\documentclass[11pt]{article}
\usepackage[a4paper, total={6in, 8in}]{geometry}
\usepackage{amssymb,amsmath,amsthm,tikz,multirow, mathdots}
\usepackage{enumerate, enumitem} 
\usepackage[hidelinks, hyperindex]{hyperref}
\hyperref[sec:function]{}
\usepackage [autostyle, english = british]{csquotes}
\usepackage [english]{babel}
\usepackage{mathtools}
\usepackage{bm}
\usepackage{authblk}
\usepackage{wasysym} 
\usepackage{scalerel}
\usepackage{caption}
\usepackage{subcaption}

\usepackage{comment} 

\usetikzlibrary{calc,arrows, arrows.meta, math} 
\usetikzlibrary{shapes.geometric}

\newtheorem{theorem}{Theorem}

\newtheorem*{theorem*}{Theorem}

\theoremstyle{definition}
\newtheorem*{definition*}{Definition}
\newtheorem*{case*}{Case}
\newtheorem*{subcase*}{Subcase}
\newtheorem*{subsubcase*}{Subsubcase}

\theoremstyle{plain}
\newtheorem{thm}{Theorem}[section]
\newtheorem{lem}[thm]{Lemma}
\newtheorem{prop}[thm]{Proposition}

\theoremstyle{definition}

\newtheorem{expl}{Example}[section]

\theoremstyle{remark}
\newtheorem{remark}[theorem]{Remark}

\numberwithin{equation}{section}

\colorlet{HMintBlue}{cyan!50!teal} 
\colorlet{HGold}{orange!75!brown!75!black!80}

\newcommand{\AVC}{\text{AVC}} 

\newcommand{\quotes}[1]{``#1''} 
\newcommand{\bvert}{\vrule width 2pt}

\newcommand{\arcThroughThreePoints}[4][]{
\coordinate (middle1) at ($(#2)!.5!(#3)$);
\coordinate (middle2) at ($(#3)!.5!(#4)$);
\coordinate (aux1) at ($(middle1)!1!90:(#3)$);
\coordinate (aux2) at ($(middle2)!1!90:(#4)$);
\coordinate (center) at ($(intersection of middle1--aux1 and middle2--aux2)$);
\draw[#1] 
 let \p1=($(#2)-(center)$),
      \p2=($(#4)-(center)$),
      \n0={veclen(\p1)},       
      \n1={atan2(\y1,\x1)}, 
      \n2={atan2(\y2,\x2)},
      \n3={\n2>\n1?\n2:\n2+360}
    in (#2) arc(\n1:\n3:\n0);
}

\def\GetOne(#1,#2,#3,#4,#5,#6,#7,#8)#9{\def#9{#1}}
\def\GetTwo(#1,#2,#3,#4,#5,#6,#7,#8)#9{\def#9{#2}}
\def\GetThree(#1,#2,#3,#4,#5,#6,#7,#8)#9{\def#9{#3}}
\def\GetFour(#1,#2,#3,#4,#5,#6,#7,#8)#9{\def#9{#4}}
\def\GetFive(#1,#2,#3,#4,#5,#6,#7,#8)#9{\def#9{#5}}
\def\GetSix(#1,#2,#3,#4,#5,#6,#7,#8)#9{\def#9{#6}}
\def\GetSeven(#1,#2,#3,#4,#5,#6,#7,#8)#9{\def#9{#7}}
\def\GetEight(#1,#2,#3,#4,#5,#6,#7,#8)#9{\def#9{#8}}

\begin{document}
\title{Dihedral Tilings of the Sphere by Kites and Regular Polygons}

\author[1]{Robert Barish \footnote{\tt email: rbarish@ims.u-tokyo.ac.jp.}}
\author[2]{Kam Hang Cheng \footnote{\tt email: keroc@ust.hk.}}
\author[3]{Hoi Ping Luk \footnote{\tt email: hoi@connect.ust.hk; hoiping@ntis.zcu.cz.}}

\affil[1]{Institute of Medical Science, University of Tokyo, Japan.}
\affil[2]{Department of Mathematics, The Hong Kong University of Science \& Technology, Hong Kong.}
\affil[3]{KMA, NTIS, Z\'apado\v{c}esk\'a univerzita v Plzni, Czech Republic.}

\maketitle

\abstract{In this article, we study the edge-to-edge dihedral tilings of the sphere by kites and regular $m$-gons with $m\ge 4$. All such tilings have been identified and fully classified, using various combinatorial and geometric tools. New phenomena are observed among the tilings. \\

\noindent Keywords: Spherical tilings, Dihedral tilings, Classification, Kites, Regular polygons \\

\noindent MSC Classification: 05B45, 52C20, 51M10, 51M20, 52B10
}

\section{Introduction}
\label{Sec-Intro}

The study of spherical tilings dates back to Plato and Archimedes. A {\em spherical tiling} (or a tiling of the sphere) is a covering of the sphere, without holes and overlaps, by {\em tiles} each of which is a spherical polygon (an $m$-gon) with $m$ edges formed by geodesic arcs. A {\em vertex} is where the edges meet, and the {\em degree} of a vertex is the number of edges that meet at the vertex. Naturally, we assume both $m$ and the degree of each vertex to be at least $3$, and that every polygon is simple (i.e., has no self-intersections). An {\em edge-to-edge} tiling has no vertex lying in the interior of an edge. Often being compared with polyhedra, edge-to-edge spherical tilings are also referred to as spherical polyhedra, because of the commonality in their underlying graphs. We study edge-to-edge spherical tilings, and for brevity, the word {\em tilings} in this article always refers to edge-to-edge spherical tilings. Standard notions in graph theory and the study of polyhedra are used without introduction.

Mathematical objects with suitable notions of uniformity or regularity set the foundation for theories as well as their applications. For example, a {\em regular} $m$-gon has all $m$ edges of the same length and all $m$ angles of the same size. As in the case of polyhedra, the five Platonic solids are the only {\em regular} convex polyhedra in $\mathbb{R}^3$: each Platonic solid has a single orbit of the faces, the edges, and the vertices under the action of the solid's symmetry group. Such properties on the faces, the edges, and the vertices are respectively called {\em face-, edge-} and {\em vertex-transitivity}. Note that the faces of a Platonic solid are necessarily congruent regular polygons. A polyhedron is called {\em uniform} if its faces are regular polygons and it is vertex-transitive. The non-regular uniform convex polyhedra are the thirteen Archimedean solids and the infinite families of prisms and antiprisms, classified by Johannes Kepler in his Harmonices Mundi. The non-uniform convex polyhedra with regular polygonal faces are the Johnson-Zalgaller solids \cite{joh,zal}. Note that a convex polyhedron from the list of Archimedean solids, prisms, antiprisms or Johnson-Zalgaller solids has exactly one congruence class of edge lengths and more than one congruence class of faces. More generally, without a prescribed requirement on the symmetry group action and regularity in the faces, one may study polyhedra in terms of the congruence classes of the faces and that of the edge lengths. The same applies to spherical tilings. 

A congruence class of tiles in a tiling is called a {\em prototile}. In the classification of edge-to-edge spherical tilings, those with a single prototile, i.e. all the tiles are congruent, are determined recently \cite{awy, ay, cly, cly2, gsy, lc, ua, wy, wy2}. Such tilings are called {\em monohedral}. Among them, the ones with regular tiles share the same underlying graphs with the Platonic solids. Spherical tilings in this sense of regularity are scarce. One may next pursue a study of tilings with more than one prototile. Spherical tilings with regular polygons as prototiles are fully classified, edge-to-edge case in \cite{lnp} and non-edge-to-edge case in \cite{aehj}. Naturally, one direction of classification is to take regular or equilateral polygons as prototiles \cite{cl,luk1}, in which there is a single edge length. A more general direction is to investigate prototiles having more than one edge length. In retrospect, a majority of the monohedral edge-to-edge spherical tilings have exactly two edge lengths; and interestingly some of the tilings admit a concave prototile or a prototile with an angle exactly equal to $\pi$, so that the tilings may also be regarded as non-edge-to-edge \cite[Figure 2]{cl}. 

A {\em dihedral} tiling has exactly two prototiles. In this article, we study dihedral edge-to-edge spherical tilings with exactly two edge lengths. An early study classified such tilings with a regular polygon and a quadrilateral having equal opposite edges as prototiles \cite{luk2}. Naturally, we consider one prototile to be a regular $m$-gon ($m\ge4$) and the other to be a kite, see Figure \ref{Fig-kite-mgon}. A kite has edge combination $x^2y^2$ and angle combination $\beta\gamma^2\delta$. A regular $m$-gon has edge combination $x^m$ for $m\ge4$, where $x,y$ denote edges of distinct edge lengths, and angle combination $\alpha^m$. The prototiles being simple implies that $x,y \in (0,\pi)$. The methods used in this article may also be applied to the dihedral tilings by kites and equilateral triangles ($m=3$). As will become transparent in Section \ref{Sec-Toolbox-Strategy}, adaptations will be required; hence, these cases will be handled in a future work.

\begin{figure}[h!] 
\centering
\begin{tikzpicture}
\tikzmath{
\s=2.5;
\r=0.6;
\th=360/4;
\ph=360/5;
\ps=360/6;
\x=\r*cos(0.5*\th);
\R = sqrt(\x^2+(3*\x)^2);
\aR = acos(3*\x/\R);
}

\begin{scope}[] 
\foreach \a in {0,...,3} {

\draw[rotate=\th*\a]
	(0.5*\th:\r) -- (1.5*\th:\r)
;
}

\foreach \a in {0,1} {
\node at (\th+\th*\a: 1*\r) {\small $x$}; 
}

\foreach \b in {2,3} {
\node at (\th+\th*\b: 1*\r) {\small $y$}; 
}

\draw[line width=1.5]
	(0.5*\th:\r) -- (3.5*\th:\r)
	(2.5*\th:\r) -- (3.5*\th:\r)
;

\node at (0.5*\th: 0.55*\r) {\small $\gamma$};
\node at (1.55*\th: 0.55*\r) {\small $\beta$};
\node at (2.5*\th: 0.55*\r) {\small $\gamma$};
\node at (3.5*\th: 0.55*\r) {\small $\delta$};
\end{scope}

\begin{scope}[xshift=\s cm] 
\foreach \a in {0,...,3} {
\tikzset{rotate=\a*\th}
\draw[]
	(0.5*\th:\r) -- (1.5*\th:\r)
;
\node at (0.5*\th: 0.55*\r) {\small $\alpha$};
\node at (\th: 1*\r) {\small $x$};
}

\end{scope}

\begin{scope}[xshift=2*\s cm] 
\foreach \a in {0,...,4} {
\tikzset{rotate=\a*\ph}

\draw[]
	(90:\r) -- (90+\ph:\r)
;
\node at (90: 0.625*\r) {\small $\alpha$};
\node at (90-0.5*\ph: 1.1*\r) {\small $x$};
}

\end{scope}

\begin{scope}[xshift=3*\s cm] 
\foreach \a in {0,...,5} {
\tikzset{rotate=\a*\ps}

\draw[]
	(90:\r) -- (90+\ps:\r)
;

\node at (90: 0.625*\r) {\small $\alpha$};
\node at (90-0.5*\ps: 1.1*\r) {\small $x$};
}

\end{scope}

\node at (3.75*\s, 0) {$\cdots$};

\end{tikzpicture}
\caption{Kite and regular $m$-gon where $m\ge4$}
\label{Fig-kite-mgon}
\end{figure}

We apply a m\'elange of combinatorial and geometric tools and obtain a rich collection of tilings. This collection includes three infinite families and some finite ones, which is a phenomenon comparable to the counterparts in the monohedral tiling classification. Apart from the sophisticated features comparable to the monohedral case, our findings also exhibit new phenomena and standout features absent in previous endeavours. One such feature is the pair of finite families generated by a sequence of flips of a cluster of tiles. Another one is an infinite subfamily in a family, which consists of odd tilings. Here an odd tiling is one that consists of an odd number of tiles. The parity phenomenon in the monohedral counterparts is highlighted in \cite{luk3}: an edge-to-edge spherical tiling by congruent polygons always has an even number of tiles. This phenomenon agrees with Gr\"unbaum's theorem \cite{gru} on polyhedra in $\mathbb{R}^3$ with congruent faces. A study on the existence of an infinite family of tilings by odd numbers of $m$-gons can be seen in \cite{luk4}, in which the family can be derived from our result via edge reduction. 

The contents of this article are organised as follows. The main result is explained in Section \ref{Sec-Main}. The combinatoric and geometric tools are established in Section \ref{Sec-Toolbox-Strategy}. The classification of the tilings is conducted in Section \ref{Sec-Tilings}. Further discussion is given in Section \ref{Sec-Discussion}.

\section{Main Result}
\label{Sec-Main}

As a prelude to the main result, we shall first recall concepts from graph theory and polyhedra. They are standard and hence are used without introduction. They unify an exposition of our main result by providing a description of the structures of the tilings as well as establishing a connection between the geometries and combinatorics. Essentially, the primal structure of each tiling can be identified with the graph of a polyhedron. This is consistent with the derivation of Archimedean solids from Platonic solids via operations in terms of Conway polyhedron notations. 

The Platonic solids are the tetrahedron $\mathcal{T}$, the cube $\mathcal{C}$, the octahedron $\mathcal{O}$, the dodecahedron $\mathcal{D}$ and the icosahedron $\mathcal{I}$. The Archimedean solids can be derived from the Platonic solids. For example, $t$ for truncation and $a$ for rectification (or ambo), and $t\mathcal{O}, a\mathcal{O}$ denote the truncated octahedron and the rectified octahedron, respectively. 

We call a remaining facet resulting from an operation on a facet of a (Platonic) solid a  {\em legacy face}. Similarly, we call a facet created by an operation a {\em derived face}. For example, in $t\mathcal{O}$ a hexagon is a legacy face while a triangle is a derived face. 

The appearance of kites as tiles is explained in terms of (nuanced) subdivisions of certain subsets of the faces, such as the legacy faces or derived faces. These subdivisions, with the exception of a triangular subdivision, are collectively called {\em kite subdivisions}. The various ways that subdivisions take place reflect the complexity of the classification and they will be explained following the main theorem. A kite subdivision $K$ is a special case of quadrilateral subdivision $Q$. Similarly, the appearance of squares in some tilings can be explained in terms of square subdivisions of the thickened $1$-skeleton of the octahedron or its flip modification $F$. Triangular subdivisions, quadrilateral subdivisions and the flip modification of the octahedron in spherical tilings are explained in detail in \cite{cly}. A more compact description may be given in terms of ribbon graphs (see definition in \cite{emm}): the thickening of $1$-skeleton of a tiling can be regarded as the ribbon graph of the embedded graph of the tiling. The other (rather unexpected) source of squares is what we refer to as the ``$2$-edged multigraphs'' of the cube and the dodecahedron. Here a {\em $2$-edged multigraph} of a graph means that the multigraph is obtained by replacing each edge of the graph by a pair of parallel edges.

Our main result is summarised in the theorem below. 

\begin{theorem*} The edge-to-edge dihedral spherical tilings by kites and regular $m$-gons with $m\ge4$ are
\begin{enumerate}[label={\Roman*}.]
\item the earth map type --- four infinite families:
\begin{enumerate}[start=1,label={\small (EM\arabic*)}]
\item \label{Label:EMT-Herschel} an infinite family of Herschel tilings;
\item \label{Label:EMT-Antiprisms} an infinite family of tilings via a kite subdivision of the polar polygons in the antiprisms;
\item \label{Label:EMT-Prisms} an infinite family of tilings via a kite subdivision of the polar polygons in the prisms, and their flip modification;
\item \label{Label:EMT-Subdiv-Equator-Prisms} an infinite even family of tilings via a kite subdivision of the equatorial quadrilaterals of prisms;
\end{enumerate}
\item the Platonic type --- two finite families and fourteen isolated tilings:
\begin{enumerate}[start=1,label={\small (P\arabic*)}]
\item \label{Label:trunc-Octa} a finite family of tilings via a kite subdivision of the legacy faces in the truncated octahedron;
\item \label{Label:trunc-Icosa} a finite family of tilings via a kite subdivision of the legacy faces in the truncated icosahedron;
\item \label{Label:polar-subdiv-cube} a tiling via a kite subdivision of two polar faces of the cube;
\item \label{Label:trunc-Platonic} five tilings via a kite subdivision of the derived faces in the truncated Platonic solids; 
\item \label{Label:rect-Platonic} five tilings via a kite subdivision of the legacy faces in the rectified Platonic solids; 
\item \label{Label:multigr-Cube-Dodeca} two tilings via a kite subdivision of the double-edged multigraphs of the cube and the dodecahedron;
\item \label{Label:tri-subdiv-thick-Octa} two tilings via a triangular subdivision of the faces and a square subdivision of the thickened graphs of the octahedron and its flip modification;
\item \label{Label:quad-subdiv-thick-Octa} two tilings via a kite subdivision of the faces and a square subdivision of the thickened graphs of the octahedron and its flip modification; 
\end{enumerate}
\item the Johnson-Zalgaller type: a kite subdivision of the square pyramid $\mathcal{J}_1$.
\end{enumerate} 
\end{theorem*}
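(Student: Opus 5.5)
The classification will follow the standard pipeline for spherical tiling classifications: combinatorial counting, then vertex-type analysis, then geometric realisation.

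\textbf{Counting setup.} Let $f_K$ be the number of kites and $f_m$ the number of $m$-gons, with $f=f_K+f_m$. Let $v_k$ be the number of degree-$k$ vertices.

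From Euler's formula $V-E+F=2$ together with $2E=4f_K+mf_m$, I will derive
\[
\sum_{k\ge3}(k-3)v_k=\Bigl(\sum_{\text{tiles}}(\deg-2)\Bigr)-2f+\dots,
\]
and in particular
\[
v_3 = 8 + 2(m-4)f_m/2\cdot\ldots + \sum_{k\ge4}(k-3)v_k .
\]
This shows that degree-$3$ vertices dominate. The angle sums give
\[
\beta+2\gamma+\delta=2\pi+\tfrac{4\pi}{f}\cdot\ldots
\]
More precisely, summing angles over all tiles gives $2\pi V$. Each $m$-gon has area $m\alpha-(m-2)\pi$ and each kite has area $\beta+2\gamma+\delta-2\pi$, and the total area is $4\pi$.

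\textbf{Edge constraints at vertices.} The edge combinations give local constraints. The angle $\alpha$ sits between two $x$-edges and $\beta$ sits between two $x$-edges. The angle $\delta$ sits between two $y$-edges, and $\gamma$ sits between an $x$-edge and a $y$-edge.

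Hence at any vertex the number of $\gamma$'s is even, and $\delta$ can only be adjacent to $\gamma$ or $\delta$ across $y$-edges. This parity lemma, together with the counting lemmas, restricts the degree-$3$ vertices to a short list: $\alpha^3$, $\alpha^2\beta$, $\alpha\beta^2$, $\beta^3$, $\alpha\gamma^2$, $\beta\gamma^2$, $\gamma^2\delta$, and $\delta^3$ is impossible by parity. Since $\beta\gamma^2\delta$ is a kite, $\gamma^2\delta$ cannot coexist with $\beta$-vertices of certain forms.

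I then split into cases according to which degree-$3$ vertex types occur, especially whether $\gamma^2\delta$ or $\alpha\gamma^2$/$\beta\gamma^2$ is present. In each case I will solve the angle linear system together with the area identity. This yields either explicit values of the angles in terms of $f$ and $m$, or a one-parameter freedom.

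\textbf{Combinatorial case analysis.} For each admissible angle system, I will determine the full vertex set (the "anglewise vertex combination") by balance arguments. For example, if $\delta$ only appears at $\gamma^2\delta$, then counting forces $\#\gamma=2\#\delta$, consistent with the kites.

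I will then build the tilings by adjacent-angle deduction around a kite or an $m$-gon. A key tool is a lemma saying that, since there are many degree-$3$ vertices, some tile is surrounded by enough degree-$3$ vertices to propagate the structure. The outcomes are as follows:

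\begin{itemize}
\item Earth map structures, where there are two poles of high degree and $f$ unbounded. These give the families (EM1)--(EM4).
\item Bounded cases, where the angles are forced to equal rational multiples of $\pi$ with small $f$. These give the Platonic-type tilings (P1)--(P8) and the $\mathcal{J}_1$ subdivision. In these, the tiles around a vertex reconstruct the truncated or rectified solid, or the doubled-edge graph.
\end{itemize}

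The flip families in (P1)/(P2) appear when a cluster admits two local fillings.

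\textbf{Geometric existence.} Finally, for each combinatorial candidate I will check realisability. A regular $m$-gon is determined by $x$ and $\alpha$, and a kite by $x$, $y$, $\beta$, $\delta$ via spherical trigonometry; the angle values force the edge lengths. Using the kite's symmetry axis, I will derive $\cos x$ from $\alpha$ and $m$, and I will require the kite to be simple with $x,y\in(0,\pi)$. This eliminates many combinatorial solutions, for instance by requiring the regular-polygon edge to match the kite's $x$-edge, and it confirms the listed tilings.

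\textbf{Main obstacle.} I expect the hardest step to be the exhaustive combinatorial case analysis of vertices of degree $\ge4$ in the bounded (non-earth-map) cases. This is where the exceptional tilings, such as the doubled-edge cube and dodecahedron and the square subdivisions of the thickened octahedron, hide. Proving there are no others needs careful use of both the geometric edge-length constraint and adjacency propagation. There I would first try to bound $f$ via the angle values, and then run computer-assisted-style exhaustive neighbourhood enumeration.
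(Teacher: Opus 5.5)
\textbf{The $\delta^3$ exclusion is wrong.} Your plan breaks at its foundation: you assert that $\delta^3$ is impossible by parity. The parity lemma only concerns $\gamma$, the unique angle flanked by one $x$-edge and one $y$-edge. Since $\delta$ is flanked by two $y$-edges, $\bvert\,\delta\,\bvert\,\delta\,\bvert\,\delta\,\bvert$ closes up consistently.

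The correct list of degree-$3$ types is $\alpha^3,\alpha^2\beta,\alpha\beta^2,\beta^3,\alpha\gamma^2,\beta\gamma^2,\gamma^2\delta,\delta^3$. The ones to discard are $\beta\gamma^2$ and $\gamma^2\delta$: each propagates into the monohedral earth map tiling by kites, so neither occurs in a dihedral tiling.

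Euler gives $v_3=8+(m-4)f_m+\sum_{k\ge5}(k-4)v_k$; your displayed formula is not correct. In the canonical seeds of (P1), (P2), with $\AVC=\{\delta^3,\alpha\beta\gamma^2\}$, and in the (P8) tilings, with $\AVC=\{\delta^3,\alpha^3\beta,\alpha^2\gamma^2\}$, the \emph{only} degree-$3$ vertices are the $\delta^3$'s. A case split on degree-$3$ types with $\delta^3$ removed therefore throws these tilings away.

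The same examples refute the propagation lemma you intend to use. In the seed of (P1) each kite has exactly one degree-$3$ vertex and each square has none, so no tile is ``surrounded by enough degree-$3$ vertices''.

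The footholds that actually work here are different:
\begin{itemize}
\item $\beta\neq\delta$ (otherwise $x=y$), which gives the trichotomy $\delta>\beta,\tfrac23\pi$ / $\beta>\delta,\tfrac23\pi$ / $\beta,\delta\le\tfrac23\pi$;
\item an edge shared by a kite and an $m$-gon has endpoints $\alpha\beta\cdots$ and $\alpha\gamma\cdots$, so both vertex types must exist;
\item the Counting Lemma.
\end{itemize}

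\textbf{Your mechanism for fixing the angles does not transfer from the monohedral case.} The area identity $f_K(\beta+2\gamma+\delta-2\pi)+f_m(m\alpha-(m-2)\pi)=4\pi$ carries two unknown counts. Given the vertex angle sums it is equivalent to Euler's formula. It ties counts to angles but does not fix the angles: in (EM1) it merely gives $f_K=2d$ and leaves $\alpha$ untouched. Nor are the angles rational multiples of $\pi$ in general; for example $\alpha=\cos^{-1}(-\tfrac34)$ in (P3).

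The decisive fourth equation is the one you postpone to the end. Equating $\cos x$ for the regular $m$-gon with $\cos x$ from the half-kite triangle gives \eqref{Eq-angle-id}. This has to be used inside the case analysis, not only as a final realisability check. Three vertex angle sums plus \eqref{Eq-angle-id} give exact angle values, and only with those values can the complete $\AVC$ be pinned down before any adjacency deduction. For instance, it forces $m=8$ and $\AVC=\{\alpha\beta^2,\alpha\gamma^2,\delta^3,\beta^4,\gamma^4\}$ in the $\mathcal{J}_1$ case, and in many subcases the resulting $\AVC$ then contradicts the Counting Lemma.

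Unless you move this identity into the middle of the argument, your case analysis has no way to close the list of admissible vertices. As written, the proposal is a template rather than a proof.
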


In light of \ref{Label:trunc-Octa} and \ref{Label:trunc-Icosa}, we obtain the following result (see Proposition \ref{Prop-tP-families}).

\begin{prop}\label{Prop-Platonic-finite-families} The truncated Platonic solids give rise to finite families of dihedral tilings via kite subdivisions and their flip modifications.
\end{prop}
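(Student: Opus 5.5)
The plan is to prove the proposition by explicit construction, realising the families \ref{Label:trunc-Octa} and \ref{Label:trunc-Icosa}. Start with the spherical truncated octahedron $t\mathcal{O}$ (legacy faces: $8$ hexagons; derived faces: $6$ squares) and the spherical truncated icosahedron $t\mathcal{I}$ (legacy faces: $20$ hexagons; derived faces: $12$ pentagons). Keep every derived face as a regular $m$-gon with $m=4$ or $m=5$. Replace each legacy hexagon by a kite subdivision: put a new vertex at the centre of the hexagon and join it by $y$-edges to alternate vertices, namely the three vertices where $\delta$ will sit. This cuts the hexagon into three kites, each with edge combination $x^2y^2$ and angle combination $\beta\gamma^2\delta$.

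The first step is the counting and the choice of angles. Here $\beta$ sits at the centres, so $3\beta=2\pi$. At each original vertex of $t\mathcal{P}$ the vertex is either $\alpha\gamma^2$ or $\alpha\delta^2$. Each hexagon has three vertices of each type, chosen according to a $2$-colouring of its six vertices. The angle equations are $\alpha+2\gamma=2\pi$, $\alpha+2\delta=2\pi$ or, more generally, whichever angle combinations occur. Combined with the angle sum $\beta+2\gamma+\delta>2\pi$ of a spherical kite and the area identity, these equations fix $\alpha,\beta,\gamma,\delta$.

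The second step is geometric realisation. I would exhibit the kite explicitly from the symmetric tiling $t\mathcal{P}$: place the hexagon's centre at its circumcentre and reflect in the symmetry planes, so that each kite is bounded by two legacy edges $x$ and two radial arcs $y$. I would check that $y\ne x$ and that the kite is simple, using spherical trigonometry on the isosceles triangles from the centre.

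The third step, and the source of the ``finite family'', is flips. The choice of which three alternate vertices of each hexagon receive the $y$-edges gives two rotations per hexagon. Compatibility at shared vertices (the vertex must be $\alpha\gamma^2$ or $\alpha\delta^2$, with $\gamma$ and $\delta$ not mixed) propagates constraints between adjacent hexagons. I would show that the admissible configurations are exactly those reachable by a sequence of local flips of a cluster: rotating the kite subdivision of one hexagon together with the necessary neighbours. Each flip preserves all vertex types, so every configuration is a tiling with the same prototiles. Since there are finitely many hexagons, the set of configurations is finite, and it has more than one element.

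The main obstacle is the compatibility analysis in this flip step. One has to check that the kite geometry is symmetric enough for a flipped subdivision to fit, i.e.\ that the angle values force $\gamma$ and $\delta$ into consistent vertex types. One also has to enumerate the configurations up to symmetry. I would first try to encode the $2$-colouring of vertices of $t\mathcal{P}$ induced by $\gamma/\delta$ as a constraint on each hexagon, reduce the problem to colourings of the dual graph, and count the orbits under the symmetry group of $t\mathcal{O}$ or $t\mathcal{I}$.
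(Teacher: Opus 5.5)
Your overall plan (kite-subdivide the hexagons of the spherical $t\mathcal{O}$ and $t\mathcal{I}$, then vary the subdivisions) matches the paper's. However, your angle bookkeeping is wrong, and the flip analysis built on it goes in the wrong direction.

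In the kite, $\delta$ is the corner between the two $y$-edges, $\beta$ is the corner between the two $x$-edges, and the $\gamma$'s are the mixed corners. Your spokes are $y$-edges, as they must be: the hexagon boundary is shared with $m$-gons, so it consists of $x$-edges. Hence each hexagon centre is $\delta^3$ with $\delta=\frac23\pi$, not $3\beta=2\pi$. The three corners receiving a spoke carry $\gamma^2$, and the other three carry a single $\beta$.

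A vertex of $t\mathcal{P}$ lies on one $m$-gon and two hexagons, so after subdivision it is $\alpha\beta^2$, $\alpha\beta\gamma^2$ or $\alpha\gamma^4$. Neither type on your list can occur. $\alpha\gamma^2$ would leave one hexagon contributing nothing. $\alpha\delta\cdots$ is excluded by edge matching, since $\alpha$ sits between $x$-edges and $\delta$ between $y$-edges.

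Moreover, vertex sums plus the area identity cannot pin down the angles. The area identity is a consequence of the vertex sums and Euler's formula, so it adds nothing; the independent extra equation is the $x$-edge compatibility \eqref{Eq-angle-id}. Your second step is actually the cleaner repair. Inheriting the geometry from the regular-faced spherical solid gives $\beta$ equal to the hexagon angle, $\gamma=\frac12\beta$ and $\delta=\frac23\pi$. It also gives $y>x$, because the central triangle has apex $\frac13\pi$ and base angles $\frac12\beta>\frac13\pi$. This reproduces \eqref{Eq-ang-vals-albe2-albega2-de3} and \eqref{Eq-ang-vals-m=5-albe2-albega2-de3} without solving anything, whereas the paper reaches them through the classification in Proposition~\ref{Prop-be}.

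The missing idea in the flip step is $\beta=2\gamma$, i.e.\ the three kites at a $\delta^3$ form a \emph{regular} hexagon. Then $\alpha\beta^2$, $\alpha\beta\gamma^2$ and $\alpha\gamma^4$ all have angle sum $\alpha+2\beta=2\pi$, and every hexagon edge is an $x$-edge. So the subdivision of each hexagon can be rotated by $\frac13\pi$ independently of all the others. There are no constraints to propagate, no clusters, and no constrained colouring problem on the dual graph.

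A flip does change vertex types, contrary to what you claim. Flipping one hexagon of the seed turns its six $\alpha\beta\gamma^2$'s into three $\alpha\beta^2$'s and three $\alpha\gamma^4$'s; what is preserved is the angle sums. The family is therefore exactly the $2^8$ (resp.\ $2^{20}$) independent choices. Up to symmetry these are the $14$ (resp.\ $8924$) tilings of \ref{Label:trunc-Octa} (resp.\ \ref{Label:trunc-Icosa}); a Burnside count over the full symmetry groups of orders $48$ and $120$ gives exactly these numbers.

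What genuinely needs checking is $\beta=2\gamma$, and it is not automatic for the seed. The vertex sums of $\delta^3$ and $\alpha\beta\gamma^2$ alone do not force it; the paper obtains it from the coexistence of $\alpha\beta^2$ and $\alpha\beta\gamma^2$. Your Archimedean realisation supplies it for free.

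Note also that the paper's version covers $t\mathcal{T}$, $t\mathcal{C}$ and $t\mathcal{D}$, with equilateral triangles as the regular prototile and legacy faces subdivided around $\delta^3,\delta^4,\delta^5$. Your $m\ge4$ construction does not reach these.
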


For brevity, from now on we often reference the tilings by their labels in the main theorem. We remark that the tilings, \ref{Label:trunc-Octa}, \ref{Label:trunc-Icosa} and \ref{Label:rect-Platonic}, may as well be regarded as of Archimedean type because of their underlying structures.

We use plane drawings to display the distinguishing features of the tilings: the infinite families of earth map type \ref{Label:EMT-Herschel}, \ref{Label:EMT-Antiprisms}, \ref{Label:EMT-Prisms}, and \ref{Label:EMT-Subdiv-Equator-Prisms} in Figures \ref{Fig-EMT-Herschel}, \ref{Fig-EMT-Antiprisms}, \ref{Fig-EMT-Prisms}, and \ref{Fig-EMT-Prisms-Subdiv-Equatorial}; the canonical seeds for generating the two finite families in Figure \ref{Fig-Finite-Families-tOcta-tIcosa}; the tilings of Platonic type \ref{Label:trunc-Octa}--\ref{Label:quad-subdiv-thick-Octa} in Figures \ref{Fig-polar-subdiv-cube}, \ref{Fig-kite-subdiv-trunc-Platonic}, \ref{Fig-Platonic-Archimedean}, \ref{Fig-Platonic-multigraph-Cube-Dodeca}, and \ref{Fig-Platonic-thicken-Octa}. The only tiling of Johnson-Zalgaller type is in Figure \ref{Fig-Tiling-Subdiv-J1}. 

In a figure, the arrowhead of a dart represents an end vertex (not on display) of the corresponding edge. For example, in Figures \ref{Fig-EMT-Herschel}, \ref{Fig-EMT-Antiprisms} and \ref{Fig-EMT-Prisms}, the upward darts in a drawing join a single vertex (the ``north pole") and the downward ones join another (the ``south pole"). In Figures \ref{Fig-EMT-Antiprisms}, \ref{Fig-EMT-Prisms}, \ref{Fig-polar-subdiv-cube}, \ref{Fig-kite-subdiv-trunc-Platonic}, \ref{Fig-Platonic-Archimedean}, \ref{Fig-Platonic-multigraph-Cube-Dodeca}, \ref{Fig-skeleton-op} and \ref{Fig-Tiling-Subdiv-J1}, the outward darts in a drawing join a single vertex (which may also be regarded as the north pole or the south pole).

We have also produced interactive $3$D rendering of the tilings in the third author's GeoGebra page: \url{www.geogebra.org/m/mpfra5q7}.

An in-depth explanation of each tiling is given as follows.

\subsection{Earth map type}

Tilings of earth map type (or simply {\em earth map tilings}) in Figures  \ref{Fig-EMT-Herschel}, \ref{Fig-EMT-Antiprisms}, \ref{Fig-EMT-Prisms}, \ref{Fig-EMT-Prisms-3D}, \ref{Fig-EMT-Prisms-Subdiv-Equatorial} resemble the earth map and hence the name. Notably, the poles of earth map tilings are the vertices with negative combinatorial curvature (see definition in \cite{hig}). A tiling is formed by gluing copies of {\em a time zone} (shaded) and the leftmost and rightmost boundaries are eventually identified. This can be done for any arbitrarily large number of time zones, giving rise to an infinite family in each case.

In Figure \ref{Fig-EMT-Herschel}, the first drawing shows three time zones of \ref{Label:EMT-Herschel} with the first one shaded; the second shows an alternative composition of a time zone -- a lune consisting of two kites and two equal halves of the rhombus. The infinite family is obtained by gluing $d\ge3$ copies of a time zone. Among them, the minimum member has its $1$-skeleton identified as the embedded Herschel graph in the third drawing. Combinatorially, for a fixed $m$ and a spherical tiling by $m$-gons, the relation between non-Hamiltonicity and the parity in the number of tiles is studied in \cite{luk4}. Given the non-Hamiltonian property of the Herschel graph \cite{bj}, we call them the {\em Herschel tilings}. A member in \ref{Label:EMT-Herschel} with an odd number of time zones has an odd number of $4$-gons. Geometrically, the family can be reduced to an equilateral family in \cite{luk4} via an {\em edge-reduction} $x=y$. 

\begin{figure}[h!] 
\centering
\begin{subfigure}{\linewidth}
\centering
\begin{tikzpicture}[>=latex]
\tikzmath{
\s=1;
\r=0.325;
\th=360/4;
\x=\r*cos(0.5*\th);
\R = sqrt(\x^2+(3*\x)^2);
\aR = acos(3*\x/\R);
\tz=3;
\tzz=\tz-1;
}

\begin{scope}[scale=1]

\node at (-4*\r, 0) {\small  \parbox[c]{2cm}{Equatorial \\ view:}};

\fill[gray!40]
	(\th:2*\r) -- (\th:\r) -- (0:\r) -- (2*\r, -\r) -- (2*\r, -2*\r) -- (4*\r, -2*\r) -- (4*\r, -\r) -- (3*\r, 0) -- (2*\r, \r) -- (2*\r, 2*\r) -- cycle
;

\foreach \a in {0,...,\tz} {
\tikzset{shift={(2*\a*\r,0)}}

\draw[]
	(\th:\r) -- (0:\r)
;

\draw[shift={(0:\r)}]
	(0:0) -- (-0.5*\th:2*\x) 
	(-0.5*\th:2*\x) -- ([shift={(-0.5*\th:2*\x)}]270:\r)
;
}

\foreach \a in {0,...,\tzz} {
\tikzset{shift={(2*\a*\r,0)}}

\draw[shift={(0:\r)}]
	(0:0) -- (0.5*\th:2*\x)
	(-0.5*\th:2*\x) -- ([shift={(-0.5*\th:2*\x)}]0.5*\th:2*\x)
;
}

\foreach \a in {0,...,\tz} {
\tikzset{shift={(2*\a*\r,0)}}

\draw[arrows = {-Latex[scale=0.45]},
line width=1.5]
	(\th:\r) -- (\th:2.6*\r)
;

\draw[arrows = {-Latex[scale=0.45]}, line width=1.5, shift={(0:\r)}]
	(-0.5*\th:2*\x) -- ([shift={(-0.5*\th:2*\x)}]270:1.6*\r)
;
}

\node at (\tz*2*\r+4*\r, 0.25*\r) {\LARGE $\cdots$};

\end{scope}

\begin{scope}[xshift=5.1*\s cm]

\fill[gray!40]
	(-\r, 2*\r) -- (-\r,-2*\r) -- (\r,-2*\r) --(\r,2*\r) 
;

\foreach \aa in {-1,1} {
\tikzset{xshift=\aa*\r cm, xscale=\aa}

\foreach \a in {1,2} {

\draw[rotate=\a*\th]
	(0:\r) -- (\th:\r)
;

}

\foreach \a in {0,3} {

\draw[rotate=\a*\th, gray!50]
	(0:\r) -- (\th:\r)
;
}

\foreach \a in {0,1} {
\tikzset{rotate=\a*2*\th}

\draw[arrows = {-Latex[scale=0.45]}, line width=1.5]
	(0,\r) -- (0,2.6*\r)
;

\draw[dotted]
	(0,\r) -- (0,-\r)
;
}
}

\end{scope}

\begin{scope}[xshift= 9*\s cm]

\node at (-4*\r, 0) {\small  \parbox[c]{2cm}{Polar \\ view:}};

\fill[white]
	(4*\x,0) -- (0,4*\x)  -- (-4*\x,0)  -- (0,-4*\x) -- cycle
;

\foreach \a in {0,1} {

\draw[rotate=\a*180, orange!75!brown!75!black!80, line width=1.5]
	(90-0.5*\th:\r) -- (90:2*\x)
	(90+0.5*\th:\r) -- (90:2*\x)
	(90:2*\x) -- (90:4*\x)
;
}

\foreach \a in {0,1} { 
\tikzset{rotate=\a*180}

\fill[teal!70!blue]
	(4*\x,0) -- (0,4*\x)  -- (-4*\x,0)  -- (0,-4*\x) -- (0,-4.8*\x) -- (-4.8*\x,0) -- (0,4.8*\x) -- (4.8*\x,0) -- cycle
;
}

\foreach \aa in {-1,1} {
\tikzset{xscale=\aa}
\fill[teal!70!blue]
	(0,0) -- (90-0.5*\th:\r) -- (4*\x,0) -- (90-1.5*\th:\r) -- cycle
;
}

\foreach \a in {0,...,3} {
\draw[rotate=\a*90]
	(0,0) -- (90-0.5*\th:\r)
	(0:4*\x) -- (90:4*\x)
;
}

\foreach \a in {0,1} {
\draw[rotate=\a*180]
	(90-0.5*\th:\r) -- (0:4*\x)
	(90-1.5*\th:\r) -- (0:4*\x)
;
}

\end{scope}
\end{tikzpicture}
\end{subfigure}
\begin{subfigure}{\linewidth}
\centering
\begin{tikzpicture}
\tikzmath{
\XS=2.5;
}

\begin{scope}[] 
\node [inner sep=0] (image) at (0,0) 
            {\includegraphics[height=2cm]{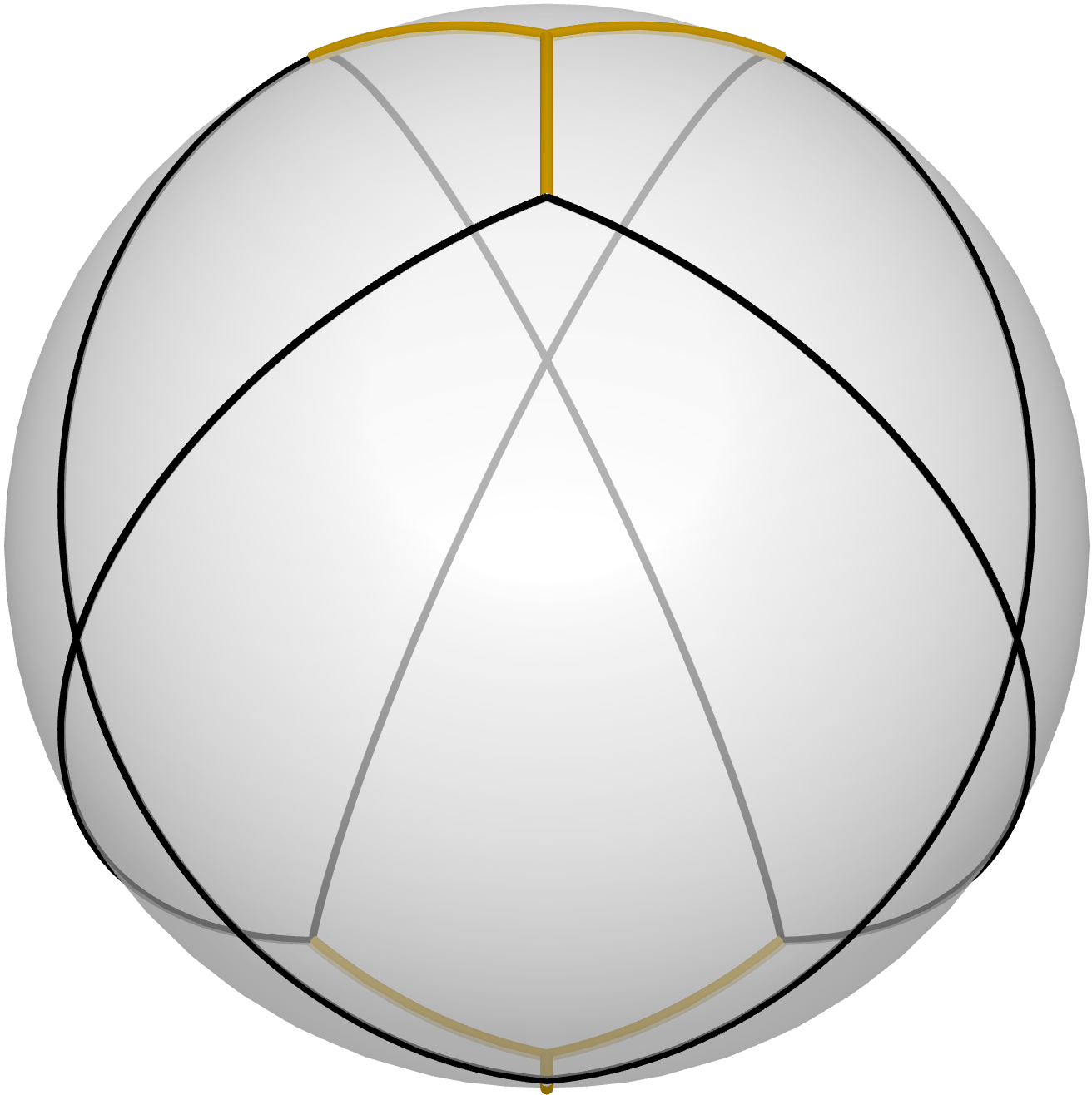}};
\end{scope}

\begin{scope}[xshift=\XS cm] 
\node [inner sep=0] (image) at (0,0) 
            {\includegraphics[height=2cm]{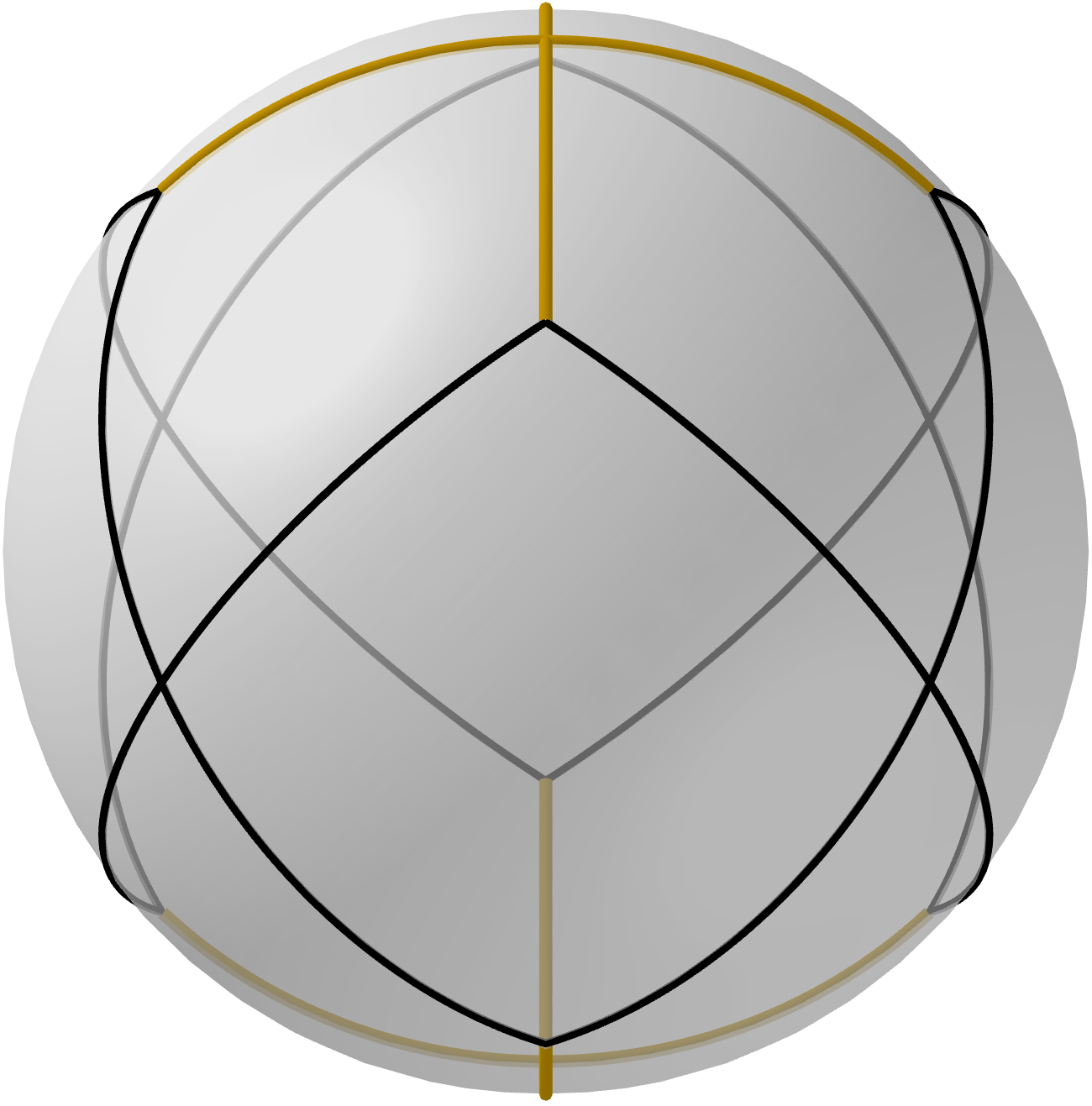}};
\end{scope}

\begin{scope}[xshift=2*\XS cm] 
\node [inner sep=0] (image) at (0,0) 
            {\includegraphics[height=2cm]{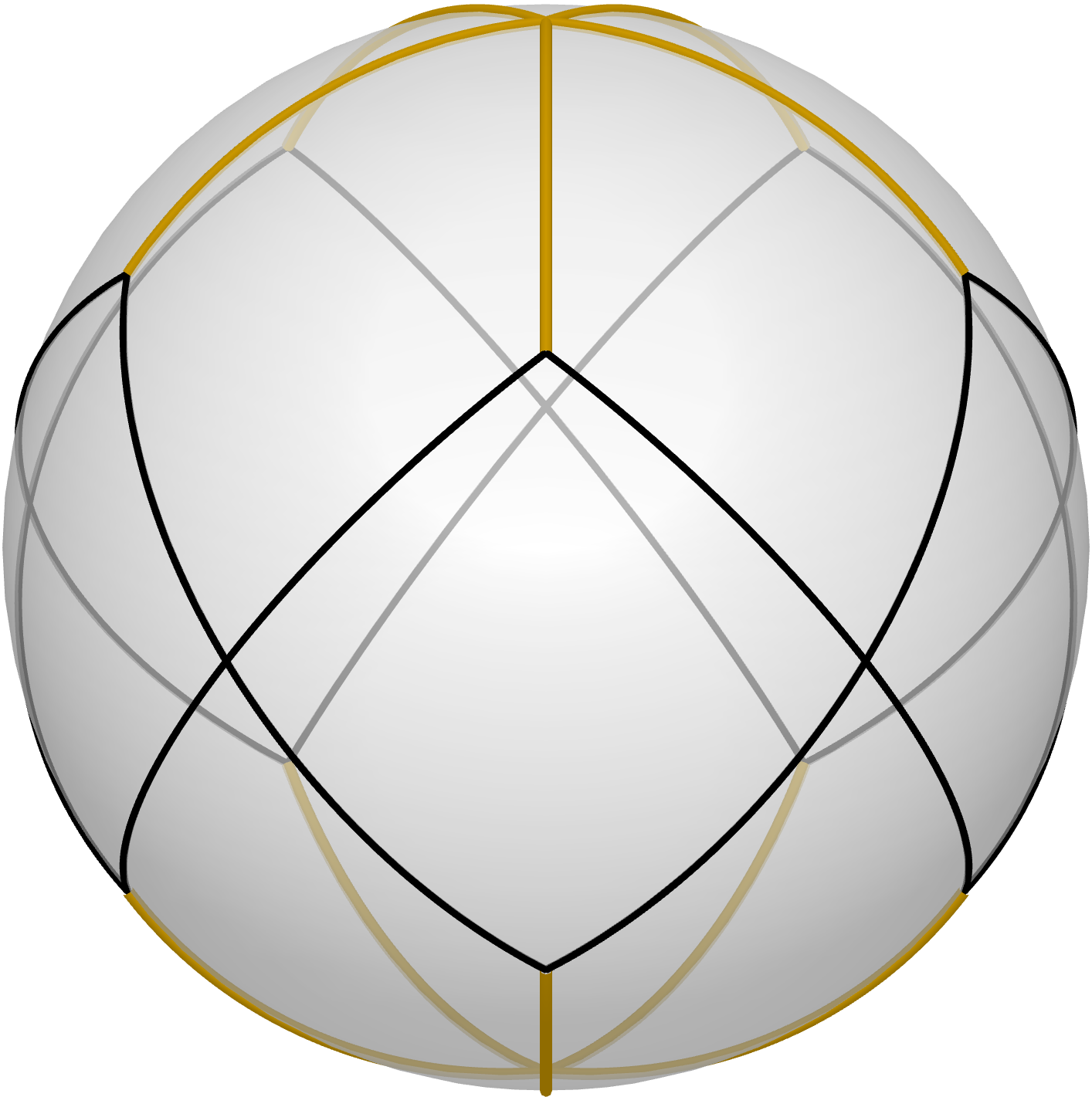}};
\end{scope}
\end{tikzpicture}
\end{subfigure}
\caption{The first row -- $3$ time zones of \ref{Label:EMT-Herschel}, an alternative composition of a time zone, the Herschel graph; the second row -- the first three tilings of the family}
\label{Fig-EMT-Herschel}
\end{figure}

In Figure \ref{Fig-EMT-Antiprisms}, the first row shows three time zones of \ref{Label:EMT-Antiprisms} and an alternative composition of a time zone. The infinite family is again obtained by gluing $d$ copies of a time zone for $d\ge4$. When $d=3$, the tiling is geometrically impossible (to be discussed after Figure \ref{Fig-a4-a2b2-Tiling-alga2-al3be-ded}). Alternatively, the tilings for $d\ge4$ can be obtained via a kite subdivision of the two polar regular $d$-gons in the family of antiprisms, see the second row of Figure \ref{Fig-EMT-Antiprisms} for $d=4,5$ and $6$.


\begin{figure}[h!] 
\centering
\begin{tikzpicture}[>=latex]

\tikzmath{
\XS=4.35;
\YS=2.25;
\r=0.3;
\th=360/4;
\x=\r*cos(0.5*\th);
\R = sqrt(\x^2+(3*\x)^2);
\aR = acos(3*\x/\R);
\tz=3;
\tzz=\tz-1;
}

\begin{scope}

\node at (-6*\r, 0) {\small  \parbox[c]{2cm}{Equatorial \\ view:}};

\begin{scope}[] 

\fill[gray!50]
	(-1.5*\r,2.5*\r) --  (1.5*\th:3*\x)  -- (3.5*\th:3*\x) -- (1.5*\r, -2.5*\r) -- (1.5*\r, -2.5*\r) 
	-- (3.5*\r, -2.5*\r) -- (3.5*\r, -1.5*\r) -- (0.5*\r, 1.5*\r) -- (0.5*\r, 1.5*\r) -- (0.5*\r, 2.5*\r) -- cycle
;

\foreach \a in {0,...,\tz} {
\tikzset{shift={(2*\a*\r,0)}}

\draw[]
	(1.5*\th:\x) -- (1.5*\th:3*\x)
	(1.5*\th:3*\x) -- (-1.5*\r, 2.5*\r)
	(1.5*\th:\x) -- (3.5*\th:\x)
	(3.5*\th:\x) -- (3.5*\th:3*\x)
;
}

\foreach \a in {0,...,\tzz} {

\tikzset{shift={(2*\a*\r,0)}}

\draw[]
	(-0.5*\r, 0.5*\r) -- (0.5*\r, 1.5*\r)
	(0.5*\r, -0.5*\r) -- (1.5*\r, 0.5*\r)
	(1.5*\r, -1.5*\r) -- (2.5*\r, -0.5*\r)
;
}

\foreach \a in {0,...,\tz} {
\tikzset{shift={(2*\a*\r,0)}}

\draw[arrows = {-Latex[scale=0.45]}, line width=1.5]
	(1.5*\th:3*\x) -- (-1.5*\r, 3*\r)
;

\draw[arrows = {-Latex[scale=0.45]}, line width=1.5]
	(3.5*\th:3*\x) -- (1.5*\r, -3*\r)
;
}

\node at (\tz*2*\r+2*\r,0.5*\r) {\LARGE $\cdots$};

\end{scope}

\begin{scope}[xshift=\XS cm]

\fill[gray!50]
	(-0.5*\r, 2.5*\r) -- (-0.5*\r, -2.5*\r) -- (1.5*\r, -2.5*\r) -- (1.5*\r, 2.5*\r)
;

\draw[]
	(2.5*\r, -0.5*\r) -- (0.5*\r, 1.5*\r)
	(-1.5*\r, -0.5*\r) -- (0.5*\r, 1.5*\r)
	(0.5*\r, -0.5*\r) -- (-0.5*\r, 0.5*\r)
	(0.5*\r, -0.5*\r) -- (-0.5*\r, -1.5*\r)
	(0.5*\r, -0.5*\r) -- (1.5*\r, 0.5*\r)
	(0.5*\r, -0.5*\r) -- (1.5*\r, -1.5*\r)
;

\draw[gray!50]
	(-0.5*\r, 0.5*\r) -- (-1.5*\r, 1.5*\r)
	(-0.5*\r, 0.5*\r) -- (-1.5*\r, -0.5*\r)
	(-0.5*\r, -1.5*\r) -- (-1.5*\r, -0.5*\r)
	(1.5*\r, 0.5*\r) -- (2.5*\r, -0.5*\r)
	(1.5*\r, -1.5*\r) -- (2.5*\r, -0.5*\r)
	(1.5*\r, 0.5*\r) -- (2.5*\r, 1.5*\r)
;

\draw[arrows = {-Latex[scale=0.45]}, gray!50, line width=1.5]
	(-1.5*\r, 1.5*\r) -- (-1.5*\r, 3*\r)
;

\draw[arrows = {-Latex[scale=0.45]}, gray!50, line width=1.5]
	(2.5*\r, 1.5*\r) -- (2.5*\r, 3*\r)
;

\draw[arrows = {-Latex[scale=0.45]}, line width=1.5]
	(0.5*\r, 1.5*\r) -- (0.5*\r, 3*\r)
;

\draw[arrows = {-Latex[scale=0.45]}, line width=1.5]
	(1.5*\r, -1.5*\r) --  (1.5*\r, -3*\r)
;

\draw[arrows = {-Latex[scale=0.45]}, line width=1.5]
	(-0.5*\r, -1.5*\r) --  (-0.5*\r, -3*\r)
;

\draw[dotted]
	(-0.5*\r, -1.5*\r) -- (-0.5*\r, 2.5*\r)
	(1.5*\r, -1.5*\r) -- (1.5*\r, 2.5*\r) 
;

\end{scope} 

\end{scope} 

\begin{scope}[yshift=-\YS cm]

\node at (-6*\r, 0) {\small \parbox[c]{2cm}{Polar \\view:}};

\tikzmath{
\xs=3;
}

\foreach \n in {0,1,2} {
\tikzset{xshift=\xs*\n cm}

\tikzmath{
\r=0.75;
\m=\n+4;
\mm = \m-1;
\th=360/\m;
}

\fill[teal!70!blue!70] (0,0) circle (\r);

\foreach \a in {0,...,\mm} {
\tikzset{rotate=\a*\th}

\fill[white] ($(90:\r) !1/2! (90-\th:\r)$) -- ($(90:\r) !1/2! (90+\th:\r)$) -- ($(90+\th:\r) !1/2! (90+2*\th:\r)$) -- ($(90+2*\th:\r) !1/2! (90+3*\th:\r)$);

}

\foreach \a in {0,...,\mm} {
\tikzset{rotate=\a*\th}

\draw[arrows = {-Latex[scale=0.45]}, orange!75!brown!75!black!80, line width=1.5]
	(0,0) -- ($($(90:\r) !1/2! (90-\th:\r)$) !1/2! ($(90:\r) !1/2! (90+\th:\r)$)$)
	(90-0.5*\th:\r) --(90-0.5*\th:1.5*\r) 
;

\draw[]
	(90:\r) -- (90+\th:\r)
	($(90:\r) !1/2! (90-\th:\r)$) -- ($(90:\r) !1/2! (90+\th:\r)$) 
;
}

\draw[] (0,0) circle (\r);
}

\end{scope} 

\begin{scope}[yshift=-2*\YS cm]

\tikzmath{
\s=1;
\xs=3;
}

\begin{scope}[] 
\node [inner sep=0] (image) at (0,0) 
            {\includegraphics[height=2cm]{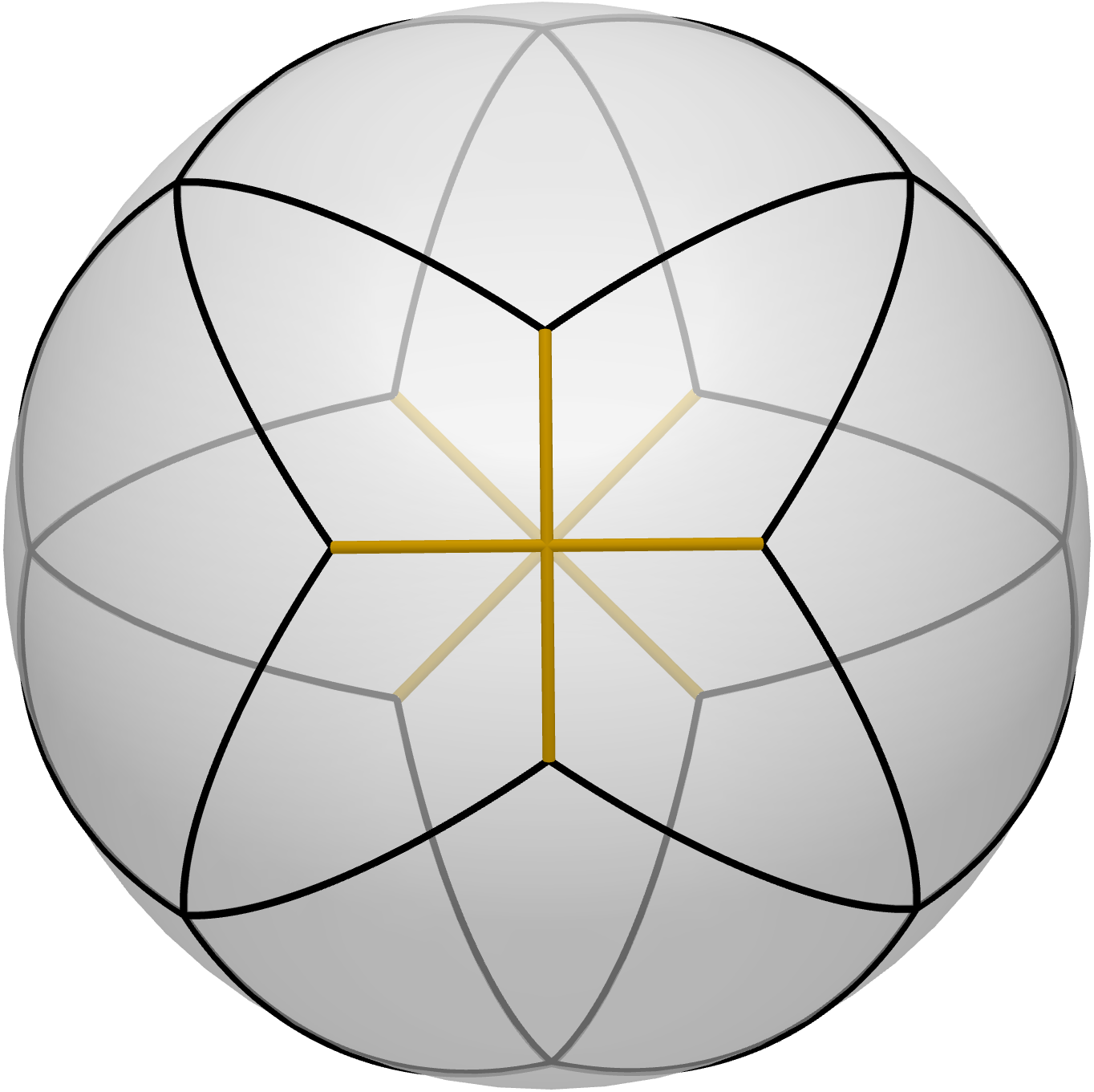}};
\end{scope}

\begin{scope}[xshift=\xs cm] 
\node [inner sep=0] (image) at (0,0) 
            {\includegraphics[height=2cm]{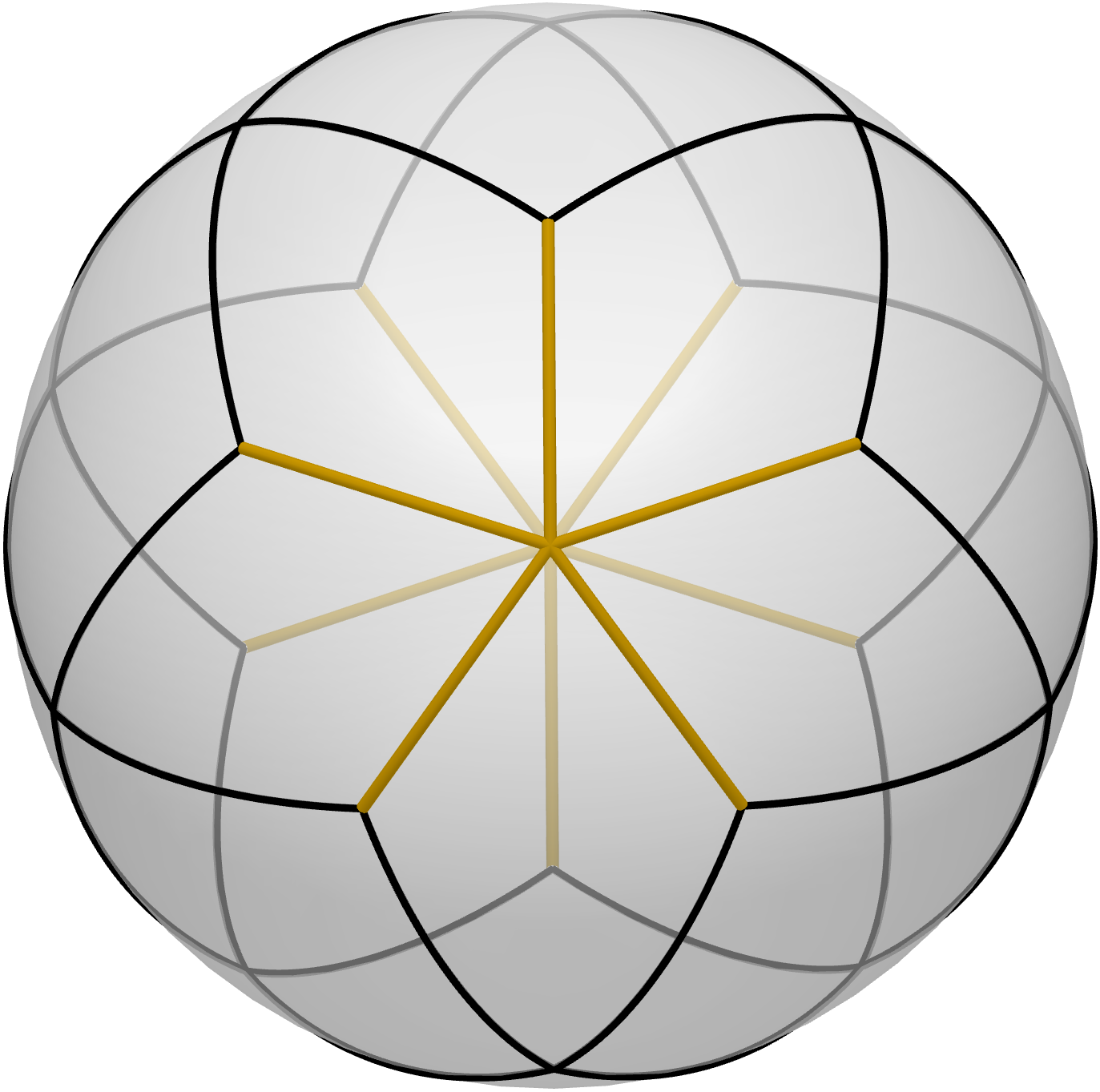}};
\end{scope}

\begin{scope}[xshift=2*\xs cm] 
\node [inner sep=0] (image) at (0,0) 
            {\includegraphics[height=2cm]{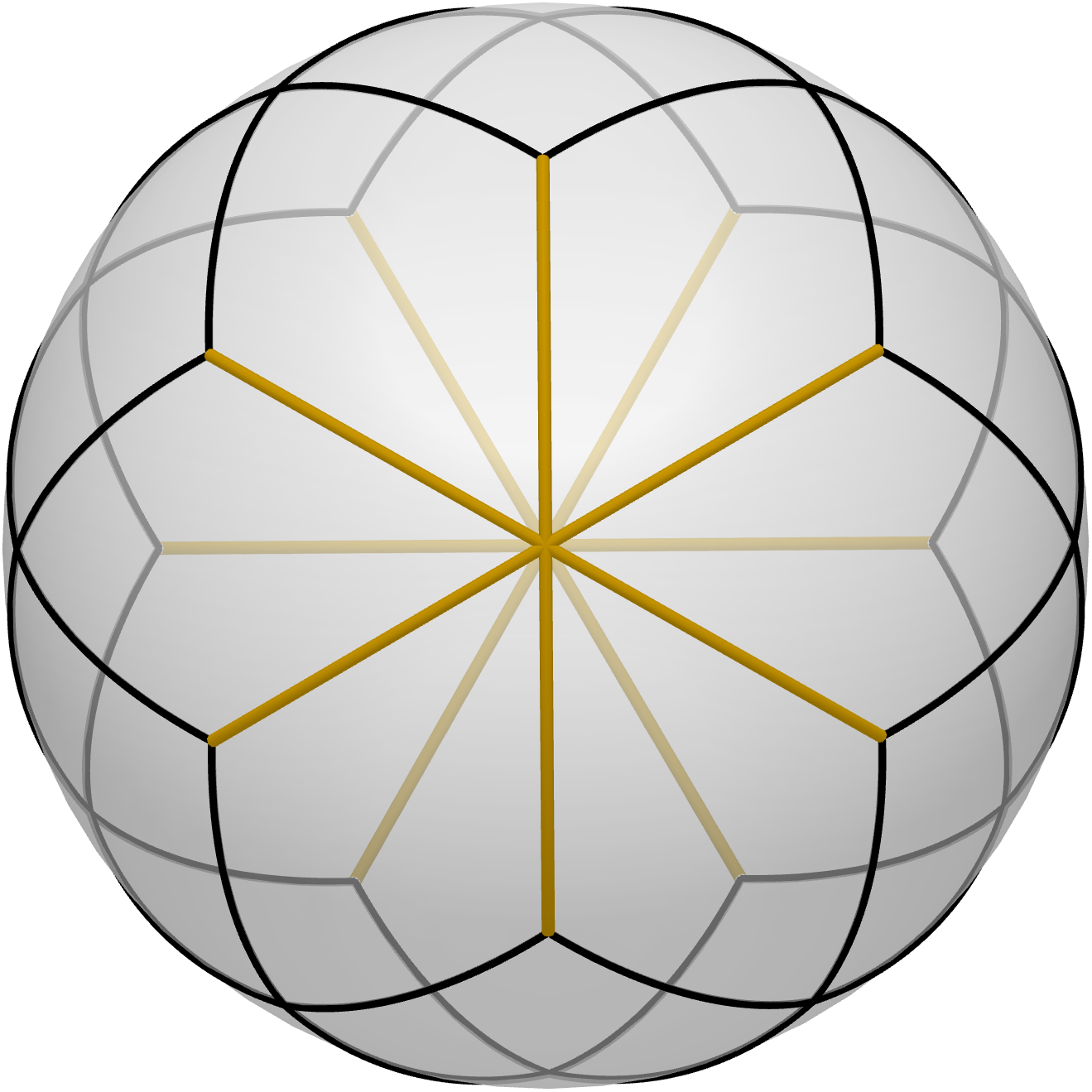}};
\end{scope}

\end{scope} 

\end{tikzpicture}
\caption{\ref{Label:EMT-Antiprisms} via a kite subdivision of the polar polygons in antiprisms}
\label{Fig-EMT-Antiprisms}
\end{figure}

In Figure \ref{Fig-EMT-Prisms}, the first row shows two time zones of \ref{Label:EMT-Prisms} and an alternative composition of a time zone. The infinite family is again obtained by gluing $d$ copies of a time zone for $d\ge3$. The modification flips the bottom part (or equivalently the top part) of the tiling which results in the time zones in the second picture of the first row. The tilings with three time zones are drawn from a polar view in the second row, where the three kites in a kite subdivision are incident to a pole. The modifications are equivalent to flipping one subdivided polar polygon. The first three members and their corresponding modification in $3$D are shown in Figure \ref{Fig-EMT-Prisms-3D}.


\begin{figure}[h!] 
\centering
\begin{tikzpicture}[>=latex]

\tikzmath{
\s=0.75;
\ys=2.25;
\r=0.325;
\th=360/4;
\x=\r*cos(0.5*\th);
}

\begin{scope}

\node at (-5*\r, 0) {\small  \parbox[c]{2cm}{Equatorial \\ view:}};

\begin{scope}

\fill[gray!50]
	(-2*\x, 3*\x) -- (-2*\x, -3*\x) -- (2*\x, -3*\x) -- (2*\x, 3*\x)
;

\foreach \xs in {0,1} {

\tikzset{xshift=4*\xs*\x cm}

\foreach \aa in {-1,1} {

\tikzset{shift={(\aa*\x,0)}}

\foreach \a in {0,1,2,3} {

\tikzset{rotate=\a*\th}

\draw[]
	(0.5*\th:\r) -- (1.5*\th:\r)
;


}
}

\foreach \aa in {-1,1} {

\tikzset{xscale=\aa}

\draw[arrows = {-Latex[scale=0.45]}, line width=1.5]
	(2*\x,\x) -- (2*\x,3.65*\x)
;

\draw[arrows = {-Latex[scale=0.45]}, line width=1.5]
	(2*\x,-\x) -- (2*\x,-3.65*\x)
;

}



}



\node at (8.25*\x,0) {\LARGE $\cdots$};


\end{scope}

\begin{scope}[xshift=5*\s cm]

\fill[gray!50]
	(-2*\x, 3*\x) -- (-2*\x, -\x) -- (0, -\x) -- (0, -3*\x) -- (4*\x, -3*\x) -- (4*\x, -\x) -- (2*\x, -\x) -- (2*\x, 3*\x)
;

\foreach \xs in {0,1} {

\tikzset{xshift=4*\xs*\x cm}

\foreach \aa in {-1,1} {

\tikzset{shift={(\aa*\x,0)}}

\foreach \a in {0,1,2,3} {

\tikzset{rotate=\a*\th}

\draw[]
	(0.5*\th:\r) -- (1.5*\th:\r)
;


}
}

\foreach \aa in {-1,1} {
\tikzset{xscale=\aa}

\draw[arrows = {-Latex[scale=0.45]}, line width=1.5]
	(2*\x,\x) -- (2*\x,3.65*\x)
;

\draw[arrows = {-Latex[scale=0.45]}, line width=1.5, opacity=0.25]
	(4*\x,-\x) -- (4*\x,-3.65*\x)
;

\draw[opacity=0.25]
	(2*\x,-\x) -- (4*\x,-\x)
;
}

\draw[arrows = {-Latex[scale=0.45]}, line width=1.5]
	(0, -\x) -- (0, -3.65*\x)
;


}






\node at (8.75*\x,0) {\LARGE $\cdots$};

\end{scope}

\end{scope}

\begin{scope}[yshift=-\ys cm]

\tikzmath{
\th=360/3;
}

\node at (-5*\r, 0) {\small  \parbox[c]{2cm}{Polar \\ view:}};

\begin{scope}[xshift=0.6*\s cm]

\draw[fill=teal!70!blue!70] (0,0) circle (2.5*\r);

\draw[fill=white] (0,0) circle (\r);

\foreach \a in {0,1,2} {
\tikzset{rotate=\a*\th}

\draw[]
	(0,\r) -- (0,2.5*\r)
	(90-0.5*\th:\r) -- (90-0.5*\th:2.5*\r)
;

\draw[arrows = {-Latex[scale=0.45]}, orange!75!brown!75!black!80, line width=1.5]
	(0,0) -- (0,\r)
	(0,2.5*\r) -- (0,3.75*\r)
;
}


\end{scope}

\begin{scope}[xshift=5.5*\s cm]

\draw[fill=teal!70!blue!70] (0,0) circle (2.5*\r);

\draw[fill=white] (0,0) circle (\r);

\foreach \a in {0,1,2} {

\tikzset{rotate=\a*\th}

\draw[]
	(0,\r) -- (0,2.5*\r)
	(90-0.5*\th:\r) -- (90-0.5*\th:2.5*\r)
;

\draw[arrows = {-Latex[scale=0.45]}, orange!75!brown!75!black!80, line width=1.5]
	(0,0) -- (0,\r)
	(90-0.5*\th:2.5*\r) -- (90-0.5*\th:3.75*\r)
;

}


\end{scope}

\end{scope}

\end{tikzpicture}
\caption{\ref{Label:EMT-Prisms} via a kite subdivision of the polar polygons in the prisms and the flip}
\label{Fig-EMT-Prisms}
\end{figure}


\begin{figure}[h!]
\centering
\begin{subfigure}{\linewidth}
\centering
\begin{tikzpicture}
\tikzmath{
\XS=2.5;
}

\begin{scope}[] 
\node [inner sep=0] (image) at (0,0) 
            {\includegraphics[height=2cm]{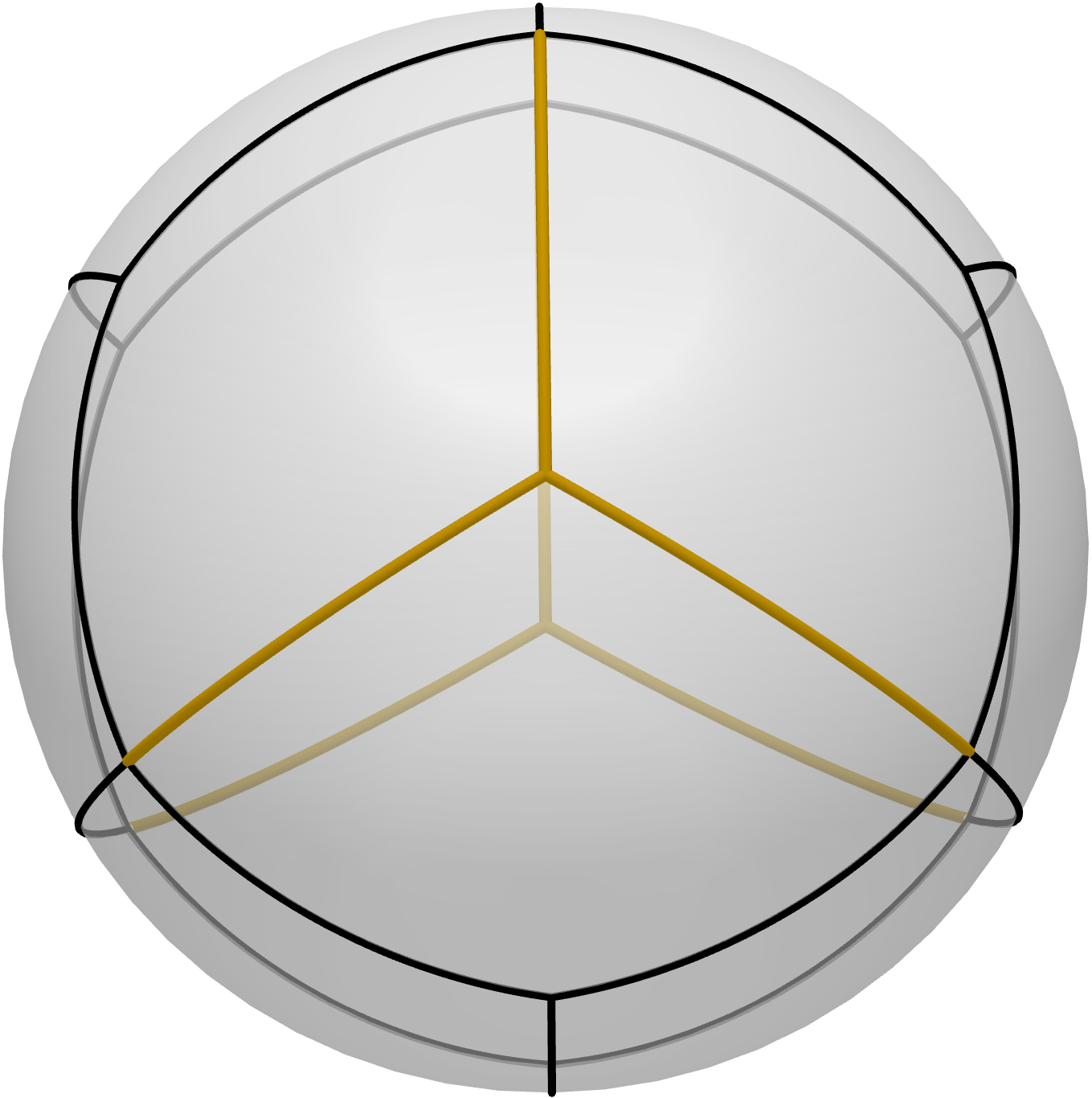}};
\end{scope}

\begin{scope}[xshift=\XS cm] 
\node [inner sep=0] (image) at (0,0) 
            {\includegraphics[height=2cm]{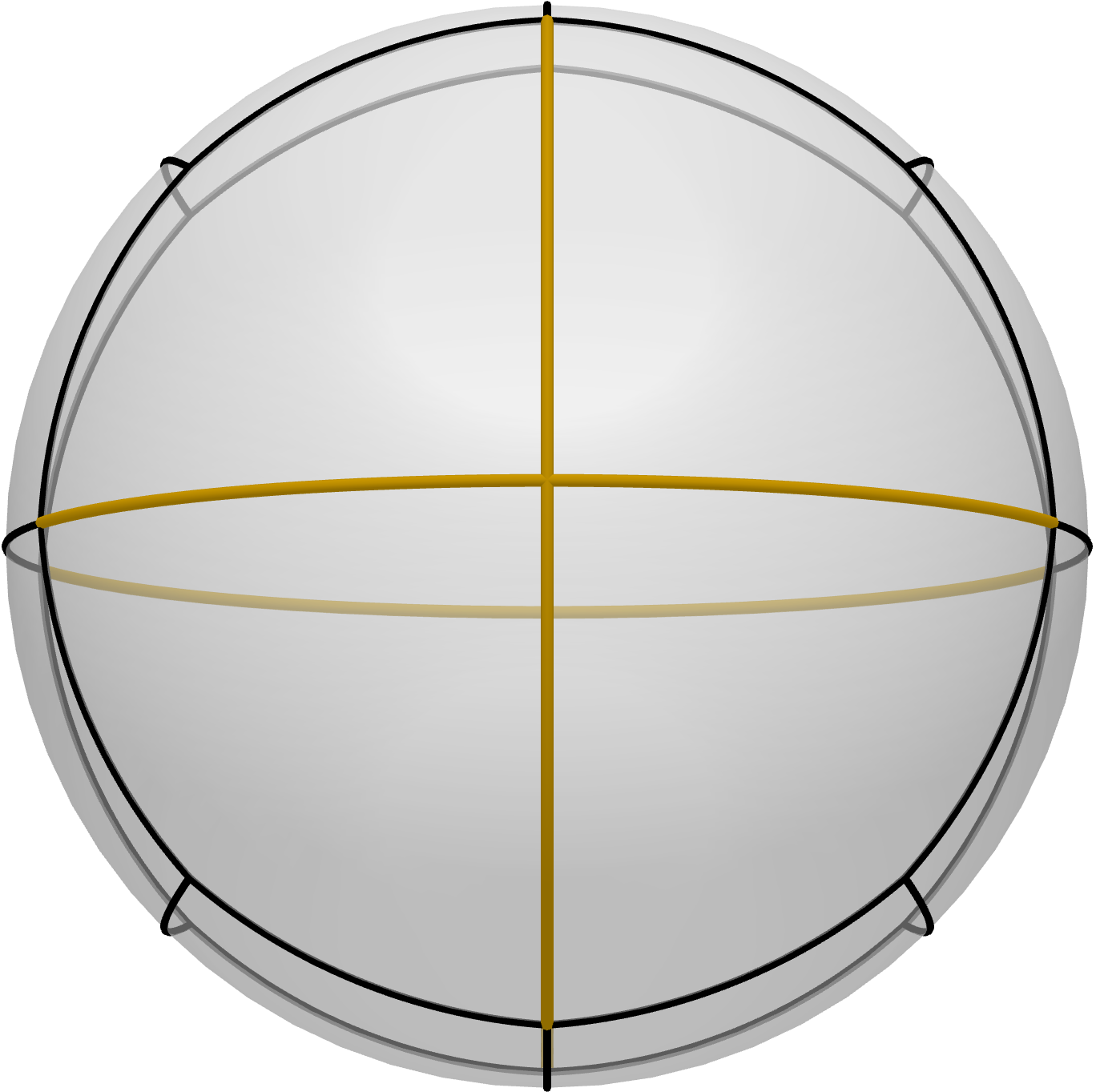}};
\end{scope}

\begin{scope}[xshift=2*\XS cm] 
\node [inner sep=0] (image) at (0,0) 
            {\includegraphics[height=2cm]{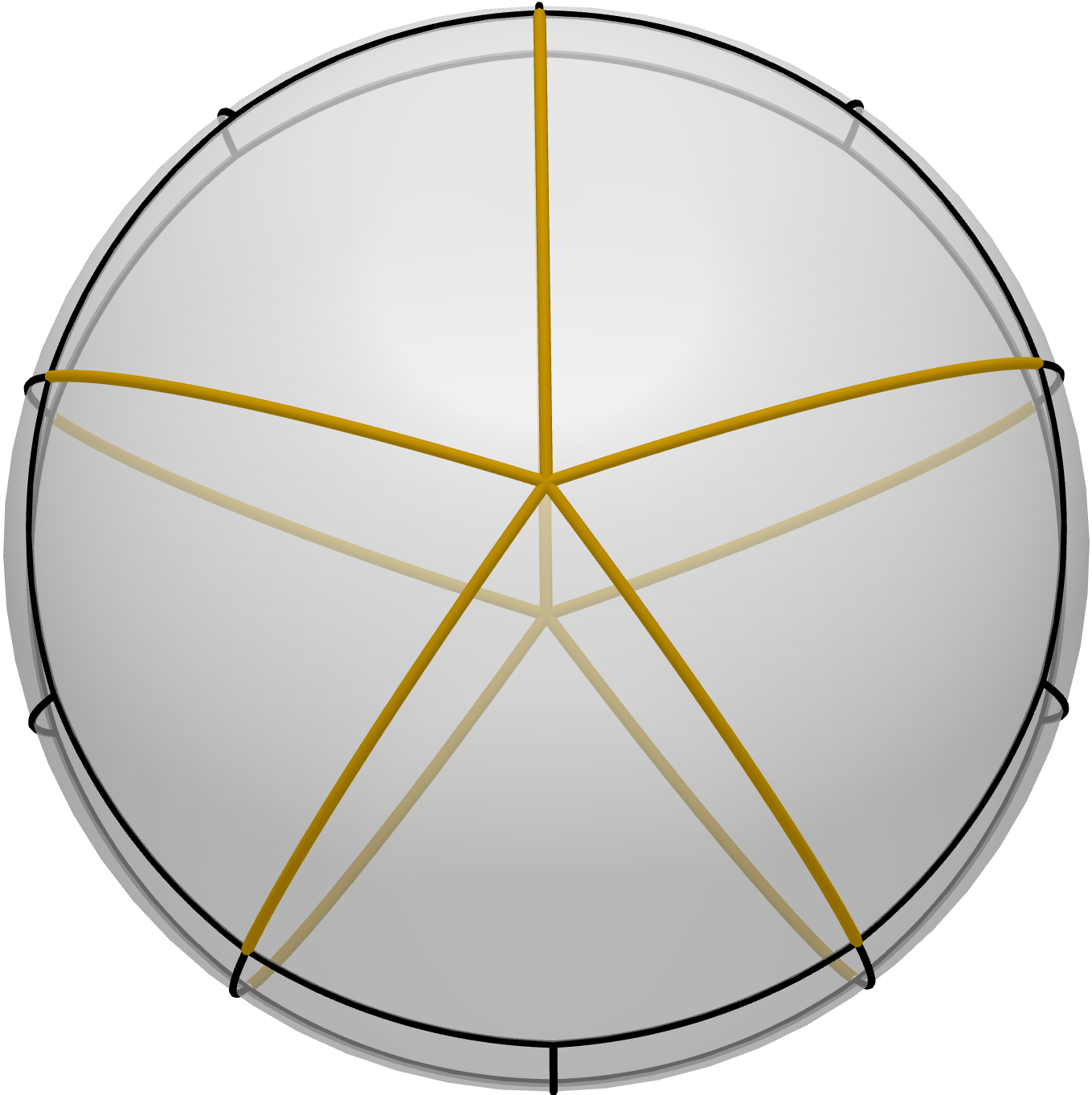}};
\end{scope}

\end{tikzpicture}
\caption{\ref{Label:EMT-Prisms} via subdividing prisms}
\end{subfigure}
\begin{subfigure}{\linewidth}
\centering
\begin{tikzpicture}
\tikzmath{
\XS=2.5;
}

\begin{scope}[] 
\node [inner sep=0] (image) at (0,0) 
            {\includegraphics[height=2cm]{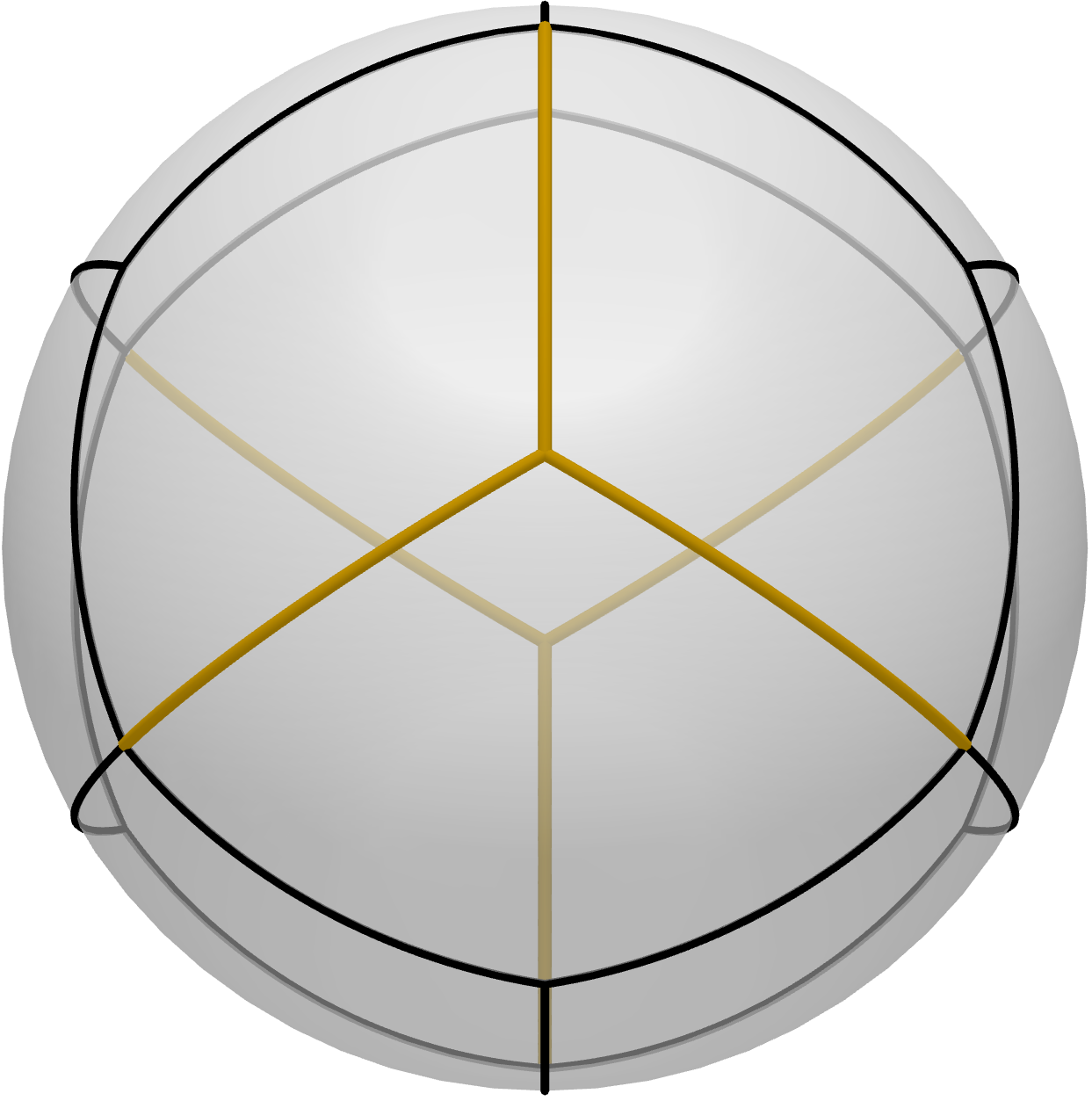}};
\end{scope}

\begin{scope}[xshift=\XS cm] 
\node [inner sep=0] (image) at (0,0) 
            {\includegraphics[height=2cm]{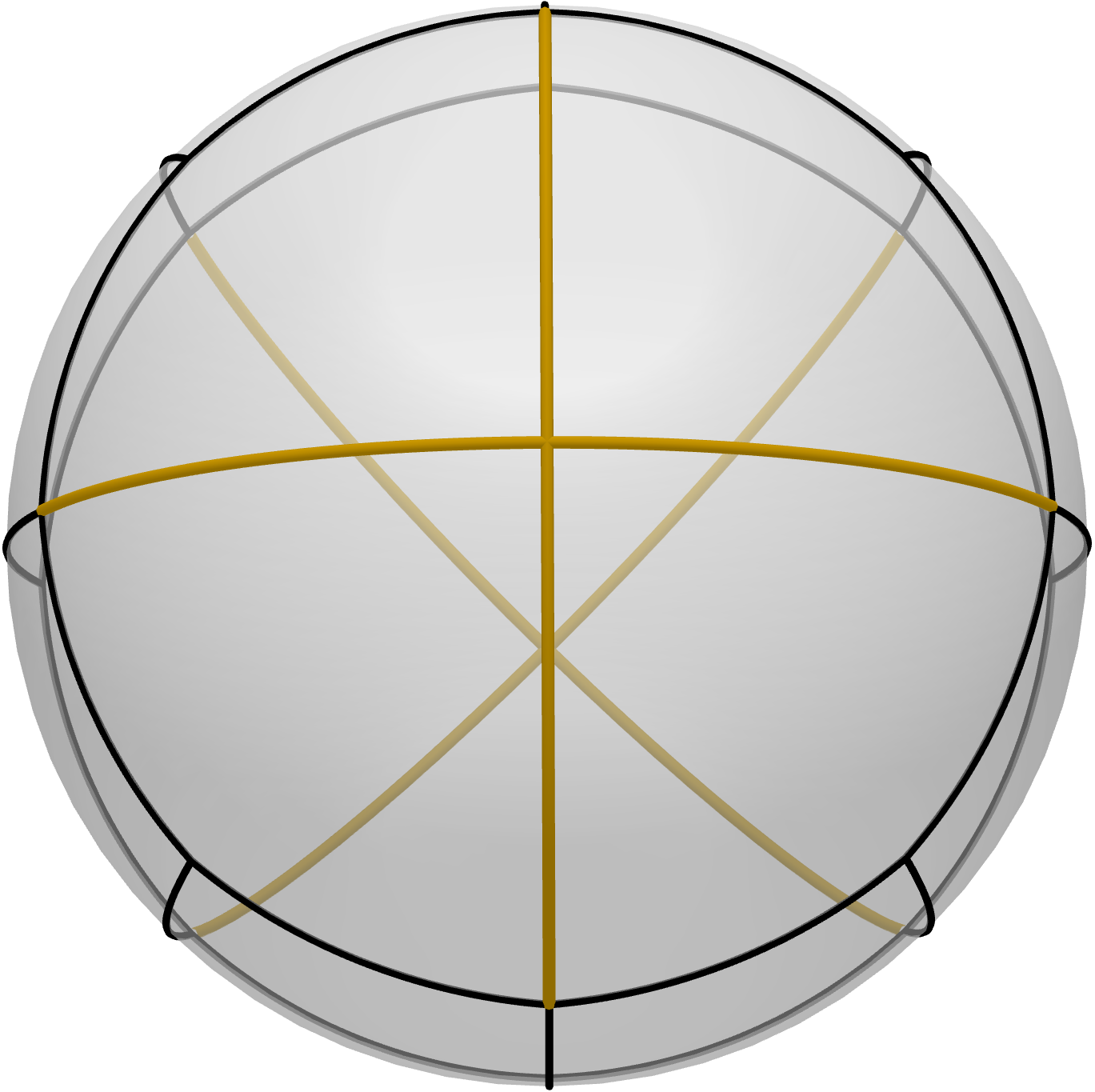}};
\end{scope}

\begin{scope}[xshift=2*\XS cm] 
\node [inner sep=0] (image) at (0,0) 
            {\includegraphics[height=2cm]{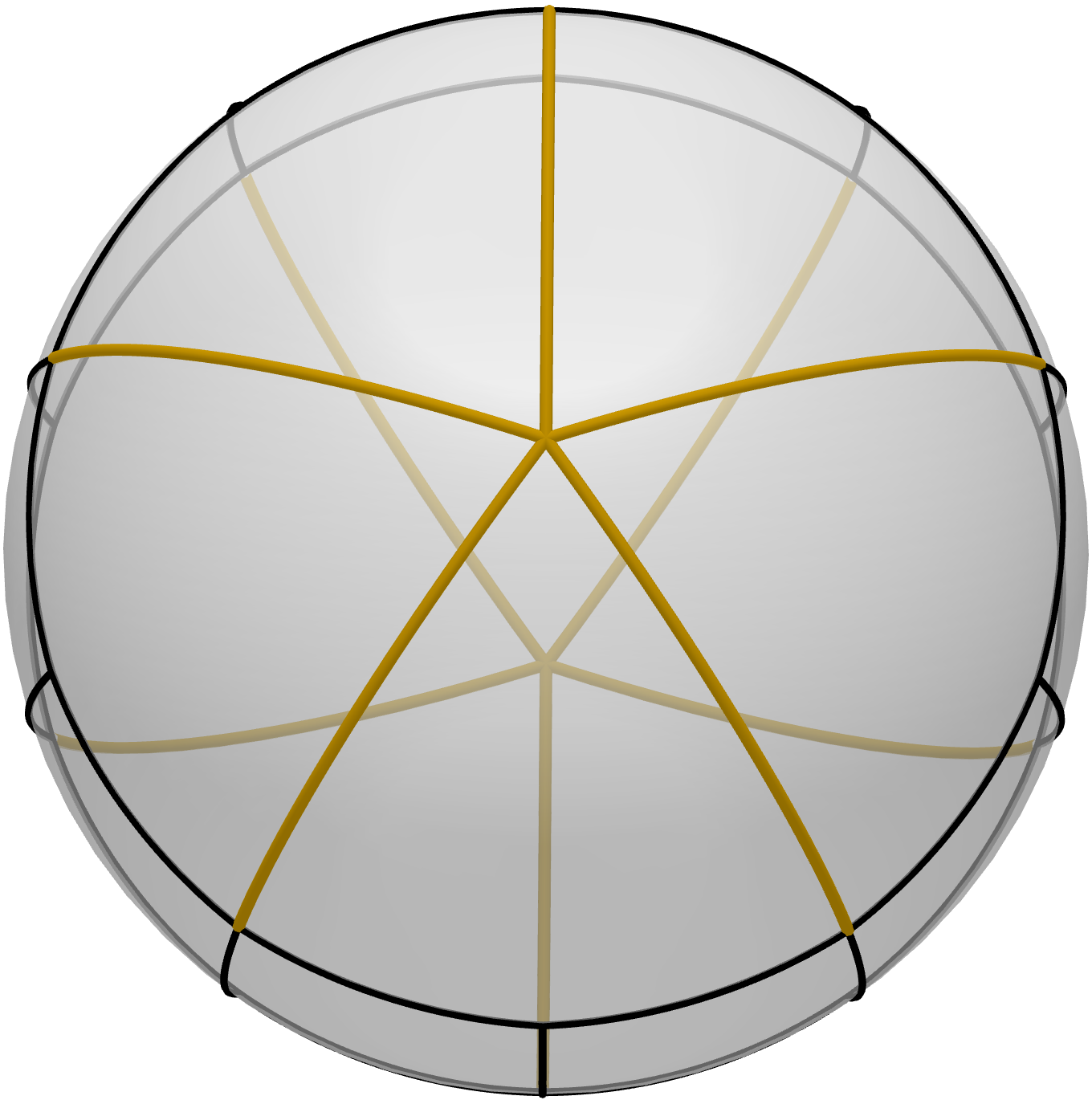}};
\end{scope}

\end{tikzpicture}
\caption{\ref{Label:EMT-Prisms} via flipping subdivided prisms}
\end{subfigure}
\caption{The first three tilings in \ref{Label:EMT-Prisms} and their flip in $3$D}
\label{Fig-EMT-Prisms-3D}
\end{figure}


\begin{figure}[h!]
\centering
\begin{subfigure}[t]{0.325\linewidth}
\centering
\begin{tikzpicture}
\tikzmath{
\r=0.35;
\n=6;
\nn=\n-1;
\nnn=\n/2-1;
\th=360/\n;
\XS=2.1;
\R=2.8*\r;
}

\fill[teal!75!blue]
	(90:\R) -- (90+\th:\R) -- (90+2*\th:\R) -- (90+3*\th:\R) -- (90+4*\th:\R) -- (90+5*\th:\R) -- cycle
;

\fill[white]
	(90:2.5*\r) -- (90+\th:2.5*\r) -- (90+2*\th:2.5*\r) -- (90+3*\th:2.5*\r) -- (90+4*\th:2.5*\r) -- (90+5*\th:2.5*\r) -- cycle
;

\fill[teal!75!blue]
	(90:\r) -- (90+\th:\r) -- (90+2*\th:\r) -- (90+3*\th:\r) -- (90+4*\th:\r) -- (90+5*\th:\r) -- cycle
;

\foreach \a in {0,...,\nn} {
\tikzset{rotate=\a*\th}
\draw[]
	(90:\r) -- (90+\th:\r)
	(90:2.5*\r) -- (90+\th:2.5*\r) 
;

\draw[HGold, line width=1.5]
	(90:1.75*\r) -- (90+\th:1.75*\r) 
;
}

\foreach \a in {0,...,\nnn} {
\tikzset{rotate=2*\a*\th}
\draw[]
	(90:\r) -- (90:1.75*\r) 
	(270:1.75*\r) -- (270:2.5*\r) 
;
\draw[HGold, line width=1.5]
	(270:\r) -- (270:1.75*\r) 
	(90:1.75*\r) -- (90:2.5*\r) 
;
}

\begin{scope}[] 
\node [inner sep=0] (image) at (\XS,0) 
            {\includegraphics[height=2cm]{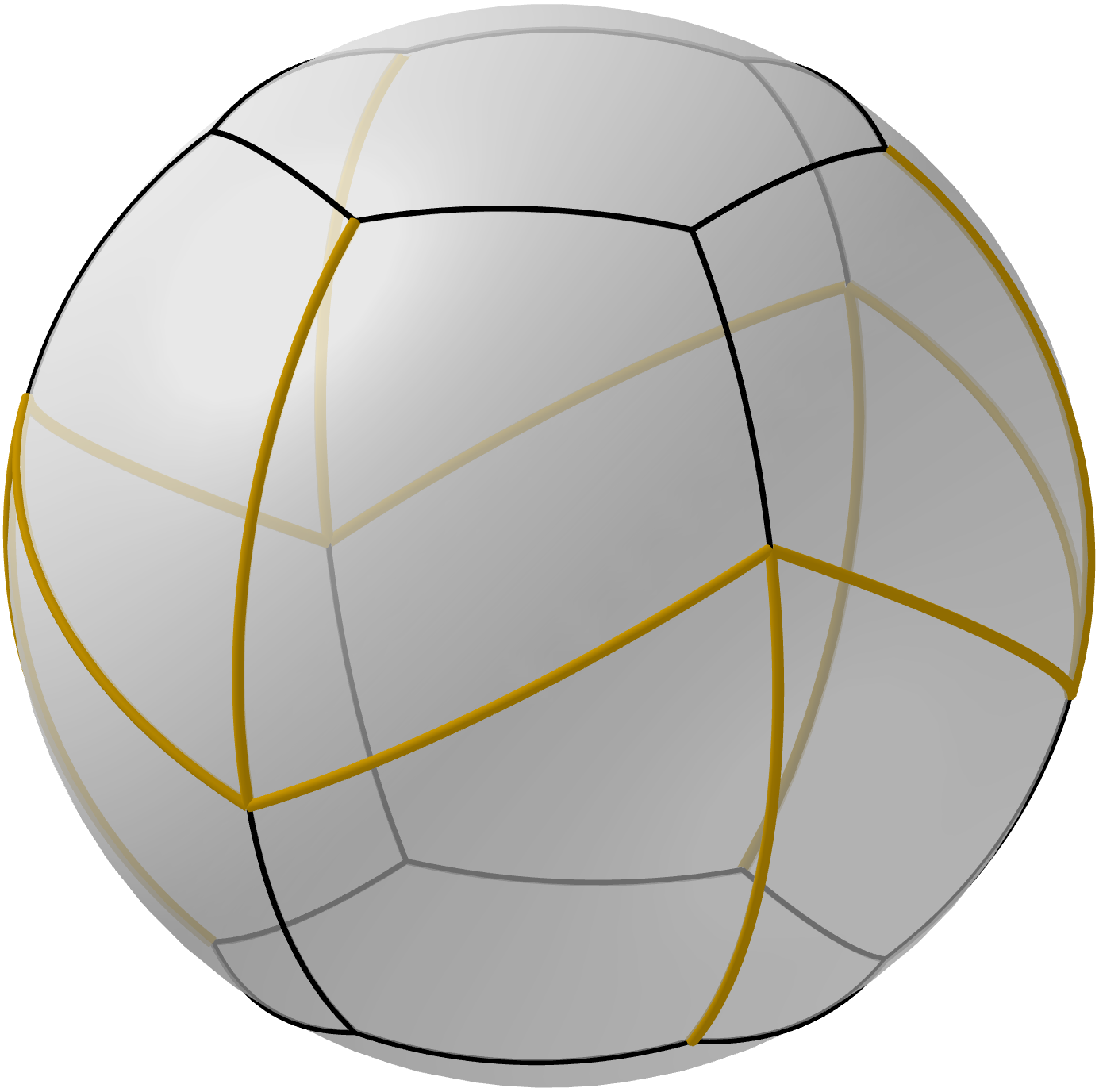}};
\end{scope}

\end{tikzpicture}
\caption{$m=6$}
\end{subfigure}
\begin{subfigure}[t]{0.325\linewidth}
\centering
\begin{tikzpicture}
\tikzmath{
\r=0.35;
\n=8;
\nn=\n-1;
\nnn=\n/2-1;
\th=360/\n;
\XS=2.2;
\R=2.8*\r;
}

\fill[teal!75!blue]
	(90:\R) -- (90+\th:\R) -- (90+2*\th:\R) -- (90+3*\th:\R) -- (90+4*\th:\R) -- (90+5*\th:\R) -- (90+6*\th:\R) -- (90+7*\th:\R) -- cycle
;

\fill[white]
	(90:2.5*\r) -- (90+\th:2.5*\r) -- (90+2*\th:2.5*\r) -- (90+3*\th:2.5*\r) -- (90+4*\th:2.5*\r) -- (90+5*\th:2.5*\r) -- (90+6*\th:2.5*\r) -- (90+7*\th:2.5*\r) -- cycle
;

\fill[teal!75!blue]
	(90:\r) -- (90+\th:\r) -- (90+2*\th:\r) -- (90+3*\th:\r) -- (90+4*\th:\r) -- (90+5*\th:\r) -- (90+6*\th:\r) -- (90+7*\th:\r) -- cycle
;

\foreach \a in {0,...,\nn} {
\tikzset{rotate=\a*\th}
\draw[]
	(90:\r) -- (90+\th:\r)
	(90:2.5*\r) -- (90+\th:2.5*\r) 
;

\draw[HGold, line width=1.5]
	(90:1.75*\r) -- (90+\th:1.75*\r) 
;
}

\foreach \a in {0,...,\nnn} {
\tikzset{rotate=2*\a*\th}
\draw[]
	(90:\r) -- (90:1.75*\r) 
	(90+\th:1.75*\r) -- (90+\th:2.5*\r) 
;
\draw[HGold, line width=1.5]
	(90:1.75*\r) -- (90:2.5*\r) 
	(270+\th:\r) -- (270+\th:1.75*\r) 
;
}

\begin{scope}[] 
\node [inner sep=0] (image) at (\XS,0) 
            {\includegraphics[height=2cm]{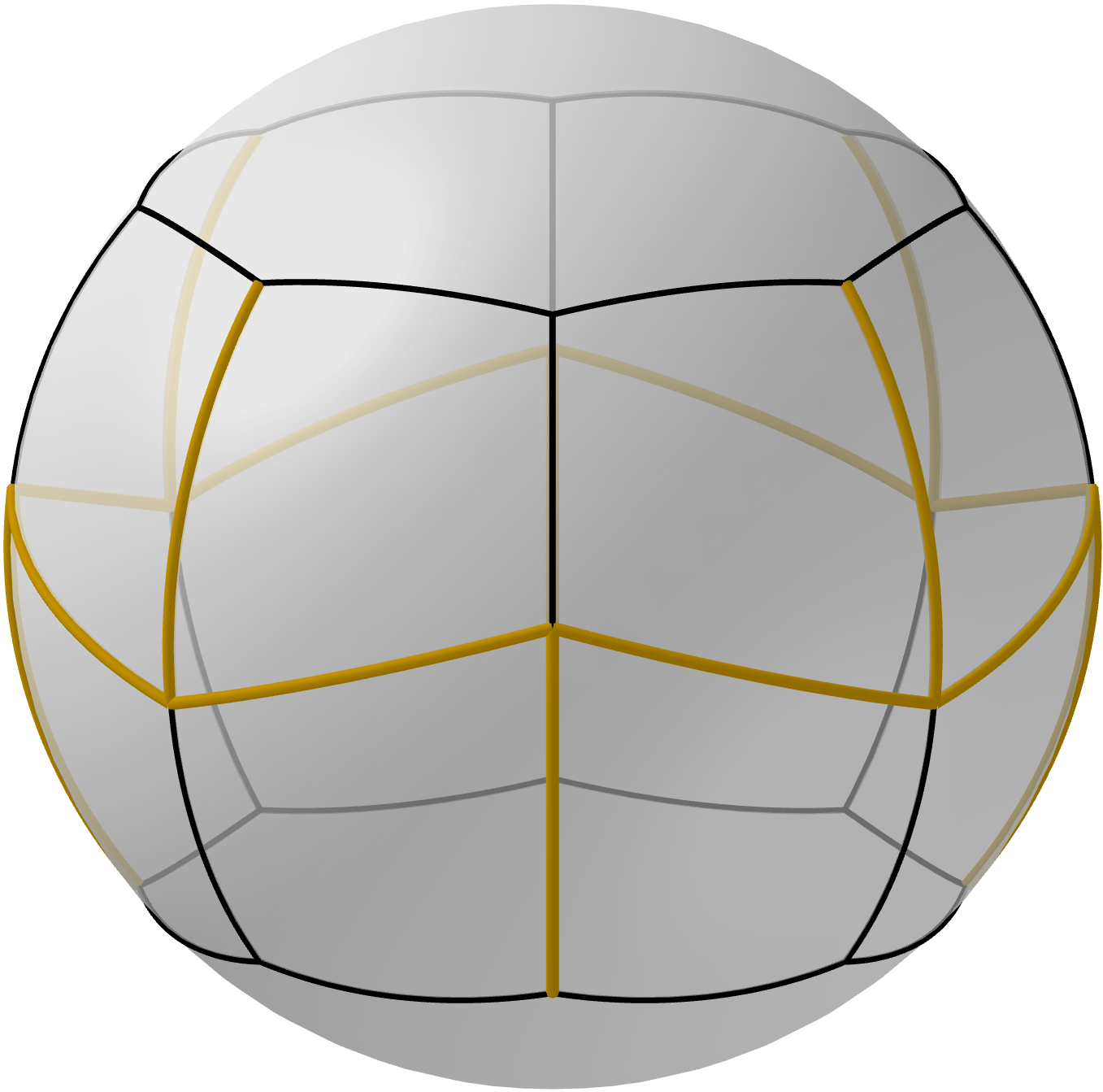}};
\end{scope}

\end{tikzpicture}
\caption{$m=8$}
\end{subfigure}
\begin{subfigure}[t]{0.325\linewidth}
\centering
\begin{tikzpicture}
\tikzmath{
\r=0.35;
\n=10;
\nn=\n-1;
\nnn=\n/2-1;
\th=360/\n;
\XS=2.15;
\R=2.8*\r;
}

\fill[teal!75!blue]
	(90:\R) -- (90+\th:\R) -- (90+2*\th:\R) -- (90+3*\th:\R) -- (90+4*\th:\R) -- (90+5*\th:\R) -- (90+6*\th:\R) -- (90+7*\th:\R) -- (90+8*\th:\R) -- (90+9*\th:\R) -- cycle
;

\fill[white]
	(90:2.5*\r) -- (90+\th:2.5*\r) -- (90+2*\th:2.5*\r) -- (90+3*\th:2.5*\r) -- (90+4*\th:2.5*\r) -- (90+5*\th:2.5*\r) -- (90+6*\th:2.5*\r) -- (90+7*\th:2.5*\r) -- (90+8*\th:2.5*\r) -- (90+9*\th:2.5*\r)  -- cycle
;

\fill[teal!75!blue]
	(90:\r) -- (90+\th:\r) -- (90+2*\th:\r) -- (90+3*\th:\r) -- (90+4*\th:\r) -- (90+5*\th:\r) -- (90+6*\th:\r) -- (90+7*\th:\r) -- (90+8*\th:\r) -- (90+9*\th:\r) -- cycle
;

\foreach \a in {0,...,\nn} {
\tikzset{rotate=\a*\th}
\draw[]
	(90:\r) -- (90+\th:\r)
	(90:2.5*\r) -- (90+\th:2.5*\r) 
;

\draw[HGold, line width=1.5]
	(90:1.75*\r) -- (90+\th:1.75*\r) 
;
}

\foreach \a in {0,...,\nnn} {
\tikzset{rotate=2*\a*\th}
\draw[]
	(90:\r) -- (90:1.75*\r) 
	(270:1.75*\r) -- (270:2.5*\r) 
;
\draw[HGold, line width=1.5]
	(90:1.75*\r) -- (90:2.5*\r) 
	(270:\r) -- (270:1.75*\r) 
;
}

\begin{scope}[] 
\node [inner sep=0] (image) at (\XS,0) 
            {\includegraphics[height=2cm]{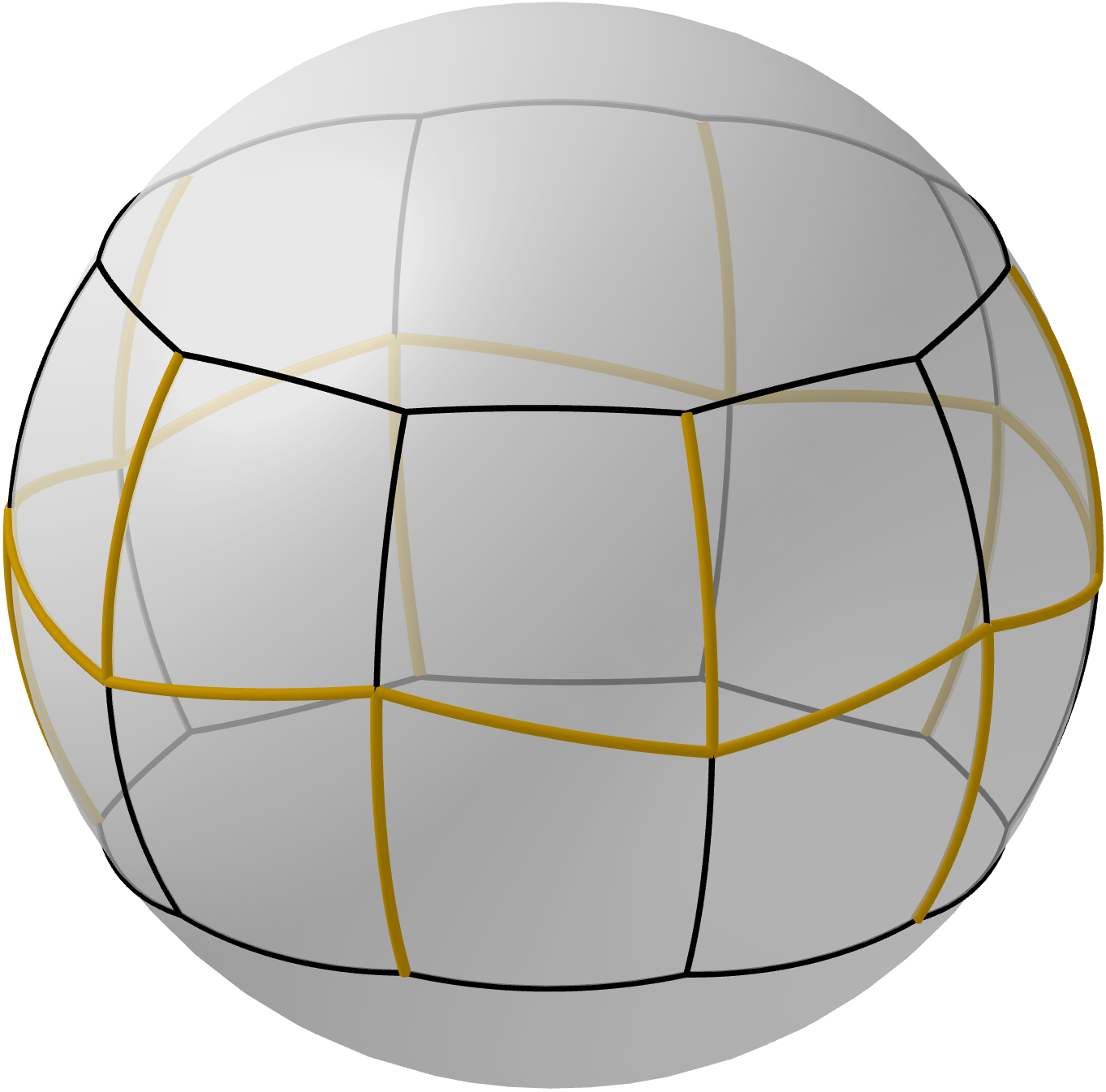}};
\end{scope}

\end{tikzpicture}
\caption{$m=10$}
\end{subfigure}
\caption{The first three members of the \ref{Label:EMT-Subdiv-Equator-Prisms} family with subdivided equatorial tiles}
\label{Fig-EMT-Prisms-Subdiv-Equatorial}
\end{figure}

\subsection{Platonic type and Johnson-Zalgaller type}

The tilings of Platonic type or Johnson-Zalgaller type in this paper appear in finite families or in isolation. 

In Figure \ref{Fig-polar-subdiv-cube}, the first picture shows a drawing of the tiling \ref{Label:polar-subdiv-cube}. The drawing, from the perspective of a pole, shows that each of the two antipodal faces of a cube are subdivided into four kites. The kite subdivision is marked by the thick edges. The equatorial faces of the cube are deformed to hexagons, as shown in the $3$D drawing next to it.

In Figure \ref{Fig-Finite-Families-tOcta-tIcosa}, the drawings of a tiling from \ref{Label:trunc-Octa} and a tiling from \ref{Label:trunc-Icosa} without thick edges are respectively the graphs of the truncated octahedron $t\mathcal{O}$ and the truncated icosahedron $t\mathcal{I}$, and the thick edges subdivide each hexagon into three kites. Each tiling generates its corresponding family via independently ``flipping'' the orientation of the subdivision in each tile. Hence we call them {\em seeds}. They are the {\em canonical} choices because the number of vertex types (defined in Section \ref{Subsec-terminology}) is minimum. Details can be seen in Proposition \ref{Prop-be}. Any other member of the family can also be used as a seed.


\begin{figure}[h!] 
\centering
\begin{subfigure}{0.4\linewidth}
\centering
\begin{tikzpicture}
\tikzmath{
\s=2.25;
}

\begin{scope}[]
\tikzmath{
\r=0.5;
\dr=0.05*\r;
\rr=0.5*\r;
\R=1.5*\r;
\RR=1.75*\r;
\th=360/4;
\x=\r*cos(\th/2);
\xx=0.5*\x;
\hex=360/6;
}

\fill[teal!75!blue, scale=1.18] (0,0) circle (\RR);

\fill[white] (0,0) circle (\RR);

\foreach \a in {0,1,2,3} {
\tikzset{rotate=\a*\th}

\draw[HGold, line width=1.25]
	(0,\x) -- (2*\xx,\x)
	(0,\x) -- (-\xx,\x+\xx) 
	(0,\x) -- (-\xx,\xx)
	(0,2.1*\x) -- (-\x-\xx,\x+\xx)
	(0,2.1*\x) -- (\xx,\x+\xx)
	(0,2.1*\x) -- (0.5*\th:\RR)
;
}

\foreach \a in {0,1,2,3} {

\tikzset{rotate=\a*\th}

\fill[teal!75!blue]
	(0.5*\th:\rr) -- (1.5*\th:\rr) -- (2.5*\th:\rr) -- (3.5*\th:\rr)
	(2*\xx,\x) -- (3*\xx,\xx) -- (0.5*\th:\R) -- (\xx,\x+\xx)
;
}

\foreach \a in {0,1,2,3} {
\tikzset{rotate=\a*\th}

\draw[]
	(0.5*\th:\rr) -- (1.5*\th:\rr)
	(0.5*\th:\rr) -- (0.5*\th:\r)
	(\xx,\x+\xx) -- (0.5*\th:\r)
	(-\xx,\x+\xx) -- (1.5*\th:\r)
	(0.5*\th:\R) -- (1.5*\th:\R)
	(0.5*\th:\R) -- (0.5*\th:\RR)
;
}

\draw[] (0,0) circle (\RR);

\end{scope}

\begin{scope}[xshift=\s cm]
\node [inner sep=0] (image) at (0,0) 
            {\includegraphics[height=2cm]{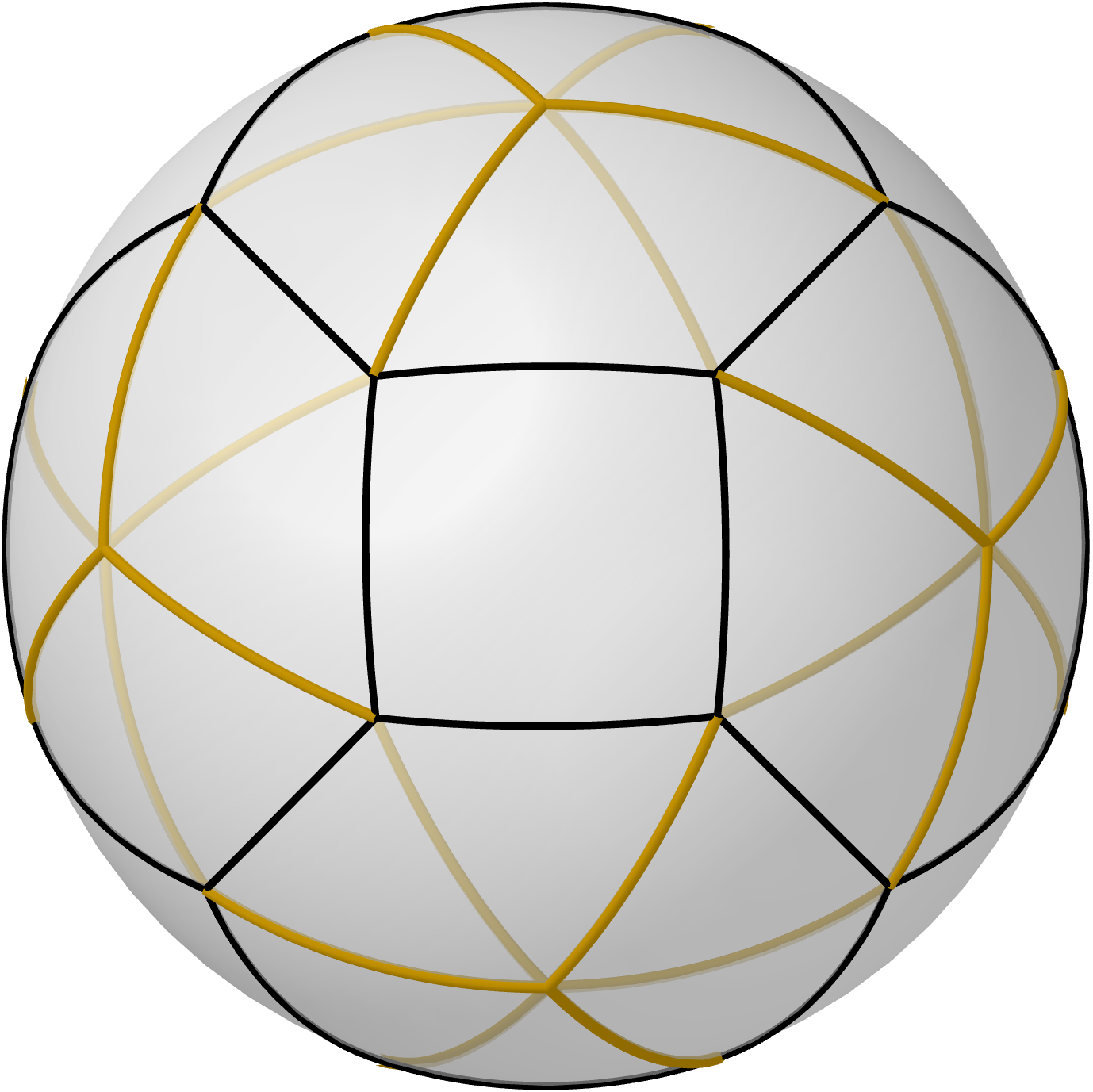}};
\end{scope}
\end{tikzpicture}
\caption{Subdivided $t\mathcal{O}$}
\label{Subfig-subdiv-tO}
\end{subfigure}
\begin{subfigure}{0.4\linewidth}
\centering
\begin{tikzpicture}
\tikzmath{
\s=2.25;
}
\begin{scope}[xshift=2*\s cm]

\tikzmath{
\r=0.375;
\th=360/5;
}

\fill[teal!75!blue] (0,0) circle (2.75*\r);
	
\fill[white] (0,0) circle (2.4*\r);

\fill[teal!75!blue]
	(90:0.4*\r) -- (90+\th:0.4*\r) -- (90+2*\th:0.4*\r) -- (90-2*\th:0.4*\r) -- (90-\th:0.4*\r) -- cycle
;

\foreach \a in {0,...,4}{
\tikzset{rotate=\a*\th}

\draw[HGold, line width=1.25]
	(18:0.4*\r) -- (54:0.75*\r) -- (90:0.8*\r)
	(54:0.75*\r) -- (42:1.1*\r) 
	(66:1.1*\r) -- (54:1.32*\r) -- (30:1.4*\r)
	(54:1.32*\r) -- (66:1.6*\r)
	(114:1.6*\r) -- (90:1.7*\r) -- (78:2*\r)
	(90:1.7*\r) -- (78:1.4*\r)
	(126:2.4*\r) to[out=20,in=180, distance=0.1 cm] 
	(90:2.2*\r) to[out=0,in=135, distance=0.1 cm] (54:2*\r)
	(90:2.2*\r) -- (102:2*\r)
;
}

\foreach \a in {0,...,4}{

\fill[teal!75!blue, rotate=\a*\th]
	(90:0.8*\r) -- (66:1.1*\r) -- (78:1.4*\r) -- (102:1.4*\r) -- (114:1.1*\r) -- cycle
;

\fill[teal!75!blue, rotate=\a*\th]
	(66:1.6*\r) -- (78:2*\r) -- (54:2*\r) -- (30:2*\r) -- (42:1.6*\r)		
;
}

\foreach \a in {0,...,4}
{
\begin{scope}[rotate=72*\a]

\draw
	(18:0.4*\r) -- (90:0.4*\r) -- (90:0.8*\r) -- (66:1.1*\r) -- (42:1.1*\r) -- (18:0.8*\r)
	(66:1.1*\r) -- (78:1.4*\r) -- (102:1.4*\r) -- (114:1.1*\r)
	(78:1.4*\r) -- (66:1.6*\r) -- (42:1.6*\r) -- (30:1.4*\r)
	(66:1.6*\r) -- (78:2*\r) -- (102:2*\r) -- (114:1.6*\r)
	(78:2*\r) -- (54:2*\r) -- (30:2*\r)
	(54:2*\r) -- (54:2.4*\r)
;

\end{scope}
}

\draw[] (0,0) circle (2.4*\r);

\end{scope}

\begin{scope}[xshift=3*\s cm]
\node [inner sep=0] (image) at (0,0) 
            {\includegraphics[height=2cm]{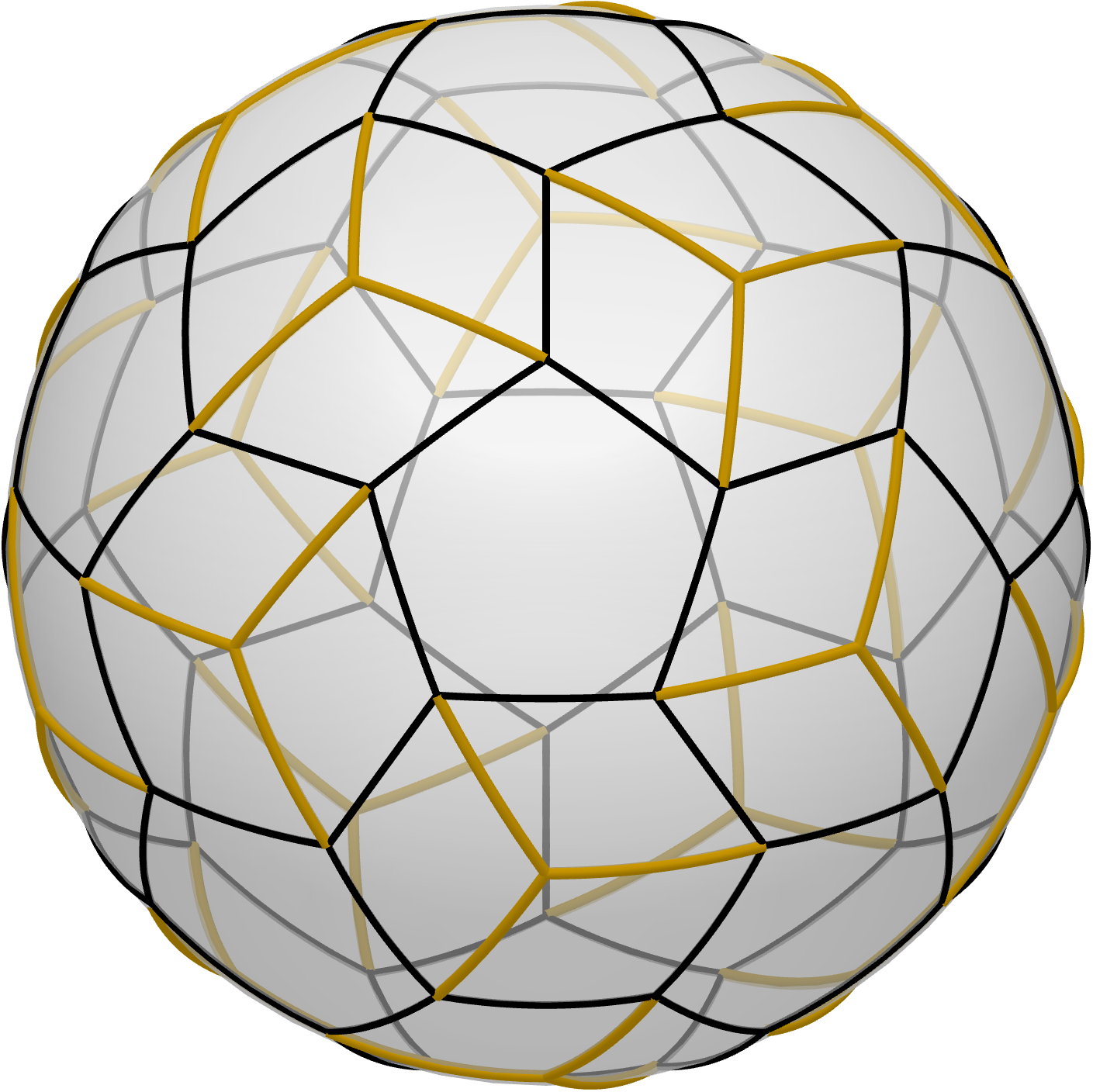}};
\end{scope}
\end{tikzpicture}
\caption{Subdivided $t\mathcal{I}$}
\label{Subfig-subdiv-tI}
\end{subfigure}
\caption{The canonical seeds for generating \ref{Label:trunc-Octa} and \ref{Label:trunc-Icosa}}
\label{Fig-Finite-Families-tOcta-tIcosa}
\end{figure}


\begin{figure}[h!] 
\centering
\begin{tikzpicture}[>=latex]
\tikzmath{
\s=2.25;
\r=0.35;
\n=4;
\nn=\n-1;
\th=360/\n;
\y=\r*sin(45);
}

\foreach \a in {0,...,\nn} {
\tikzset{rotate=\a*\th}
\fill[teal!70!blue!70]
	(90-0.5*\th:1*\r) -- (90-0.5*\th:2.5*\r) -- (90+0.5*\th:2.5*\r) 
	-- (90+0.5*\th:1*\r) -- cycle
;
}

\foreach \a in {0,...,\nn} {
\tikzset{rotate=\a*\th}
\draw[HGold, line width=1.5]
	(0,0) -- (90:\y)
;
\draw[arrows = {-Latex[scale=0.45]}, HGold, line width=1.5]
	(90:2.5*\y) -- (90:4*\y)
;
}

\foreach \a in {0,...,\nn} {
\tikzset{rotate=\a*\th}
\draw[]
	(90-0.5*\th:1*\r) -- (90+0.5*\th:1*\r)
	(90-0.5*\th:1*\r) -- (90-0.5*\th:2.5*\r) 
	(90-0.5*\th:2.5*\r) -- (90+0.5*\th:2.5*\r)
;
}

\begin{scope}[xshift=\s cm]
\node [inner sep=0] (image) at (0,0) 
            {\includegraphics[height=2cm]{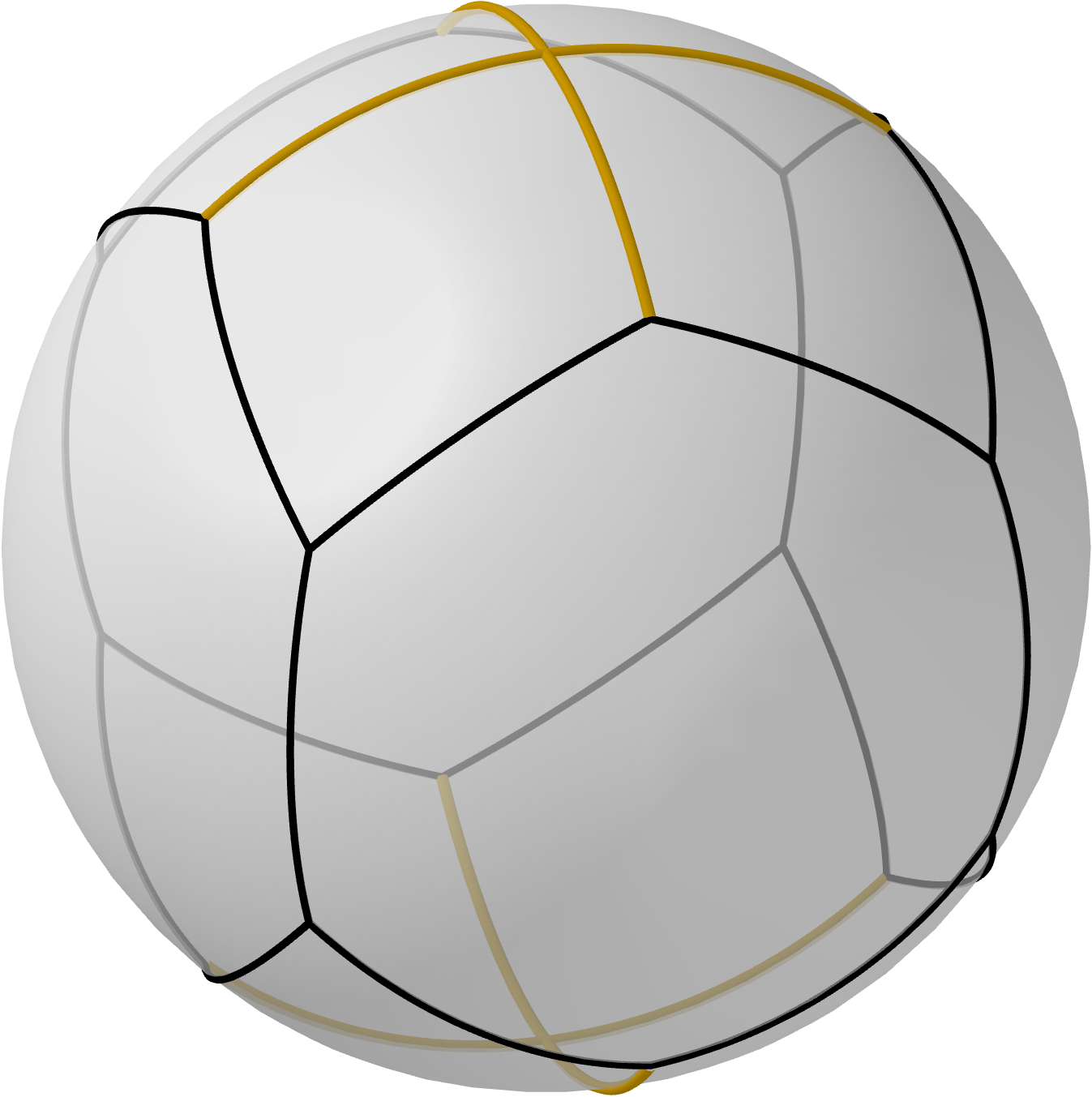}};
\end{scope}
\end{tikzpicture}
\caption{\ref{Label:polar-subdiv-cube} tiling via a kite subdivision of a pair of polar faces of the cube}
\label{Fig-polar-subdiv-cube}
\end{figure}

In Figure \ref{Fig-kite-subdiv-trunc-Platonic}, the drawings of \ref{Label:trunc-Platonic} without thick edges are respectively the graphs of the truncated Platonic solids $t\mathcal{T}, t\mathcal{C}, t\mathcal{O}, t\mathcal{D}, t\mathcal{I}$; the thick edges subdivide the derived faces from the truncation. 

\begin{figure}[h!]
\centering
\begin{subfigure}[t]{0.32\linewidth}
\centering
\begin{tikzpicture}
\tikzmath{
\s=2;
\r=0.3;
\th=360/3;
\x=\r*cos(30);
\y=\r*sin(30);
}

\raisebox{1ex}{

\foreach \a in {0,1,2} {
\tikzset{rotate=\a*\th}
\fill[teal!75!blue]
	(90-0.5*\th:\r) -- (90-0.5*\th:2*\r) -- (\x, 3*\y) -- (-\x, 3*\y) 
	-- (90+0.5*\th:2*\r) -- (90+0.5*\th:\r) -- cycle
;

\fill[teal!75!blue]
	(90-0.5*\th:2*\r) circle (1.5*\r)
;

\fill[teal!75!blue]
	(-\x,4*\y) -- (\x,4*\y) -- (0,0) -- cycle
;

}

\fill[white]
	(90-0.5*\th:\r) -- (90+0.5*\th:\r) -- (90+1.5*\th:\r)
;

\foreach \a in {0,1,2} {
\tikzset{rotate=\a*\th}
\fill[white]
	(2*\x, 0) arc (270:270+2*\th:\r) -- (90-0.5*\th:2*\r) -- cycle
;
}

\foreach \a in {0,1,2} {
\tikzset{rotate=\a*\th}

\draw[HGold, line width=1.15]
	(0,0) -- (0,\y)
	(90-0.5*\th:2.5*\r) -- (90-0.5*\th:3*\r) 
	(90-0.5*\th:2.5*\r) to[bend right] ($(\x, 3*\y) !1/2!  (90-0.5*\th:2*\r)$)
	(90+0.5*\th:2.5*\r) to[bend left] ($(-\x, 3*\y) !1/2!  (90+0.5*\th:2*\r)$)
;
}

\foreach \a in {0,1,2} {
\tikzset{rotate=\a*\th}
\draw[]
	(90-0.5*\th:\r) -- (90+0.5*\th:\r)
	(90-0.5*\th:\r) -- (90-0.5*\th:2*\r) 
	(90-0.5*\th:2*\r) -- (\x, 3*\y)
	(90+0.5*\th:2*\r) -- (-\x, 3*\y)
	(-\x, 3*\y) -- (\x, 3*\y)
	(2*\x, 0) arc (270:270+2*\th:\r)
;
}
}

\begin{scope}[xshift=\s cm]
\node [inner sep=0] (image) at (0,0) 
            {\includegraphics[height=2cm]{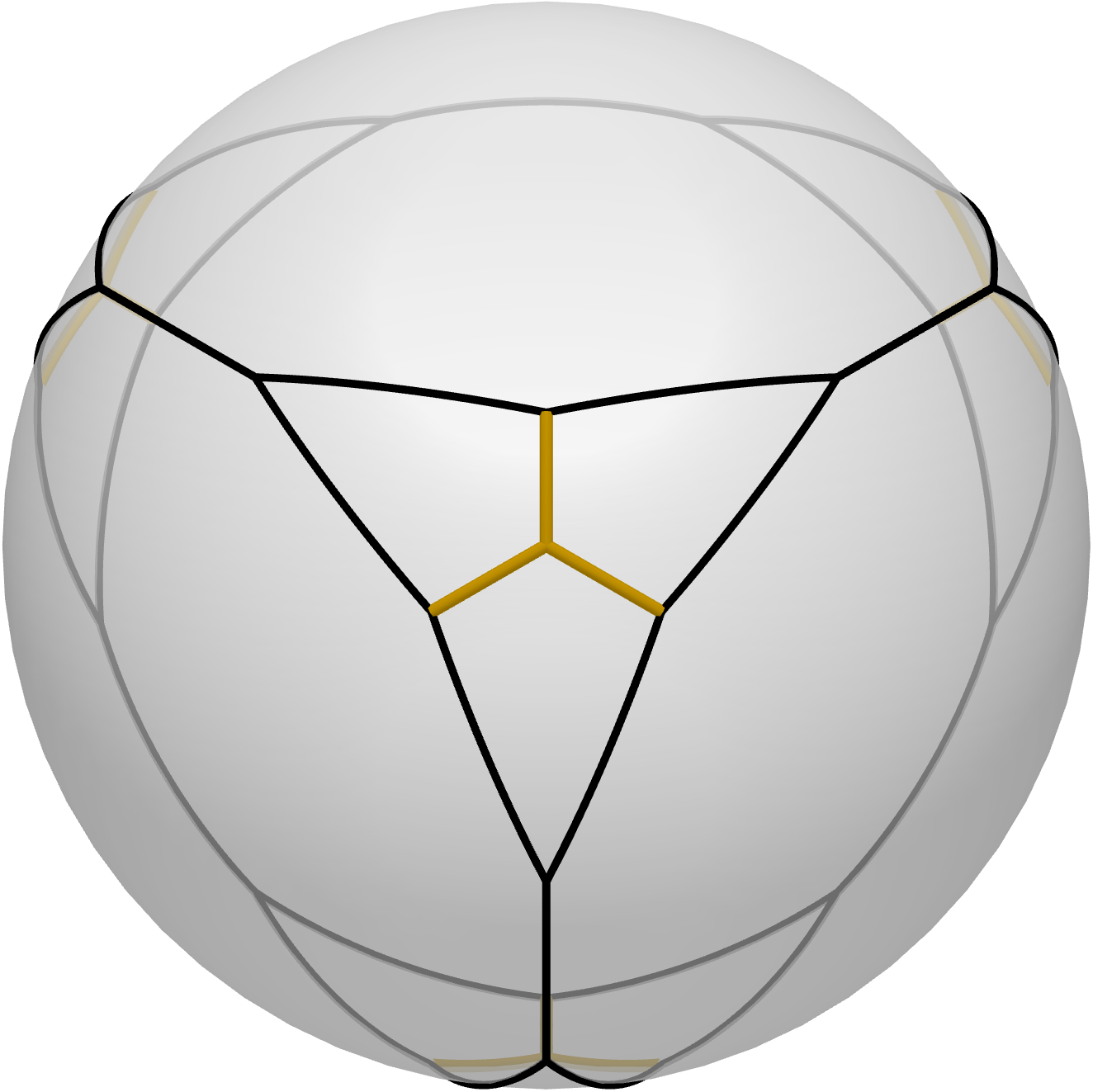}};
\end{scope}

\end{tikzpicture}
\caption{Subdivided $t\mathcal{T}$}
\label{Subfig-kite-subdiv-tT}
\end{subfigure}
\begin{subfigure}[t]{0.33\linewidth}
\centering
\begin{tikzpicture}[>=latex]
\tikzmath{
\s=2;
\r=0.3;
\n=8;
\m=3;
\nn=\n-1;
\th=360/\n;
\ph=360/\m;
\y=\r*cos(0.5*\th);
\x=\r*sin(0.5*\th);
\xx=\r*cos(0.5*\ph);
\yy=\x/tan(0.5*\ph);
\rr=\x*sin(0.5*\ph);
}

\raisebox{1ex}{

\fill[teal!75!blue] circle (2.75*\r);

\fill[white] circle (2.5*\r);

\foreach \a in {0,2,...,\n} {
\tikzset{rotate=\a*\th}
\draw[HGold, line width=1.15]
	(0,\y+\yy) -- (0,\y)
	(0,\y+\yy) -- ($(90-0.5*\th:\r) !1/2! (90:\r+\xx)$)
	(0,\y+\yy) -- ($(90+0.5*\th:\r) !1/2! (90:\r+\xx)$)
	(0,\y+\yy+\rr+0.75*\r) -- ($(0,\y+\yy+\rr+0.5*\r) !1/2! (90-0.5*\th:2.5*\r)$)
	(0,\y+\yy+\rr+0.75*\r) -- ($(0,\y+\yy+\rr+0.5*\r) !1/2! (90+0.5*\th:2.5*\r)$)
	(0,\y+\yy+\rr+0.75*\r) -- (90:2.5*\r)
;
}

\fill[teal!75!blue]
	(90-0.5*\th:\r) -- (90+0.5*\th:\r) -- (90+1.5*\th:\r) -- (90+2.5*\th:\r) -- (90+3.5*\th:\r) 
	-- (90+4.5*\th:\r) -- (90+5.5*\th:\r) -- (90+6.5*\th:\r) -- cycle
;

\foreach \a in {0,2,...,\n} {
\tikzset{rotate=\a*\th}
\fill[teal!75!blue]
	(90-0.5*\th:\r) -- (0,\y+\yy+\rr) -- (0,\y+\yy+\rr+0.5*\r) 
	-- (90-0.5*\th:2.5*\r) arc (90-0.5*\th:90-1.5*\th:2.5*\r) 
	-- (\y+\yy+\rr+0.5*\r,0)  -- (\y+\yy+\rr,0) -- (90-1.5*\th:\r) 
	-- cycle
;
}

\draw[] circle (2.5*\r);

\foreach \a in {0,...,\nn} {
\tikzset{rotate=\a*\th}
\draw[]
	(90-0.5*\th:\r) -- (90+0.5*\th:\r)
;
}

\foreach \a in {0,2,...,\n} {
\tikzset{rotate=\a*\th}
\draw[]
	(90-0.5*\th:\r) -- (90:\r+\xx)
	(90+0.5*\th:\r) -- (90:\r+\xx)
	(90:\r+\xx) -- (0,\y+\yy+\rr+0.5*\r)
	(90:\y+\yy+\rr+0.5*\r) -- (90-0.5*\th:2.5*\r)
	(90:\y+\yy+\rr+0.5*\r) -- (90+0.5*\th:2.5*\r)
;
}

\begin{scope}[xshift=\s cm]
\node [inner sep=0] (image) at (0,0) 
            {\includegraphics[height=2cm]{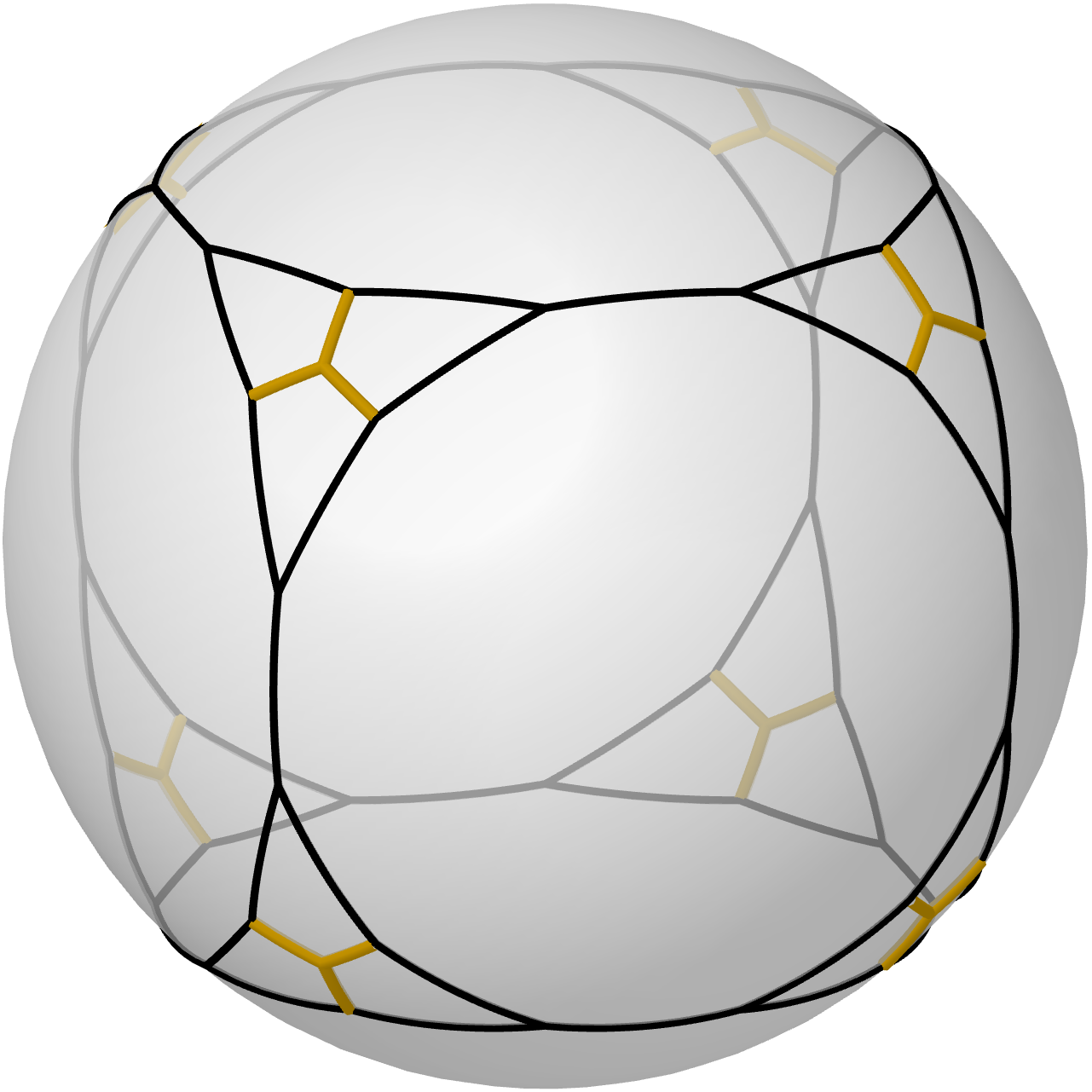}};
\end{scope}

}

\end{tikzpicture}
\caption{Subdivided $t\mathcal{C}$}
\label{Subfig-kite-subdiv-tC}
\end{subfigure}
\begin{subfigure}[t]{0.32\linewidth}
\centering
\begin{tikzpicture}[>=latex]
\tikzmath{
\s=2.15;
\r=0.45;
\dr=0.05*\r;
\rr=0.5*\r;
\R=1.5*\r;
\RR=1.75*\r;
\th=360/4;
\x=\r*cos(\th/2);
\xx=0.5*\x;
\hex=360/6;
}

\fill[white] (0,0) circle (\RR);

\foreach \a in {0,1,2,3} {
\tikzset{rotate=\a*\th}

\draw[HGold, line width=1.15]
	(0,0) -- (0,0.5*\x)
	($(0.5*\th:\r) !1/2! (\xx+\x,\x+\xx)$) -- ($(\xx,\x+\xx) !1/2! (\xx+\x,\x+\xx)$)
	($(0.5*\th:\r) !1/2! (\xx+\x,\x+\xx)$) -- ($(\xx+\x,\x-\xx) !1/2! (\xx+\x,\x+\xx)$)
	($(0.5*\th:\r) !1/2! (\xx+\x,\x+\xx)$) -- ($(0.5*\th:\r) !1/2! (\xx,\x+\xx)$)
	($(0.5*\th:\r) !1/2! (\xx+\x,\x+\xx)$) -- ($(0.5*\th:\r) !1/2! (\xx+\x,\x-\xx)$)
;

\draw[arrows = {-Latex[scale=0.45]}, HGold, line width=1.15]
	(90:\RR) -- (90:1.35*\RR)
;
}

\foreach \a in {0,1,2,3} {
\tikzset{rotate=\a*\th}
\fill[teal!75!blue]
	(0.5*\th:\rr) -- (0.5*\th:\r) -- (\xx,\x+\xx) -- (-\xx,\x+\xx) 
	-- (1.5*\th:\r) -- (1.5*\th:\rr) -- cycle
;
\fill[teal!75!blue]
	(-\xx-\x,\x+\xx) -- (\xx+\x,\x+\xx) -- (0.5*\th:\RR) arc (0.5*\th:1.5*\th:\RR)
;
}

\foreach \a in {0,1,2,3} {
\tikzset{rotate=\a*\th}

\draw[]
	(0.5*\th:\rr) -- (1.5*\th:\rr)
	(0.5*\th:\rr) -- (0.5*\th:\r)
	(\xx,\x+\xx) -- (0.5*\th:\r)
	(-\xx,\x+\xx) -- (1.5*\th:\r)
	(0.5*\th:\R) -- (1.5*\th:\R)
	(0.5*\th:\R) -- (0.5*\th:\RR)
;
}

\draw[] (0,0) circle (\RR);

\begin{scope}[xshift=\s cm]
\node [inner sep=0] (image) at (0,0) 
            {\includegraphics[height=2cm]{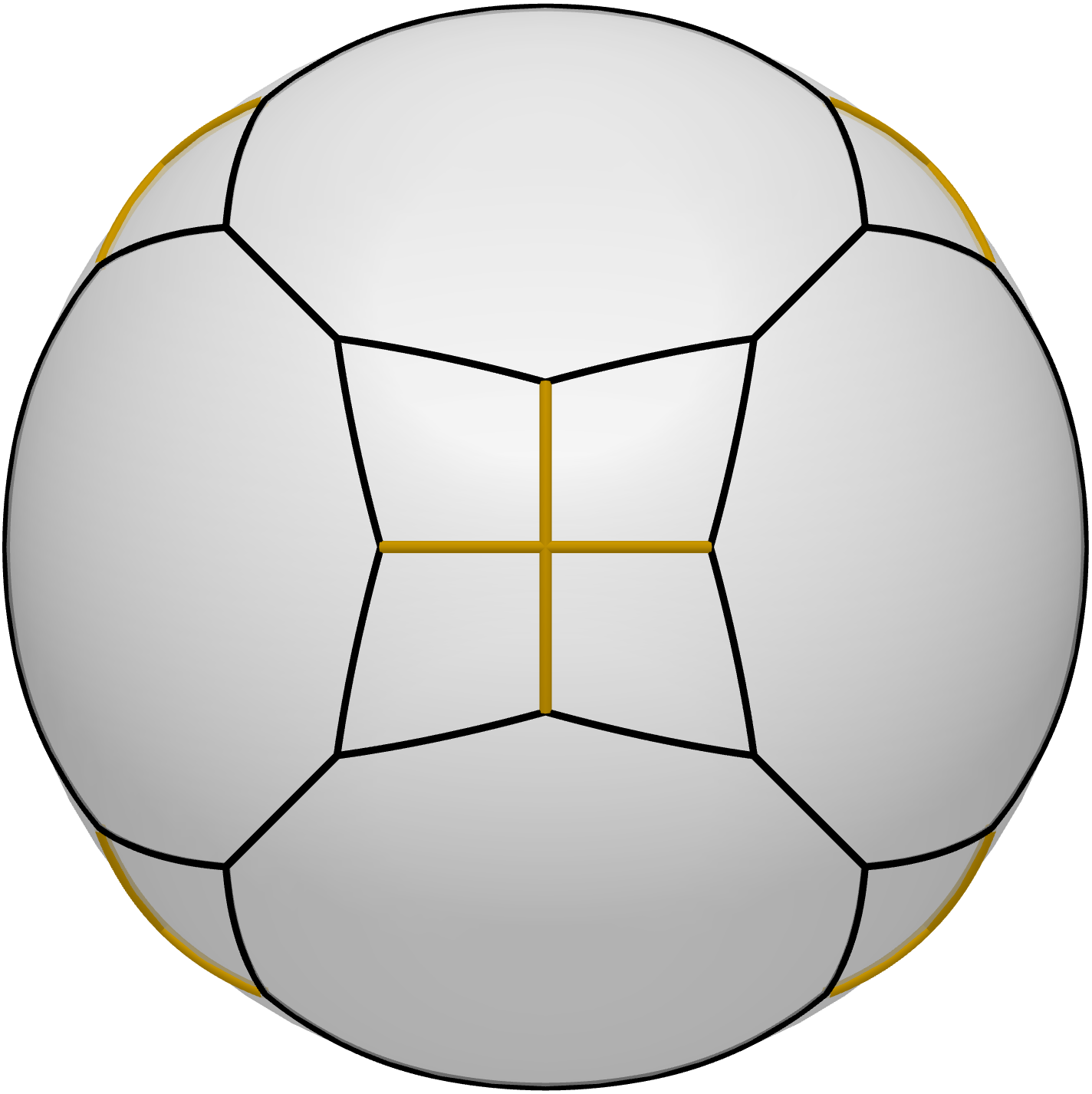}};
\end{scope}

\end{tikzpicture}
\caption{Subdivided $t\mathcal{O}$}
\label{Subfig-kite-subdiv-tO}
\end{subfigure}

\begin{subfigure}[t]{0.4\linewidth}
\centering
\begin{tikzpicture}[>=latex]
\tikzmath{
\s=2.25;
\r=0.18;
\n=10;
\m=3;
\nn=\n-1;
\th=360/\n;
\ph=360/\m;
\y=\r*cos(0.5*\th);
\x=\r*sin(0.5*\th);
\xx=\r*cos(0.5*\ph);
\yy=\x/tan(0.5*\ph);
\rr=\x*sin(0.5*\ph);
}

\raisebox{1ex}{

\fill[teal!75!blue] circle (5.5*\r);
\fill[white] circle (5*\r);

\foreach \a in {0,2,...,\n} {
\tikzset{rotate=\a*\th}
\draw[HGold, line width=1.15]
	(90-1*\th:4.65*\r) -- ($(90-1*\th:4.25*\r) !1/2! (90-0.5*\th:5*\r)$)
	(90-1*\th:4.65*\r) -- ($(90-1*\th:4.25*\r) !1/2! (90-1.5*\th:5*\r)$)
	(90-1*\th:4.65*\r) -- (90-1*\th:5*\r)
;
}

\fill[teal!75!blue]
	(90-0.5*\th:\r) -- (90+0.5*\th:\r) -- (90+1.5*\th:\r) -- (90+2.5*\th:\r)
	-- (90+3.5*\th:\r) -- (90+4.5*\th:\r) -- (90+5.5*\th:\r) -- (90+6.5*\th:\r)
	-- (90+7.5*\th:\r) -- (90+8.5*\th:\r) -- (90+9.5*\th:\r) -- cycle
;

\foreach \a in {0,2,...,\n} {
\tikzset{rotate=\a*\th}
\fill[teal!75!blue]
	(0,\y+\yy+\rr+0.5*\r) -- (90-0.25*\th:2.5*\r) -- (90-0.7*\th:2.85*\r) 
	-- (90-1.3*\th:2.85*\r) -- (90-1.75*\th:2.5*\r) -- (90-2*\th:\y+\yy+\rr+0.5*\r) 
	-- (90-2*\th:\r+\xx) -- (90-1.5*\th:\r) -- (90-0.5*\th:\r)
	-- (0,\r+\xx) -- cycle
;
\fill[teal!75!blue]
	(90-0.25*\th:2.5*\r) -- (90-0.7*\th:2.85*\r) -- (90-1*\th:3.6*\r) 
	-- (90-1*\th:4.25*\r) -- (90-0.5*\th:5*\r) arc (90-0.5*\th:90+0.5*\th:5*\r)
	-- (90+0.5*\th:5*\r) -- (90+1*\th:4.25*\r) -- (90+1*\th:3.6*\r) 
	-- (90+0.7*\th:2.85*\r) -- (90+0.25*\th:2.5*\r) -- cycle
;
}

\foreach \a in {0,2,...,\n} {
\tikzset{rotate=\a*\th}
\draw[HGold, line width=1.15]
	(0,\y+\yy) -- (0,\y)
	(0,\y+\yy) -- ($(90-0.5*\th:\r) !1/2! (90:\r+\xx)$)
	(0,\y+\yy) -- ($(90+0.5*\th:\r) !1/2! (90:\r+\xx)$)
	(0,\y+\yy+\rr+0.85*\r) -- ($(0,\y+\yy+\rr+0.5*\r) !1/2! (90-0.25*\th:2.5*\r)$)
	(0,\y+\yy+\rr+0.85*\r) -- ($(0,\y+\yy+\rr+0.5*\r) !1/2! (90+0.25*\th:2.5*\r)$)
	(0,\y+\yy+\rr+0.85*\r) -- ($(90+0.25*\th:2.5*\r) !1/2! (90-0.25*\th:2.5*\r)$)
	(90-\th:2*\x+2.5*\r) -- ($(90-0.7*\th:2.85*\r) !1/2! (90-1*\th:3.6*\r)$)
	(90-\th:2*\x+2.5*\r) -- ($(90-1.3*\th:2.85*\r) !1/2! (90-1*\th:3.6*\r)$)
	(90-\th:2*\x+2.5*\r) -- ($(90-0.7*\th:2.85*\r) !1/2! (90-1.3*\th:2.85*\r)$)
;
}

\draw[] (0,0) circle (5*\r);

\foreach \a in {0,...,\nn} {
\tikzset{rotate=\a*\th}
\draw[]
	(90-0.5*\th:\r) -- (90+0.5*\th:\r)
;
}

\foreach \a in {0,2,...,\n} {
\tikzset{rotate=\a*\th}
\draw[]
	(90-0.5*\th:\r) -- (90:\r+\xx)
	(90+0.5*\th:\r) -- (90:\r+\xx)
	(90:\r+\xx) -- (90:\y+\yy+\rr+0.5*\r)
	(90:\y+\yy+\rr+0.5*\r) -- (90-0.25*\th:2.5*\r)
	(90:\y+\yy+\rr+0.5*\r) -- (90+0.25*\th:2.5*\r)
	(90-0.25*\th:2.5*\r) -- (90+0.25*\th:2.5*\r)
	(90-0.25*\th:2.5*\r) -- (90-0.7*\th:2.85*\r)
	(90+0.25*\th:2.5*\r) -- (90+0.7*\th:2.85*\r)
	(90-0.7*\th:2.85*\r) -- (90-1.3*\th:2.85*\r)
	(90-0.7*\th:2.85*\r) -- (90-1*\th:3.6*\r)
	(90-1.3*\th:2.85*\r) -- (90-1*\th:3.6*\r)
	(90-1*\th:3.6*\r) -- (90-1*\th:4.25*\r)
	(90-1*\th:4.25*\r) -- (90-0.5*\th:5*\r)
	(90-1*\th:4.25*\r) -- (90-1.5*\th:5*\r)
;
}

\begin{scope}[xshift=\s cm]
\node [inner sep=0] (image) at (0,0) 
            {\includegraphics[height=2cm]{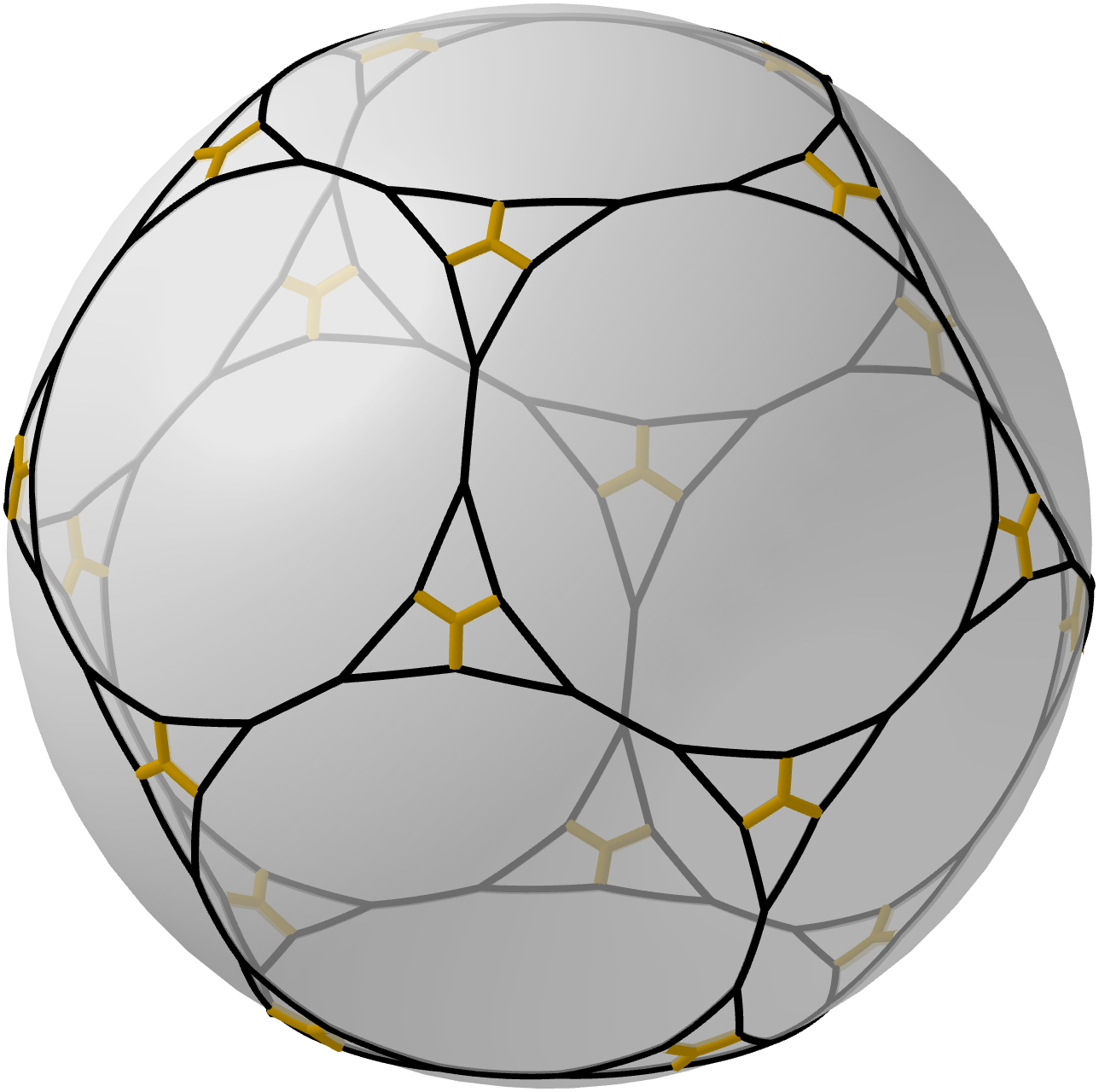}};
\end{scope}

}

\end{tikzpicture}
\caption{Subdivided $t\mathcal{D}$}
\label{Subfig-kite-subdiv-tD}
\end{subfigure}
\begin{subfigure}[t]{0.4\linewidth}
\centering
\begin{tikzpicture}[>=latex]
\tikzmath{
\s=2.15;
\r=0.375;
\th=360/5;
\x=0.4*\r*cos(0.5*\th);
}

\raisebox{1ex}{

\fill[white] (0,0) circle (2.4*\r);

\foreach \a in {0,...,4}{
\tikzset{rotate=\a*\th}
\fill[teal!75!blue]
	(90:0.4*\r) -- (90:0.8*\r) -- (66:1.1*\r) -- (42:1.1*\r)  -- (18:0.8*\r) -- (18:0.4*\r) -- cycle
;
\fill[teal!75!blue]
	(66:1.1*\r) -- (42:1.1*\r) -- (90-72+12:1.4*\r) -- (90-72+24:1.6*\r) -- (66:1.6*\r) -- (78:1.4*\r) -- cycle
;
\fill[teal!75!blue]
	(102:1.4*\r) -- (78:1.4*\r) -- (66:1.6*\r) -- (78:2*\r) -- (102:2*\r) -- (114:1.6*\r) -- cycle
;
\fill[teal!75!blue]
	(102:2*\r) -- (78:2*\r) -- (54:2*\r) -- (54:2.4*\r) arc (54:126:2.4*\r) -- (126:2*\r) -- cycle
;
}

\foreach \a in {0,...,4}{
\tikzset{rotate=\a*\th}

\draw[HGold, line width=1.15]
	(0,0) -- (0,-\x)
	(0,1.1*\r) -- ($(90:0.8*\r) !1/2! (66:1.1*\r)$)
	(0,1.1*\r) -- ($(66:1.1*\r) !1/2! (78:1.4*\r)$)
	(0,1.1*\r) -- ($(102:1.4*\r) !1/2! (78:1.4*\r)$)
	(0,1.1*\r) -- ($(114:1.1*\r) !1/2! (102:1.4*\r)$)
	(0,1.1*\r) -- ($(90:0.8*\r) !1/2! (114:1.1*\r)$)
	(90-36:1.75*\r) -- ($(66:1.6*\r) !1/2! (90-72+24:1.6*\r)$)
	(90-36:1.75*\r) -- ($(66:1.6*\r) !1/2! (78:2*\r)$)
	(90-36:1.75*\r) -- ($(78:2*\r) !1/2! (90-36:2*\r)$)
	(90-36:1.75*\r) -- ($(30:2*\r) !1/2! (90-36:2*\r)$)
	(90-36:1.75*\r) -- ($(30:2*\r) !1/2! (90-72+24:1.6*\r)$)
;	

\draw[arrows = {-Latex[scale=0.45]}, HGold, line width=1.15]
	(0,2.4*\r) -- (0,3.2*\r)
;

}

\foreach \a in {0,...,4}
{
\begin{scope}[rotate=72*\a]

\draw
	(18:0.4*\r) -- (90:0.4*\r) -- (90:0.8*\r) -- (66:1.1*\r) -- (42:1.1*\r) -- (18:0.8*\r)
	(66:1.1*\r) -- (78:1.4*\r) -- (102:1.4*\r) -- (114:1.1*\r)
	(78:1.4*\r) -- (66:1.6*\r) -- (42:1.6*\r) -- (30:1.4*\r)
	(66:1.6*\r) -- (78:2*\r) -- (102:2*\r) -- (114:1.6*\r)
	(78:2*\r) -- (54:2*\r) -- (30:2*\r)
	(54:2*\r) -- (54:2.4*\r)
;

\end{scope}
}

\draw[] (0,0) circle (2.4*\r);

\begin{scope}[xshift=\s cm]
\node [inner sep=0] (image) at (0,0) 
            {\includegraphics[height=2cm]{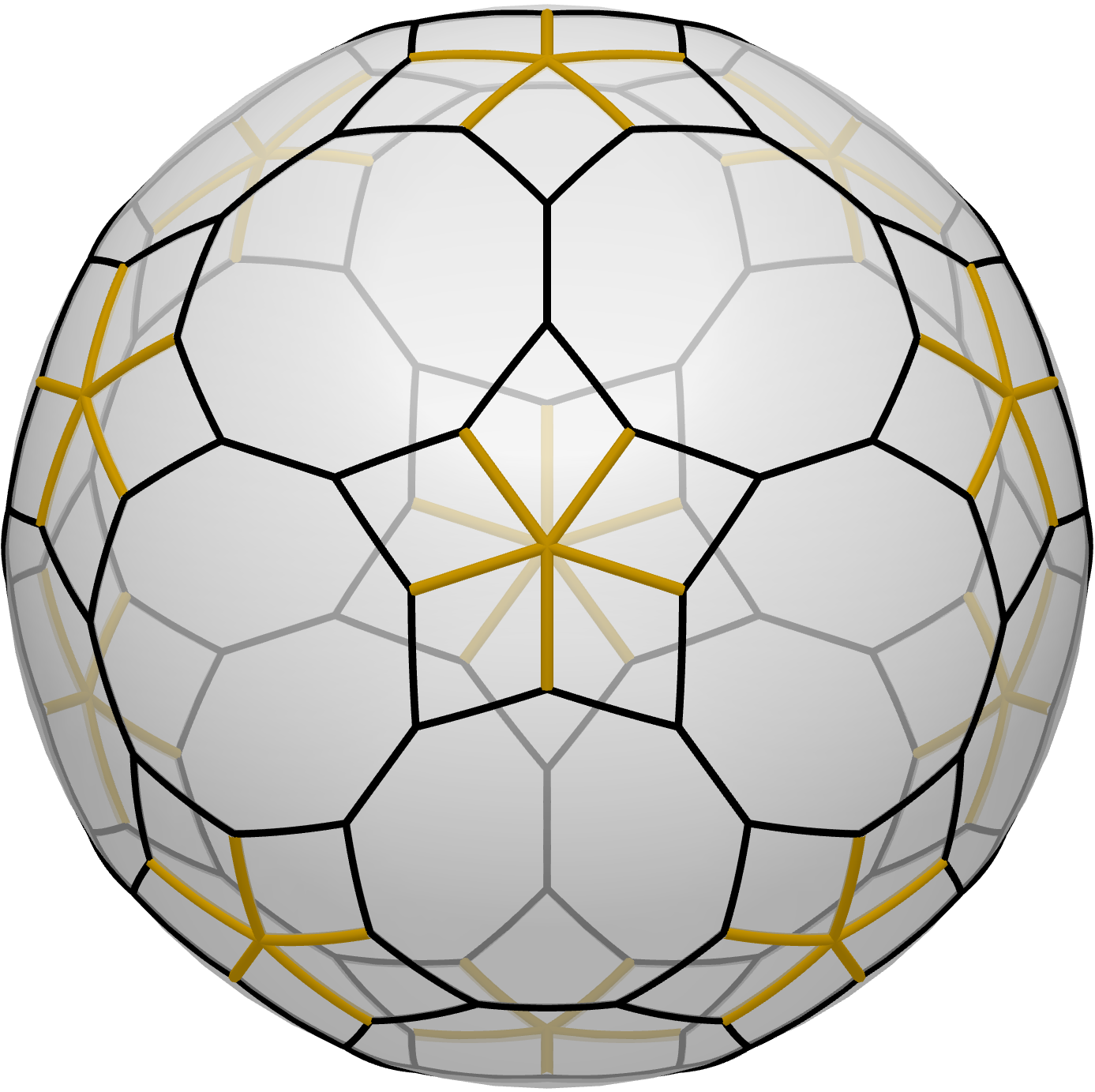}};
\end{scope}

}

\end{tikzpicture}
\caption{Subdivided $t\mathcal{I}$}
\label{Subfig-kite-subdiv-tI}
\end{subfigure}
\caption{\ref{Label:trunc-Platonic} tilings via kite subdivision of the derived faces of the truncated Platonic solids}
\label{Fig-kite-subdiv-trunc-Platonic}
\end{figure}


\begin{figure}[h!] 
\centering
\begin{subfigure}[t]{0.3\linewidth}
\centering
\begin{tikzpicture}[>=latex]

\tikzmath{
\xs=2.15;
}

\raisebox{1ex}{

\begin{scope}[]
\tikzmath{
\th=360/3;
\r=0.4;
\x=\r*cos(\th/2);
}

\fill[teal!75!blue] (0,0) circle (2.25*\r);
\fill[white] (0,0) circle (2*\r);
\fill[teal!75!blue] (90:2*\r) -- (90+\th:2*\r) -- (90+2*\th:2*\r) -- cycle;
\fill[white] (270:\r) -- (270+\th:\r) -- (270+2*\th:\r) -- cycle;

\foreach \a in {0,...,2} {

\tikzset{rotate=\a*\th}

\draw[]
	(270:\r) -- (270+\th:\r)
	(90:2*\r) -- (90+\th:2*\r)
;

\draw[HGold, line width=1.25]
	(0,0) -- (90:\x)
	(90-0.5*\th:1.5*\r) -- (90-0.5*\th:2*\r)
;

\arcThroughThreePoints[HGold, line width=1.25]{$(90-0.5*\th:1*\r) !1/2! (90-1*\th:2*\r)$}{90-0.5*\th:1.5*\r}{$(90:2*\r) !1/2! (90-0.5*\th:1*\r)$};
} 

\draw[] (0,0) circle (2*\r);
\end{scope}

\begin{scope}[xshift=\xs cm]
\node [inner sep=0] (image) at (0,0) 
            {\includegraphics[height=2cm]{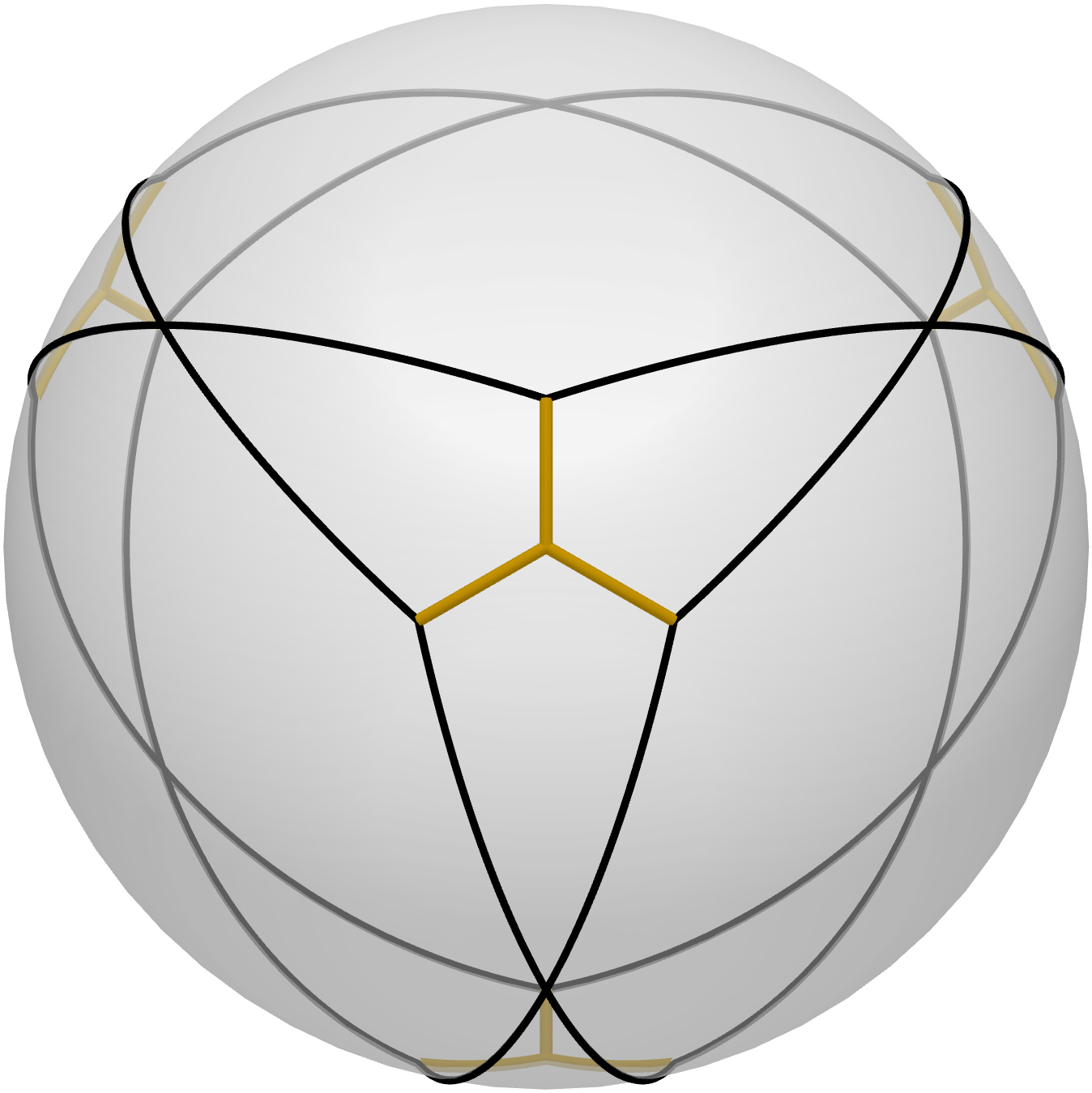}};
\end{scope}
}
\end{tikzpicture}
\caption{Subdivided $a\mathcal{T}$}
\end{subfigure}
\begin{subfigure}[t]{0.37\linewidth}
\centering
\begin{tikzpicture}[>=latex]
\tikzmath{
\xs=2.15;
\ys=4;
}
\begin{scope}[xshift=2*\xs cm]
\tikzmath{
\ph=360/4;
\r=0.4;
\x=\r*cos(0.5*\ph);
}

\fill[teal!75!blue] (0,0) circle (2*\r);
\fill[white] (\x,\x) -- (-\x,\x) -- (-\x,-\x) -- (\x,-\x) -- cycle;

\foreach \a in {0,...,3} {
\tikzset{rotate=\a*\ph}

\fill[white] (2*\x,2*\x) -- (0,2*\x) --  (2*\x,0) -- cycle;

\draw[HGold, line width=1.25]
	($(0,2*\x) !1/2! (2*\x,2*\x)$) -- (1.5*\x, 1.5*\x)
	($(2*\x,0) !1/2! (2*\x,2*\x)$) -- (1.5*\x, 1.5*\x)
	($(0,2*\x) !1/2! (\x,\x)$) -- (1.5*\x, 1.5*\x)
	($(2*\x,0) !1/2! (\x,\x)$) -- (1.5*\x, 1.5*\x)
;
}

\foreach \a in {0,...,3} {
\tikzset{rotate=\a*\ph}

\draw[]
	(\x,\x) -- (-\x,\x)
	(2*\x,0) -- (0,2*\x)
	(2*\x,2*\x) -- (-2*\x,2*\x)
;

\draw[HGold, line width=1.25]
	(0,0) -- (0,\x)
	%
;

\draw[arrows = {-Latex[scale=0.45]}, HGold, line width=1.1]
	(0,2*\r) -- (0,2.8*\r)
;
}

\foreach \a in {0,...,3} {
\tikzset{rotate=\a*\ph}

\draw[fill=teal!75!blue] (\x, \x) -- (0,2*\x) -- (-\x, \x) -- cycle;
\draw[fill=teal!75!blue] (2*\x,2*\x) arc (45:135:2*\r) -- cycle;
}

\draw[] (0,0) circle (2*\r);
\end{scope}

\begin{scope}[xshift=3*\xs cm] 
\node [inner sep=0] (image) at (0,0) 
            {\includegraphics[height=2cm]{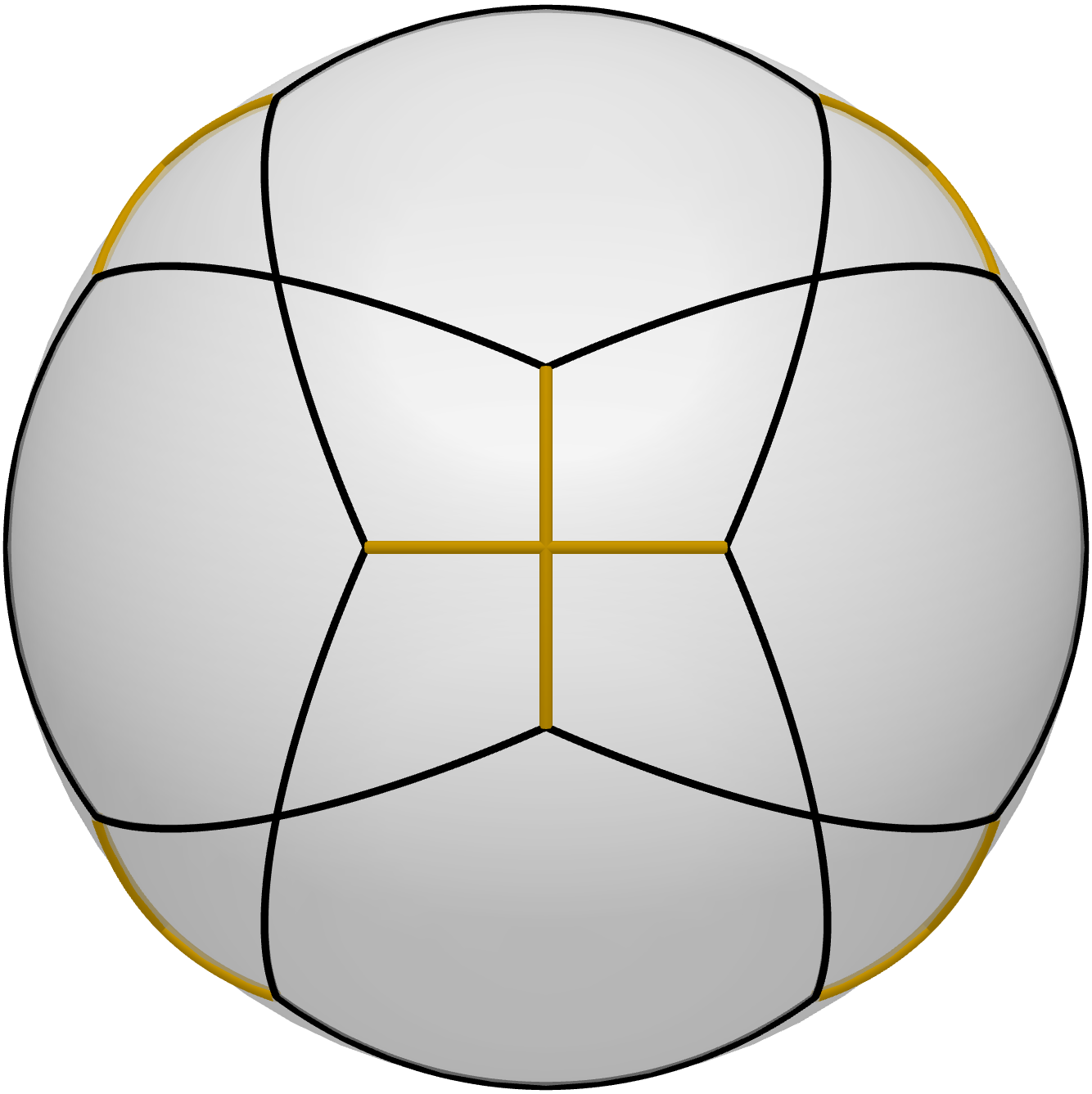}};
\end{scope}
\end{tikzpicture}
\caption{Subdivided $a\mathcal{C}$}
\end{subfigure}
\begin{subfigure}[t]{0.3\linewidth}
\centering
\begin{tikzpicture}[>=latex]

\tikzmath{
\xs=2.15;
}

\raisebox{1ex}{

\begin{scope}[]
\tikzmath{
\ph=360/4;
\r=0.4;
\x=\r*cos(0.5*\ph);
}

\fill[teal!75!blue] (0,0) circle (2.25*\r);

\draw[fill=white] (0,0) circle (2*\r);

\draw[fill=teal!75!blue] (\x,\x) -- (-\x,\x) -- (-\x,-\x) -- (\x,-\x) -- cycle;

\draw[] (0,0) circle (2*\r);

\foreach \a in {0,...,3} {
\tikzset{rotate=\a*\ph}

\draw[fill=teal!75!blue] (2*\x,2*\x) -- (0,2*\x) --  (2*\x,0) -- cycle;

\draw[HGold, line width=1.25]
	(0,2.4*\x) -- (0,2*\r)
	(0,2.4*\x) to[out=0, in=135, distance=0.1cm] (1*\x,2*\x) 
	(0,2.4*\x) to[out=180, in=45, distance=0.1cm] (-1*\x,2*\x) 
;

\draw[HGold, line width=1.25]
	(0,1.5*\x) -- (0,\x)
	(0,1.5*\x) -- ($(0,2*\x) !1/2! (\x,\x)$)
	(0,1.5*\x) -- ($(0,2*\x) !1/2! (-\x,\x)$)
;
}

\foreach \a in {0,...,3} {
\tikzset{rotate=\a*\ph}

\draw[]
	(\x,\x) -- (-\x,\x)
	(2*\x,0) -- (0,2*\x)
	(2*\x,2*\x) -- (-2*\x,2*\x)
;
}

\end{scope}

\begin{scope}[xshift=1*\xs cm]
\node [inner sep=0] (image) at (0,0) 
            {\includegraphics[height=2cm]{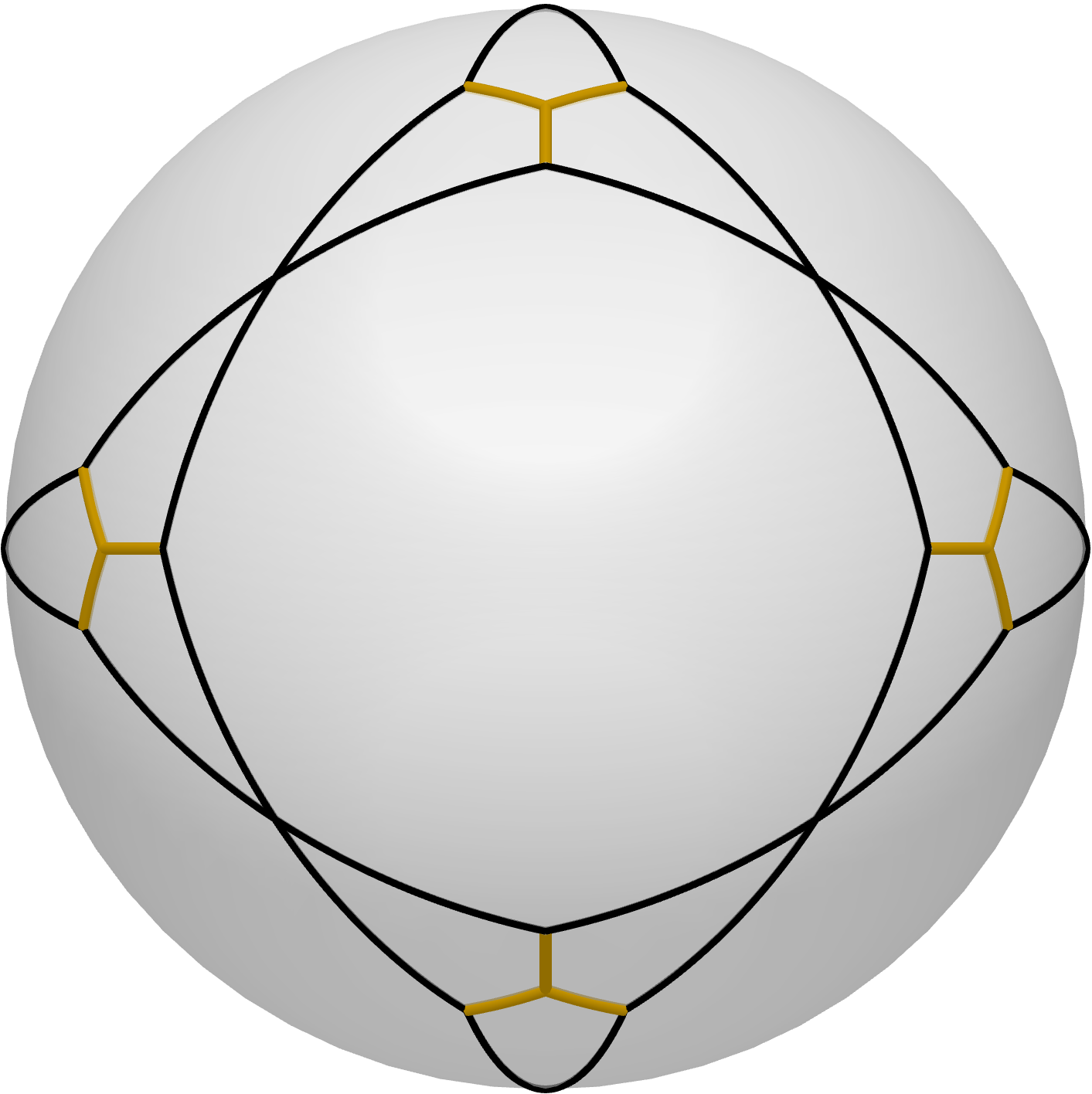}};
\end{scope}


}
\end{tikzpicture}
\caption{Subdivided $a\mathcal{O}$}
\end{subfigure}

\begin{subfigure}[t]{0.45\linewidth}
\centering
\begin{tikzpicture}[>=latex]

\tikzmath{
\xs=2;
\ps=360/5;
\r=0.25;
\x = \r*cos(0.5*\ps);
}

\begin{scope}[]

\foreach \a in {0,...,4} {
\tikzset{rotate=\a*\ps}

\draw[]
	(270:\r) -- (270+\ps:\r)
	(270:\r) -- (270-0.5*\ps:\x+\r)
	(270:\r) -- (270+0.5*\ps:\x+\r)
	(270-0.25*\ps:2.5*\r) -- (270-0.5*\ps:\x+\r)
	(270+0.25*\ps:2.5*\r) -- (270+0.5*\ps:\x+\r)
 	(270-0.25*\ps:2.5*\r) -- (270+0.25*\ps:2.5*\r)
	(90-0.25*\ps:2.5*\r) -- (90+0.25*\ps:2.5*\r) 
	(90-0.25*\ps:2.5*\r) -- (90-0.5*\ps:3.25*\r) 
	(90+0.25*\ps:2.5*\r) -- (90+0.5*\ps:3.25*\r)
	(90:3.75*\r) -- (90-0.5*\ps:3.25*\r) 
	(90:3.75*\r) -- (90+0.5*\ps:3.25*\r) 
;

\draw[HGold, line width=1.25]
	(0,0) -- (0,\x)
	(90-0.5*\ps:\x+\r) -- ($(90-0.5*\ps:1*\r) !1/2! (90-\ps:\x+\r)$)
	(90-0.5*\ps:\x+\r) -- ($(90-0.5*\ps:1*\r) !1/2! (90:\x+\r)$)
	(90-0.5*\ps:\x+\r) -- ($(90:\x+\r) !1/2! (90-0.25*\ps:2.5*\r)$)
	(90-0.5*\ps:\x+\r) -- ($(90-\ps:\x+\r) !1/2! (90-0.75*\ps:2.5*\r)$)
	(90-0.5*\ps:\x+\r) -- ($(90-0.25*\ps:2.5*\r) !1/2! (90-0.75*\ps:2.5*\r)$)
	(90:3*\r) -- ($(90-0.25*\ps:2.5*\r) !1/2! (90+0.25*\ps:2.5*\r)$)
	(90:3*\r) -- ($(90-0.25*\ps:2.5*\r) !1/2! (90-0.5*\ps:3.25*\r)$)
	(90:3*\r) -- ($(90+0.25*\ps:2.5*\r) !1/2! (90+0.5*\ps:3.25*\r)$)
	(90:3*\r) -- ($(90:3.75*\r) !1/2! (90-0.5*\ps:3.25*\r)$)
	(90:3*\r) -- ($(90:3.75*\r) !1/2! (90+0.5*\ps:3.25*\r)$)
;

\draw[arrows = {-Latex[scale=0.45]}, HGold, line width=1.25]
	(90-0.5*\ps:3.75*\r) -- (90-0.5*\ps:5*\r)
;
}

\foreach \a in {0,...,4} {
\tikzset{rotate=\a*\ps}

\draw[fill=teal!75!blue] 
	(90:\x+\r) -- (90-0.5*\ps:\r) -- (90+0.5*\ps:\r) -- cycle
;

\draw[fill=teal!75!blue] 
	(90:\x+\r) -- (90-0.25*\ps:2.5*\r) -- (90+0.25*\ps:2.5*\r) -- cycle
;

\draw[fill=teal!75!blue] 
	(90-0.25*\ps:2.5*\r) -- (90-0.5*\ps:3.25*\r) -- (90-0.75*\ps:2.5*\r) -- cycle
;

\draw[fill=teal!75!blue]  
	(90-0.5*\ps:3.25*\r) -- (90:3.75*\r) arc (90:90-\ps:3.75*\r) -- cycle
;
}

\draw[] (0,0) circle (3.75*\r);

\end{scope}

\begin{scope}[xshift=1.14*\xs cm]
\node [inner sep=0] (image) at (0,0) 
            {\includegraphics[height=2cm]{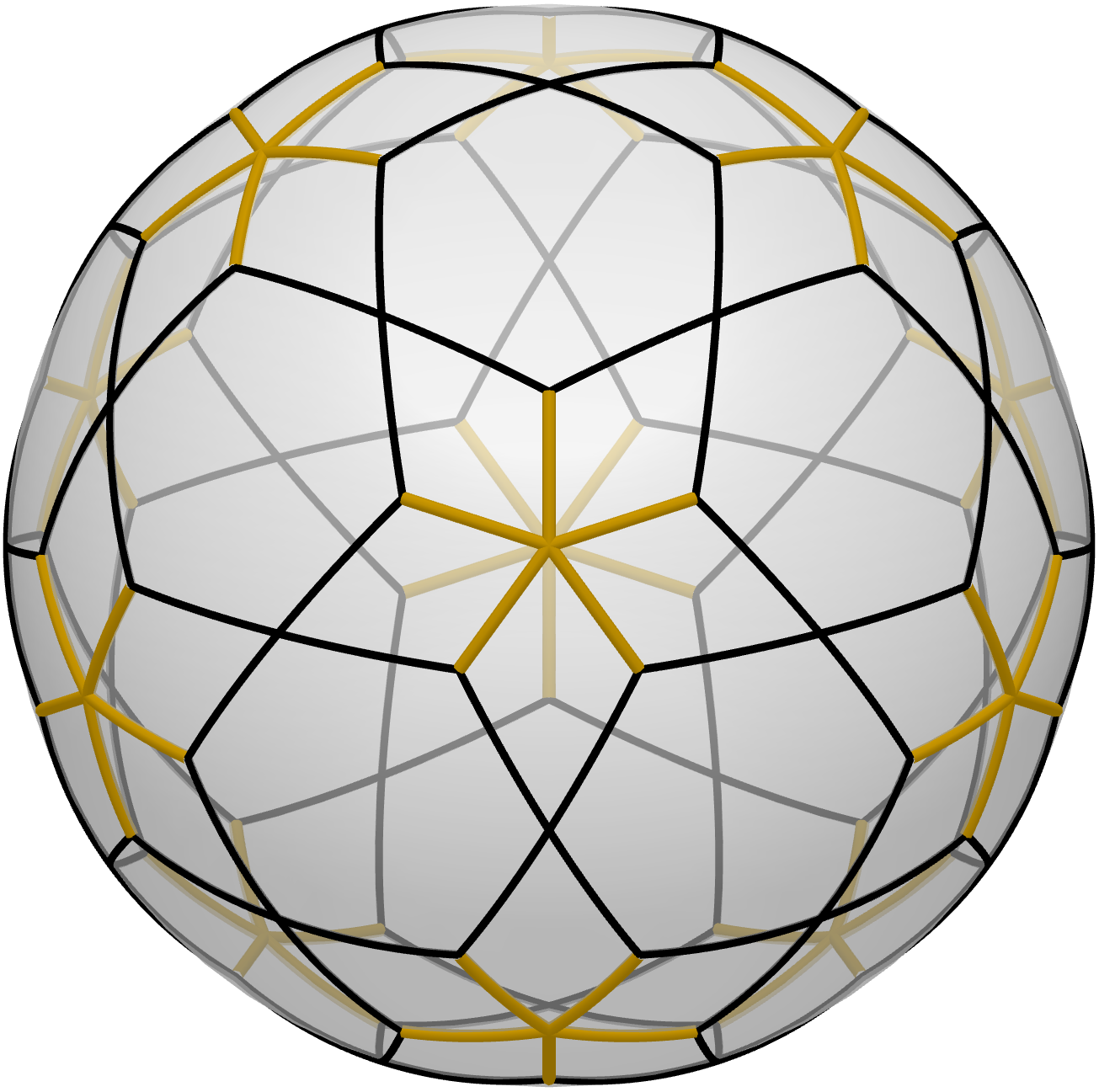}};
\end{scope}

\end{tikzpicture}
\caption{Subdivided $a\mathcal{D}$}
\end{subfigure}
\begin{subfigure}[t]{0.45\linewidth}
\centering
\begin{tikzpicture}[>=latex]

\tikzmath{
\xs=2;
}

\raisebox{1.25ex}{

\begin{scope}[]

\tikzmath{
\ps=360/5;
\r=0.25;
\x = \r*cos(0.5*\ps);
}

\fill[teal!75!blue] (0,0) circle (4.2*\r);

\draw[fill=white] (0,0) circle (3.75*\r);

\draw[] (0,0) circle (3.75*\r);

\foreach \a in {0,...,4} {
\tikzset{rotate=\a*\ps}

\draw[HGold, line width=1.25]
	(90:1.2*\r) -- (90:\x)
	(90:1.2*\r) -- ($(90-0.5*\ps:\r) !1/2! (90:\x+\r)$)
	(90:1.2*\r) -- ($(90+0.5*\ps:\r) !1/2! (90:\x+\r)$)
	(90:2.59*\x) -- ($(90-0.25*\ps:2.5*\r) !1/2! (90:\x+\r)$)
	(90:2.59*\x) -- ($(90+0.25*\ps:2.5*\r) !1/2! (90:\x+\r)$)
	(90:2.59*\x) -- ($(90-0.25*\ps:2.5*\r) !1/2! (90+0.25*\ps:2.5*\r)$)
	(90-0.5*\ps:2.75*\r) -- ($(90-0.25*\ps:2.5*\r) !1/2! (90-0.75*\ps:2.5*\r)$)
	(90-0.5*\ps:2.75*\r) -- ($(90-0.25*\ps:2.5*\r) !1/2! (90-0.5*\ps:3.25*\r)$)
	(90-0.5*\ps:2.75*\r) -- ($(90-0.75*\ps:2.5*\r) !1/2! (90-0.5*\ps:3.25*\r)$)
	(270:3.5*\r) -- (270:3.75*\r) 
	(270:3.5*\r) to[out=180, in=280, distance=0.05 cm] ($(270:3.5*\r) !3/5! (270-0.5*\ps:3.5*\r)$)
	(270:3.5*\r) to[out=0, in=260, distance=0.05 cm] ($(270:3.5*\r) !3/5! (270+0.5*\ps:3.5*\r)$)
;
}

\foreach \a in {0,...,4} {
\tikzset{rotate=\a*\ps}

\draw[]
	(270:\r) -- (270+\ps:\r)
	(270:\r) -- (270-0.5*\ps:\x+\r)
	(270:\r) -- (270+0.5*\ps:\x+\r)
	(270-0.25*\ps:2.5*\r) -- (270-0.5*\ps:\x+\r)
	(270+0.25*\ps:2.5*\r) -- (270+0.5*\ps:\x+\r)
 	(270-0.25*\ps:2.5*\r) -- (270+0.25*\ps:2.5*\r)
	(90-0.25*\ps:2.5*\r) -- (90+0.25*\ps:2.5*\r) 
	(90-0.25*\ps:2.5*\r) -- (90-0.5*\ps:3.25*\r) 
	(90+0.25*\ps:2.5*\r) -- (90+0.5*\ps:3.25*\r)
	(90:3.75*\r) -- (90-0.5*\ps:3.25*\r) 
	(90:3.75*\r) -- (90+0.5*\ps:3.25*\r) 
;
}

\draw[fill=teal!75!blue, line width=0.6] 
	(270:\r) -- (270+\ps:\r) -- (270+2*\ps:\r) -- (270+3*\ps:\r) -- (270+4*\ps:\r) -- cycle
;

\foreach \a in {0,...,4} {
\tikzset{rotate=\a*\ps}

\draw[fill=teal!75!blue, line width=0.6] 
	(270:\r) -- (270-0.5*\ps:\x+\r) -- (270-0.25*\ps:2.5*\r) -- (270+0.25*\ps:2.5*\r)  -- (270+0.5*\ps:\x+\r) -- cycle
;

\draw[fill=teal!75!blue, line width=0.6] 
	(90-0.25*\ps:2.5*\r) -- (90-0.5*\ps:3.25*\r) -- (90:3.75*\r) -- (90+0.5*\ps:3.25*\r) -- (90+0.25*\ps:2.5*\r)
;
}

\end{scope}

\begin{scope}[xshift=1.15*\xs cm] 
\node [inner sep=0] (image) at (0,0) 
            {\includegraphics[height=2cm]{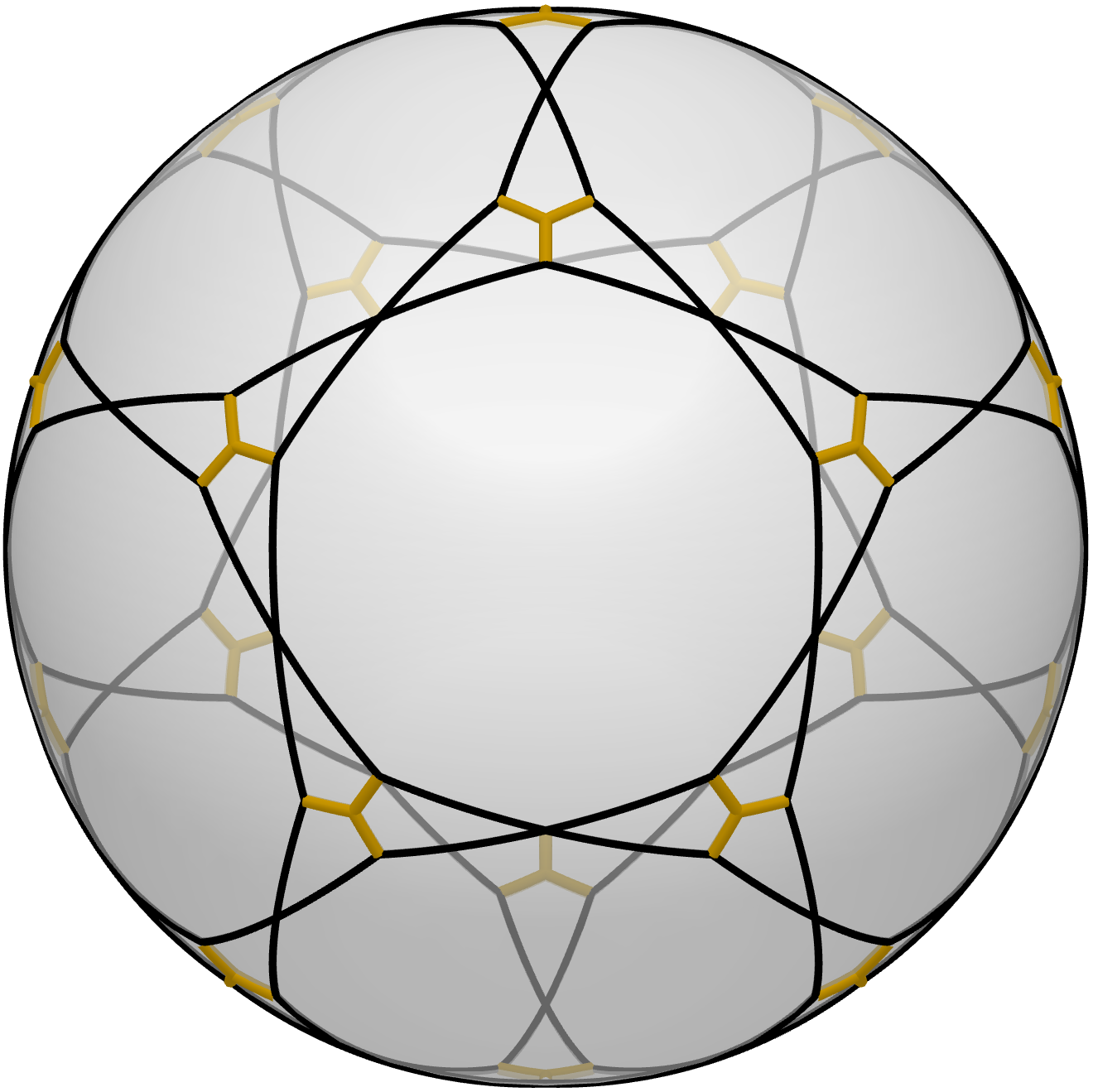}};
\end{scope} 

}
\end{tikzpicture}
\caption{Subdivided $a\mathcal{I}$}
\end{subfigure}
\caption{\ref{Label:rect-Platonic} via a kite subdivision of the (uncoloured) legacy faces in the rectified Platonic solids}
\label{Fig-Platonic-Archimedean}
\end{figure}


In Figure \ref{Fig-Platonic-Archimedean}, the drawings of \ref{Label:rect-Platonic} without thick edges are respectively the graphs of the octahedron $\mathcal{O}$, the cuboctahedron $a\mathcal{C}$ (equivalently $a\mathcal{O}$) and the icosidodecahedron $a\mathcal{D}$ (equivalently $a\mathcal{I}$) as rectified Platonic solids. The thick edges divide the legacy faces from the original solids into kites. Because of duality, one may also view them as kite subdivision of the derived faces of the rectification. 

In Figure \ref{Fig-Platonic-multigraph-Cube-Dodeca}, the drawings of \ref{Label:multigr-Cube-Dodeca} without thick edges are the $2$-edged multigraphs of the cube and the dodecahedron. Here a multigraph is {\em $2$-edged}, which means that each pair of adjacent vertices are the end vertices of exactly two incident edges between them. The thick edges divide the faces of the cube and the dodecahedron into kites.

\begin{figure}[h!] 
\centering
\begin{subfigure}[t]{0.45\linewidth}
\centering
\begin{tikzpicture}[>=latex]
\tikzmath{
\xs=2.5;
}

\begin{scope}[]
\tikzmath{
\r=0.4;
\th=360/4;
\x=\r*cos(0.5*\th);
}

\foreach \a in {0,...,3} {
\tikzset{rotate=\a*\th}

\draw[HGold, line width=1.25]
	(0,0) -- (0,\x)
	(0,1.6*\x) -- (0,\x)
	(0,1.6*\x) -- (0,2.5*\x)
	(0,1.6*\x) -- (0.5*\th: 1.6*\r)
	(0,1.6*\x) -- (1.5*\th: 1.6*\r)
;
\draw[arrows = {-Latex[scale=0.45]}, HGold, line width=1.25]
	(90:2.25*\r) -- (90:3.15*\r)
;
}

\foreach \a in {0,...,3} {
\tikzset{rotate=\a*\th}

\fill[teal!75!blue]
	(0.5*\th:\r) to[out=150, in=30] (1.5*\th:\r) to[out=330, in=210] (0.5*\th:\r) 
	(0.5*\th:\r) to[out=75, in=195] (0.5*\th:2.25*\r) to[out=255, in=15] (0.5*\th:\r) 
	(0.5*\th:2.25*\r) to[out=185, in=355] (1.5*\th:2.25*\r) to[out=45, in=135] (0.5*\th:2.25*\r)
;
}

\foreach \a in {0,...,3} {
\tikzset{rotate=\a*\th}

\draw[]
	(0.5*\th:\r) to[out=150, in=30] (1.5*\th:\r)
	(0.5*\th:\r) to[out=210, in=-30] (1.5*\th:\r)
	(0.5*\th:\r) to[out=75, in=195] (0.5*\th:2.25*\r)
	(0.5*\th:\r) to[out=15, in=255] (0.5*\th:2.25*\r)
	(0.5*\th:2.25*\r) to[out=185, in=355] (1.5*\th:2.25*\r)
;
}

\draw[] (0,0) circle (2.25*\r);

\end{scope}

\begin{scope}[xshift=\xs cm] 
\node [inner sep=0] (image) at (0,0) 
            {\includegraphics[height=2cm]{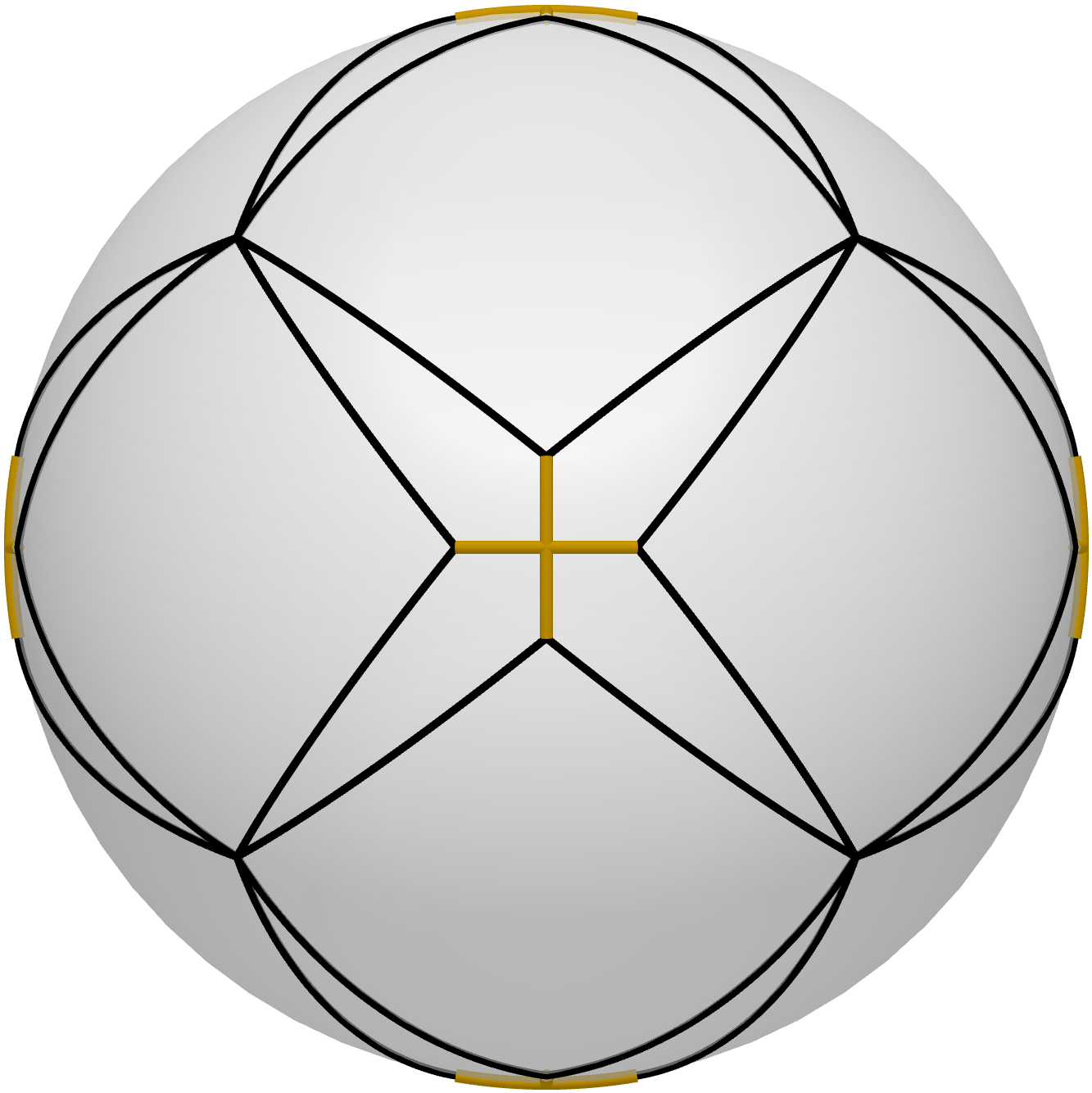}};
\end{scope} 
\node at (0,-0.5*\xs) {};
\end{tikzpicture}
\caption{Subdivided $2$-edged multigraph of $\mathcal{C}$}
\end{subfigure}
\begin{subfigure}[t]{0.54\linewidth}
\centering
\begin{tikzpicture}[>=latex]

\tikzmath{
\xs=2.5;
}

\raisebox{0ex}{

\begin{scope}[]
\tikzmath{
\r=0.3;
\ph=360/5;
\X=\r*cos(0.5*\ph);
}

\foreach \a in {0,...,4} {
\tikzset{rotate=\a*\ph}

\draw[HGold, line width=1.15]
	(0,0) -- (0,\X)
	(0,1.8*\X) -- (0,\X)
	(0,1.8*\X) -- (90-0.5*\ph:1.5*\r)
	(0,1.8*\X) -- (90+0.5*\ph:1.5*\r)
	(0,1.8*\X) -- (90-0.25*\ph:2.2*\r)
	(0,1.8*\X) -- (90+0.25*\ph:2.2*\r)
	(270:2.5*\r) -- (270+0.25*\ph:2.2*\r) 
	(270:2.5*\r) -- (270-0.25*\ph:2.2*\r) 
	(270:2.5*\r) -- (270+0.5*\ph:3.1*\r) 
	(270:2.5*\r) -- (270-0.5*\ph:3.1*\r) 
	(270:2.5*\r) -- (270:3*\r)
;

\draw[arrows = {-Latex[scale=0.45]}, HGold, line width=1.15]
	(90-0.5*\ph:3.5*\r) -- (90-0.5*\ph:4.5*\r)
;
}

\foreach \a in {0,...,4} {
\tikzset{rotate=\a*\ph}

\fill[teal!75!blue]
	(90-0.5*\ph:\r) to[out=150, in=30] (90+0.5*\ph:\r) to[out=-30, in=210] (90-0.5*\ph:\r)
	(270:\r) to[out=300, in=60] (270:2.25*\r) to[out=120, in=240] (270:\r) 
	(90:2.25*\r) to[out=60, in=300] (90:3.5*\r) to[out=240, in=120] (90:2.25*\r) 
	(90:3.5*\r) to[out=-40, in=180-\ph+40]  (90-\ph:3.5*\r) arc (90-\ph:90:3.5*\r)
;

\foreach \aa in {-1,1} {
\tikzset{xscale=\aa}

\fill[teal!75!blue]
	(270+2*\ph:2.25*\r) to[out=185, in=320] (90:2.25*\r) to[out=20, in=130] (270+2*\ph:2.25*\r) 
;

}
}

\foreach \a in {0,...,4} {
\tikzset{rotate=\a*\ph}

\draw[]
	(90-0.5*\ph:\r) to[out=150, in=30] (90+0.5*\ph:\r)
	(90-0.5*\ph:\r) to[out=210, in=-30] (90+0.5*\ph:\r)
	(270:\r) to[out=300, in=60] (270:2.25*\r)	
	(270:\r) to[out=240, in=120] (270:2.25*\r)
	(90:2.25*\r) to[out=60, in=300] (90:3.5*\r)
	(90:2.25*\r) to[out=120, in=240] (90:3.5*\r)
	(90:3.5*\r) to[out=-40, in=180-\ph+40]  (90-\ph:3.5*\r)
;

\foreach \aa in {-1,1} {
\tikzset{xscale=\aa}

\draw[]
	(270+2*\ph:2.25*\r) to[out=185, in=320] (90:2.25*\r)
	(270+2*\ph:2.25*\r) to[out=130, in=20] (90:2.25*\r)
;

}
}

\draw[] (0,0) circle (3.5*\r);

\end{scope}

\begin{scope}[xshift=\xs cm] 

\node [inner sep=0] (image) at (0,0) 
            {\includegraphics[height=2cm]{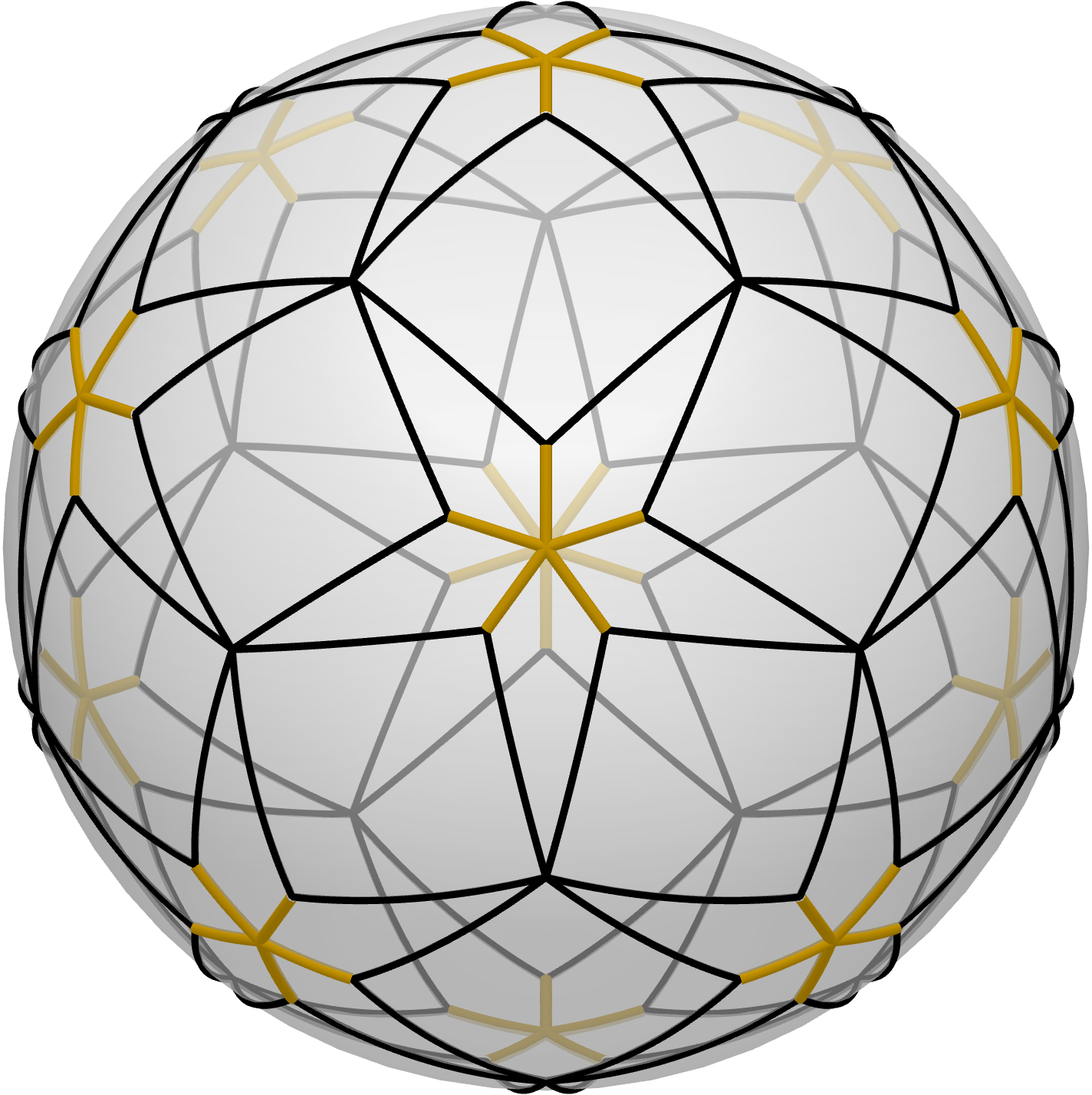}};

\end{scope}

}

\end{tikzpicture}
\caption{Subdivided $2$-edged multigraph of $\mathcal{D}$}
\end{subfigure}
\caption{\ref{Label:multigr-Cube-Dodeca} via a kite subdivision of the $2$-edged multigraphs of the cube and the dodecahedron}
\label{Fig-Platonic-multigraph-Cube-Dodeca}
\end{figure}


In Figure \ref{Fig-Platonic-thicken-Octa}, the first row shows \ref{Label:tri-subdiv-thick-Octa} and the second row shows \ref{Label:quad-subdiv-thick-Octa}. The first column shows the tilings via two operations on the octahedron $\mathcal{O}$, and the second shows those via the same operations on the flipped octahedron $F\mathcal{O}$. The first operation thickens the $1$-skeletons of $\mathcal{O}$ and $F\mathcal{O}$ respectively and subdivides them into squares, see Figure \ref{Fig-skeleton-op}. The other operation is a triangular or a kite subdivision of the legacy faces of the original solids. In fact, \ref{Label:tri-subdiv-thick-Octa} and \ref{Label:quad-subdiv-thick-Octa} are related via ``flipping'' the subdivision in each face all at once. For further details, see the discussions of Figures \ref{Fig-be-Tilings-sq-hex-al2be-al3ga2} and \ref{Fig-be,de-Tilings-sq-hex-de3-al3be-al2ga2}.

\begin{figure}[h!] 
\centering
\begin{subfigure}[t]{0.475\linewidth}
\centering
\begin{tikzpicture}
\tikzmath{
\xs=2.25;
\s=0.65;
\ys=3.5;
\r=0.14;
\rr=0.15*\r;
\ps=360/3;
\X=\r*cos(0.5*\ps);
\ph=360/6;
\l=\r*cos(0.5*\ph);
\th=360/4;
\x=\r*cos(\th/2);
}


\begin{scope}[]

\fill[teal!70!blue!70, scale=1] (0,0) circle (7*\r);

\fill[white] (0,0) circle (7*\r);

\fill[teal!70!blue!70]
	(\x, \x) -- (-\x, \x) -- (-\x, -\x) -- (\x, -\x) -- cycle
;

\foreach \a in {0,1,2,3} {
\tikzset{rotate=\a*\th}

\fill[teal!70!blue!70]
	(\x, \x) -- (\x, 3*\x) -- (2*\x, 4*\x) -- (2*\x, 7*\x) to[out=20,in=160] (7*\x, 7*\x) arc (45:135:7*\r) 
	to[out=20,in=160] (-2*\x, 7*\x) -- (-2*\x, 4*\x) -- (-\x, 3*\x) -- (-\x, \x) -- cycle
;

\fill[teal!70!blue!70]
	(2*\x, 4*\x) -- (2*\x, 5.5*\x) -- (5.5*\x, 5.5*\x) 
	-- (5.5*\x, 2*\x) -- (4*\x, 2*\x) -- (4*\x, 4*\x) -- cycle
;

\draw[HGold, line width=1.25]
	(2.5*\x, 2.5*\x) -- (2*\x, 4*\x)
	(2.5*\x, 2.5*\x) -- (4*\x, 2*\x)
	(2.5*\x, 2.5*\x) -- (\x, \x)
	(6.25*\x, 6.25*\x) -- (7*\x, 7*\x)
	(6.25*\x, 6.25*\x) to[out=180, in=45] (2*\x, 5.5*\x)
	(6.25*\x, 6.25*\x) to[out=270, in=45] (5.5*\x, 2*\x)
;
}

\foreach \aa in {0,1,2,3} {
\tikzset{shift={(\aa*\th:2*\x)}}

\foreach \a in {0,1,2,3} {
\tikzset{rotate=\a*\th}

\draw[]
	(0.5*\th:\r) -- (1.5*\th:\r)
;
}
}

\foreach \a in {0,1,2,3} {
\tikzset{rotate=\a*\th}

\draw[]
	(\x,3*\x) -- (2*\x, 4*\x)
	(3*\x,\x) -- (4*\x, 2*\x)
	(2*\x, 4*\x) -- (4*\x, 4*\x)
	(4*\x,2*\x) -- (4*\x, 4*\x)
	(2*\x, 4*\x) -- (-2*\x, 4*\x)
	(2*\x, 4*\x) -- (2*\x, 5.5*\x)
	(-2*\x, 4*\x) -- (-2*\x, 5.5*\x)
	(-2*\x, 5.5*\x) -- (2*\x, 5.5*\x)
	(4*\x, 4*\x) -- (5.5*\x, 5.5*\x)
	(2*\x, 5.5*\x) -- (5.5*\x, 5.5*\x)
	(-2*\x, 5.5*\x) -- (-5.5*\x, 5.5*\x)
	(2*\x, 5.5*\x) -- (2*\x, 7*\x)
	(2*\x, 7*\x) to[out=20,in=160] (7*\x, 7*\x)
	(5.5*\x, 2*\x) -- (7*\x,2*\x)
	(7*\x,2*\x) to[out=70,in=290] (7*\x, 7*\x)
	(-2*\x, 7*\x) -- (2*\x, 7*\x)
;
}

\draw (0,0) circle (7*\r);


\end{scope}

\begin{scope}[xshift=\xs cm]
\node [inner sep=0] (image) at (0,0) 
            {\includegraphics[height=2cm]{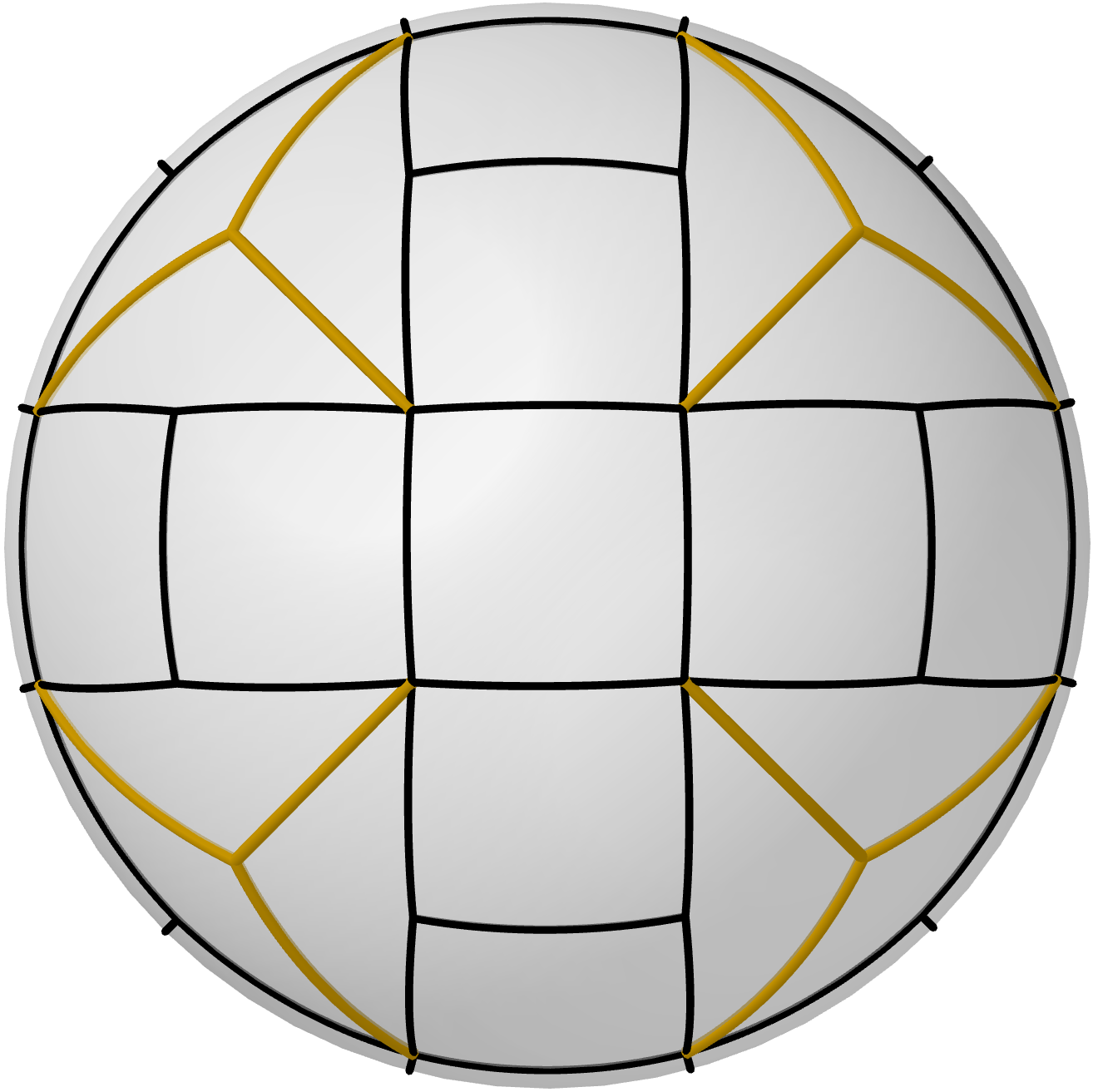}};
\end{scope}
\end{tikzpicture}
\caption{\ref{Label:tri-subdiv-thick-Octa} via $\mathcal{O}$}
\end{subfigure}
\begin{subfigure}[t]{0.475\linewidth}
\centering
\begin{tikzpicture}
\tikzmath{
\xs=2.25;
\s=0.65;
\ys=3.5;
\r=0.14;
\rr=0.15*\r;
\ps=360/3;
\X=\r*cos(0.5*\ps);
\ph=360/6;
\l=\r*cos(0.5*\ph);
\th=360/4;
\x=\r*cos(\th/2);
}

\begin{scope}[]

\fill[teal!70!blue!70, scale=1] (0,0) circle (7*\r);

\fill[white] (0,0) circle (7*\r);

\fill[teal!70!blue!70]
	(\x, \x) -- (-\x, \x) -- (-\x, -\x) -- (\x, -\x) -- cycle
;

\foreach \a in {0,1,2,3} {
\tikzset{rotate=\a*\th}

\fill[teal!70!blue!70]
	(-2*\x, 5.5*\x) -- (-2*\x, 7.75*\x) to[out=20, in=160] (7*\x, 7*\x) arc (45:135:7*\r) -- (-5.5*\x, 5.5*\x) -- cycle
;

\fill[teal!70!blue!70]
	(\x, \x) -- (\x, 3*\x) -- (2*\x, 4*\x) -- (2*\x, 5.5*\x) 
	-- (-2*\x, 5.5*\x) -- (-2*\x, 4*\x) -- (-\x, 3*\x) -- (-\x, \x) -- cycle
;

\fill[teal!70!blue!70]
	(2*\x, 4*\x) -- (2*\x, 5.5*\x) -- (5.5*\x, 5.5*\x) 
	-- (5.5*\x, 2*\x) -- (4*\x, 2*\x) -- (4*\x, 4*\x)   -- cycle
;

\draw[
HGold, 
line width=1.5]
	(2.5*\x, 2.5*\x) -- (2*\x, 4*\x)
	(2.5*\x, 2.5*\x) -- (4*\x, 2*\x)
	(2.5*\x, 2.5*\x) -- (\x, \x)
	(2*\x, 7.25*\x) -- (7*\x, 7*\x)
	(2*\x, 7.25*\x) -- (5.5*\x, 5.5*\x)
	(2*\x, 7.25*\x) -- (-2*\x, 5.5*\x)
;

}

\foreach \aa in {0,1,2,3} {
\tikzset{shift={(\aa*\th:2*\x)}}

\foreach \a in {0,1,2,3} {
\tikzset{rotate=\a*\th}

\draw[]
	(0.5*\th:\r) -- (1.5*\th:\r)
;
}
}

\foreach \a in {0,1,2,3} {
\tikzset{rotate=\a*\th}

\draw[]
	(\x,3*\x) -- (2*\x, 4*\x)
	(3*\x,\x) -- (4*\x, 2*\x)
	(2*\x, 4*\x) -- (4*\x, 4*\x)
	(4*\x,2*\x) -- (4*\x, 4*\x)
	(2*\x, 4*\x) -- (-2*\x, 4*\x)
	(2*\x, 4*\x) -- (2*\x, 5.5*\x)
	(-2*\x, 4*\x) -- (-2*\x, 5.5*\x)
	(-2*\x, 5.5*\x) -- (2*\x, 5.5*\x)
	(4*\x, 4*\x) -- (5.5*\x, 5.5*\x)
	(2*\x, 5.5*\x) -- (5.5*\x, 5.5*\x)
	(-2*\x, 5.5*\x) -- (-5.5*\x, 5.5*\x)
	(5.5*\x, 5.5*\x) -- (7*\x, 7*\x)
	(-2*\x, 5.5*\x) -- (-2*\x, 7.75*\x)
	(-2*\x, 7.75*\x) to[out=180, in=45] (-6.25*\x, 6.25*\x)
	(-2*\x, 7.75*\x) to[out=20, in=160] (7*\x, 7*\x)
;
}

\draw (0,0) circle (7*\r);

\end{scope}

\begin{scope}[xshift=\xs cm]
\node [inner sep=0] (image) at (0,0) 
            {\includegraphics[height=2cm]{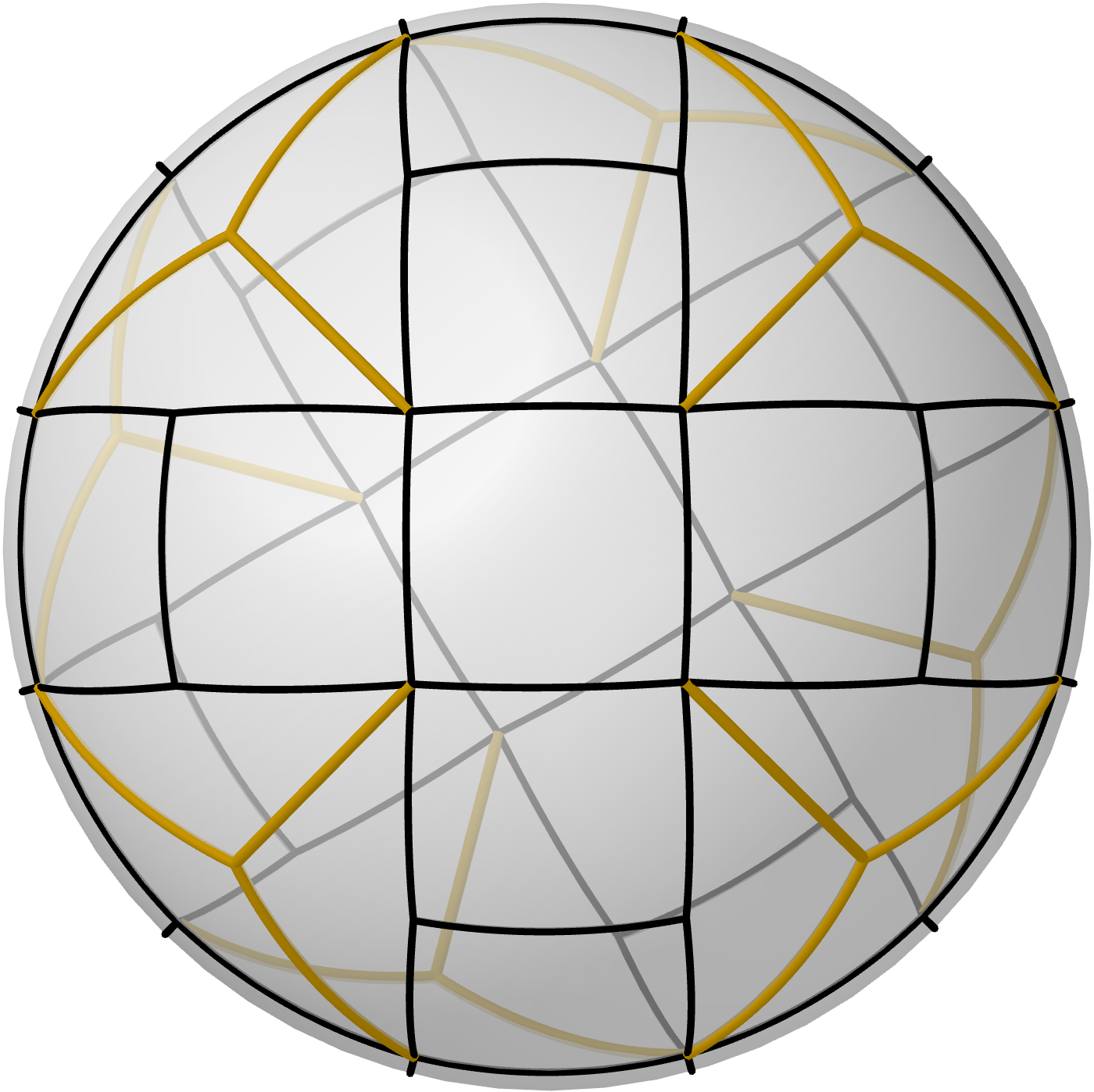}};
\end{scope}
\end{tikzpicture}
\caption{\ref{Label:tri-subdiv-thick-Octa} via $F\mathcal{O}$}
\end{subfigure}
\begin{subfigure}[t]{0.475\linewidth}
\centering
\begin{tikzpicture}
\tikzmath{
\xs=2.25;
\s=0.65;
\ys=3.5;
\r=0.14;
\rr=0.15*\r;
\ps=360/3;
\X=\r*cos(0.5*\ps);
\ph=360/6;
\l=\r*cos(0.5*\ph);
\th=360/4;
\x=\r*cos(\th/2);
}

\begin{scope}[]

\fill[teal!70!blue!70, scale=1] (0,0) circle (7*\r);

\fill[white] (0,0) circle (7*\r);

\fill[teal!70!blue!70]
	(\x, \x) -- (-\x, \x) -- (-\x, -\x) -- (\x, -\x) -- cycle
;

\foreach \a in {0,1,2,3} {
\tikzset{rotate=\a*\th}

\fill[teal!70!blue!70]
	(\x, \x) -- (\x, 3*\x) -- (2*\x, 4*\x) -- (2*\x, 7*\x) to[out=20,in=160] (7*\x, 7*\x) arc (45:135:7*\r) 
	to[out=20,in=160] (-2*\x, 7*\x) -- (-2*\x, 4*\x) -- (-\x, 3*\x) -- (-\x, \x) -- cycle
;

\fill[teal!70!blue!70]
	(2*\x, 4*\x) -- (2*\x, 5.5*\x) -- (5.5*\x, 5.5*\x) 
	-- (5.5*\x, 2*\x) -- (4*\x, 2*\x) -- (4*\x, 4*\x) -- cycle
;

\draw[HGold, line width=1.25]
	(2.5*\x, 2.5*\x) -- (\x, 3*\x)
	(2.5*\x, 2.5*\x) -- (3*\x, \x)
	(2.5*\x, 2.5*\x) -- (4*\x, 4*\x)
	(6.25*\x, 6.25*\x) -- (5.5*\x, 5.5*\x)
	(6.25*\x, 6.25*\x) to[] (2*\x, 7*\x)
	(6.25*\x, 6.25*\x) to[] (7*\x, 2*\x)
;
}

\foreach \aa in {0,1,2,3} {
\tikzset{shift={(\aa*\th:2*\x)}}

\foreach \a in {0,1,2,3} {
\tikzset{rotate=\a*\th}

\draw[]
	(0.5*\th:\r) -- (1.5*\th:\r)
;
}
}

\foreach \a in {0,1,2,3} {
\tikzset{rotate=\a*\th}

\draw[]
	(\x,3*\x) -- (2*\x, 4*\x)
	(3*\x,\x) -- (4*\x, 2*\x)
	(2*\x, 4*\x) -- (4*\x, 4*\x)
	(4*\x,2*\x) -- (4*\x, 4*\x)
	(2*\x, 4*\x) -- (-2*\x, 4*\x)
	(2*\x, 4*\x) -- (2*\x, 5.5*\x)
	(-2*\x, 4*\x) -- (-2*\x, 5.5*\x)
	(-2*\x, 5.5*\x) -- (2*\x, 5.5*\x)
	(4*\x, 4*\x) -- (5.5*\x, 5.5*\x)
	(2*\x, 5.5*\x) -- (5.5*\x, 5.5*\x)
	(-2*\x, 5.5*\x) -- (-5.5*\x, 5.5*\x)
	(2*\x, 5.5*\x) -- (2*\x, 7*\x)
	(2*\x, 7*\x) to[out=20,in=160] (7*\x, 7*\x)
	(5.5*\x, 2*\x) -- (7*\x,2*\x)
	(7*\x,2*\x) to[out=70,in=290] (7*\x, 7*\x)
	(-2*\x, 7*\x) -- (2*\x, 7*\x)
;
}

\draw (0,0) circle (7*\r);

\end{scope}

\begin{scope}[xshift=\xs cm]
\node [inner sep=0] (image) at (0,0) 
            {\includegraphics[height=2cm]{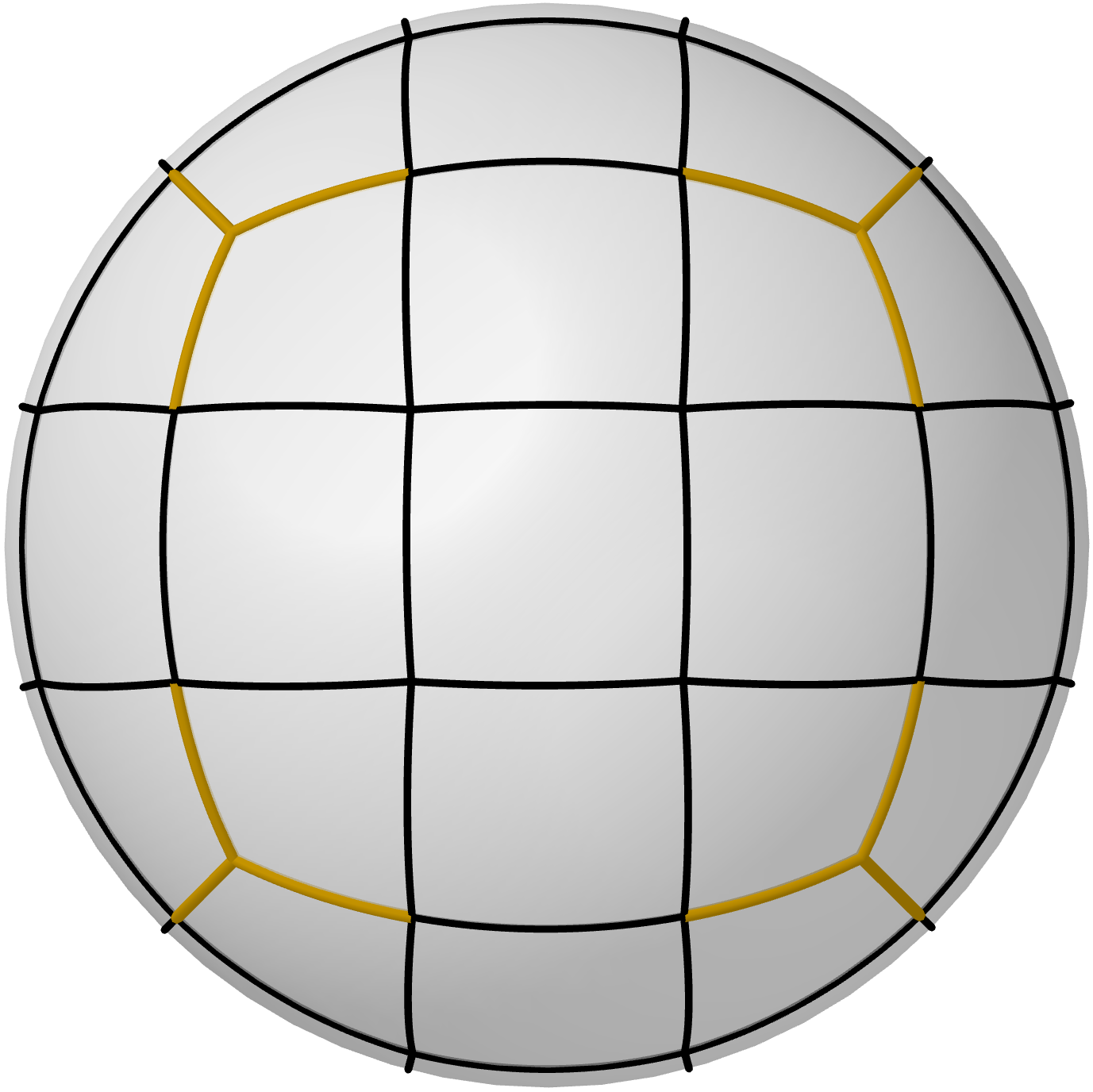}};
\end{scope}
\end{tikzpicture}
\caption{\ref{Label:quad-subdiv-thick-Octa} via $\mathcal{O}$}
\end{subfigure}
\begin{subfigure}[t]{0.475\linewidth}
\centering
\begin{tikzpicture}
\tikzmath{
\xs=2.25;
\s=0.65;
\ys=3.5;
\r=0.14;
\rr=0.15*\r;
\ps=360/3;
\X=\r*cos(0.5*\ps);
\ph=360/6;
\l=\r*cos(0.5*\ph);
\th=360/4;
\x=\r*cos(\th/2);
}

\begin{scope}[]

\fill[teal!70!blue!70, scale=1] (0,0) circle (7*\r);

\fill[white] (0,0) circle (7*\r);

\fill[teal!70!blue!70]
	(\x, \x) -- (-\x, \x) -- (-\x, -\x) -- (\x, -\x) -- cycle
;

\foreach \a in {0,1,2,3} {
\tikzset{rotate=\a*\th}

\fill[teal!70!blue!70]
	(-2*\x, 5.5*\x) -- (-2*\x, 7.75*\x) to[out=20, in=160] (7*\x, 7*\x) arc (45:135:7*\r) -- (-5.5*\x, 5.5*\x) -- cycle
;

\fill[teal!70!blue!70]
	(\x, \x) -- (\x, 3*\x) -- (2*\x, 4*\x) -- (2*\x, 5.5*\x) 
	-- (-2*\x, 5.5*\x) -- (-2*\x, 4*\x) -- (-\x, 3*\x) -- (-\x, \x) -- cycle
;

\fill[teal!70!blue!70]
	(2*\x, 4*\x) -- (2*\x, 5.5*\x) -- (5.5*\x, 5.5*\x) 
	-- (5.5*\x, 2*\x) -- (4*\x, 2*\x) -- (4*\x, 4*\x)   -- cycle
;

\draw[HGold, line width=1.25]
	(2.5*\x, 2.5*\x) -- (\x, 3*\x)
	(2.5*\x, 2.5*\x) -- (3*\x, \x)
	(2.5*\x, 2.5*\x) -- (4*\x, 4*\x)
	(2*\x, 7*\x) -- (6.25*\x, 6.25*\x)
	(2*\x, 7*\x) -- (-2*\x, 7.75*\x) 
	(2*\x, 7*\x) -- (2*\x, 5.5*\x)
;
}

\foreach \aa in {0,1,2,3} {
\tikzset{shift={(\aa*\th:2*\x)}}

\foreach \a in {0,1,2,3} {
\tikzset{rotate=\a*\th}

\draw[]
	(0.5*\th:\r) -- (1.5*\th:\r)
;

}
}

\foreach \a in {0,1,2,3} {
\tikzset{rotate=\a*\th}

\draw[]
	(\x,3*\x) -- (2*\x, 4*\x)
	(3*\x,\x) -- (4*\x, 2*\x)
	(2*\x, 4*\x) -- (4*\x, 4*\x)
	(4*\x,2*\x) -- (4*\x, 4*\x)
	(2*\x, 4*\x) -- (-2*\x, 4*\x)
	(2*\x, 4*\x) -- (2*\x, 5.5*\x)
	(-2*\x, 4*\x) -- (-2*\x, 5.5*\x)
	(-2*\x, 5.5*\x) -- (2*\x, 5.5*\x)
	(4*\x, 4*\x) -- (5.5*\x, 5.5*\x)
	(2*\x, 5.5*\x) -- (5.5*\x, 5.5*\x)
	(-2*\x, 5.5*\x) -- (-5.5*\x, 5.5*\x)
	(5.5*\x, 5.5*\x) -- (7*\x, 7*\x)
	(-2*\x, 5.5*\x) -- (-2*\x, 7.75*\x)
	(-2*\x, 7.75*\x) to[out=180, in=45] (-6.25*\x, 6.25*\x)
	(-2*\x, 7.75*\x) to[out=20, in=160] (7*\x, 7*\x)
;
}

\draw (0,0) circle (7*\r);
\end{scope}

\begin{scope}[xshift=\xs cm] 
\node [inner sep=0] (image) at (0,0) 
            {\includegraphics[height=2cm]{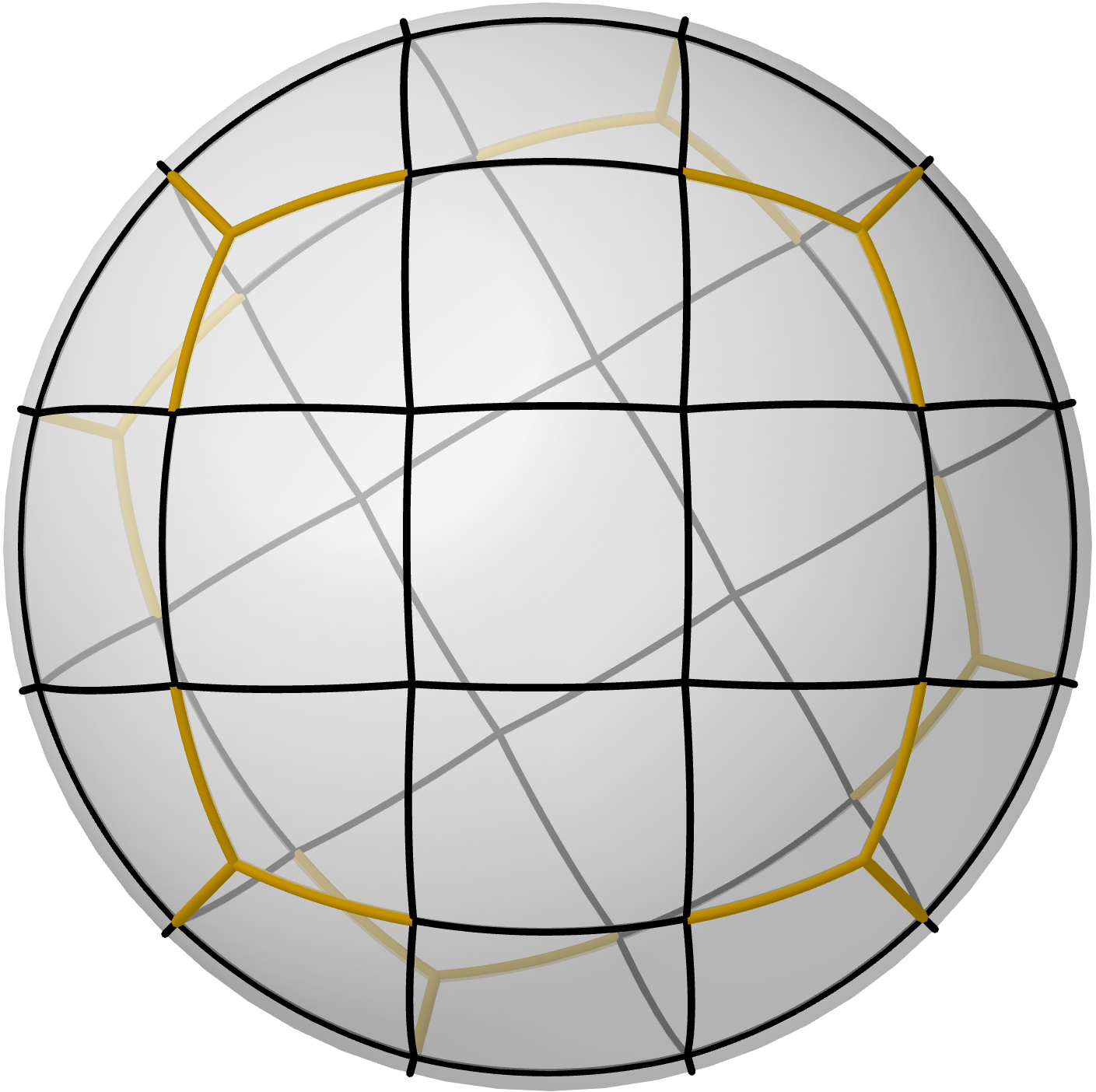}};
\end{scope}
\end{tikzpicture}
\caption{\ref{Label:quad-subdiv-thick-Octa} via $F\mathcal{O}$}
\end{subfigure}
\caption{\ref{Label:tri-subdiv-thick-Octa} in the first row and \ref{Label:quad-subdiv-thick-Octa} in the second;
the left (resp. the right) two tilings via subdividing the thickened $1$-skeletons and the (uncoloured) faces from the octahedron (resp. the flipped octahedron)}
\label{Fig-Platonic-thicken-Octa}
\end{figure}


\begin{figure}[h!] 
\centering
\begin{subfigure}[t]{0.45\linewidth}
\centering
\begin{tikzpicture}[>=latex]
\tikzmath{
\r=0.35;
\rr=0.1*\r;
\ys=1;
\xs=2.5;
\th=90;
\x=\r*cos(0.5*\th);
}

\begin{scope}[]

\foreach \a in {0,1,2,3} {
\tikzset{rotate=\a*\th}

\draw[]
	(0,0) -- (0,1*\r) 
	(0,\r) -- (\r,0)
;

\draw[->]
	(0,\r) -- (0,2*\r)
;

}


\end{scope}

\begin{scope}[xshift=\xs cm] 

\foreach \a in {0,1,2,3} {
\tikzset{rotate=\a*\th}

\fill[gray!50]
	(2*\x, 0.5*\x) -- (2*\x, -0.5*\x) -- (3.25*\x, -0.5*\x) -- (3.25*\x, 0.5*\x) -- (0.5*\x,3.25*\x) -- (0.5*\x,2*\x) -- cycle
;

\fill[gray!50]
	(-0.5*\x, 0.5*\x) -- (-0.5*\x, -0.5*\x) -- (4.75*\x, -0.5*\x) -- (4.75*\x, 0.5*\x) -- cycle
;

}

\foreach \a in {0,1,2,3} {
\tikzset{rotate=\a*\th}

\draw[]
	(0.5*\x, 0.5*\x) -- (0.5*\x, 2*\x) -- (2*\x, 0.5*\x) -- cycle
	(0.5*\x, 3.25*\x) -- (3.25*\x, 0.5*\x)
	%
;


\draw[->]
	(0.5*\x, 3.25*\x) -- (0.5*\x, 4.85*\x)
;

\draw[->]
	(-0.5*\x, 3.25*\x) -- (-0.5*\x, 4.85*\x)
;

\draw[densely dashed]
	(0.5*\x, 0.5*\x) -- (-0.5*\x, 0.5*\x)
	(1.25*\x, 0.5*\x) -- (1.25*\x, -0.5*\x)
	(2*\x, 0.5*\x) -- (2*\x, -0.5*\x)
	($(0.5*\x, 2*\x) !1/2! (2*\x, 0.5*\x)$) -- ($(0.5*\x, 3.25*\x) !1/2! (3.25*\x, 0.5*\x)$)
	(2*\x, 0.5*\x) -- (3.25*\x, 0.5*\x)
	(2*\x, -0.5*\x) -- (3.25*\x, -0.5*\x)
	(3.25*\x, 0.5*\x) -- (3.25*\x, -0.5*\x)
	(4*\x, 0.5*\x) -- (4*\x, -0.5*\x) 
;

}


\end{scope}

\end{tikzpicture}
\caption{$\mathcal{O}$ and its thickened $1$-skeleton}
\end{subfigure}
\begin{subfigure}[t]{0.45\linewidth}
\centering
\begin{tikzpicture}[>=latex]
\tikzmath{
\r=0.35;
\rr=0.1*\r;
\ys=1;
\xs=2.5;
\th=90;
\x=\r*cos(0.5*\th);
}

\begin{scope}[]

\foreach \a in {0,1,2,3} {
\tikzset{rotate=\a*\th}

\draw[]
	(0,0) -- (0,1*\r) 
	(0,\r) -- (\r,0)
;

\draw[->]
	(45:\x) -- (45:2*\r)
;
}


\end{scope}

\begin{scope}[xshift=\xs cm]

\foreach \a in {0,1,2,3} {
\tikzset{rotate=\a*\th}

\fill[gray!50]
	(2*\x, 0.5*\x) -- (2*\x, -0.5*\x) -- (3*\x, 0) -- (3*\x,2.25*\x) -- (4.5*\x, 3.75*\x) -- (3.75*\x,4.5*\x)
	-- (2.25*\x, 3*\x) -- (-0.5*\x, 3*\x) -- cycle
	(-0.5*\x, 0.5*\x) -- (2*\x, 0.5*\x) -- (2*\x, -0.5*\x) -- (-0.5*\x, -0.5*\x) -- cycle
;
}

\foreach \a in {0,1,2,3} {
\tikzset{rotate=\a*\th}

\draw[densely dashed]
	%
	%
;

\draw[HGold, line width=1.15, densely dashed]
	%
;
}

\foreach \a in {0,1,2,3} {
\tikzset{rotate=\a*\th}

\draw[]
	(0.5*\x, 0.5*\x) -- (0.5*\x, 2*\x) -- (2*\x, 0.5*\x) -- cycle
	(2.25*\x,3*\x) -- (-2.25*\x,3*\x) 
	%
;


\draw[->]
	(2.25*\x,3*\x) -- (3.85*\x,4.6*\x) 
;

\draw[->]
	(3*\x,2.25*\x) -- (4.6*\x, 3.85*\x)
;

\draw[densely dashed]
	(0.5*\x, 0.5*\x) -- (-0.5*\x, 0.5*\x)
	(1.25*\x, 0.5*\x) -- (1.25*\x, -0.5*\x)
	(2*\x, 0.5*\x) -- (2*\x, -0.5*\x)
	(0.5*\x, 2*\x) -- (0, 3*\x) 
	(-0.5*\x, 2*\x) -- (-2.25*\x,3*\x) 
	($(0.5*\x, 2*\x) !1/2! (2*\x, 0.5*\x)$) -- (2.25*\x,3*\x) 
	(2.25*\x,3*\x) -- (3*\x,2.25*\x) 
	(3.75*\x,3*\x) -- (3*\x,3.75*\x) 
;

}


\end{scope}

\end{tikzpicture}
\caption{$F\mathcal{O}$ and its thickened $1$-skeleton}
\end{subfigure}

\caption{The thickened skeletons of the octahedron and the flipped octahedron}
\label{Fig-skeleton-op}
\end{figure}


In Figure \ref{Fig-Tiling-Subdiv-J1}, the drawing without thick edges is the graph of the square pyramid $\mathcal{J}_1$; the thick edges subdivide each triangle into three kites and turn the square into a regular octagon. 

\begin{figure}[h!]
\centering
\begin{tikzpicture}[>=latex]
\tikzmath{
\r=0.4;
\n=8;
\nn=\n-1;
\th=360/\n;
\XS=2.5;
}

\fill[teal!70!blue] (0,0) circle (\r);

\draw[] (0,0) circle (\r);

\draw[orange!75!brown!75!black!80, line width=1.5] (0,0) circle (2*\r);

\foreach \a in {0,2,...,6} {
\tikzset{rotate=\a*\th}
\draw[->]
	(90:2*\r) -- (90:2.75*\r)
;
}

\foreach \a in {0,2,...,6} {
\tikzset{rotate=\a*\th}
\draw[]
	(90:\r) -- (90:2*\r) 
;
\draw[orange!75!brown!75!black!80, line width=1.5]
	(90-\th:\r) -- (90-\th:2*\r)
;
}

\begin{scope}[] 
\node [inner sep=0] (image) at (\XS,0) 
            {\includegraphics[height=2cm]{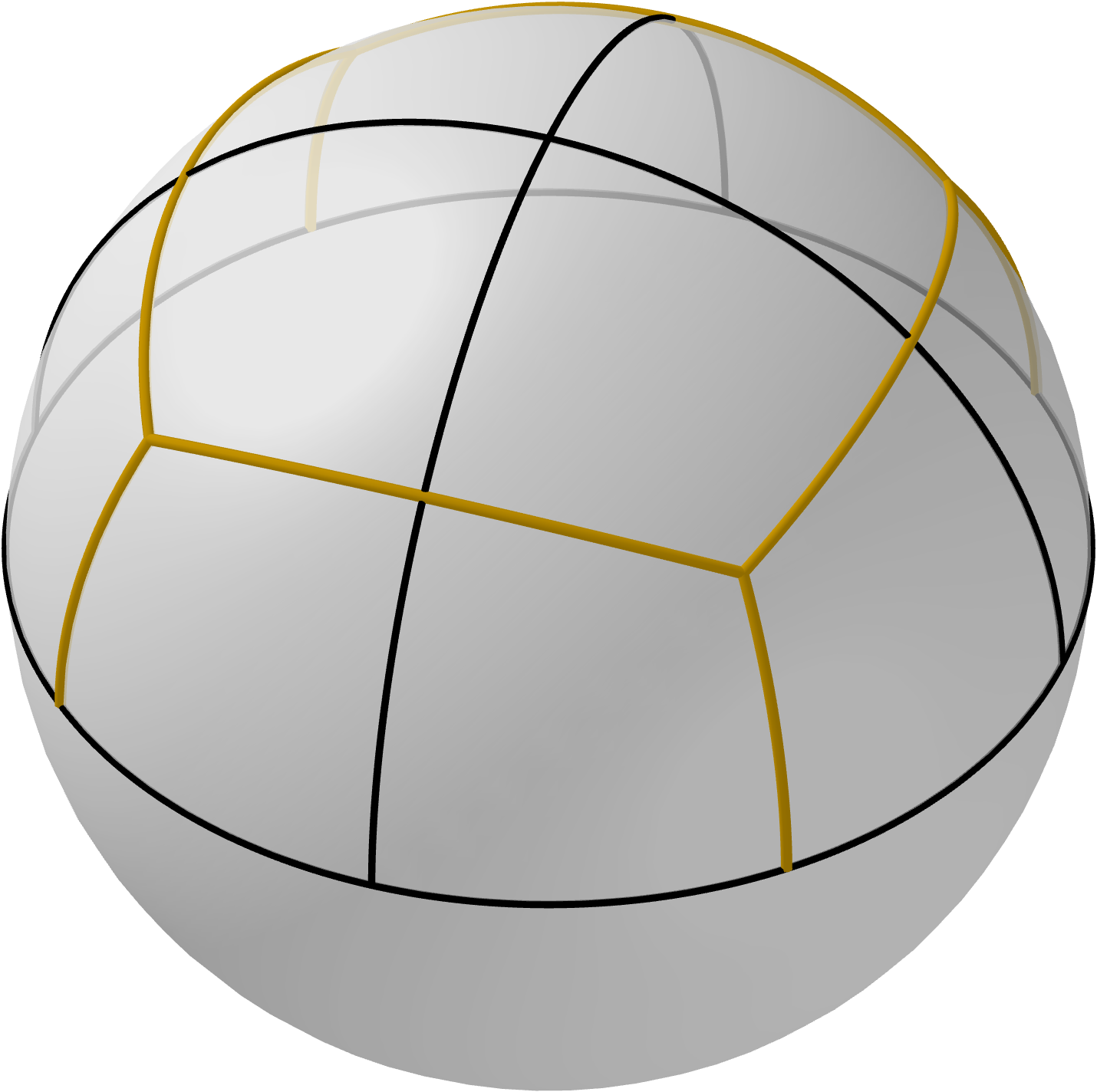}};
\end{scope}

\end{tikzpicture}
\caption{The kite subdivision of the square pyramid $\mathcal{J}_1$}
\label{Fig-Tiling-Subdiv-J1}
\end{figure}

\section{Terminology \& Tools}
\label{Sec-Toolbox-Strategy}

The key to classification is to determine the admissible vertex types in tilings in terms of their incident angle combinations. Our tools are developed to serve this purpose. Terminology is inherited from \cite{cly,cly2} with minimum introduction to be self-contained.

\subsection{Terminology}
\label{Subsec-terminology}

A vertex with full incident angle information is represented by its incident angles counting multiplicities (without specifying the angle arrangement) in a product form. For example, $\alpha^a\beta^b\gamma^c$ denotes a vertex having $a$ copies of $\alpha$ and $b$ copies of $\beta$ and $c$ copies of $\gamma$ as incident angles. Its {\em vertex angle sum} is
\begin{align*}
a \alpha + b \beta + c \gamma = 2\pi.
\end{align*} 
In practice, we implicitly assume $a,b,c >0$ unless otherwise specified. For example, $\alpha\beta^2$ represents a vertex with one $\alpha$, two $\beta$'s and no $\gamma$, i.e., $a=1, b=2$ and $c=0$. 

A vertex with partial incident angle information can also be similarly represented. For example, $\alpha\beta^2\cdots$ means a vertex with at least one $\alpha$ and two $\beta$'s. 

In fact, the same notations above may represent the equivalence class of vertices having the same full/partial incident angle combination. We call such an equivalence class of vertices a {\em vertex type} and the vertex angle sum remains the same. For this reason, in practice we may not need to distinguish between vertex and vertex type. 

Various algebraic, geometric and combinatorial constraints lead to a list of {\em admissible} vertex types in a tiling, which is called an {\em anglewise vertex combination} ($\AVC$). One constraint is the consistency of the linear system of vertex angle sums. The list can be refined recursively by further geometric or combinatorial deductions, until one obtains either a tiling or a contradiction. An example (\eqref{Eq-AVC-al2be-al2ga2-ded} of \ref{Label:EMT-Prisms}) is given below,
\begin{align*} 
\AVC = \{ \alpha^2\beta, \alpha^2\gamma^2, \delta^d \}.
\end{align*}
One can check that the linear system given by the above $\AVC$ is underdetermined. The generic parameter $d$ takes different values in different tilings within the family. 

To represent the angle arrangement(s) at a vertex, we use the following system of notations. The $x$-edge (resp. the $y$-edge) between angles are stylised as \quotes{ $\vert$ } (resp. \quotes{ $\bvert$ }).  For example, $\alpha_1\gamma_2\cdots$ in Figure \ref{Subfig-albe-alga} denotes a vertex where an $\alpha$ from tile $T_1$ and a $\gamma$ from tile $T_2$ are incident angles; and $\alpha_1\vert\gamma_2\cdots$ serves the same purpose and specifies the adjacency of angles at an $x$-edge. At times, for simplicity we may use $\alpha_1\vert\gamma_2$ just to emphasise the angle arrangement. In addition, the same picture shows that $\alpha\vert\gamma\cdots$ is a vertex if and only if $\alpha\vert\beta\cdots$ is a vertex. Similarly, Figure \ref{Subfig-bebe-gaga} shows that $\gamma\vert\gamma\cdots$ is a vertex if and only if $\beta\vert\beta\cdots$ is also a vertex; and in Figure \ref{Subfig-gaga-dede}, $\gamma \, \bvert \, \gamma \cdots$ is a vertex if and only if $\delta \, \bvert \, \delta \cdots$ is a vertex. Figure \ref{Subfig-dedede} shows an example of the angle arrangement of a full vertex, $\delta^3$, denoted by $\bvert \, \delta \, \bvert \, \delta \, \bvert \, \delta \, \bvert$. The above system of notations serves as substitutes for pictures to streamline the later discussion. 

\begin{figure}[h!] 
\centering
\begin{subfigure}[t]{0.28\linewidth}
\centering
\begin{tikzpicture}
\tikzmath{
\r=0.625;
\th=360/4;
\x=\r*cos(0.5*\th);
\R = sqrt(\x^2+(3*\x)^2);
\aR = acos(3*\x/\R);
}

\begin{scope}

\draw[]
	(0,\x) -- (-1.5*\x, 1.25*\x)
	(0,-\x) -- (-1.5*\x, -1.25*\x)
;

\foreach \a in {0,...,3} {
\draw[xshift=\x cm, rotate=\th*\a]
	(0.5*\th:\r) -- (1.5*\th:\r)
;
}

\foreach \a in {0,3} {
\tikzset{shift={(-\x,0)}}
\node at (0.5*\th+\a*\th: 0.6*\r) {\small $\alpha$};
}

\node at (-1.75*\x, 0.2*\x) {$\vdots$};

{
\tikzset{shift={(\x,0)}}

\draw[line width=2]
	(0.5*\th:\r) -- (3.5*\th:\r)
	(2.5*\th:\r) -- (3.5*\th:\r)
;

\node at (0.5*\th: 0.575*\r) {\small $\gamma$};
\node at (1.55*\th: 0.55*\r) {\small $\beta$};
\node at (2.5*\th: 0.575*\r) {\small $\gamma$};
\node at (3.5*\th: 0.575*\r) {\small $\delta$};
}

\node[inner sep=1,draw=black,line width=0.5,shape=circle] at (-\x,0) {\footnotesize $1$};
\node[inner sep=1,draw=black,line width=0.5,shape=circle] at (\x,0) {\footnotesize $2$};

\end{scope}
\end{tikzpicture}
\caption{$\alpha_1\vert\beta_2\cdots$, $\alpha_1\vert\gamma_2\cdots$}
\label{Subfig-albe-alga}
\end{subfigure}
\begin{subfigure}[t]{0.28\linewidth}
\centering
\begin{tikzpicture}

\tikzmath{
\r=0.625;
\th=360/4;
\x=\r*cos(0.5*\th);
\R = sqrt(\x^2+(3*\x)^2);
\aR = acos(3*\x/\R);
}

\begin{scope}[]

\foreach \aa in {-1, 1} {

\tikzset{shift={(\aa*\x,0)}}

\foreach \a in {0,...,3} {

\draw[rotate=\th*\a]
	(0.5*\th:\r) -- (1.5*\th:\r)
;

}
}

\foreach \aa in {-1, 1} {

\tikzset{shift={(\aa*\x,0)}, xscale=\aa}

\draw[line width=2]
	(0.5*\th:\r) -- (1.5*\th:\r)
	(0.5*\th:\r) -- (3.5*\th:\r)
;

\node at (0.5*\th: 0.575*\r) {\small $\delta$};
\node at (1.55*\th: 0.575*\r) {\small $\gamma$};
\node at (2.5*\th: 0.575*\r) {\small $\beta$};
\node at (3.5*\th: 0.575*\r) {\small $\gamma$};

}

\node[inner sep=1,draw=black,line width=0.5,shape=circle] at (-\x,0) {\footnotesize $1$};
\node[inner sep=1,draw=black,line width=0.5,shape=circle] at (\x,0) {\footnotesize $2$};

\end{scope}
\end{tikzpicture}
\caption{$\beta_1\vert\beta_2\cdots$, $\gamma_1\vert\gamma_2\cdots$}
\label{Subfig-bebe-gaga}
\end{subfigure}
\begin{subfigure}[t]{0.28\linewidth}
\centering
\begin{tikzpicture}

\tikzmath{
\r=0.625;
\th=360/4;
\x=\r*cos(0.5*\th);
\R = sqrt(\x^2+(3*\x)^2);
\aR = acos(3*\x/\R);
}

\begin{scope}[]

\foreach \aa in {-1, 1} {
\tikzset{shift={(\aa*\x,0)}}

\foreach \a in {0,...,3} {
\draw[rotate=\th*\a]
	(0.5*\th:\r) -- (1.5*\th:\r)
;
}
}

\draw[shift={(\x,0)}, line width=2]
	(1.5*\th:\r) -- (2.5*\th:\r)
;

\foreach \aa in {-1, 1} {
\tikzset{shift={(\aa*\x,0)}, xscale=\aa}

\draw[line width=2]
	(0.5*\th:\r) -- (1.5*\th:\r)
;

\node at (0.5*\th: 0.575*\r) {\small $\gamma$};
\node at (1.55*\th: 0.575*\r) {\small $\delta$};
\node at (2.5*\th: 0.575*\r) {\small $\gamma$};
\node at (3.5*\th: 0.575*\r) {\small $\beta$};
}

\node[inner sep=1,draw=black,line width=0.5,shape=circle] at (-\x,0) {\footnotesize $1$};
\node[inner sep=1,draw=black,line width=0.5,shape=circle] at (\x,0) {\footnotesize $2$};

\end{scope}
\end{tikzpicture}
\caption{$\gamma_1\,\bvert\,\gamma_2\cdots$, $\delta_1\,\bvert\,\delta_2\cdots$}
\label{Subfig-gaga-dede}
\end{subfigure}
\begin{subfigure}[t]{0.125\linewidth}
\centering
\begin{tikzpicture}

\tikzmath{
\r=0.625;
\th=360/4;
\x=\r*cos(0.5*\th);
\R = sqrt(\x^2+(3*\x)^2);
\aR = acos(3*\x/\R);
}

\begin{scope}[] 
\tikzmath{
\ph=360/3;
}

\foreach \a in {0,1,2} {
\tikzset{rotate=\a*\ph}
\draw[line width=2]
	(0,0) -- (90:\r)
;

\node at (90-0.5*\ph:0.4*\r) {\small $\delta$};
}

\end{scope}

\end{tikzpicture}
\caption{$\bvert\,\delta\,\bvert\,\delta\,\bvert\,\delta\,\bvert$}
\label{Subfig-dedede}
\end{subfigure}
\caption{Adjacent pairs of tiles; and $\delta^3$}
\label{Fig-a4-a2b2-adj}
\end{figure}

As per Figure \ref{Fig-kite-mgon}, the kite angle sum and the $m$-gon angle sum are
\begin{align}
\label{Eq-mgon-sum}
m\alpha > (m-2)\pi, \\
\label{Eq-kite-sum}
\beta+2\gamma+\delta > 2\pi.
\end{align}
Given $m$-gons with $m\ge4$, equation \eqref{Eq-mgon-sum} further implies that
\begin{align}
\label{Eq-alpha-lbound}
\alpha > (1-\tfrac{2}{m})\pi\ge\tfrac{1}{2}\pi.
\end{align}

\subsection{Combinatorics}

Local and global combinatorics shape vertex types in a tiling. A local fact follows immediately from Figures \ref{Subfig-bebe-gaga} and \ref{Subfig-gaga-dede}. 

\begin{lem}\label{Lem-bebe-gaga-dede} In a tiling with kites, $\beta\vert\beta\cdots$ is a vertex if and only if $\gamma\vert\gamma\cdots$ is a vertex; and $\gamma\,\bvert\,\gamma\cdots$ is a vertex if and only if $\delta\,\bvert\,\delta\cdots$ is a vertex. 
\end{lem}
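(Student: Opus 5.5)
The plan is to read the statement off the local structure of the kite along a single shared edge, i.e. from Figures \ref{Subfig-bebe-gaga} and \ref{Subfig-gaga-dede}. Everything rests on two facts about the kite with edge combination $x^2y^2$ and angle combination $\beta\gamma^2\delta$, as in Figure \ref{Fig-kite-mgon}. First, each $x$-edge runs between $\beta$ and one $\gamma$. Second, each $y$-edge runs between $\delta$ and the other $\gamma$. Put differently, the angles at the two ends of an $x$-edge are $\{\beta,\gamma\}$, and at the two ends of a $y$-edge they are $\{\gamma,\delta\}$. In an edge-to-edge tiling each edge is shared by exactly two tiles, and the two end vertices of that edge are vertices of both tiles.

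For the first equivalence, suppose $\beta\vert\beta\cdots$ is a vertex. Then two tiles $T_1,T_2$ contribute $\beta$ angles on either side of a common $x$-edge $e$. A $\beta$ angle forces both tiles to be kites, since the $m$-gon has only $\alpha$. In each kite, the other end of the $x$-edge from $\beta$ carries $\gamma$. So at the far end vertex of $e$, the angles of $T_1$ and $T_2$ are both $\gamma$, and they are adjacent across $e$. This gives $\gamma\vert\gamma\cdots$.

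The converse starts from $\gamma\vert\gamma\cdots$ at an $x$-edge $e$. Again both tiles are kites, because the regular polygon has no $\gamma$. Since $e$ is an $x$-edge, each kite's $\gamma$ at $e$ must be the $\gamma$ whose $x$-edge runs to $\beta$. Hence the far end of $e$ is $\beta\vert\beta\cdots$.

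The second equivalence is the same argument with $y$-edges in place of $x$-edges. Along a $y$-edge, the kite's angles at the two ends are $\gamma$ and $\delta$. So $\gamma\,\bvert\,\gamma\cdots$ at one end of a $y$-edge forces $\delta\,\bvert\,\delta\cdots$ at the other end, and conversely. Only kites have $y$-edges, so there is no need to check which tile type is involved.

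The only point needing care is that the two angles written across the edge symbol really belong to the two distinct tiles sharing that edge. They must also be the angles at the same endpoint of that edge, which is exactly what the notation $\vert$ and $\bvert$ encodes. There is also a non-degeneracy issue: the far end of $e$ must be a genuine vertex where these two angles are adjacent. This holds because tiles are simple polygons, so the two ends of an edge are distinct vertices, and the tiling is edge-to-edge. With these checks in place, the whole proof is a direct reading of the two figures and contains no real obstacle.
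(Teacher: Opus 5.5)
Your proposal is correct and matches the paper's argument. The paper just reads the lemma off Figures~\ref{Subfig-bebe-gaga} and~\ref{Subfig-gaga-dede}, which show exactly your observation: an $x$-edge of a kite joins $\beta$ to $\gamma$, and a $y$-edge joins $\gamma$ to $\delta$, so two kites sharing an edge have matching angle pairs at its two ends. Your remarks that the two tiles are distinct and the two endpoints are genuine vertices only make explicit what the figures take for granted.
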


\begin{lem}[Parity Lemma]\label{Lem-parity} The number of $\gamma$ at a vertex is even. 
\end{lem}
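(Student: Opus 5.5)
The plan is a local edge-counting argument at a single vertex $v$, using only the edge pattern of the two prototiles. The regular $m$-gon has only $x$-edges. In the kite (Figure \ref{Fig-kite-mgon}), going around the boundary with edge combination $x^2y^2$, the two $x$-edges meet at $\beta$ and the two $y$-edges meet at $\delta$, while each $\gamma$ lies between one $x$-edge and one $y$-edge.

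\textbf{Step 1 (classify corners).} Record, for each angle, the pair of edges bounding it at its vertex:
\[
\alpha,\ \beta \text{ are bounded by } x|x, \qquad \delta \text{ by } y|y, \qquad \gamma \text{ by } x|y.
\]

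\textbf{Step 2 (count $y$-edges at $v$).} Let $e_y$ be the number of $y$-edges incident to $v$. Since the tiling is edge-to-edge, each $y$-edge at $v$ is shared by exactly two corners at $v$, one on each side. The same edge cannot appear on both sides of a single corner, because the tiles are simple polygons with $x,y\in(0,\pi)$. Counting pairs (corner at $v$, $y$-edge bounding that corner) therefore gives $2e_y$. By Step 1, the same count equals $2\#\delta + \#\gamma$, where $\#$ denotes the number of copies of each angle at $v$.

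\textbf{Step 3 (conclude).} From $\#\gamma = 2e_y - 2\#\delta$, the number of $\gamma$ at $v$ is even.

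An equivalent way to phrase the argument: going cyclically around $v$, the edge type changes from $x$ to $y$ or back exactly at the $\gamma$ corners, and a cyclic sequence must switch an even number of times.

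The only point needing care is that each edge at $v$ is counted exactly twice. This follows from the edge-to-edge assumption and the simplicity of the tiles, which together rule out an edge bounding the same corner on both sides. I do not expect a genuine obstacle.
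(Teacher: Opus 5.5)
Your proof is correct and follows essentially the same idea as the paper. The paper notes that going around a vertex, the edge type can switch between $x$ and $y$ only at a $\gamma$ corner, and the cycle of edges must close up; this is your ``equivalent phrasing'', and your identity $2e_y = 2\#\delta + \#\gamma$ is the double-counting form of the same parity observation.
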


\begin{proof} A vertex with $\gamma$ is of the form $\vert \gamma \, \bvert \cdots$. The edges at both ends must be matched and $\gamma$ is the only angle with both edges $\vert$ and $\bvert$. Hence the claim follows.
\end{proof}

\begin{remark}\label{Rmk-benede}
Since $\gamma$ must appear at some vertex, the above Parity Lemma~\ref{Lem-parity} implies that there is a vertex of type $\gamma^2\cdots$. Its vertex angle sum then implies $\gamma<\pi$. Now if $\beta=\delta$, then by $\gamma<\pi$ the kite can be bisected into two congruent triangles as shown in Figure \ref{Fig-a4-a2b2-kite}, in which $\beta_1=\delta_1$ and $\beta_2=\delta_2$; this would imply that $a=b$ and so the kite would in fact be a square. Hence, in every dihedral tiling under our assumption, we must have either $\beta>\delta$ or $\beta<\delta$.
\end{remark}

The following lemma excludes certain vertices.  

\begin{lem}\label{Lem-bega2-ga2de} If $\beta\gamma^2$ or $\gamma^2\delta$ is a vertex, then the tiling is monohedral. 
\end{lem}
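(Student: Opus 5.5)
The plan is to show that an $m$-gon can never share an edge with a kite once $\beta\gamma^2$ or $\gamma^2\delta$ occurs. The sphere is connected, so the kites would then cover everything, the $m$-gons would be absent, and the tiling would be monohedral. I treat $\beta\gamma^2$ in detail; $\gamma^2\delta$ is handled symmetrically by swapping the roles of $x,y$ and of $\beta,\delta$.

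First I fix the arrangement at the degree-$3$ vertex $\beta\gamma^2$. The angle $\beta$ lies between two $x$-edges, while each $\gamma$ lies between one $x$-edge and one $y$-edge. The only arrangement is therefore $\vert\,\beta\,\vert\,\gamma\,\bvert\,\gamma\,\vert$. By Lemma \ref{Lem-bebe-gaga-dede}, $\gamma\,\bvert\,\gamma$ forces the two kites sharing that $y$-edge to meet at a vertex $\delta\,\bvert\,\delta\cdots$ at its other end. The vertex angle sum gives $\beta+2\gamma=2\pi$. Together with \eqref{Eq-kite-sum} this yields $\delta>\beta$; with $\gamma<\pi$ (Remark \ref{Rmk-benede}) it also yields $\beta>0$ constraints relating $\beta,\gamma,\delta$.

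Next I note which edges each angle can see. Only $\gamma$ and $\delta$ are bounded by $y$-edges, and $\alpha$ is bounded by $x$-edges only. An $m$-gon can touch a kite only along an $x$-edge, so the configuration to exclude is a vertex $\alpha\vert\beta\cdots$ or $\alpha\vert\gamma\cdots$. By Figure \ref{Subfig-albe-alga}, these two occur together, so it suffices to rule out a vertex $\alpha\gamma\cdots$. By the Parity Lemma \ref{Lem-parity}, such a vertex is $\alpha\gamma^2\cdots$, and its angle sum is at least $\alpha+2\gamma$.

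The main obstacle is the angle-sum step that kills $\alpha\gamma^2\cdots$, and I expect it to need a short case analysis. My first try is to compare $\alpha\gamma^2\cdots$ with $\beta\gamma^2$, which gives $\alpha\le\beta$, possibly with equality forcing the remainder of the vertex to be empty. The vertex $\alpha\gamma^2$ would then be degree $3$ with $\alpha=\beta$. I would then use the partner vertex $\alpha\vert\beta\cdots$, whose sum $\alpha+\beta+\cdots\le 2\pi$, together with $\alpha>\tfrac12\pi$ from \eqref{Eq-alpha-lbound}, to bound $\beta$ from below. Finally I would feed these bounds into the $\delta\,\bvert\,\delta\cdots$ vertex, which consists only of $\gamma$'s and $\delta$'s, and into \eqref{Eq-kite-sum}, aiming for a contradiction.

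If the pure inequalities are not enough, I would fall back to a neighbourhood construction around $\alpha\vert\gamma$. This means tracing the kite that carries the $\gamma$, locating its $\beta$ at the vertex $\alpha\vert\beta$, and checking that the degree-$3$ arrangement $\vert\,\alpha\,\vert\,\gamma\,\bvert\,\gamma\,\vert$ forces an $\alpha$ to sit next to a $y$-edge somewhere, which is impossible.
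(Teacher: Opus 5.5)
Your reduction is sound: in a dihedral tiling some $m$-gon shares an $x$-edge with a kite, so it suffices to exclude a vertex $\alpha\gamma\cdots$. The gap is that this exclusion cannot come from angle information.

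\textbf{Why angle sums are not enough.} They do narrow things down. Every vertex containing both $\beta$ and $\gamma$ must be $\beta\gamma^2$. So the partner vertex $\alpha\vert\beta\cdots$ has no $\gamma$, and hence no $\delta$. With $\alpha\le\beta$ and $\alpha>\tfrac12\pi$ it must be $\alpha^2\beta$ or $\alpha\beta^2$, while $\alpha\gamma\cdots=\alpha\gamma^2\delta^k$ with $k\delta=\beta-\alpha$.

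Consistent values nevertheless remain. Take $m=5$, $\alpha=\beta=\gamma=\tfrac23\pi$, $\delta=\tfrac25\pi$. These satisfy \eqref{Eq-mgon-sum}, \eqref{Eq-kite-sum} and $\beta\neq\delta$. They make $\beta\gamma^2$, $\alpha\gamma^2$, $\alpha^2\beta$ and $\alpha\beta^2$ all sum to $2\pi$. They even satisfy \eqref{Eq-angle-id} exactly, because $\cos\tfrac15\pi-\cos\tfrac25\pi=\tfrac12$; the common $x$ is the edge of the spherical dodecahedron. So no chain of inequalities through $\alpha\vert\beta\cdots$, $\delta\,\bvert\,\delta\cdots$ and \eqref{Eq-kite-sum} can end in a contradiction.

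\textbf{Why the fallback fails too.} Its only input from $\beta\gamma^2$ is $\beta+2\gamma=2\pi$. The arrangement $\vert\alpha\vert\gamma\,\bvert\,\gamma\vert$ flanked by $\alpha\vert\beta$ vertices is realised in actual tilings (e.g.\ Figure \ref{Fig-polar-subdiv-cube}). So nothing local forces an $\alpha$ against a $y$-edge.

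\textbf{Two smaller errors.} First, $\beta+2\gamma=2\pi$ together with \eqref{Eq-kite-sum} gives only $\delta>0$, not $\delta>\beta$. Second, you pursue only the equality case $\alpha=\beta$ and never treat $\alpha<\beta$.

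\textbf{The missing idea.} Propagate from the vertex $\beta\gamma^2$ itself, not from a hypothetical $m$-gon. Start from $\vert\beta_1\vert\gamma_2\,\bvert\,\gamma_3\vert$. The far end of the $x$-edge shared by $T_1,T_3$ carries $\gamma_1$ and $\beta_3$, so by the fact above it is again $\beta\gamma^2$. That fixes the kite $T_4$ across the $y$-edge of $T_1$ there. Iterating, the kites zigzag around two poles $\delta\,\bvert\,\delta\cdots$ and close up into the earth map tiling with $\AVC=\{\beta\gamma^2,\delta^d\}$ of Figure \ref{Subfig-kt-emt-bega2}. That tiling already covers the sphere, leaving no room for an $m$-gon. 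This is the paper's proof.

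Because this argument involves kites only, the case $\gamma^2\delta$ genuinely follows by swapping $x,y$ and $\beta,\delta$. Your argument involves $\alpha$, which sits only at $x$-edges, so that swap is not a symmetry of it. The $\gamma^2\delta$ case would then need its own analysis.
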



\begin{proof} Suppose $\beta\gamma^2$ is a vertex and it has tiles $T_1, T_2, T_3$ as in Figure \ref{Subfig-kt-emt-bega2}. Parity Lemma (Lemma \ref{Lem-parity}) then implies $\beta_3\gamma_1\cdots=\beta\gamma^2$, which determines $T_4$. Similarly, $\beta_4\gamma_3\cdots=\beta\gamma^2$ determines $T_5$. Since $T_4, T_3, T_5$ form an identical neighbourhood as $T_1, T_2, T_3$ do, the pattern repeats until it returns to the left-hand boundary of $T_1, T_2$, forming a monohedral tiling by kites.

\begin{figure}[h!]
\centering
\begin{subfigure}{0.45\linewidth}
\centering
\begin{tikzpicture}[>=latex]
\tikzmath{ 
\s=2;
\r=1.5;
\rr=0.075*\r;
\tz=2;
\tzz=\tz-1;
}

\begin{scope}[]

\foreach \a in {0,...,\tz}
{
\tikzset{xshift=0.8*\r*\a cm}

\draw[]
	(0.8*\r,0.15*\r) -- (0.4*\r,-0.15*\r)
;

\draw[arrows = {-Latex[scale=0.45]},line width=1.5]	
	(0.8*\r,0.15*\r) -- (0.8*\r,0.6*\r)
;
\draw[arrows = {-Latex[scale=0.45]},line width=1.5]	
	(0.4*\r,-0.15*\r) -- (0.4*\r,-0.6*\r)
;

}

\foreach \a in {0,...,\tzz}{

\tikzset{xshift=0.8*\r*\a cm}

\draw[]
	(0.8*\r,0.15*\r) -- (1.2*\r,-0.15*\r)
;	

	(0.8*\r,0.15*\r) -- (1.2*\r,-0.15*\r)
;

\node at (0.4*\r+0.8*\r,0.5*\r) {\small $\delta$};
\node at (0.15*\r+0.75*\r,0.25*\r) {\small $\gamma$};
\node at (0.4*\r+1.075*\r,0.25*\r) {\small $\gamma$};
\node at (0.4*\r+0.8*\r,0.025*\r) {\small $\beta$};

\node at (0.8*\r,-0.025*\r) {\small $\beta$};
\node at (0.15*\r+0.35*\r,-0.25*\r) {\small $\gamma$};
\node at (0.0*\r+1.075*\r,-0.25*\r) {\small $\gamma$};
\node at (0.8*\r,-0.5*\r) {\small $\delta$};

}

\foreach \gs in {3} {

\tikzmath{
\op=1-\gs*0.16; 
\c=100-\gs*16;
}

\begin{scope}[xshift=\gs*0.8*\r cm]

\draw[line width=0.5, opacity=\op]
	(0.8*\r,0.6*\r) -- (0.8*\r,0.15*\r)
	(0.4*\r,-0.6*\r) -- (0.4*\r,-0.15*\r)
	(0.8*\r,0.15*\r) -- (0.4*\r,-0.15*\r)
;

\draw[arrows = {-Latex[scale=0.45]}, line width=1.5, opacity=\op]	
	(0.8*\r,0.15*\r) -- (0.8*\r,0.6*\r) 
;
\draw[arrows = {-Latex[scale=0.45]}, line width=1.5, opacity=\op]	
	(0.4*\r,-0.15*\r) -- (0.4*\r,-0.6*\r) 
;

\draw[line width=0.5, opacity=\op]
	(0*\r,0.15*\r) -- (0.4*\r,-0.15*\r)
;	

\node at (0.4*\r,0.5*\r) {\small \textcolor{gray!\c}{$\delta$}};
\node at (0.1*\r,0.25*\r) {\small \textcolor{gray!\c}{$\gamma$}};
\node at (0.7*\r,0.25*\r) {\small \textcolor{gray!\c}{$\gamma$}};
\node at (0.4*\r,0.025*\r) {\small \textcolor{gray!\c}{$\beta$}};

\node at (0.0*\r,-0.025*\r) {\small \textcolor{gray!\c}{$\beta$}};
\node at (-0.3*\r,-0.25*\r) {\small $\gamma$};
\node at (0.3*\r,-0.25*\r) {\small \textcolor{gray!\c}{$\gamma$}};
\node at (0.0*\r,-0.5*\r) {\small \textcolor{gray!\c}{$\delta$}};

\end{scope}
}

\node[inner sep=1,draw=black,line width=0.5,shape=circle] at (1.2*\r, 0.26*\r) {\footnotesize $1$};
\node[inner sep=1,draw=black,line width=0.5,shape=circle] at (0.8*\r, -0.26*\r) {\footnotesize $2$};
\node[inner sep=1,draw=black,line width=0.5,shape=circle] at (1.6*\r, -0.26*\r) {\footnotesize $3$};

\node[inner sep=1,draw=black,line width=0.5,shape=circle] at (1.2*\r+0.8*\r, 0.26*\r) {\footnotesize $4$};
\node[inner sep=1,draw=black,line width=0.5,shape=circle] at (1.6*\r+0.8*\r, -0.26*\r) {\footnotesize $5$};

\node at (3.6*\r,0) {\Large $\cdots$}; 

\end{scope}
\end{tikzpicture}
\caption{$\AVC = \{ \beta\gamma^2, \delta^d \}$, $d\ge3$}
\label{Subfig-kt-emt-bega2}
\end{subfigure}
\begin{subfigure}{0.45\linewidth}
\centering
\begin{tikzpicture}[>=latex]

\tikzmath{ 
\s=2;
\r=1.5;
\rr=0.075*\r;
\tz=2;
\tzz=\tz-1;
}
\begin{scope}[xshift=3.5*\s cm]

\foreach \a in {0,...,\tz}
{
\tikzset{xshift=0.8*\r*\a cm}

\draw[->]
	(0.8*\r,0.15*\r) -- (0.8*\r,0.6*\r)
;
\draw[->]
	(0.4*\r,-0.15*\r) -- (0.4*\r,-0.6*\r) 
;

\draw[line width=1.5]	
	(0.8*\r,0.15*\r) -- (0.4*\r,-0.15*\r)
;

}

\foreach \a in {0,...,\tzz}{

\tikzset{xshift=0.8*\r*\a cm}

\draw[]
	(0.8*\r,0.15*\r) -- (1.2*\r,-0.15*\r)
;	

\draw[line width=1.5]
	(0.8*\r,0.15*\r) -- (1.2*\r,-0.15*\r)
;

\node at (0.4*\r+0.8*\r,0.5*\r) {\small $\beta$};
\node at (0.15*\r+0.75*\r,0.25*\r) {\small $\gamma$};
\node at (0.4*\r+1.075*\r,0.25*\r) {\small $\gamma$};
\node at (0.4*\r+0.8*\r,0.025*\r) {\small $\delta$};

\node at (0.8*\r,-0.025*\r) {\small $\delta$};
\node at (0.15*\r+0.35*\r,-0.25*\r) {\small $\gamma$};
\node at (0.0*\r+1.075*\r,-0.25*\r) {\small $\gamma$};
\node at (0.8*\r,-0.5*\r) {\small $\beta$};

}

\foreach \gs in {3} {

\tikzmath{
\op=1-\gs*0.16; 
\c=100-\gs*16;
}

\begin{scope}[xshift=\gs*0.8*\r cm]

\draw[->, line width=0.5, opacity=\op]
	(0.8*\r,0.15*\r) -- (0.8*\r,0.6*\r) 
;
\draw[->, line width=0.5, opacity=\op]
	 (0.4*\r,-0.15*\r) -- (0.4*\r,-0.6*\r)
;

\draw[line width=1.5, opacity=\op]	
	(0*\r,0.15*\r) -- (0.4*\r,-0.15*\r)
	(0.8*\r,0.15*\r) -- (0.4*\r,-0.15*\r)
;

\draw[line width=0.5, opacity=\op]
	(0*\r,0.15*\r) -- (0.4*\r,-0.15*\r)
;	

\node at (0.4*\r,0.5*\r) {\small \textcolor{gray!\c}{$\beta$}};
\node at (0.1*\r,0.25*\r) {\small \textcolor{gray!\c}{$\gamma$}};
\node at (0.7*\r,0.25*\r) {\small \textcolor{gray!\c}{$\gamma$}};
\node at (0.4*\r,0.025*\r) {\small \textcolor{gray!\c}{$\delta$}};

\node at (0.0*\r,-0.025*\r) {\small \textcolor{gray!\c}{$\delta$}};
\node at (-0.3*\r,-0.25*\r) {\small $\gamma$};
\node at (0.3*\r,-0.25*\r) {\small \textcolor{gray!\c}{$\gamma$}};
\node at (0.0*\r,-0.5*\r) {\small \textcolor{gray!\c}{$\beta$}};

\end{scope}
}

\node[inner sep=1,draw=black,line width=0.5,shape=circle] at (1.2*\r, 0.26*\r) {\footnotesize $1$};
\node[inner sep=1,draw=black,line width=0.5,shape=circle] at (0.8*\r, -0.26*\r) {\footnotesize $2$};
\node[inner sep=1,draw=black,line width=0.5,shape=circle] at (1.6*\r, -0.26*\r) {\footnotesize $3$};

\node[inner sep=1,draw=black,line width=0.5,shape=circle] at (1.2*\r+0.8*\r, 0.26*\r) {\footnotesize $4$};
\node[inner sep=1,draw=black,line width=0.5,shape=circle] at (1.6*\r+0.8*\r, -0.26*\r) {\footnotesize $5$};

\node at (3.6*\r,0) {\Large $\cdots$}; 

\end{scope}

\end{tikzpicture}
\caption{$\AVC = \{ \gamma^2\delta, \beta^b \}$, $b\ge3$}
\label{Subfig-kt-emt-ga2de}
\end{subfigure}
\caption{The monohedral earth map tiling with $\beta\gamma^2$ and that with $\gamma^2\delta$}
\label{Fig-kt-emt-bega2-ga2de}
\end{figure}

The argument for $\gamma^2\delta$ is analogous and the tiling is given in Figure \ref{Subfig-kt-emt-ga2de}. 
\end{proof}

\begin{lem}\label{Lem-3-div-m} Along the boundary of an $m$-gon with its angles specified by $\hat{\alpha}$, if the angle arrangement of each $\hat{\alpha}$ at its vertex is either $\alpha \vert \hat{\alpha} \vert \beta$ or $\gamma \vert \hat{\alpha} \vert \gamma$, then $m$ is divisible by $3$.
\end{lem}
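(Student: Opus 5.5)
We encode the boundary of the $m$-gon as a cyclic word in its edges and show that this word is forced to be periodic with period $3$. Let the $m$-gon be $T$, with edges $e_1,\dots,e_m$ in cyclic order and $v_i$ the common vertex of $e_i$ and $e_{i+1}$ (indices mod $m$). Every edge of $T$ is an $x$-edge, and the tiling is edge-to-edge, so across each $e_i$ there is a unique neighbouring tile $T_i'$. Such a tile contributes one angle at each end of $e_i$, namely the angle of $T_i'$ adjacent to $e_i$ there. These are exactly the angles written next to $\hat\alpha$ in the notation $\alpha\vert\hat\alpha\vert\beta$ or $\gamma\vert\hat\alpha\vert\gamma$.

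\emph{Step 1: two edge types.} I first classify the edges by the neighbouring tile.
\begin{itemize}
\item If $T_i'$ is an $m$-gon, it contributes $\alpha$ at both ends of $e_i$. Call $e_i$ of type $A$.
\item If $T_i'$ is a kite, then $e_i$ is one of the two $x$-edges of the kite. By Figure~\ref{Fig-kite-mgon}, each $x$-edge of a kite joins a $\beta$ and a $\gamma$. So $T_i'$ contributes $\beta$ at one end of $e_i$ and $\gamma$ at the other. Call $e_i$ of type $K$, and remember which end carries $\beta$.
\end{itemize}

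\emph{Step 2: the local rule at each vertex.} At $v_i$, the two angles flanking $\hat\alpha$ come from $e_i$ and $e_{i+1}$. The hypothesis says these are $\{\alpha,\beta\}$ or $\{\gamma,\gamma\}$.
\begin{itemize}
\item In the first case, exactly one of $e_i,e_{i+1}$ is of type $A$. The other is of type $K$ with its $\beta$-end at $v_i$.
\item In the second case, both edges are of type $K$ with their $\gamma$-ends at $v_i$.
\end{itemize}
In particular, an $A$-edge can never be next to another $A$-edge. Also, a $K$-edge whose $\beta$-end is at $v_i$ must be next to an $A$-edge across $v_i$.

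\emph{Step 3: propagation.} Suppose $e_1$ is of type $A$.
\begin{itemize}
\item At $v_1$ it supplies $\alpha$, so $e_2$ is of type $K$ with its $\beta$-end at $v_1$.
\item Then $e_2$ has its $\gamma$-end at $v_2$, so $e_3$ is of type $K$ with its $\gamma$-end at $v_2$.
\item Then $e_3$ has its $\beta$-end at $v_3$, so $e_4$ is of type $A$.
\end{itemize}
By induction the word is $AKKAKK\cdots$.

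It remains to check that an $A$-edge exists. Take any $K$-edge. One of its ends carries $\beta$, and by Step 2 the other edge at that end is of type $A$. So we may indeed start at an $A$-edge.

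The cyclic word has period $3$ and must close up consistently after $m$ steps. Therefore $3\mid m$.

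The only real obstacle is the justification in Step 1 that a kite meets an $x$-edge with $\beta$ at one end and $\gamma$ at the other. This is read off directly from the edge and angle labelling of the kite ($\gamma$ between $x$ and $y$, $\beta$ between the two $x$-edges, $\delta$ between the two $y$-edges). Once that is in place, the rest is bookkeeping.
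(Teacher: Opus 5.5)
Your proof is correct and is essentially the paper's argument. The paper propagates the vertex arrangements $\gamma\vert\hat\alpha\vert\gamma \to \beta\vert\hat\alpha\vert\alpha \to \alpha\vert\hat\alpha\vert\beta \to \gamma\vert\hat\alpha\vert\gamma$ around the boundary, while you propagate the dual edge pattern $AKK$; you also make explicit, via the $\beta$--$\gamma$ ends of a kite's $x$-edge, why each step is forced, and your observation that two $A$-edges cannot be adjacent supplies the one tacit point, the existence of a $K$-edge.
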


\begin{proof} Without loss of generality, we may start at a vertex with angle arrangement $\gamma \vert \hat{\alpha} \vert \gamma$ on the left of Figure \ref{Fig-mgon-3-multiple}. Its neighbour to the right must have an angle arrangement $\beta \vert \hat{\alpha} \vert \alpha$, which then determines the next vertex to its right. Then the fourth vertex has $\gamma \vert \hat{\alpha} \vert \gamma$. It is either the same vertex as the first one or the same pattern continues until it comes back to the starting vertex, and hence the claim.

\begin{figure}[h!]
\centering
\begin{tikzpicture}
\tikzmath{
\XS=1;
\Y=0.8;
}

\foreach \xs in {0,1,2} {
\tikzset{xshift=\xs*\XS cm}
\draw[]
	(0,0) -- (\XS,0)
;
}

\foreach \aa in {-1,1} {
\tikzset{xshift=1.5*\XS cm, xscale=\aa}
\draw[]
	(0.5*\XS,0) -- (0.3*\XS,\Y)
	(0.5*\XS,0) -- (0.7*\XS,\Y)
;
\draw[line width=1.5]
	(1.5*\XS,0) -- (1.3*\XS,\Y)
	(1.5*\XS,0) -- (1.7*\XS,\Y)
;

\node at (0.25*\XS,0.2*\Y) {\small $\alpha$};
\node at (0.75*\XS,0.2*\Y) {\small $\beta$};
\node at (1.25*\XS,0.2*\Y) {\small $\gamma$};
\node at (1.75*\XS,0.2*\Y) {\small $\gamma$};

\node at (0.5*\XS,-0.2*\Y) {\small $\hat{\alpha}$};
\node at (1.5*\XS,-0.2*\Y) {\small $\hat{\alpha}$};
}

\end{tikzpicture}
\caption{The boundary of an $m$-gon having angle arrangements $\alpha \vert \hat{\alpha} \vert \beta$ or $\gamma \vert \hat{\alpha} \vert \gamma$} \qedhere
\label{Fig-mgon-3-multiple}
\end{figure}
\end{proof}

\begin{lem}\label{Lem-even-m} Along the boundary of an $m$-gon with angles $\alpha$, if the vertices have alternating angle arrangements between $\theta \vert \alpha \vert \theta$ and $\varphi \vert \alpha \vert \varphi$ for distinct $\theta,\varphi \neq \alpha$, then $m$ is even. 
\end{lem}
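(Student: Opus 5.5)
The plan is a parity count around the $m$-gon, following the template of Lemma \ref{Lem-3-div-m}. Label the vertices of the $m$-gon cyclically $v_1, v_2, \ldots, v_m$, with $v_{m+1}=v_1$, and give each the angle arrangement $\theta\vert\alpha\vert\theta$ or $\varphi\vert\alpha\vert\varphi$. By hypothesis these two arrangements alternate along the boundary. So if $v_1$ carries $\theta\vert\alpha\vert\theta$, then $v_2$ carries $\varphi\vert\alpha\vert\varphi$, $v_3$ carries $\theta\vert\alpha\vert\theta$, and so on. By induction, $v_i$ carries $\theta\vert\alpha\vert\theta$ exactly when $i$ is odd.

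Closing the cycle, $v_{m+1}=v_1$ must carry $\theta\vert\alpha\vert\theta$, so $m+1$ is odd and $m$ is even. The hypothesis $\theta\neq\varphi$ is what makes the two arrangements genuinely different, so the labelling by parity of the index is well defined. The hypothesis $\theta,\varphi\neq\alpha$ only ensures that these angles come from tiles adjacent to the $m$-gon.

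One should also explain why the alternation is forced rather than merely assumed, since that is how the lemma will be applied. Consider an edge $v_iv_{i+1}$ of the $m$-gon. The tile across this edge contributes one angle at $v_i$ and one at $v_{i+1}$, namely the two angles at the ends of that edge. If the neighbouring tile is a kite whose $x$-edge has end angles $\theta$ (say $\beta$) and $\varphi$ (say $\gamma$), then $\theta$ at one end forces $\varphi$ at the other. This is exactly the mechanism shown in Figure \ref{Fig-mgon-3-multiple}.

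As stated, however, the lemma takes alternation as a hypothesis, so the proof reduces to the cyclic parity count above. I expect no real obstacle. The only care needed is to note that the argument is purely combinatorial along the closed boundary cycle, so wrapping around forces the period $2$ of the pattern to divide $m$, just as period $3$ did in Lemma \ref{Lem-3-div-m}.
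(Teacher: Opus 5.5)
Your proof is correct and is essentially the paper's argument: the alternation gives a proper $2$-colouring of the boundary cycle of the $m$-gon, and wrapping around the cycle forces $m$ to be even. The only difference is that the paper colours the edges by their exterior labels $\theta\varphi$ and $\varphi\theta$, whereas you colour the vertices by arrangement type; this is the same parity count.
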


Figure \ref{Fig-odd-m-bealbe-gaalga} illustrates examples of the alternating vertex condition along an $m$-gon. In the context of $\theta, \varphi$ being the angles $\beta,\gamma$ of the kite, the alternating vertex condition automatically holds (for example, see Figure \ref{Subfig-mgon-m=5/7}).

\begin{figure}[h!]
\centering
\begin{subfigure}[t]{0.425\linewidth}
\centering
\begin{tikzpicture}
\tikzmath{
\r=1;
\rr=0.08*\r;
\n=9;
\nn=\n-1;
\th=360/\n;
}

\foreach \a in {0,...,2,4,6,7,8} {
\tikzset{rotate=\a*\th}
\draw[]
	(90:\r) -- (90+\th:\r)
;
}

\foreach \a in {-2,...,2,4,5} {
\tikzset{rotate=\a*\th}
\draw[]
	(90:\r) -- (90-0.15*\th:1.5*\r)
	(90:\r) -- (90+0.15*\th:1.5*\r)
;
}

\draw[fill=white] (90:\r) circle (\rr);

\foreach \a in {0,...,\nn} {
\tikzset{rotate=\a*\th}
\node at (90:0.8*\r) {\small $\alpha$};
}

\foreach \a in {-2,0,2} {
\tikzset{rotate=\a*\th}
\node at (90-0.3*\th:1.15*\r) {\small $\theta$};
\node at (90+0.3*\th:1.15*\r) {\small $\theta$};

\node at (90-0.7*\th:1.15*\r) {\small $\varphi$};
\node at (90+0.7*\th:1.15*\r) {\small $\varphi$};
}

\node at (270-1.1*\th:0.9*\r) {\small $\ddots$};
\node at (270+1.1*\th:0.9*\r) {\small $\iddots$};

\node at (270-0.25*\th: 1.1*\r) {\small $\star$};
\node at (270+0.25*\th: 1.1*\r) {\small $\ast$};

\node at (0,0) {\small $m$-gon};
\end{tikzpicture}
\caption{Odd $m$, $(\star,\ast)=(\theta,\theta)$ or $(\varphi,\varphi)$}
\label{Subfig-mgon-odd-m}
\end{subfigure}
\begin{subfigure}[t]{0.52\linewidth}
\centering
\begin{tikzpicture}

\tikzmath{
}

\begin{scope}[]

\tikzmath{
\s=1.5;
\l=0.8;
\rr=0.08*\l;
}

\begin{scope}[yshift=0.25*\s cm]

\foreach \xs in {0,...,4} {

\draw[xshift=\xs*\l cm]
	(0,0) -- (\l,0)
;

}

\foreach \xs in {1,4} {

\draw[xshift=\xs*\l cm, line width=1.5]
	(0,0) -- (0,0.75*\l)
;

}

\draw[]
	(0,0) -- (0.25*\l, 0.75*\l)
	(2*\l,0) -- (1.75*\l,0.75*\l)
	(2*\l,0) -- (2.25*\l,0.75*\l)
	(3*\l,0) -- (2.75*\l,0.75*\l)
	(3*\l,0) -- (3.25*\l,0.75*\l)
	(5*\l,0) -- (4.75*\l, 0.75*\l)
;

\draw[fill=white] (0,0) circle (\rr);
\draw[fill=white] (5*\l,0) circle (\rr);

\foreach \xs in {0,...,5} {

\tikzset{xshift=\xs*\l cm}

\node at (0,-0.25*\l) {\small $\alpha$};

}


\node at (0.25*\l,0.2*\l) {\small $\beta$};

\foreach \xs in {0,1} {

\tikzset{xshift=\xs*\l cm}

\node at (1.75*\l,0.2*\l) {\small $\beta$};
\node at (2.25*\l,0.2*\l) {\small $\beta$};

}

\node at (2.5*\l,0.5*\l) {\small $?$};

\node at (4.7*\l,0.2*\l) {\small $\beta$};

\node at (0.8*\l,0.2*\l) {\small $\gamma$};
\node at (1.2*\l,0.2*\l) {\small $\gamma$};

\node at (3.8*\l,0.2*\l) {\small $\gamma$};
\node at (4.2*\l,0.2*\l) {\small $\gamma$};

\node at (6*\l, 0) {\small $m=5$};

\end{scope}

\begin{scope}[yshift=-0.85*\s cm]

\foreach \xs in {0,...,6} {

\draw[xshift=\xs*\l cm]
	(0,0) -- (\l,0)
;

}

\foreach \xs in {1,3,4,6} {

\draw[xshift=\xs*\l cm, line width=1.5]
	(0,0) -- (0,0.75*\l)
;

}

\draw[]
	(0,0) -- (0.25*\l, 0.75*\l)
	(2*\l,0) -- (1.75*\l,0.75*\l)
	(2*\l,0) -- (2.25*\l,0.75*\l)
	(5*\l,0) -- (4.75*\l,0.75*\l)
	(5*\l,0) -- (5.25*\l,0.75*\l)
	(7*\l,0) -- (6.75*\l, 0.75*\l)
;

\draw[fill=white] (0,0) circle (\rr);
\draw[fill=white] (7*\l,0) circle (\rr);

\foreach \xs in {0,...,7} {

\tikzset{xshift=\xs*\l cm}

\node at (0,-0.25*\l) {\small $\alpha$};

}

\node at (0.3*\l,0.2*\l) {\small $\beta$};

\foreach \xs in {0,3} {

\tikzset{xshift=\xs*\l cm}

\node at (1.7*\l,0.2*\l) {\small $\beta$};
\node at (2.3*\l,0.2*\l) {\small $\beta$};

}

\node at (6.7*\l,0.2*\l) {\small $\beta$};

\foreach \xs in {0,2,3,5} {

\tikzset{xshift=\xs*\l cm}

\node at (0.8*\l,0.2*\l) {\small $\gamma$};
\node at (1.2*\l,0.2*\l) {\small $\gamma$};

}

\node at (3.5*\l,0.5*\l) {\small $?$};

\node at (8*\l, 0) {\small $m=7$};

\end{scope}

\end{scope}

\end{tikzpicture}
\caption{Examples of $m=5,7$ and $\ast = \beta, \gamma$} 
\label{Subfig-mgon-m=5/7}
\end{subfigure}
\caption{Alternating angle arrangements $\theta\vert\alpha\vert\theta$ and $\varphi\vert\alpha\vert
\varphi$ around an $m$-gon} \qedhere
\label{Fig-odd-m-bealbe-gaalga}
\end{figure}

\begin{proof} The alternating angle arrangements along the boundary of an $m$-gon mean that the edges have alternating labels between $\theta\varphi$ and $\varphi\theta$ in the exterior. This induces a $2$-edge colouring of the boundary and hence $m$ is even.   
\end{proof}

Globally, combinatorics of the tilings deduce the existence of vertices of a small degree. Let $f_{m}$ denote the number of $m$-gons, $v_k$ denote the number of vertices of degree $k$, and $v,e,f$ denote the total numbers of vertices, edges, faces respectively. Since we consider $m\ge 4$ only and the vertices have degrees $\ge3$, we have $f=\sum_{m\ge4} f_m$ and $v=\sum_{k\ge3}v_k$. Following from Euler polyhedral formula and Dehn-Sommerville formulae, we have
\begin{align}
\label{Eq-basic-Euler}
&v-e+f=2, \\
\label{Eq-basic-DS1}
&2e = \sum_{k\ge3} kv_k, \\
\label{Eq-basic-DS2}
&2e=\sum_{m\ge4} m f_{m}.
\end{align}
Equation \eqref{Eq-basic-DS2} implies $2e\ge4f$. Combining this inequality with \eqref{Eq-basic-Euler} and \eqref{Eq-basic-DS1} gives
\begin{align*}
2 = v-e+f \le 
v - \tfrac{e}{2} = \sum_{k\ge 3} (1-\tfrac{k}{4})v_k 
= (1 - \tfrac{3}{4})v_3 + ( 1 - \tfrac{4}{4} ) v_4 + ( 1 - \tfrac{5}{4} ) v_5 + \cdots.
\end{align*}
This implies $v_3 >0$, meaning that there must exist at least one vertex of degree exactly $3$. This fact will be used frequently without reference. 

We remark that, for $m=3$, the same argument above leads to $v_3, v_4$ or $v_5 >0$, meaning that there must be a vertex of degree $3, 4$ or $5$.

\begin{lem}[Counting Lemma] \label{Lem-counting} If two angles $\theta,\varphi$ appear in the same prototile such that the number of $\varphi$ is $t$ times of that of $\theta$, and at every vertex of a tiling the number of $\varphi$ is at least $t$ times of that of $\theta$, then at every vertex the number of $\varphi$ equals $t$ times the number of $\theta$. 
\end{lem}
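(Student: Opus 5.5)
We prove the Counting Lemma by double counting the occurrences of $\theta$ and $\varphi$ over the whole tiling. Let $P$ be the prototile containing both $\theta$ and $\varphi$, and let $f_P$ be the number of tiles in the tiling congruent to $P$. If each copy of $P$ contains $n_\theta$ copies of $\theta$, then by hypothesis it contains $n_\varphi = t\,n_\theta$ copies of $\varphi$. In the dihedral setting the other prototile may also carry one of these angles; for example, $\alpha$ could coincide in value with a kite angle. So the first step is to fix the convention that angles are labelled by tile position, not by value. Under that convention $\theta$ and $\varphi$ occur only in copies of $P$, and the total numbers of occurrences are $N_\theta = n_\theta f_P$ and $N_\varphi = n_\varphi f_P = t\,N_\theta$.

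Next we count the same totals vertex by vertex. For each vertex $v$, let $\theta(v)$ and $\varphi(v)$ be the numbers of $\theta$ and $\varphi$ incident at $v$. Every angle of every tile sits at exactly one vertex, so
\begin{align*}
\sum_v \theta(v) = N_\theta, \qquad \sum_v \varphi(v) = N_\varphi = t N_\theta .
\end{align*}
The hypothesis says $\varphi(v) - t\,\theta(v) \ge 0$ for every vertex $v$. Summing over all vertices gives
\begin{align*}
\sum_v \bigl(\varphi(v) - t\,\theta(v)\bigr) = N_\varphi - t N_\theta = 0 .
\end{align*}
A sum of nonnegative terms that equals zero has every term zero. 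Hence $\varphi(v) = t\,\theta(v)$ at every vertex, which is the claim.

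The only point needing care is the labelling convention in the first step. The argument counts angles by their position in a tile, not by their numerical value, so the vertex counts $\theta(v)$ and $\varphi(v)$ must be read the same way. Once that is fixed, the rest is routine double counting; the finiteness of the tiling, which is automatic on the sphere, justifies the sums.
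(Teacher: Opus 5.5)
Your proof is correct and is essentially the paper's argument: both double count $\theta$ and $\varphi$ over the vertices, use $\#\varphi = t\,\#\theta$ from the prototile, and conclude that because the nonnegative differences $\varphi(v) - t\,\theta(v)$ sum to zero, each one vanishes; the paper merely groups the sum by vertex types $\theta^{p_i}\varphi^{q_i}\cdots$ rather than by individual vertices. Your convention that angles are labelled by their position in a tile rather than by their value matches how the paper uses angle names, though the paper leaves this implicit.
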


\begin{proof} Let $\# \theta, \# \varphi$ denote the total numbers of $\theta, \varphi$ respectively. The hypothesis implies $\# \varphi = t \# \theta$. Suppose every vertex is $\theta^{p_i}\varphi^{q_i}\cdots$ for some $p_i, q_i \ge 0$ and $tp_i \le q_i$. Counting the two angles at the vertices, we get
\begin{align*}
&\# \theta = \sum_i  p_i \#\theta^{p_i}\varphi^{q_i}\cdots,&
&\# \varphi = \sum_i  q_i \#\theta^{p_i}\varphi^{q_i}\cdots.&
\end{align*}
By $\# \varphi = t \# \theta$ and $t p_i \le q_i$ and $\#\theta^{p_i}\varphi^{q_i}\cdots > 0$, we get $(tp_i - q_i) = 0$ for every $i$, which implies the desired result.
\end{proof}

\begin{expl}\label{Eg-Counting} As an example of applying the Counting Lemma \ref{Lem-counting}, which will be used later in the proofs of Lemma \ref{Lem-vertex} and Proposition \ref{Prop-de}, suppose $\delta\cdots=\gamma^{c_1}\delta, \alpha\gamma^{c_2}\delta$, where $c_1, c_2\ge2$. Then $\gamma\cdots=\gamma^{c_1}\delta, \alpha\gamma^{c_2}\delta$, etc. Counting $\gamma$ among the vertices gives
\begin{align*}
\# \gamma \ge c_1 \# \gamma^{c_1}\delta + c_2 \# \alpha\gamma^{c_2}\delta \ge 2( \# \gamma^{c_1}\delta + \# \alpha\gamma^{c_2}\delta  ) = 2\# \delta.
\end{align*}
In the kite, the number of $\gamma$ is twice that of $\delta$. Counting Lemma implies $c_1=c_2=2$ and hence $\gamma\cdots=\delta\cdots=\gamma^2\delta, \alpha\gamma^2\delta$. The dihedral hypothesis and Lemma \ref{Lem-bega2-ga2de} then dismiss $\gamma^2\delta$ and therefore $\gamma\cdots=\delta\cdots= \alpha\gamma^2\delta$. It means that there are vertices of the type $\alpha\gamma^2\delta$ and they are the only places where $\gamma,\delta$ appear.
\end{expl}

Given the above example, we may simply say \quotes{by Counting Lemma on $\gamma,\delta$ (or $\beta,\gamma$)} for similar arguments in the later discussion.

\begin{lem}\label{Lem-vertex} In a dihedral tiling by regular polygons and kites, each vertex belongs to one of the following types,
\begin{align}
\label{Eq-vertex-al}
\alpha\cdots &= \alpha^3, \alpha^a\beta^b, \alpha^a\gamma^c, \alpha\beta^b\gamma^c, \alpha\gamma^c\delta^d; \\
\label{Eq-vertex-be}
\beta\cdots &= \beta^b, \alpha^a\beta^b, \beta^b\gamma^c((b,c)\neq(1,2)), \alpha\beta^b\gamma^c;  \\
\label{Eq-vertex-ga}
\gamma\cdots &= \gamma^c, \alpha^a\gamma^c, \beta^b\gamma^c((b,c)\neq(1,2)), \gamma^c\delta^d((c,d)\neq(2,1)), \alpha\beta^b\gamma^c, \alpha\gamma^c\delta^d;  \\
\label{Eq-vertex-de}
\delta\cdots &=\delta^d, \gamma^c\delta^d((c,d)\neq(2,1)), \alpha\gamma^c\delta^d.
\end{align}
where $a\le3$, and $c\ge2$ is even. Notably, no vertex is of the type $\alpha^a\delta^d, \beta\delta\cdots, \beta\gamma^2$ or $\gamma^2\delta$.
\end{lem}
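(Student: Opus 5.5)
Every vertex angle sum equals $2\pi$. The plan is to combine the angle-sum inequalities \eqref{Eq-alpha-lbound} and \eqref{Eq-kite-sum} with the local combinatorics of Lemmas \ref{Lem-bebe-gaga-dede}, \ref{Lem-parity} and \ref{Lem-bega2-ga2de}.

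\emph{Bounding $\alpha$.} Since $\alpha>\frac12\pi$, at most three $\alpha$'s fit at a vertex. This gives $a\le3$, and $\alpha^4\cdots$ is impossible.

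\emph{Parity of $\gamma$.} The Parity Lemma \ref{Lem-parity} says $c$ is even. So whenever $\gamma$ appears we have $c\ge2$, and Remark \ref{Rmk-benede} gives $\gamma<\pi$.

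\emph{Excluded vertex types.} Lemma \ref{Lem-bega2-ga2de} together with the dihedral hypothesis removes $\beta\gamma^2$ and $\gamma^2\delta$. These are exactly the exceptions $(b,c)\neq(1,2)$ and $(c,d)\neq(2,1)$ in the statement.

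\emph{Excluding $\beta\delta\cdots$.} The edge structure does the work. In the kite, $\beta$ is flanked by two $x$-edges, written $\vert\beta\vert$, and $\delta$ by two $y$-edges, written $\bvert\,\delta\,\bvert$. The angle $\alpha$ is flanked by $x$-edges, and $\gamma$ is the only angle with mixed edges. Going around a vertex, an $x$-edge region must switch to a $y$-edge region through a $\gamma$. Hence a vertex containing both $\beta$ and $\delta$ contains at least two $\gamma$'s, and then $\beta+2\gamma+\delta\le2\pi$, contradicting \eqref{Eq-kite-sum}. The same reasoning rules out $\alpha^a\delta^d$: $\delta$ needs $y$-edges on both sides and $\alpha$ has only $x$-edges, so a $\gamma$ is forced between them.

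\emph{Vertices with $\alpha$ and kite angles.} It remains to see which kite angles can accompany $\alpha$. Suppose $\alpha$ occurs together with $\gamma$. Then $\gamma^2$ is present, and because $\alpha>\frac12\pi$ we get $2\gamma<\frac32\pi$. Now consider $\alpha^2\gamma^2\cdots$ with additional $\beta$ or $\delta$. If there is a $\delta$, the vertex is $\alpha^a\gamma^c\delta^d$. We want to show $a\le1$ here, and similarly $\alpha\beta^b\gamma^c$ must have $a=1$. This is the main obstacle: $\alpha^2\beta\gamma^2$ is not immediately excluded by the angle sum.

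The first attempt is
\[
2\alpha+\beta+2\gamma>\pi+(2\pi-\delta),
\]
which says $\alpha^2\beta\gamma^2$ would require $\delta>\pi$. Then $\delta$ can only occur at vertices without $\alpha$, since $\alpha+\delta$ plus the forced $\gamma^2$ exceeds $2\pi$. Those vertices are $\delta\cdots=\delta^d$ or $\gamma^c\delta^d$, and by the angle sum they must be $\gamma^2\delta$, which is excluded. The remaining option $\delta^d$ is impossible with $\delta>\pi$ and $d\ge2$ (and $d\ge3$ for the degree). This contradiction removes $\alpha^2\beta\cdots\gamma^2$.

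For $\alpha^2\gamma^2\delta$ the analogous estimate forces $\beta>\pi$. Then $\beta$ appears only at $\beta^b$ or $\beta\gamma^c$, and these again fail. The same style handles $\alpha^2\beta\gamma^2$ with further angles, and $\alpha^a\beta^b$ alongside $\gamma$.

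If this inequality route proves insufficient, I would fall back on Counting Lemma arguments on $\beta,\gamma$ or $\gamma,\delta$, as in Example \ref{Eg-Counting}.

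\emph{Assembling the list.} Collecting the surviving combinations, organized by which angle is present, gives exactly \eqref{Eq-vertex-al}--\eqref{Eq-vertex-de}.
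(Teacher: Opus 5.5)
Your handling of $a\le 3$, the parity of $c$, the edge-structure exclusion of $\beta\delta\cdots$ and $\alpha^a\delta^d$, and the removal of $\beta\gamma^2,\gamma^2\delta$ agrees with the paper. Your estimate that a vertex $\alpha^2\beta\gamma^2\cdots$ forces $\delta>\pi$ is also correct.

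The gap is in what follows, which is the step you yourself flag as the main obstacle. Two claims there are unjustified:
\begin{itemize}
\item The claim $\alpha+2\gamma+\delta>2\pi$ would need $2\gamma\ge\tfrac{1}{2}\pi$, and nothing gives you that.
\item The claim that $c\gamma+\delta=2\pi$ forces $c=2$ is false.
\end{itemize}
In fact no angle-sum argument can close this case. Take $\alpha=0.55\pi$, $\beta=0.7\pi$, $\gamma=0.1\pi$, $\delta=1.25\pi$. These satisfy $m\alpha>(m-2)\pi$ for $m=4$ and $\beta+2\gamma+\delta>2\pi$, and they make both $\alpha^2\beta\gamma^2$ and $\alpha\gamma^2\delta$ exact vertex sums. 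Changing $\delta$ to $1.6\pi$ makes $\gamma^4\delta$ exact instead. Interchanging $\beta$ and $\delta$ in these examples shows that your treatment of $\alpha^2\gamma^2\delta\cdots$ fails the same way. With $\beta>\pi$, both $\alpha\beta\gamma^2$ and $\beta\gamma^4$ survive the angle sums, so the claim that only $\beta^b,\beta\gamma^c$ remain and ``again fail'' does not hold.

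The missing idea is global, so the Counting Lemma \ref{Lem-counting} you list only as a fallback is indispensable. The argument runs as follows.
\begin{itemize}
\item Once $\delta>\pi$, every vertex containing $\delta$ contains exactly one $\delta$.
\item The angles on either side of $\bvert\,\delta\,\bvert$ at that vertex must also be bounded by a $y$-edge and cannot be another $\delta$. So both are $\gamma$'s, and the vertex has at least two $\gamma$'s.
\item Hence at every vertex $\#\gamma\ge 2\#\delta$. Since each kite has $\#\gamma=2\#\delta$, the Counting Lemma forces equality at every vertex.
\item So no vertex without $\delta$ can contain $\gamma$. But $\alpha^2\beta\gamma^2\cdots$ contains $\gamma$ and, having a $\beta$, no $\delta$. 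This is a contradiction.
\end{itemize}
Symmetrically, $\alpha^2\gamma^2\delta\cdots$ forces $\beta>\pi$. Then $\alpha^a\beta$ is impossible: degree $\ge 3$ needs $a\ge 2$, and $2\alpha+\beta>2\pi$. So every $\beta$-vertex has exactly one $\beta$ and, by parity, at least two $\gamma$'s. Counting $\beta,\gamma$ then excludes $\gamma$ from every vertex without $\beta$, contradicting $\alpha^2\gamma^2\delta\cdots$. This is the paper's argument, carried out via Example \ref{Eg-Counting}.
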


\begin{proof} Given $\alpha>\frac{1}{2}\pi$ and $\gamma$ appearing an even number of times at a vertex (Parity Lemma) and the kite angle sum, no vertex has all of $\alpha,\beta,\gamma,\delta$. We also know $\alpha^a=\alpha^3$.

The adjacent edge combinations of $\alpha, \beta, \gamma, \delta$ are $\vert \alpha \vert$, $\vert \beta \vert$, $\vert \gamma \, \bvert$, and $\bvert \, \delta \, \bvert$ respectively, which rule out $\alpha^a\delta^d, \beta^b\delta^d, \alpha^a\beta^b\delta^d$. Parity Lemma implies $\beta\gamma\delta\cdots=\beta\gamma^2\delta\cdots$ which has angle sum $\beta+2\gamma + \delta > 2\pi$. Then $\beta\gamma\delta\cdots$ is also not a vertex, and hence it rules out $\beta\delta\cdots$. 

The above yields the lists $\beta\cdots = \beta^b, \alpha^a\beta^b, \beta^b\gamma^c, \alpha^a\beta^b\gamma^c$ and $\delta\cdots =\delta^d, \gamma^c\delta^d, \alpha^a\gamma^c\delta^d$. It suffices to show that $\alpha^2\beta\gamma\cdots, \alpha^2\gamma\delta\cdots$ are not vertices. Parity Lemma implies $\alpha^2\beta\gamma\cdots=\alpha^2\beta\gamma^2\cdots$ and $\alpha^2\gamma\delta\cdots=\alpha^2\gamma^2\delta\cdots$.

Assume that $\alpha^2\beta\gamma^2\cdots$ is a vertex. Then its vertex angle sum and the kite angle sum imply $\beta+2\gamma+\delta > 2\pi \ge 2\alpha+\beta+2\gamma$. Combining it with $\alpha>\frac{1}{2}\pi$ deduces $\delta>2\alpha>\pi$. Then $\delta\cdots=\delta^d, \gamma^c\delta^d, \alpha^a\gamma^c\delta^d$ becomes $\delta\cdots=\gamma^c\delta, \alpha\gamma^c\delta$. As per Example \ref{Eg-Counting}, Counting Lemma on $\gamma,\delta$ implies $\gamma^c\delta, \alpha\gamma^c\delta=\gamma^2\delta, \alpha\gamma^2\delta$ and then Lemma \ref{Lem-bega2-ga2de} implies $\delta\cdots=\alpha\gamma^2\delta$. Now Counting Lemma on $\gamma,\delta$ implies $\gamma\cdots=\alpha\gamma^2\delta$, which contradicts $\alpha^2\beta\gamma^2\cdots$ being a vertex.

An analogous argument rules out $\alpha^2\gamma^2\delta\cdots$. The four lists follow from the above. 
\end{proof}

\begin{lem} \label{Lem-albe-alga} In a dihedral tiling by regular polygons and kites, there must exist vertices of each of the types $\alpha\beta\cdots, \alpha\gamma\cdots$. Moreover,
\begin{align}
\label{Eq-vertex-albe}
\alpha\beta\cdots &= \alpha^a\beta^b, \alpha\beta^b\gamma^c; \\
\label{Eq-vertex-alga}
\alpha\gamma\cdots &= \alpha\gamma^2\cdots = \alpha^a\gamma^c, \alpha\beta^b\gamma^c, \alpha\gamma^c\delta^d
\end{align}
where $a,b,c,d$ are positive integers such that $1 \le a \le 3$ and $c\ge2$ is even.
\end{lem}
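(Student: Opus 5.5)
The plan has two parts: an existence argument that looks along the boundary of a single $m$-gon, and a description of $\alpha\beta\cdots$, $\alpha\gamma\cdots$ read off from Lemma \ref{Lem-vertex}.

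\textbf{The lists.} Take the entries of \eqref{Eq-vertex-al} and keep those containing $\beta$, respectively $\gamma$.
\begin{itemize}
\item For $\alpha\beta\cdots$ the entries are $\alpha^a\beta^b$ and $\alpha\beta^b\gamma^c$. The entry $\alpha\gamma^c\delta^d$ is excluded because $\beta\delta\cdots$ is not a vertex.
\item For $\alpha\gamma\cdots$ the entries are $\alpha^a\gamma^c$, $\alpha\beta^b\gamma^c$ and $\alpha\gamma^c\delta^d$.
\item By the Parity Lemma \ref{Lem-parity}, any vertex with a $\gamma$ has an even number $c\ge2$ of them. This gives $\alpha\gamma\cdots=\alpha\gamma^2\cdots$.
\item The bound $a\le3$ comes from $\alpha>\frac12\pi$ in \eqref{Eq-alpha-lbound}.
\end{itemize}
So the only substantive claim is existence.

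\textbf{Existence.} The tiling is dihedral, so it contains both an $m$-gon and a kite. Since the sphere is connected, some edge is shared by an $m$-gon $M$ and a kite $K$. The shared edge is an $x$-edge, because every edge of $M$ is $x$. In $K$ the $x$-edges join $\beta$ to $\gamma$ (Figure \ref{Fig-kite-mgon}). So at the two endpoints of the shared edge, $\alpha$ from $M$ sits next to $\beta$ of $K$ at one end and next to $\gamma$ of $K$ at the other. This is exactly the picture of Figure \ref{Subfig-albe-alga}: one endpoint is an $\alpha\vert\beta\cdots$ vertex and the other is an $\alpha\vert\gamma\cdots$ vertex. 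Both types therefore occur.

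It is worth a sentence to say why an $m$-gon must be adjacent to a kite. If every neighbour of every $m$-gon were an $m$-gon, the union of the $m$-gons would be open and closed, hence the whole sphere. That contradicts the presence of a kite.

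\textbf{Main obstacle.} The one point needing care is whether the angle arrangement can make the $\alpha$ at an endpoint sit next to some angle other than the kite's. It cannot: along the common edge, the two tiles' angles at each endpoint are adjacent across that edge. I expect the whole proof to be short, with the bookkeeping against Lemma \ref{Lem-vertex} being routine.
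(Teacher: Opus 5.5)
Your proposal is correct and matches the paper's proof: existence comes from the two endpoints of an $x$-edge shared by an $m$-gon and a kite (Figure \ref{Subfig-albe-alga}), and the two lists are read off from Lemma \ref{Lem-vertex} together with the Parity Lemma \ref{Lem-parity}. Your connectivity argument that such a shared edge must exist is a detail the paper leaves implicit.
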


\begin{proof} The vertices $\alpha\beta\cdots, \alpha\gamma\cdots$ appear along the common edge between a regular polygon and a kite (Figure \ref{Subfig-albe-alga}). Then \eqref{Eq-vertex-albe} and \eqref{Eq-vertex-alga} follow from Parity Lemma and Lemma \ref{Lem-vertex}.
\end{proof}

\subsection{Geometry}

Subdividing a regular $m$-gon from its centre into isosceles triangles and applying the spherical cosine law \cite{lnp}, we derive  
\begin{align}\label{Eq-a4-cosx}
\cos x = \cot^2 \tfrac{1}{2}\alpha + \frac{\cos \tfrac{2}{m}\pi}{\sin^2 \tfrac{1}{2}\alpha}.
\end{align}

For $\gamma<\pi$, the kite can be divided into two congruent triangles along the bisector of $\beta,\delta$ as shown in Figure \ref{Fig-a4-a2b2-kite}. 

\begin{figure}[h!] 
\centering
\begin{tikzpicture}

\tikzmath{
\s=1;
\r=0.85;
\th=360/4;
\x=\r*cos(0.5*\th);
\R = sqrt(\x^2+(3*\x)^2);
\aR = acos(3*\x/\R);
}

\begin{scope}[] 

\foreach \a in {0,...,3} {

\draw[rotate=\a*\th]
	(\th:\r) -- (2*\th:\r)
;

}

\draw[line width=1.5]
	(270:\r) -- (270-\th:\r)
	(270:\r) -- (270+\th:\r)
;

\draw[dashed]
	(90:\r) -- (270:\r)
;

\node at (0.5*\th:0.9*\r) {\small $x$};
\node at (1.5*\th:0.9*\r) {\small $x$};
\node at (-0.5*\th:0.9*\r) {\small $y$};
\node at (2.5*\th:0.9*\r) {\small $y$};

\node at (1.25*\th:0.5*\r) {\small $\beta_1$};
\node at (0.725*\th:0.5*\r) {\small $\beta_2$};
\node at (180:0.7*\r) {\small $\gamma$};
\node at (0:0.7*\r) {\small $\gamma$};
\node at (2.75*\th:0.5*\r) {\small $\delta_1$};
\node at (3.25*\th:0.5*\r) {\small $\delta_2$};

\end{scope}

\end{tikzpicture}
\caption{Triangulation of the kite}
\label{Fig-a4-a2b2-kite}
\end{figure}

The spherical cosine law on the triangle with angles $\frac{1}{2}\beta, \gamma, \frac{1}{2}\delta$ implies
\begin{align}
\label{Eq-x2y2-cosx}
\cos x &= \frac{\cos \frac{1}{2}\delta + \cos \frac{1}{2}\beta \cos \gamma}{ \sin \frac{1}{2}\beta \sin \gamma}, \\
\label{Eq-x2y2-cosy}
\cos y &=  \frac{ \cos \frac{1}{2}\beta + \cos \gamma\cos \frac{1}{2}\delta }{\sin \gamma \sin \frac{1}{2}\delta}.
\end{align}

Combining \eqref{Eq-a4-cosx} and \eqref{Eq-x2y2-cosx}, we deduce
\begin{align*}
\sin \tfrac{1}{2}\beta \sin \gamma (1 - \sin^2\tfrac{1}{2}\alpha + \cos \tfrac{2}{m}\pi) = \sin^2\tfrac{1}{2}\alpha ( \cos \tfrac{1}{2}\delta + \cos \tfrac{1}{2}\beta \cos \gamma),
\end{align*}
which is simplified as %
\begin{align} \label{Eq-angle-id}
\sin\tfrac{1}{2}\beta\sin\gamma (1+ \cos \tfrac{2}{m}\pi) = \sin^2\tfrac{1}{2}\alpha (\cos\tfrac{1}{2}\delta+\cos(\gamma-\tfrac{1}{2}\beta) ).
\end{align}

\subsection{Strategy and Construction}

Admissible vertex types allow us to construct the associated tiling(s). They can be determined by the necessary conditions imposed by facts established in this section. Typically, we use the following facts:
\begin{itemize}
\item there exists a degree $3$ vertex;
\item $\alpha\beta\cdots$ and $\alpha\gamma\cdots$ are vertices, while $\beta\gamma^2, \gamma^2\delta, \alpha^a\delta^d$ and $\beta\delta\cdots$ are not;
\item the angles at vertices satisfy Lemma \ref{Lem-vertex} and Counting Lemma;
\item $\beta\vert\beta\cdots$ is a vertex if and only if $\gamma\vert\gamma\cdots$ is a vertex; likewise for $\gamma \, \bvert \, \gamma \cdots$ and $\delta \, \bvert \, \delta$.
\end{itemize} 
Each admissible vertex comes with a vertex angle sum equation. In principle, with three independent equations and \eqref{Eq-angle-id}, one can determine the angle values and hence all the vertex types.

\section{Classification}
\label{Sec-Tilings}

We structure the classification using Remark~\ref{Rmk-benede}: either $\beta>\delta$ or $\beta<\delta$ holds. The three possible scenarios, (\ref{Prop-de}) $\delta > \beta, \frac{2}{3}\pi$, 
(\ref{Prop-be}) $\beta > \delta, \frac{2}{3}\pi$, and (\ref{Prop-be-de}) both $\beta, \delta\le \frac{2}{3}\pi$, tie the classification together in the three propositions in this section.

The tilings come from Propositions \ref{Prop-be} and \ref{Prop-be-de}, which filter through some of the near cases. Indeed, in one case we actually obtain a ``pseudo-tiling'' which is combinatorially feasible. The nature of classification inevitably demands substantial effort in these propositions and we provide the reader with diagrams for the workflows in the proofs in Figures \ref{Fig-flow-4.2}, \ref{Fig-flow-4.3} respectively.

\begin{prop}\label{Prop-de} If $\delta > \beta, \frac{2}{3}\pi$, then there is no dihedral tiling. 
\end{prop}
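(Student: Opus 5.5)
The plan is to first pin down the vertices containing $\delta$ using the hypothesis $\delta>\frac{2}{3}\pi$, and then show that the remaining vertex list cannot support a tiling. By Lemma \ref{Lem-vertex}, $\delta\cdots=\delta^d,\gamma^c\delta^d,\alpha\gamma^c\delta^d$ with $c\ge2$ even.
\begin{itemize}
\item Since $\delta>\frac{2}{3}\pi$, $\delta^d$ would need $d\le2$, which is impossible for a vertex of degree $\ge3$.
\item Any vertex with $\delta^2$ also forces $d\le2$. Then $\gamma^c\delta^2$ gives $2\gamma+2\delta\le2\pi$. Together with the kite angle sum \eqref{Eq-kite-sum} this yields $\beta>\delta$, contradicting the hypothesis.
\item Similarly, $\alpha\gamma^c\delta^2$ is excluded, since $\alpha>\frac12\pi$ and $\delta>\frac23\pi$ make its angle sum too large.
\end{itemize}
Hence $\delta\cdots=\gamma^c\delta,\alpha\gamma^c\delta$. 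Exactly as in Example \ref{Eg-Counting}, the Counting Lemma on $\gamma,\delta$ and Lemma \ref{Lem-bega2-ga2de} then give $\gamma\cdots=\delta\cdots=\alpha\gamma^2\delta$. Consequently no vertex contains both $\beta$ and $\gamma$, and by Lemma \ref{Lem-vertex} $\beta\cdots=\beta^b,\alpha^a\beta^b$. Moreover, $\alpha+2\gamma+\delta=2\pi$ combined with \eqref{Eq-kite-sum} gives $\beta>\alpha>\frac12\pi$.

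Next I would use the angle arrangement at $\alpha\gamma^2\delta$. The edge types force it to be $\vert\gamma\,\bvert\,\delta\,\bvert\,\gamma\vert\alpha\vert$, so the $\alpha$ there has arrangement $\gamma\vert\alpha\vert\gamma$. Every other $\alpha$ lies at some $\alpha^a\beta^b$, where its neighbours are $\alpha$ or $\beta$. Along an $x$-edge shared by an $m$-gon and a kite, one endpoint carries the kite's $\gamma$ and the other its $\beta$ (Figure \ref{Subfig-albe-alga}). So walking around an $m$-gon adjacent to a kite, a $\gamma\vert\hat\alpha\vert\gamma$ vertex is followed by a $\beta\vert\hat\alpha\vert\cdots$ vertex. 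Since $\beta>\alpha>\frac12\pi$, the admissible $\alpha^a\beta^b$ are very limited: $\alpha\beta^2$ and $\alpha^2\beta$ (and $\alpha^3$, $\beta^3$ as the degree-$3$ options). For each such vertex I would determine the arrangement and, in the spirit of Lemma \ref{Lem-3-div-m}, propagate around the $m$-gon.

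The main obstacle is this last step, namely producing the final contradiction from the short list $\{\alpha\gamma^2\delta,\ \alpha^a\beta^b,\ \beta^b\}$. I would try two routes.
\begin{itemize}
\item \emph{Combinatorial.} Every kite has its $\delta$ at an $\alpha\gamma^2\delta$ and both of its $\gamma$'s at $\alpha\gamma^2\delta$ vertices. Each such vertex uses one $\delta$, two $\gamma$'s and one $\alpha$, and its $\alpha$ is flanked on both sides by kites across $x$-edges, whose far endpoints then carry $\beta$ next to $\alpha$. Counting $\alpha$'s used at $\alpha\gamma^2\delta$ (equal to $f_{\text{kite}}$) against the $\beta$'s these force at $\alpha^a\beta^b$ should contradict the Counting Lemma, or a divisibility/parity condition as in Lemmas \ref{Lem-3-div-m}, \ref{Lem-even-m}.
\item \emph{Geometric.} If the combinatorics still admits a candidate, the three vertex equations (for example $\alpha\gamma^2\delta$, $\alpha\beta^2$ or $\alpha^2\beta$, and a second equation) determine the angles in terms of $m$. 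Substituting into \eqref{Eq-angle-id} should then give an inconsistency, or violate $\delta>\beta$ or $\delta>\frac23\pi$.
\end{itemize}
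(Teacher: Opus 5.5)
Your reduction to $\gamma\cdots=\delta\cdots=\alpha\gamma^2\delta$, $\beta\cdots=\beta^b,\alpha^a\beta^b$ and $\beta>\alpha>\frac12\pi$ agrees with the paper. One fix is needed there: $\alpha>\frac12\pi$ and $\delta>\frac23\pi$ alone do not rule out $\alpha\gamma^c\delta^2$. Use instead $2\gamma+2\delta>\beta+2\gamma+\delta>2\pi$ from your previous bullet.

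The real gap is that the proposal stops exactly where the decisive step is needed. The missing idea is Lemma \ref{Lem-bebe-gaga-dede}.
\begin{enumerate}
\item You already found that $\gamma$ occurs only at $\alpha\gamma^2\delta$, with the unique arrangement $\vert\alpha\vert\gamma\,\bvert\,\delta\,\bvert\,\gamma\vert$. So there is no vertex $\gamma\vert\gamma\cdots$, and by the lemma no vertex $\beta\vert\beta\cdots$.
\item Concretely: each $x$-edge of a kite starts at one of its $\gamma$'s, and at that $\alpha\gamma^2\delta$ the tile across the edge is an $m$-gon. Hence every $\beta$ sits as $\alpha\vert\beta\vert\alpha$.
\item This eliminates $\alpha\beta^2$ and $\beta^3$. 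By Lemma \ref{Lem-albe-alga}, $\alpha^2\beta$ must then be a vertex.
\item Together with $\beta>\alpha$, the vertex $\alpha^2\beta$ gives $\alpha<\frac23\pi$. This excludes $\alpha^3$, and by \eqref{Eq-mgon-sum} it forces $m\in\{4,5\}$.
\item Every vertex of an $m$-gon is now $\alpha^2\beta$ or $\alpha\gamma^2\delta$, with arrangement $\alpha\vert\hat\alpha\vert\beta$ or $\gamma\vert\hat\alpha\vert\gamma$. Lemma \ref{Lem-3-div-m} then gives $3\mid m$, a contradiction.
\end{enumerate}
Your remark about propagating ``in the spirit of Lemma \ref{Lem-3-div-m}'' is the right instinct. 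However, it only works after $\alpha\beta^2$, $\beta^3$ and $\alpha^3$ are excluded and $m\le5$ is established.

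Neither fallback route would finish the proof on its own.
\begin{itemize}
\item \emph{Counting.} Pure vertex counting is consistent. With $\AVC=\{\alpha^2\beta,\alpha\gamma^2\delta\}$ and $m=4$, the angle counts and Euler's formula give $8$ kites and $6$ squares. With $\{\alpha\beta^2,\alpha\gamma^2\delta\}$ they give $16$ kites and $6$ squares. So the Counting Lemma or a vertex count cannot produce the contradiction without using angle arrangements.
\item \emph{Geometry.} The surviving $\AVC$ has only two vertex types, so there is no third vertex equation. Even with \eqref{Eq-angle-id}, the angles keep a free parameter, and there is no reason to expect an inconsistency.
\end{itemize}
The contradiction in this proposition is purely combinatorial and comes from the arrangement constraints above.
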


\begin{proof} We will determine the vertex types using the information about angle sizes. 

Given in the hypothesis, $\delta$ is a bigger angle. The kite angle sum $\beta+2\gamma + \delta > 2\pi$ and $\delta > \beta$ imply $\gamma+\delta > \beta + \gamma, \pi$. Combined with $\alpha>\frac{1}{2}\pi$ and $\delta > \frac{2}{3}\pi$, list \eqref{Eq-vertex-de} is then reduced to $\delta\cdots=\gamma^c\delta, \alpha\gamma^c\delta$. In Example \ref{Eg-Counting}, we deduce $\gamma\cdots=\delta\cdots=\alpha\gamma^2\delta$, meaning that $\gamma,\delta$ appear only at $\alpha\gamma^2\delta$. 

The remaining vertices are given by combinations of $\alpha,\beta$. From \eqref{Eq-vertex-al} and \eqref{Eq-vertex-be}, they are $\alpha^3, \alpha^a\beta^b, \beta^b$. Notably, $\alpha\beta\cdots=\alpha^a\beta^b$. Lemma \ref{Lem-albe-alga} then implies that $\alpha^a\beta^b$ is a vertex. Combining the kite angle sum and the vertex angle sum of $\alpha\gamma^2\delta$ gives $\beta+2\gamma+\delta>2\pi = \alpha+2\gamma+\delta$, which implies $\beta>\alpha>\frac{1}{2}\pi$. This imply that $\alpha^a\beta^b$ is of degree $3$. Since $\gamma$ appears only at $\alpha\gamma^2\delta$ which has angle arrangement $\vert \alpha \vert \gamma \, \bvert \, \delta \, \bvert \, \gamma \vert$, it rules out $\gamma \vert \gamma\cdots$. Then Lemma \ref{Lem-bebe-gaga-dede} rules out $\beta\vert\beta\cdots$, meaning that $\beta^b, \alpha\beta^b$ are not vertices. Hence a degree $3$ vertex $\alpha^a\beta^b$ can only be $\alpha^2\beta$. Given $\beta>\alpha$, it then rules out $\alpha^3$. The above concludes $\alpha\cdots=\alpha^2\beta, \alpha\gamma^2\delta$ and $\beta\cdots=\alpha^2\beta$. Therefore the vertex types are
\begin{align*}
\AVC = \{ \alpha^2\beta, \alpha\gamma^2\delta \}.
\end{align*}

The vertex angle sum of $\alpha^2\beta$ and $\beta>\alpha$ and \eqref{Eq-mgon-sum} imply $\beta>\frac{2}{3}\pi>\alpha > (1-\frac{2}{m})\pi$, and hence $m\le5$, i.e., $m=4,5$. 

Along the boundary of an $m$-gon, its vertices are of the type either $\alpha^2\beta$ or $\alpha\gamma^2\delta$. Given the unique angle arrangements of $\alpha^2\beta$ and $\alpha\gamma^2\delta$ along the boundary of an $m$-gon, Lemma \ref{Lem-3-div-m} implies that $m$ is a multiple of $3$, contradicting $m=4,5$.  \qedhere
\end{proof}

\begin{prop}\label{Prop-be} If $\beta>\delta, \frac{2}{3}\pi$, then the dihedral tilings are
\begin{itemize}
\item
the \ref{Label:EMT-Prisms} family with flip modifications, 
\item
the \ref{Label:trunc-Octa} family, 
\item 
the \ref{Label:trunc-Icosa} family, and 
\item
the two \ref{Label:tri-subdiv-thick-Octa} tilings.
\end{itemize}
\end{prop}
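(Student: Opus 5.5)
The plan is to follow the template of Proposition \ref{Prop-de}, with the roles of $\beta$ and $\delta$ exchanged as far as possible. We first prune the lists of Lemma \ref{Lem-vertex} using angle sizes, then build an $\AVC$, and finally construct the tilings, using \eqref{Eq-angle-id} and \eqref{Eq-a4-cosx} to settle existence.

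\textbf{Step 1: the vertex $\beta\cdots$.} The kite angle sum together with $\beta>\delta$ gives $\beta+\gamma>\pi$. Combined with $\alpha>\frac12\pi$ and $\beta>\frac23\pi$, this kills $\beta^b$ for $b\ge3$, every $\beta\gamma^c$ with $c\ge2$ other than the excluded $\beta\gamma^2$, and all $\alpha\beta^b\gamma^c$ except possibly $\alpha\beta\gamma^2$. But $\alpha\beta\gamma^2$ has angle sum $\alpha+\beta+2\gamma>2\pi$ (since $\alpha>\delta$ would be needed only partially), and I expect to exclude it by comparing with the kite angle sum, possibly after a short subcase. So the expected reduction is $\beta\cdots=\alpha\beta^2,\alpha^2\beta,\beta^2\gamma^c$.

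Next, Lemma \ref{Lem-albe-alga} forces an $\alpha\gamma\cdots$ vertex, namely one of $\alpha^a\gamma^c$ or $\alpha\gamma^c\delta^d$. A degree~$3$ vertex must also exist. With these I would split into cases according to which $\beta$-vertex occurs: $\alpha^2\beta$ or $\alpha\beta^2$. The angle sums then sharply bound $\alpha$, and via \eqref{Eq-mgon-sum} they bound $m$. For example, $\alpha^2\beta$ with $\beta>\frac23\pi$ gives $\alpha<\frac23\pi$, hence $m\le5$.

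\textbf{Step 2: the $\gamma,\delta$ vertices.} In each case I would use the Counting Lemma on $\beta,\gamma$ (or $\gamma,\delta$), together with Lemma \ref{Lem-bebe-gaga-dede}, which links $\gamma\vert\gamma$ with $\beta\vert\beta$ and $\gamma\,\bvert\,\gamma$ with $\delta\,\bvert\,\delta$. This determines the admissible list. Expected outcomes:
\begin{enumerate}[label=(\roman*)]
\item $\AVC=\{\alpha^2\beta,\alpha^2\gamma^2,\delta^d\}$, with $d$ free, giving \ref{Label:EMT-Prisms} for $m=4$ and its flips. These come from the earth-map construction around the $\delta^d$ poles.
\item $\AVC=\{\alpha^2\beta,\alpha\gamma^2\delta\ \text{or}\ \alpha^2\gamma^2,\delta^3\}$ with $m=6$, giving the \ref{Label:trunc-Octa} and \ref{Label:trunc-Icosa} families. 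Here $\delta^3$ marks the kite-subdivided hexagons, and the independent flips of each hexagon's subdivision produce the families. Lemma \ref{Lem-3-div-m} is used to force $3\mid m$, and geometry (\eqref{Eq-angle-id}) selects the $t\mathcal O$ and $t\mathcal I$ values.
\item A case with $\alpha^3$ or $\alpha\beta^2$ and $m=4$, leading to \ref{Label:tri-subdiv-thick-Octa}.
\end{enumerate}
Lemma \ref{Lem-even-m} and Lemma \ref{Lem-3-div-m} will be the main combinatorial tools for discarding values of $m$ around an $m$-gon.

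\textbf{Step 3: construction and geometric elimination.} For each surviving $\AVC$ I would start at an $m$-gon and propagate tiles along its boundary. The forced arrangements (parity of $\gamma$, and $\beta\vert\beta$ versus $\gamma\vert\gamma$) determine neighbourhoods, and the tiling is rebuilt up to the flip choices. I then solve the three vertex equations together with \eqref{Eq-angle-id} to confirm a genuine kite exists ($x\ne y$, $\beta\ne\delta$, simplicity). Cases that are combinatorially consistent but have no solution are discarded.

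The main obstacle will be Step 2 in the subcases with $\alpha\gamma^c\delta^d$ and higher $c$. There the Counting Lemma alone does not close the argument, and one must rule out mixed $\AVC$s, including a pseudo-tiling. For these I would first try explicit neighbourhood propagation around $\delta\,\bvert\,\delta$ chains, and then fall back on the trigonometric identity to show the angle system has no admissible solution.
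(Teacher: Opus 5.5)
Your Step 1 contains a false step that derails the rest of the plan: $\alpha\beta\gamma^2$ cannot be excluded. Comparing its angle sum $\alpha+\beta+2\gamma=2\pi$ with the kite angle sum $\beta+2\gamma+\delta>2\pi$ gives only $\delta>\alpha$, which is not a contradiction. In fact $\alpha\beta\gamma^2$ is the defining vertex of the \ref{Label:trunc-Octa} and \ref{Label:trunc-Icosa} tilings, where $\delta=\frac{2}{3}\pi>\alpha$.

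Your reduced list is also wrong in the other direction. $\beta^2\gamma^c$ is impossible, because $\beta+\gamma>\pi$ gives $2\beta+2\gamma>2\pi$. On the other hand, $\beta\gamma^{c\ge4}$ and $\alpha\beta\gamma^{c\ge4}$ survive the angle-size bounds, since $\gamma$ can be small, and must be removed later by counting or arrangement arguments. The correct starting list is $\beta\cdots=\alpha^2\beta,\alpha\beta^2,\beta\gamma^{c\ge4},\alpha\beta\gamma^c$.

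Because you discard $\alpha\beta\gamma^c$, your split into the cases $\alpha^2\beta$ and $\alpha\beta^2$ misses the case where neither occurs. There the Counting Lemma on $\beta,\gamma$ gives $\beta\cdots=\gamma\cdots=\alpha\beta\gamma^2$. Then $\delta$ appears only at $\delta^d$, and $\delta>\alpha>\frac12\pi$ forces $\delta^3$. This yields $\AVC=\{\delta^3,\alpha\beta\gamma^2\}$ with $m=4,5$, i.e. the canonical seeds. The other members of these two families arise in the $\alpha\beta^2$ case when $\alpha\beta\gamma^2$ is also a vertex, with $\AVC=\{\alpha\beta^2,\delta^3,\alpha\beta\gamma^2,\alpha\gamma^4\}$. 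Flipping the subdivision of a hexagon is exactly what creates $\alpha\beta^2$ and $\alpha\gamma^4$.

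Your outcome (i) is correct, but (ii) is wrong and contradicts your own Step 1. You note that $\alpha^2\beta$ forces $\alpha<\frac23\pi$ and hence $m\le5$. The same bound follows from $\alpha\beta^2$, and from $\delta^3$ with $\delta>\alpha$, so $m=6$ never occurs under this hypothesis. In \ref{Label:trunc-Octa} and \ref{Label:trunc-Icosa} the regular $m$-gons are the squares and pentagons of $t\mathcal{O}$ and $t\mathcal{I}$. The hexagons are unions of three kites around a $\delta^3$ (Figure \ref{Fig-be-hex-de3}), not $m$-gons. Lemma \ref{Lem-3-div-m} plays no role here; that argument belongs to the \ref{Label:trunc-Platonic} tilings in Proposition \ref{Prop-be-de}.

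Outcome (iii) is also misattributed. $\alpha^3$ is never a vertex here, since $\alpha<\frac23\pi$ in every case, and $\alpha\beta^2$ is incompatible with $\alpha^2\beta$. The \ref{Label:tri-subdiv-thick-Octa} tilings come from $\AVC=\{\alpha^2\beta,\alpha^3\gamma^2,\delta^3\}$ with $m=4$. Reaching this requires ruling out every $\gamma\vert\alpha\vert\gamma\cdots$ vertex and showing that $\alpha^3\cdots=\alpha^3\gamma^c$ is possible only for $m=4$.

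The missing idea is to keep $\alpha\beta\gamma^c$ and organise the proof into three cases:
\begin{enumerate}
\item neither $\alpha^2\beta$ nor $\alpha\beta^2$ is a vertex;
\item $\alpha^2\beta$, split by the presence of $\gamma\vert\alpha\vert\gamma\cdots$, of $\alpha^3\gamma^2$, or of $\alpha^2\gamma^2$, the last giving \ref{Label:EMT-Prisms} and its flip;
\item $\alpha\beta^2$, split by whether $\alpha\beta\gamma^2$ is also a vertex. The negative branch contains the geometrically unrealisable combinatorial tiling with $\AVC=\{\alpha\beta^2,\alpha\gamma^2,\gamma^2\delta^2\}$.
\end{enumerate}
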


\begin{figure}[h!]
\centering
\begin{tikzpicture}[>=]
\tikzmath{
\x=1.5;
\y=0.25;
}

\draw[]
	(0,0) -- (0,\y)
	(0,0) -- (0,-\y)
	(-\x,0) -- (-\x,-\y)
	(\x,0) -- (\x,-\y)
	(-5*\x,2.75*\y) -- (-5*\x,3.75*\y) 
	(-3.5*\x,2.75*\y) -- (-3.5*\x,3.75*\y) 
	(-2*\x,2.75*\y) -- (-2*\x,3.75*\y)
	(0,2.75*\y) -- (0,3.75*\y)
	(-1*\x,3.75*\y) -- (-1*\x,4.75*\y)
	(-4*\x,3.75*\y) -- (-4*\x,4.75*\y)
	(-1*\x,6.75*\y) -- (-1*\x,7.75*\y)
	(-6*\x,6.75*\y) -- (-6*\x,7.75*\y)
	(-4*\x,6.75*\y) -- (-4*\x,7.75*\y)
	(-3*\x,7.75*\y) --  (-3*\x,8.75*\y)
	(-\x,0) -- (0,0) -- (\x,0)
	(-2*\x,3.75*\y) -- (-1*\x,3.75*\y) -- (0,3.75*\y)
	(-5*\x,3.75*\y) -- (-3.5*\x,3.75*\y) 
	(-6*\x,7.75*\y) -- (-1*\x,7.75*\y) 
;

\node at (-3*\x,9.5*\y) {\tiny $\alpha\beta\cdots$};

\node at (-6*\x,5.75*\y) {\tiny $\nexists$ $\alpha^2\beta, \alpha\beta^2$};
\node at (-6*\x,4.5*\y) {\tiny $\Downarrow$};
\node at (-6*\x,3.5*\y) {\tiny $Kt\mathcal{O}, Kt\mathcal{I}$};

\node at (-5*\x,1.75*\y) {\tiny $\gamma \vert \alpha \vert \gamma$};
\node at (-5*\x,0.5*\y) {\tiny $\Downarrow$};
\node at (-5*\x,-0.5*\y) {\tiny $\varnothing$};

\node at (-3.5*\x,1.75*\y) {\tiny $\nexists$ $\gamma \vert \alpha \vert \gamma$};
\node at (-3.5*\x,0.5*\y) {\tiny $\Downarrow$};
\node at (-3.5*\x,-0.5*\y) {\tiny (EM3) \& Flip, (P7)};

\node at (-4*\x,5.75*\y) {\tiny $\alpha^2\beta$, $\nexists$ $\alpha\beta^2$};

\node at (-1*\x,5.75*\y) {\tiny $\alpha\beta^2$, $\nexists$ $\alpha^2\beta$};

\node at (-2*\x,1.75*\y) {\tiny $\alpha\beta\gamma^2$};
\node at (-2*\x,0.5*\y) {\tiny $\Downarrow$};
\node at (-2*\x,-0.5*\y) {\tiny (P1), (P2)};

\node at (0,1.75*\y) {\tiny $\nexists$ $\alpha\beta\gamma^2$};


\node at (-\x,-1.75*\y) {\tiny $\alpha\gamma^2$};
\node at (-\x,-3*\y) {\tiny $\Downarrow$};
\node at (-\x,-4*\y) {\tiny $\varnothing^{\ast}$};

\node at (0,-1.75*\y) {\tiny $\alpha\gamma^4$};
\node at (0,-3*\y) {\tiny $\Downarrow$};
\node at (0,-4*\y) {\tiny $\subset$ (P1), (P2)};

\node at (\x,-1.75*\y) {\tiny $\alpha\gamma^2\delta^d$};
\node at (\x,-3*\y) {\tiny $\Downarrow$};
\node at (\x,-4*\y) {\tiny $\varnothing$};

\end{tikzpicture}
\caption{The workflow of the proof for Proposition \ref{Prop-be}; *a combinatorial tiling appears under $\alpha\gamma^2$}
\label{Fig-flow-4.2}
\end{figure}

\begin{proof} 
From the hypothesis, $\beta$ is a bigger angle. The kite angle sum and $\beta>\delta$ imply $\beta+\gamma>\gamma+\delta, \pi$. Parity Lemma and $\alpha>\frac{1}{2}\pi$ and $\beta>\frac{2}{3}\pi$ and $\beta+\gamma>\pi$ reduce \eqref{Eq-vertex-be} to 
\begin{align}\label{Eq-be>de,2/3-vertex-be}
\beta\cdots=\alpha^2\beta, \alpha\beta^2, \beta\gamma^{c\ge4}, \alpha\beta\gamma^c.
\end{align}
In particular, we have $\alpha\beta\cdots=\alpha^2\beta, \alpha\beta^2, \alpha\beta\gamma^c$.
Lemma \ref{Lem-albe-alga} asserts that one of them is a vertex. The angle sums imply that $\alpha^2\beta,\alpha\beta^2$ are mutually exclusive, which means that they cannot both be vertices. We will study the cases: both of $\alpha^2\beta,\alpha\beta^2$ are not vertices; and one of them is a vertex.

\begin{case*}[$\nexists \, \alpha^2\beta,\alpha\beta^2$] List \eqref{Eq-be>de,2/3-vertex-be} becomes $\beta\cdots=\beta\gamma^{c\ge4}, \alpha\beta\gamma^c$. Counting Lemma on $\beta, \gamma$ then rules out $\beta\gamma^{c\ge4}$ which implies $\beta\cdots=\alpha\beta\gamma^2$. Counting Lemma on $\beta,\gamma$ again implies $\gamma\cdots=\beta\cdots=\alpha\beta\gamma^2$. Now $\alpha\beta\gamma^2$ has to be a vertex.

With $\beta\cdots=\gamma\cdots=\alpha\beta\gamma^2$, list \eqref{Eq-vertex-al} becomes $\alpha\cdots=\alpha^3, \alpha\beta\gamma^2$ and list \eqref{Eq-vertex-de} becomes $\delta\cdots=\delta^d$. Since $\delta$ appears only at $\delta^d$, now $\delta^d$ has to be a vertex. Furthermore, the vertex angle sum of $\alpha\beta\gamma^2$ and the kite angle sum imply $\delta>\alpha > \frac{1}{2}\pi$, and hence $\delta^d=\delta^3$. Then $\delta>\alpha$ and the vertex angle sum of $\delta^3$ rule out $\alpha^3$. Therefore the admissible vertices are
\begin{align}\label{AVC-de3-albega2}
\AVC = \{ \delta^3, \alpha\beta\gamma^2 \}. 
\end{align}
Next, $\delta^3$ and $\delta>\alpha$ imply $\tfrac{2}{3}\pi>\alpha$. Then \eqref{Eq-mgon-sum} implies $\tfrac{2}{3}\pi>\alpha > (1-\tfrac{2}{m})\pi$, and hence $m\le5$. Thus, $m=4,5$, namely the regular prototile is either a square or a pentagon.

The neighbourhood of $\delta^3$ consists of three kites as shown in Figure \ref{Fig-be-hex-de3} and has a hexagonal boundary. Consider each such neighbourhood as a hexagon with angles $\beta^3\bar{\gamma}^3$ where $\bar{\gamma}$ denotes $\gamma^2$ combined. Then AVC \eqref{AVC-de3-albega2} becomes $\{ \alpha\beta\bar{\gamma} \}$ for the prototiles given by the hexagon and the square (resp. the regular pentagon). From this viewpoint, a straightforward construction gives the truncated octahedron (resp. the truncated icosahedron) as shown in Figure \ref{Fig-Finite-Families-tOcta-tIcosa}. The tilings are therefore given by subdividing each hexagon into three kites. 

\begin{figure}[h!] 
\centering
\begin{tikzpicture}

\tikzmath{
\s=1;
\r=0.75;
\dr=0.1*\r;
\rr=0.5*\r;
\R=1.5*\r;
\RR=2.5*\r;
\th=360/4;
\x=\r*cos(\th/2);
\xx=0.5*\x;
\hex=360/6;
}

\begin{scope}[]

\foreach \a in {0,1,...,5} {

\tikzset{rotate=\a*\hex}

\draw[]
	(90:\r) -- (90+\hex:\r)
;

}

\foreach \a in {0,2,4} {

\tikzset{rotate=\a*\hex}

\draw[line width=1.5]
	(0,0) -- (90:\r)
;

\fill (90:\r) circle (\dr);

\node at (90-1*\hex:0.75*\r) {\small $\beta$};
\node at (90-1*\hex:0.25*\r) {\small $\delta$};
\node at (0.2*\r, 0.65*\r) {\small $\gamma$};
\node at (-0.2*\r, 0.65*\r) {\small $\gamma$};
}

\end{scope}

\end{tikzpicture}
\caption{A hexagon centred at $\delta^3$}
\label{Fig-be-hex-de3}
\end{figure}
\end{case*}

\begin{case*}[$\alpha^2\beta$] The vertex angle sum of $\alpha^2\beta$ and $\alpha>\frac{1}{2}\pi$ and $\beta>\frac{2}{3}\pi$ imply $\pi>\beta>\frac{2}{3}\pi>\alpha > \frac{1}{2}\pi$. Then \eqref{Eq-mgon-sum} implies $\frac{2}{3}\pi> (1-\tfrac{2}{m})\pi$, and hence $m=4,5$.

The vertex $\alpha^2\beta$ and $\beta>\alpha$ rule out $\alpha\beta^2$. Then \eqref{Eq-be>de,2/3-vertex-be} becomes $\beta\cdots=\alpha^2\beta, \beta\gamma^{c\ge4}, \alpha\beta\gamma^c$. Notably, $\beta^2\cdots$ is not a vertex, and hence the same for $\beta\vert\beta\cdots$. Lemma \ref{Lem-bebe-gaga-dede} then rules out $\gamma\vert\gamma\cdots$. Hence $\gamma^c, \gamma^c\delta^d(=$ $\bvert \, \delta \, \bvert \, \gamma \vert \gamma \cdots)$ and $\beta\gamma^{c\ge4}$ are not vertices; and the angle arrangements for $\alpha\beta\gamma^c$ are $\bvert \, \gamma \vert \alpha \vert \beta \vert \gamma \, \bvert$ and $\bvert \, \gamma \vert \alpha \vert \gamma \, \bvert \, \gamma \vert \beta \vert \gamma \, \bvert$, which correspond to $\alpha\beta\gamma^2, \alpha\beta\gamma^4$. List \eqref{Eq-be>de,2/3-vertex-be} becomes
\begin{align}
\label{Eq-be>de,2/3-al2be-be}
\beta\cdots&=\alpha^2\beta, \alpha\beta\gamma^2, \alpha\beta\gamma^4.
\end{align}

Similarly, the absence of $\gamma\vert\gamma\cdots$ implies $\alpha\gamma^c\delta^d=$ $\bvert \, \gamma \vert \alpha \vert \gamma \, \bvert \, \delta \cdots \delta \, \bvert$ $=\alpha\gamma^2\delta^d$. Combined with \eqref{Eq-be>de,2/3-al2be-be}, lists \eqref{Eq-vertex-ga} and \eqref{Eq-vertex-de} become
\begin{align}
\label{Eq-be>de,2/3-al2be-ga}
\gamma\cdots&= \alpha^{a}\gamma^c,  \alpha\beta\gamma^2, \alpha\beta\gamma^4, \alpha\gamma^2\delta^d; \\
\label{Eq-be>de,2/3-al2be-de}
\delta\cdots&=\delta^d, \alpha\gamma^2\delta^d.
\end{align}

Given \eqref{Eq-be>de,2/3-al2be-be} and the case assumption, we will show that $\alpha\beta\gamma^2$ and $\alpha\beta\gamma^4$, which are mutually exclusive, do not appear in any tiling. 

First, assume that $\alpha\beta\gamma^2$ appears in a tiling. The vertices $\alpha^2\beta, \alpha\beta\gamma^2$ imply $2\gamma=\alpha>\frac{1}{2}\pi$. Combined with $2\gamma+\delta>\pi$, it implies $\alpha+2\gamma+2\delta=4\gamma+2\delta>2\pi$. Using \eqref{Eq-be>de,2/3-al2be-ga} we then determine $\alpha\gamma\delta\cdots=\alpha\gamma^{2}\delta$. It is ruled out by $\alpha\beta\gamma^2$ and $\beta\neq\delta$. Hence \eqref{Eq-be>de,2/3-al2be-ga} becomes $\gamma\cdots=\alpha\beta\gamma^2, \alpha^a\gamma^c$. One of them appears as a vertex. Substituting $2\gamma=\alpha$ into the vertex angle sum of $\alpha^a\gamma^c$ gives $2\pi=a\alpha+c\gamma=(a+\bar{c})\alpha$ where $\bar{c}:=\frac{1}{2}c$. Then $\alpha>\frac{1}{2}\pi$ implies $a+\bar{c}=3$. Since $\frac{2}{3}\pi > \alpha$, the vertex angle sum is $<2\pi$, a contradiction. This rules out $\alpha^a\gamma^c$ and \eqref{Eq-be>de,2/3-al2be-ga} becomes $\gamma\cdots=\alpha\beta\gamma^2$. Counting Lemma on $\beta, \gamma$ implies $\beta\cdots=\alpha\beta\gamma^2$, contradicting $\alpha^2\beta$ being a vertex. 

Second, assume that $\alpha\beta\gamma^4$ appears in a tiling. From \eqref{Eq-be>de,2/3-al2be-de}, we know $\delta\cdots =\delta^d, \alpha\gamma^2\delta^d$. One of them is a vertex. The vertices $\alpha^2\beta, \alpha\beta\gamma^4$ imply $4\gamma=\alpha$. Combining it with the kite angle sum and $\alpha\beta\gamma^4$ and $\alpha>\frac{1}{2}\pi$, we get $\delta>\alpha+2\gamma = \frac{3}{2}\alpha > \frac{3}{4}\pi$. This rules out $\delta^d$. Moreover, $4\gamma=\alpha$ and $\delta>\alpha+2\gamma$ imply $2\delta=3\alpha$ and $\alpha+6\gamma+\delta>4\alpha$. Then $\alpha\gamma^c\delta^d=\alpha\gamma^2\delta, \alpha\gamma^4\delta$. However, $\beta>\delta$ and $\alpha\beta\gamma^4$ imply that $\alpha\gamma^2\delta, \alpha\gamma^4\delta$ have the angle sums $<2\pi$, meaning no vertex for $\delta\cdots$, a contradiction. 

The above therefore reduces \eqref{Eq-be>de,2/3-al2be-be} to 
\[
\beta\cdots=\alpha^2\beta.
\]


Lemma \ref{Lem-albe-alga} asserts that $\alpha\gamma\cdots$ is a vertex. Using lists \eqref{Eq-vertex-al}, \eqref{Eq-be>de,2/3-al2be-ga}, \eqref{Eq-be>de,2/3-al2be-de} and $\alpha>\tfrac{1}{2}\pi$ and the absence of $\gamma\vert\gamma\cdots$ and Parity Lemma, we deduce 
\begin{align}
\label{Eq-be>de,2/3-al2be-alga}
\alpha\gamma\cdots =  \alpha\gamma^2, \alpha^2\gamma^2, \alpha^2\gamma^4, \alpha^3\gamma^2, \alpha^3\gamma^4, \alpha^3\gamma^6, \alpha\gamma^2\delta^{d}. 
\end{align}

Note that, the presence of $\alpha^2\beta$ and $\beta>\alpha$ rule out $\alpha^3$ and \eqref{Eq-vertex-al} implies $\alpha^3\cdots=\alpha^3\gamma^c$, which will be useful for determining $\alpha\gamma\cdots$.

We claim that $\alpha^3\cdots$ is possible only for $m=4$. For $m=5$, inequality \eqref{Eq-mgon-sum} implies $\alpha>\tfrac{3}{5}\pi$. For $k\ge1$, the vertex angle sums of $\alpha^2\beta, \alpha^3\gamma^{2k}$ imply $\beta=2\pi - 2\alpha$ and $2\gamma=\tfrac{1}{k}(2\pi-3\alpha)$. Substituting them into the kite angle sum gives $\delta>\tfrac{1}{k}((2k+3)\alpha-2\pi)$. Then $\alpha>\tfrac{3}{5}\pi$ and $k\ge1$ further implies $\delta > \tfrac{1}{5k}(6k-1)\pi>\pi$. Now $\beta>\delta > \pi$ and $\alpha>\tfrac{3}{5}\pi$ mean that $\alpha^2\beta$ has an angle sum $>2\pi$, a contradiction. 


As $\alpha\gamma\cdots$ appears as one of the vertices in \eqref{Eq-be>de,2/3-al2be-alga}, we have the following observation. The absence of $\gamma\vert\gamma\cdots$ means that each of $\alpha\gamma^2, \alpha^2\gamma^4, \alpha^3\gamma^4, \alpha^3\gamma^6, \alpha\gamma^2\delta^d$ has a unique angle arrangement: 
\begin{align*}
&\alpha\gamma^2= \, \bvert \gamma \vert \alpha \vert \gamma \, \bvert, \quad
\alpha^2\gamma^4= \, \bvert \gamma \vert \alpha \vert \gamma \, \bvert \, \gamma \vert \alpha \vert \gamma \, \bvert, \quad
\alpha^3\gamma^4= \, \bvert \gamma \vert \alpha \vert \gamma \, \bvert \, \gamma \vert \alpha \vert \alpha \vert \gamma \, \bvert, \\
&\alpha^3\gamma^6=\, \bvert \gamma \vert \alpha \vert \gamma \, \bvert \, \gamma \vert \alpha \vert \gamma \, \bvert \, \gamma \vert \alpha \vert \gamma \, \bvert, \quad
\alpha\gamma^2\delta^d=\, \bvert \gamma \vert \alpha \vert \gamma \, \bvert \, \delta \cdots \delta \, \bvert.
\end{align*}
Similarly, $\alpha^2\gamma^2$ has a unique angle arrangement $\bvert \, \gamma \vert \alpha \vert \alpha \vert \gamma \, \bvert$; and $\alpha^3\gamma^2=$ $\bvert \, \gamma \vert \alpha \vert \alpha \vert \alpha \vert \gamma \, \bvert$. Note that, $\alpha^3\cdots=\alpha^3\gamma^c$ implies $\alpha \vert \alpha \vert \alpha \cdots = \vert \alpha \vert \alpha \vert \alpha \vert \gamma \, \bvert \, \gamma \vert = \alpha^3\gamma^2$. 

What $\alpha\gamma^2, \alpha^2\gamma^4, \alpha^3\gamma^4, \alpha^3\gamma^6, \alpha\gamma^2\delta^d$ have in common is the angle arrangement $\gamma \vert \alpha \vert \gamma$, while the remaining vertices $\alpha^2\gamma^2, \alpha^3\gamma^2$ do not have that. We proceed with two subcases depending on the presence of $\gamma \vert \alpha \vert \gamma\cdots$ (one of $\alpha\gamma^2, \alpha^2\gamma^4, \alpha^3\gamma^4, \alpha^3\gamma^6, \alpha\gamma^2\delta^d$) and otherwise (either $\alpha^2\gamma^2$ or $\alpha^3\gamma^2$). 

\begin{subcase*}[$\alpha^2\beta$ and $\exists$ $\gamma \vert \alpha \vert \gamma\cdots$]

For $m=4$, the arrangement $\gamma \vert \alpha \vert \gamma$ determines tiles $T_1, T_2, T_3$ in Figure \ref{Subfig-a4-a2b2-AAD-al2be-gaalga-m=4}. Then $\alpha_2\beta_1\cdots, \alpha_2\beta_3\cdots=\alpha^2\beta$ determine $T_4, T_5$ resulting in $\alpha \vert \alpha \vert \alpha \cdots$, which is $\alpha^3\gamma^2$. Now $\alpha^3\gamma^2$ is a vertex, which rules out $\alpha\gamma^2, \alpha^3\gamma^4, \alpha^3\gamma^6$. It remains to consider one of $\alpha^2\gamma^4, \alpha\gamma^2\delta^d$ in the subsubcase assumption. The vertices $\alpha^2\beta, \alpha^3\gamma^2$ imply $\beta=2\pi - 2\alpha$ and $2\gamma = 2\pi - 3\alpha$ respectively. On the one hand, the vertex angle sum of $\alpha^2\gamma^4$ and second equation imply $\alpha=\frac{1}{2}\pi$, a contradiction; on the other hand, the two equations and the kite angle sum imply $2\gamma+\delta>2\alpha>\pi$ and $\delta > 5\alpha - 2\pi > \tfrac{1}{2}\pi$, and hence $\alpha\gamma^2\delta^d=\alpha\gamma^2\delta$. However, $\alpha\gamma^2\delta, \alpha^3\gamma^2$ imply $\delta=2\alpha>\pi > \beta$, a contradiction again. 

\begin{figure}[h!] 
\centering
\begin{subfigure}[t]{0.4\linewidth}
\centering
\begin{tikzpicture}

\tikzmath{
\xs=6;
\ys=1;
}

\begin{scope}[]

\tikzmath{
\r=0.85;
\gon=5;
\th=360/\gon;
\x=\r*cos(\th/2);
}

\foreach \a in {1,...,4}{

\draw[rotate=\th*\a]
	(0:0) -- (270:\r)
;
}

\foreach \a in {1,...,3}{
\draw[rotate=\th*\a]
	(270:\r) -- (270+0.5*\th:2*\x)
;
}

\foreach \a in {2,...,4}{
\draw[rotate=\th*\a]
	(270:\r) -- (270-0.5*\th:2*\x)
;
}

\draw[]
	(90:2*\x) -- (90-0.5*\th:2.6*\x) 
	(90:2*\x) -- (90+0.5*\th:2.6*\x) 
	(90-\th:2*\x) -- (90-0.5*\th:2.6*\x) 
	(90+\th:2*\x) -- (90+0.5*\th:2.6*\x) 
;

\draw[line width=1.5]
	(0,0) -- (270-\th:\r)
	(270-\th:\r) -- (270-1.5*\th:2*\x)
	(0,0) -- (270+\th:\r)
	(270+\th:\r) -- (270+1.5*\th:2*\x)
;

\node at (137.5:1.05*\x) {\small $\beta$}; 
\node at (185:1.1*\x) {\small $\delta$};
\node at (165:0.35*\x) {\small $\gamma$};
\node at (162.5:1.65*\x) {\small $\gamma$};

\node at (90:0.3*\x) {\small $\alpha$}; 
\node at (90:1.7*\x) {\small $\alpha$};
\node at (65:1.1*\x) {\small $\alpha$};
\node at (115:1.1*\x) {\small $\alpha$};

\node at (15:0.35*\x) {\small $\gamma$}; 
\node at (17.5:1.65*\x) {\small $\gamma$};
\node at (42.5:1.05*\x) {\small $\beta$}; 
\node at (355:1.1*\x) {\small $\delta$};

\node at (150:1.85*\x) {\small $\alpha$}; 
\node at (90+0.5*\th:1.15*\r) {\small $\alpha$};
\node at (90+0.5*\th:2.35*\x) {\small $\alpha$};
\node at (100:1.9*\x) {\small $\alpha$};

\node at (30:1.85*\x) {\small $\alpha$}; 
\node at (90-0.5*\th:1.15*\r) {\small $\alpha$};
\node at (90-0.5*\th:2.35*\x) {\small $\alpha$};
\node at (80:1.9*\x) {\small $\alpha$};

\node at (275:0.25*\r) {\small $\cdots$};

\node[inner sep=1,draw,shape=circle] at (90+\th:1*\x) {\footnotesize $1$};
\node[inner sep=1,draw,shape=circle] at (90:1*\x) {\footnotesize $2$};
\node[inner sep=1,draw,shape=circle] at (90-\th:1*\x) {\footnotesize $3$};
\node[inner sep=1,draw,shape=circle] at (90+0.5*\th:1.5*\r) {\footnotesize $4$};
\node[inner sep=1,draw,shape=circle] at (90-0.5*\th:1.5*\r) {\footnotesize $5$};

\end{scope}

\end{tikzpicture}
\caption{$m=4$}
\label{Subfig-a4-a2b2-AAD-al2be-gaalga-m=4}
\end{subfigure}
\begin{subfigure}[t]{0.4\linewidth}
\centering
\begin{tikzpicture}

\tikzmath{
\xs=6;
\ys=1;
}

\begin{scope}[xshift=\xs cm, yshift=\ys cm]

\tikzmath{
\r=0.625;
\g=5;
\ph=360/\g;
\x=\r*cos(\ph/2);
}

\foreach \aa in {-1,1} {

\tikzset{xshift=\aa*\x cm, xscale=-\aa}

\foreach \a in {0,...,4} {

\draw[rotate=\a*\ph]
	(0.5*\ph:\r) -- (-0.5*\ph:\r)
;

\node at (0.5*\ph+\a*\ph:0.65*\r) {\small $\alpha$};

}
}

\foreach \aa in {-1,1} {

\tikzset{xscale=\aa}

\draw[xshift=-\x cm]
	(2.5*\ph:\r) -- (2.5*\ph:2*\r)
	(1.5*\ph:\r) -- (1.75*\ph:2*\r)
	(1.75*\ph:2*\r) -- (2*\ph:2.5*\r)
	(2.5*\ph:2*\r) -- (2*\ph:2.5*\r)
	(0+\x,-2.25*\r) -- (0+1.75*\x,-2.75*\r)
;

\draw[line width=1.5]
	([xshift=-\x cm]0.5*\ph:\r) -- (0.6*\x,2.5*\x)
;

\draw[xshift=-\x cm, line width=1.5]
	(-1.5*\ph:\r) -- (0+\x,-2.25*\r)
;

\draw[xshift=-\x cm, line width=1.5]
	(3.5*\ph:\r) -- (3.25*\ph:2*\r)
;

\node at (-1.8*\x,-1.15*\x) {\small $\gamma$};
\node at (-2.4*\x,-0.4*\x) {\small $\beta$};

\node at (2.4*\x,0.35*\x) {\small $\alpha$};
\node at (1.8*\x,1.1*\x) {\small $\alpha$};
\node at (2.55*\x,1.7*\x) {\small $\alpha$};
\node at (3.2*\x,1.6*\x) {\small $\alpha$};
\node at (3.1*\x,0.3*\x) {\small $\alpha$};

}

\node at (0,-1.125*\x) {\small $\beta$};
\node at (0.8*\x,-1.3*\x) {\small $\gamma$};
\node at (-0.8*\x,-1.3*\x) {\small $\gamma$};
\node at (0,-2.3*\x) {\small $\delta$};

\node at (-1.25*\x,1.45*\x) {\small $\beta$};
\node at (-0.5*\x,1.2*\x) {\small $\gamma$};

\node at (1.25*\x,1.45*\x) {\small $\beta$};
\node at (0.5*\x,1.2*\x) {\small $\gamma$};

\node at (0.08*\x,2*\x) {\small $\cdots$};

\node at (-1.4*\x,-1.75*\x) {\small $\cdots$};
\node at (1.45*\x,-1.75*\x) {\small $\cdots$};

\node at (-1.5*\x,-2.225*\x) {\small $\delta$};
\node at (-0.5*\x,-2.75*\x) {\small $\gamma$};

\node at (1.5*\x,-2.225*\x) {\small $\delta$};
\node at (0.5*\x,-2.75*\x) {\small $\gamma$};

\node at (0,-3.25*\x) {\small $\alpha$};

\node[inner sep=1,draw,shape=circle] at (-2.75*\x,-\x) {\footnotesize $1$};
\node[inner sep=1,draw,shape=circle] at (-\x,0) {\footnotesize $2$};
\node[inner sep=1,draw,shape=circle] at (0,-1.7*\x) {\footnotesize $3$};
\node[inner sep=1,draw,shape=circle] at (-2.75*\x,\x) {\footnotesize $4$};
\node[inner sep=1,draw,shape=circle] at (\x,0) {\footnotesize $5$};
\node[inner sep=1,draw,shape=circle] at (1.25*\x,2.25*\x) {\footnotesize $6$};
\node[inner sep=1,draw,shape=circle] at (2.75*\x,\x) {\footnotesize $7$};
\node[inner sep=1,draw,shape=circle] at (2.75*\x,-\x) {\footnotesize $8$};

\end{scope}

\end{tikzpicture}
\caption{$m=5$}
\label{Subfig-a4-a2b2-AAD-al2be-gaalga-m=5}
\end{subfigure}
\caption{The deductions of $\gamma \vert \alpha \vert \gamma$ for $m=4,5$
}
\label{Fig-a4-a2b2-AAD-al2be-gaalga}
\end{figure}

For $m=5$, the arrangement $\gamma \vert \alpha \vert \gamma$ determines the angles in $T_1, T_2, ..., T_5$ in Figure \ref{Subfig-a4-a2b2-AAD-al2be-gaalga-m=5}. Knowing that $\alpha^3\cdots$ is not a vertex, the symmetry across $T_4, T_2, T_5$ allows us to assume the top $\alpha_2\alpha_4\cdots$ to be $\alpha^2\beta$. Parity Lemma and \eqref{Eq-be>de,2/3-al2be-alga} then imply $\vert \alpha \vert \gamma \, \bvert \, \cdots \, \bvert  \gamma \vert \alpha \vert = \alpha^2\gamma^2$, which now has to be a vertex, and it rules out $\alpha\gamma^2, \alpha^2\gamma^4$. Hence \eqref{Eq-be>de,2/3-al2be-alga} becomes $\alpha\gamma\cdots=\alpha^2\gamma^2, \alpha\gamma^2\delta^d$, and therefore $\gamma \vert \alpha \vert \gamma \cdots=\alpha\gamma^2\delta^d$. The top $\alpha_2\alpha_4\cdots=\alpha^2\gamma^2$ in the figure determines the angles in $T_6$. As $\alpha_5\beta_6\cdots=\alpha^2\beta$, we also determine $T_7$, and hence $\alpha_5\gamma_3\cdots= \gamma_3  \vert  \alpha_5  \vert \gamma_8  \cdots = \alpha\gamma^2\delta^d$, and the same for $ \gamma_1  \vert  \alpha_2  \vert \gamma_3  \cdots$. With \eqref{Eq-be>de,2/3-al2be-de}, this further implies $\delta_3\cdots =\alpha\gamma^2\delta$. The vertex angle sums of $\alpha^2\beta, \alpha^2\gamma^2, \alpha\gamma^2\delta$ and $m=5$ and \eqref{Eq-angle-id} determine
\begin{align*}
&\alpha = 2\cos^{-1}\lambda = (0.63471...)\pi,&
&\beta = 2\pi-2\alpha = 4\sin^{-1}\lambda = (0.73057...)\pi,& \\
&\gamma = \pi-\alpha = 2\sin^{-1}\lambda = (0.36528...)\pi,&
&\delta = 2\cos^{-1}\lambda = (0.63471...)\pi.&
\end{align*}
where $\lambda=\tfrac{1}{8}(3-\sqrt5+\sqrt{62-22\sqrt5})$. The angle values and no $\gamma\vert\gamma\cdots$ determine the vertices 
\begin{align*}
\AVC = \{ \alpha^2\beta, \alpha^2\gamma^2, \alpha\gamma^2\delta \},
\end{align*}
which contradicts Counting Lemma on $\gamma,\delta$. 
\end{subcase*}

\begin{subcase*}[$\alpha^2\beta, \alpha^3\gamma^2$] We have $\alpha\gamma\cdots$ $=\alpha^3\gamma^2$ and \eqref{Eq-be>de,2/3-al2be-ga} becomes $\gamma\cdots=\alpha^3\gamma^2$. Then \eqref{Eq-be>de,2/3-al2be-de} becomes $\delta\cdots=\delta^d$. We already know $\beta\cdots=\alpha^2\beta$, and $\alpha^3\cdots$ is a vertex only for $m=4$ from the subcase. The knowledge of $\beta\cdots, \gamma\cdots, \delta\cdots$ determines $\alpha\cdots=\alpha^2\beta, \alpha^3\gamma^2$ and $m=4$.

By $\pi>\beta>\alpha>\frac{1}{2}\pi$ and $\alpha^2\beta, \alpha^3\gamma^2$, we get $\gamma<\frac{1}{4}\pi$. Then the kite angle sum implies $\delta>\frac{1}{2}\pi$ and we get $\delta^d=\delta^3$. Therefore the vertices are 
\begin{align}\label{Eq-AVC-al2be-de3-al3ga2}
\AVC = \{ \alpha^2\beta, \delta^3, \alpha^3\gamma^2 \}.
\end{align}
Together with $m=4$ and \eqref{Eq-angle-id}, we obtain a solution for $\alpha \in (\frac{1}{2}\pi,\frac{2}{3}\pi)$ below,
\begin{align}
&\alpha = \cos^{-1}\tfrac{1}{4}(\sqrt{3} - 2) = (0.52133...)\pi,& \\
&\beta = 2\pi - 2\alpha = \cos^{-1}(-\tfrac{1}{8}(1+4\sqrt{3}))=(0.95732...)\pi,& \notag \\
&\gamma = \pi-\tfrac{3}{2}\alpha = \cos^{-1}\tfrac{1}{8}(1+3\sqrt{3})=(0.21799...)\pi,&  \notag \\
&\delta=\tfrac{2}{3}\pi.& \notag
\end{align}
Furthermore, the angle values and \eqref{Eq-a4-cosx} and \eqref{Eq-x2y2-cosy} imply
\begin{align}
&x = \cos^{-1} \tfrac{1}{33} (15 + 8\sqrt{3}),&
&y=\tan^{-1} \tfrac{1}{13}(\sqrt 2 + 3\sqrt 6).&
\end{align}

As seen in Figure \ref{Fig-be-hex-de3}, three kites at $\delta^3$ form a hexagon with alternating angles $\beta$'s and $\gamma^2$'s along the boundary. In the hexagon, we denote the marked $\gamma^2$-angles by ``$\bullet$" and the unmarked $\beta$-angles by ``$\_$" (the same notation will also be used for $\AVC$ \eqref{Eq-AVC-de3-al3be-al2ga2}). Then $\AVC$ \eqref{Eq-AVC-al2be-de3-al3ga2} becomes $\{ \alpha^2\_, \alpha^3\bullet \}$, whereby we determine the nine squares around a hexagon in  Figure \ref{Fig-be-Tilings-sq-hex-al2be-al3ga2} outlined by thick dashed lines.

Up to symmetry, it suffices to consider vertex $\circ$ in Figure \ref{Fig-be-Tilings-sq-hex-al2be-al3ga2} having degree $3$ or $4$ with respect to the squares and hexagons. If $\circ$ has degree $3$, then $\{ \alpha^2\beta, \alpha^3\bullet \}$ uniquely determine the tiling in the second picture; if $\circ$ has degree $4$, then it determines the tiling in the third. Therefore, the two \ref{Label:tri-subdiv-thick-Octa} tilings with $\AVC = \{ \alpha^2\beta, \alpha^3\gamma^2, \delta^3 \}$ are given by subdividing each hexagon into three kites: join \quotes{the centre} with the $\bullet$'s as illustrated by thin dashed lines.

\begin{figure}[h!] 
\centering
\begin{tikzpicture}

\tikzmath{
\s=1;
\r=0.2;
\ps=360/3;
\X=\r*cos(0.5*\ps);
\ph=360/6;
\l=\r*cos(0.5*\ph);
\th=360/4;
\x=\r*cos(\th/2);
}

\begin{scope}[scale=1.25]

\tikzmath{
\rr=0.15*\r;
}

\foreach \a in {0,1,2} {

\tikzset{rotate=\a*\ps}

\fill[gray!50] 
	(90+0.5*\ph:2*\l) -- (90+\ph:1.5*\r) -- (90 + \ph:\r) -- (90:\r)
;

\fill[gray!50]
	(90:\r) -- (90-0.5*\ph:2*\l) -- (90:\r+2*\X) -- (90+0.5*\ph:2*\l) -- cycle
;

\fill[gray!50] 
	(90-0.5*\ph:2*\l) -- (90-\ph:1.5*\r) -- (90 - \ph:\r) -- (90:\r)
;

}

\foreach \a in {0,1,2} {

\tikzset{rotate=\a*\ps}

\draw[cyan, densely dashed, line width=1.5] 
	(90:\r+2*\X) -- (90-0.5*\ph:2*\l) -- (90-0.5*\ph-\ph:2*\l) -- (90-2*\ph:\r+2*\X) 
;

\draw[red, dashed]
	(0,0) -- (90:\r)
;
}

\foreach \a in {0,1,2} {

\tikzset{rotate=\a*\ps}

\draw[]
	(90:\r) -- (90-0.5*\ph:2*\l) 
	(90-0.5*\ph:2*\l) -- (90:\r+2*\X)
	(90:\r) -- (90+0.5*\ph:2*\l)
	(90+0.5*\ph:2*\l) -- (90:\r+2*\X)
	(90-\ph:\r) -- (90-\ph:1.5*\r)
	(90-0.5*\ph:2*\l) -- (90-1.5*\ph:2*\l)
;

\filldraw 
	(90:\r) circle (\rr)
;

}

\foreach \b in {0,...,5} {

\tikzset{rotate=\b*\ph}

\draw[]
	(90:\r) -- (90+\ph:\r)
;
}

\draw[fill=white] (0,-1.5*\r) circle (1.5*\rr);

\node at (0,-3*\r) {\footnotesize $\deg \circ = 3, 4$};

\end{scope}

\begin{scope}[xshift=3.5*\s cm, scale=1]

\tikzmath{
\rr=0.25*\r;
}

\fill[gray!50, scale=1.1] (0,0) circle (7*\r);

\fill[white] (0,0) circle (7*\r);

\fill[gray!50]
	(\x, \x) -- (-\x, \x) -- (-\x, -\x) -- (\x, -\x) -- cycle
;

\foreach \a in {0,1,2,3} {

\tikzset{rotate=\a*\th}

\fill[gray!50]
	(\x, \x) -- (\x, 3*\x) -- (2*\x, 4*\x) -- (2*\x, 7*\x) to[out=20,in=160] (7*\x, 7*\x) arc (45:135:7*\r) 
	to[out=20,in=160] (-2*\x, 7*\x) -- (-2*\x, 4*\x) -- (-\x, 3*\x) -- (-\x, \x) -- cycle
;

\fill[gray!50]
	(2*\x, 4*\x) -- (2*\x, 5.5*\x) -- (5.5*\x, 5.5*\x) 
	-- (5.5*\x, 2*\x) -- (4*\x, 2*\x) -- (4*\x, 4*\x) -- cycle
;

\draw[red, densely dashed]
	(2.5*\x, 2.5*\x) -- (2*\x, 4*\x)
	(2.5*\x, 2.5*\x) -- (4*\x, 2*\x)
	(2.5*\x, 2.5*\x) -- (\x, \x)
	(6.25*\x, 6.25*\x) -- (7*\x, 7*\x)
	(6.25*\x, 6.25*\x) to[out=180, in=45] (2*\x, 5.5*\x)
	(6.25*\x, 6.25*\x) to[out=270, in=45] (5.5*\x, 2*\x)
;

}

\draw[cyan, densely dashed, line width=1.5]
	(\x, \x) -- (-3*\x, \x) -- (-4*\x, 2*\x) -- (-5.5*\x, 2*\x) 
	-- (-5.5*\x, -5.5*\x) -- (2*\x, -5.5*\x) 
	-- (2*\x, -4*\x) -- (1*\x, -3*\x) -- cycle
;

\foreach \aa in {0,1,2,3} {

\tikzset{shift={(\aa*\th:2*\x)}}

\foreach \a in {0,1,2,3} {

\tikzset{rotate=\a*\th}

\draw[]
	(0.5*\th:\r) -- (1.5*\th:\r)
;

}
}

\foreach \a in {0,1,2,3} {

\tikzset{rotate=\a*\th}

\draw[]
	(\x,3*\x) -- (2*\x, 4*\x)
	(3*\x,\x) -- (4*\x, 2*\x)
	(2*\x, 4*\x) -- (4*\x, 4*\x)
	(4*\x,2*\x) -- (4*\x, 4*\x)
	(2*\x, 4*\x) -- (-2*\x, 4*\x)
	(2*\x, 4*\x) -- (2*\x, 5.5*\x)
	(-2*\x, 4*\x) -- (-2*\x, 5.5*\x)
	(-2*\x, 5.5*\x) -- (2*\x, 5.5*\x)
	(4*\x, 4*\x) -- (5.5*\x, 5.5*\x)
	(2*\x, 5.5*\x) -- (5.5*\x, 5.5*\x)
	(-2*\x, 5.5*\x) -- (-5.5*\x, 5.5*\x)
	(2*\x, 5.5*\x) -- (2*\x, 7*\x)
	(2*\x, 7*\x) to[out=20,in=160] (7*\x, 7*\x)
	(5.5*\x, 2*\x) -- (7*\x,2*\x)
	(7*\x,2*\x) to[out=70,in=290] (7*\x, 7*\x)
	(-2*\x, 7*\x) -- (2*\x, 7*\x)
;

\fill (\x,\x) circle (\rr); 
\fill (2*\x, 4*\x) circle (\rr);
\fill (4*\x, 2*\x) circle (\rr);
\fill (2*\x, 5.5*\x) circle (\rr); 
\fill (-2*\x, 5.5*\x) circle (\rr); 
\fill (7*\x, 7*\x) circle (\rr); 

}

\draw (0,0) circle (7*\r);

\draw[fill=white] (-5.5*\x, -5.5*\x) circle (1.35*\rr);

\node at (0,-9*\r) {\small $\deg \circ = 3$};

\end{scope}

\begin{scope}[xshift=7.5*\s cm, scale=1]

\tikzmath{
\rr=0.25*\r;
}

\fill[gray!50, scale=1.1] (0,0) circle (7*\r);

\fill[white] (0,0) circle (7*\r);

\fill[gray!50]
	(\x, \x) -- (-\x, \x) -- (-\x, -\x) -- (\x, -\x) -- cycle
;

\foreach \a in {0,1,2,3} {

\tikzset{rotate=\a*\th}

\fill[gray!50]
	(-2*\x, 5.5*\x) -- (-2*\x, 7.75*\x) to[out=20, in=160] (7*\x, 7*\x) arc (45:135:7*\r) -- (-5.5*\x, 5.5*\x) -- cycle
;

\fill[gray!50]
	(\x, \x) -- (\x, 3*\x) -- (2*\x, 4*\x) -- (2*\x, 5.5*\x) 
	-- (-2*\x, 5.5*\x) -- (-2*\x, 4*\x) -- (-\x, 3*\x) -- (-\x, \x) -- cycle
;

\fill[gray!50]
	(2*\x, 4*\x) -- (2*\x, 5.5*\x) -- (5.5*\x, 5.5*\x) 
	-- (5.5*\x, 2*\x) -- (4*\x, 2*\x) -- (4*\x, 4*\x)   -- cycle
;

\draw[red, densely dashed]
	(2.5*\x, 2.5*\x) -- (2*\x, 4*\x)
	(2.5*\x, 2.5*\x) -- (4*\x, 2*\x)
	(2.5*\x, 2.5*\x) -- (\x, \x)
	(2*\x, 7.25*\x) -- (7*\x, 7*\x)
	(2*\x, 7.25*\x) -- (5.5*\x, 5.5*\x)
	(2*\x, 7.25*\x) -- (-2*\x, 5.5*\x)
;

}

\draw[cyan, densely dashed, line width=1.5]
	(\x, \x) -- (-3*\x, \x) -- (-4*\x, 2*\x) -- (-5.5*\x, 2*\x) 
	-- (-5.5*\x, -5.5*\x) -- (2*\x, -5.5*\x) 
	-- (2*\x, -4*\x) -- (1*\x, -3*\x) -- cycle
;

\foreach \aa in {0,1,2,3} {

\tikzset{shift={(\aa*\th:2*\x)}}

\foreach \a in {0,1,2,3} {

\tikzset{rotate=\a*\th}

\draw[]
	(0.5*\th:\r) -- (1.5*\th:\r)
;

}
}

\foreach \a in {0,1,2,3} {

\tikzset{rotate=\a*\th}

\draw[]
	(\x,3*\x) -- (2*\x, 4*\x)
	(3*\x,\x) -- (4*\x, 2*\x)
	(2*\x, 4*\x) -- (4*\x, 4*\x)
	(4*\x,2*\x) -- (4*\x, 4*\x)
	(2*\x, 4*\x) -- (-2*\x, 4*\x)
	(2*\x, 4*\x) -- (2*\x, 5.5*\x)
	(-2*\x, 4*\x) -- (-2*\x, 5.5*\x)
	(-2*\x, 5.5*\x) -- (2*\x, 5.5*\x)
	(4*\x, 4*\x) -- (5.5*\x, 5.5*\x)
	(2*\x, 5.5*\x) -- (5.5*\x, 5.5*\x)
	(-2*\x, 5.5*\x) -- (-5.5*\x, 5.5*\x)
	(5.5*\x, 5.5*\x) -- (7*\x, 7*\x)
	(-2*\x, 5.5*\x) -- (-2*\x, 7.75*\x)
	(-2*\x, 7.75*\x) to[out=180, in=45] (-6.25*\x, 6.25*\x)
	(-2*\x, 7.75*\x) to[out=20, in=160] (7*\x, 7*\x)
;

\fill (\x,\x) circle (\rr); 
\fill (2*\x, 4*\x) circle (\rr);
\fill (4*\x, 2*\x) circle (\rr);

\fill (-2*\x, 5.5*\x) circle (\rr); 
\fill (5.5*\x, 5.5*\x) circle (\rr); 
\fill (7*\x,7*\x) circle (\rr); 

}

\draw (0,0) circle (7*\r);

\draw[fill=white] (-5.5*\x, -5.5*\x) circle (1.35*\rr);

\node at (0,-9*\r) {\small $\deg \circ = 4$};

\end{scope}

\end{tikzpicture}
\caption{The tilings by squares and hexagons with $\AVC = \{ \alpha^2\_, \alpha^3\bullet \}$, via a kite subdivision of the hexagons, determine the two \ref{Label:tri-subdiv-thick-Octa}  tilings with $\AVC = \{ \alpha^2\beta, \alpha^3\gamma^2, \delta^3 \}$}
\label{Fig-be-Tilings-sq-hex-al2be-al3ga2}
\end{figure}
\end{subcase*}

\begin{subcase*}[$\alpha^2\beta, \alpha^2\gamma^2$] We have $\alpha\gamma\cdots=\alpha^2\gamma^2$. and \eqref{Eq-be>de,2/3-al2be-ga} becomes $\gamma\cdots=\alpha^2\gamma^2$. It then rules out $\alpha^3\cdots(=\alpha^3\gamma^c)$. Meanwhile, \eqref{Eq-be>de,2/3-al2be-de} becomes $\delta\cdots=\delta^d$. We already know $\beta\cdots=\alpha^2\beta$. Given the knowledge of $\beta\cdots, \gamma\cdots, \delta\cdots$ and the absence of $\alpha^3\cdots$, we determine $\alpha\cdots=\alpha^2\beta, \alpha^2\gamma^2$, and therefore
\begin{align} \label{Eq-AVC-al2be-al2ga2-ded}
\AVC = \{ \alpha^2\beta, \alpha^2\gamma^2, \delta^d \}.
\end{align}
It implies that a vertex with $m$-gon and two kites as incident faces must be $\alpha^2\gamma^2=$ $\bvert \, \gamma \vert \alpha \vert \alpha \vert \gamma \, \bvert$.

We already know $m=4,5$ from the case ($\alpha^2\beta$).

For $m=5$, consider pentagon $T_1$ adjacent to kite $T_2$ in Figure \ref{Fig-be-m=5-al2be-al2ga2-ded}. Then $\alpha_1\beta_2\cdots=\alpha^2\beta$ and $\alpha_1\gamma_2\cdots=\alpha^2\gamma^2$ determine $T_3, T_4$ respectively. As $\alpha_1\alpha_3\cdots, \alpha_1\alpha_4\cdots$ is one of $\alpha^2\beta, \alpha^2\gamma^2$, tiles $T_5, T_6$ incident to the bottom right $\alpha_1\cdots$ must be kites, and the vertex can only be $\alpha^2\gamma^2$. However, the tiles are not arranged as $\bvert \, \gamma \vert \alpha \vert \alpha \vert \gamma \, \bvert$, a contradiction. 

\begin{figure}[h!] 
\centering
\begin{tikzpicture}

\begin{scope}[] 

\tikzmath{
\r=0.625;
\ph=360/5;
}

\foreach \a in {0,...,4} {

\tikzset{rotate=\a*\ph}

\draw[]
	(90:\r) -- (90+\ph:\r)
;

\node at (90:0.65*\r)  {\small $\alpha$};

}

\node at (90-2*\ph:1.35*\r)  {\small $?$};

\draw[]
	(90:\r) -- ([shift={(90:\r)}]45:\r)
	(90-\ph:\r) -- (90-\ph:1.8*\r)
	(90+\ph:\r) -- (90+\ph:1.8*\r)
	(90+2*\ph:\r) -- (90+2*\ph:1.8*\r)
;

\draw[line width=1.5]
	(90:\r) -- ([shift={(90:\r)}]135:\r)
;

\draw[dashed]
	(90-\ph:\r) -- (90-1.25*\ph:1.75*\r)
	(90+2*\ph:\r) -- (90+2.25*\ph:1.75*\r)
;

\node at (-0.4*\r,1*\r) {\small $\gamma$};
\node at (-1*\r,0.55*\r) {\small $\beta$};

\node at (-1.15*\r,0.1*\r) {\small $\alpha$};
\node at (-0.85*\r,-0.75*\r) {\small $\alpha$};

\node at (0.4*\r,1*\r) {\small $\alpha$};
\node at (1*\r,0.55*\r) {\small $\alpha$};

\node at (90:1.35*\r) {\small $\gamma$};

\node[inner sep=1,draw,shape=circle] at (0,0) {\footnotesize $1$};
\node[inner sep=1,draw,shape=circle] at (90+0.6*\ph:1.5*\r) {\footnotesize $2$};
\node[inner sep=1,draw,shape=circle] at (90+1.5*\ph:1.5*\r) {\footnotesize $3$};
\node[inner sep=1,draw,shape=circle] at (90-0.6*\ph:1.5*\r) {\footnotesize $4$};
\node[inner sep=1,draw,shape=circle] at (90-1.6*\ph:1.5*\r) {\footnotesize $5$};
\node[inner sep=1,draw,shape=circle] at (270:1.45*\r) {\footnotesize $6$};

\end{scope} 

\end{tikzpicture}
\caption{The deduction around a regular pentagon for $\AVC=\{ \alpha^2\beta, \alpha^2\gamma^2, \delta^d \}$}
\label{Fig-be-m=5-al2be-al2ga2-ded}
\end{figure}

For $m=4$, if both $\alpha_1\vert\alpha_2\cdots$'s in squares $T_1, T_2$ in Figure \ref{Subfig-be-timezone-al2be-al2ga2-ded} are $\alpha^2\beta$, then they determine $T_3, T_4$. Next, $\alpha_2\gamma_3\cdots, \alpha_2\gamma_4\cdots=\alpha^2\gamma^2$ determine $T_5, T_6, T_7$. Then $\alpha_5\beta_6\cdots$ determines $T_8$. The pattern in $T_5, T_6, T_7, T_8$ is the same as that in $T_1, T_2, T_3, T_4$. If both $\alpha_1\vert\alpha_2\cdots$'s are $\alpha^2\gamma^2$'s in squares $T_1, T_2$, a similar discussion will also result in the same pattern.

\begin{figure}[h!] 
\centering
\begin{subfigure}[t]{0.45\linewidth}
\centering
\begin{tikzpicture}

\tikzmath{
\r=0.625;
\th=360/4;
\x=\r*cos(0.5*\th);
}

\begin{scope}

\foreach \xs in {0,1} {

\tikzset{xshift=4*\xs*\x cm}

\foreach \aa in {-1,1} {

\tikzset{shift={(\aa*\x,0)}}

\foreach \a in {0,1,2,3} {

\tikzset{rotate=\a*\th}

\draw[]
	(0.5*\th:\r) -- (1.5*\th:\r)
;

\node at (0.5*\th:0.6*\r) {\small $\alpha$};

}
}

\foreach \aa in {-1,1} {

\tikzset{xscale=\aa}

\draw[line width=1.5]
	(2*\x,\x) -- (2*\x,3*\x)
	(2*\x,-\x) -- (2*\x,-3*\x)
;

}

\node at (0, 2.5*\x) {\small $\delta$};
\node at (0, 1.5*\x) {\small $\beta$};
\node at (-1.6*\x, 1.5*\x) {\small $\gamma$};
\node at (1.6*\x, 1.5*\x) {\small $\gamma$};

\node at (0, -1.5*\x) {\small $\beta$};
\node at (-1.6*\x, -1.5*\x) {\small $\gamma$};
\node at (1.6*\x, -1.5*\x) {\small $\gamma$};
\node at (0, -2.5*\x) {\small $\delta$};

}

\node[inner sep=1,draw,shape=circle] at (-\x, 0) {\footnotesize $1$};
\node[inner sep=1,draw,shape=circle] at (\x, 0) {\footnotesize $2$};
\node[inner sep=1,draw,shape=circle] at (\x, 2*\x) {\footnotesize $3$};
\node[inner sep=1,draw,shape=circle] at (\x, -2*\x) {\footnotesize $4$};

\node[inner sep=1,draw,shape=circle] at (-\x+4*\x, 0) {\footnotesize $5$};
\node[inner sep=1,draw,shape=circle] at (\x+4*\x, 2*\x) {\footnotesize $6$};
\node[inner sep=1,draw,shape=circle] at (\x+4*\x, -2*\x) {\footnotesize $7$};
\node[inner sep=1,draw,shape=circle] at (\x+4*\x, 0) {\footnotesize $8$};


\end{scope}

\end{tikzpicture}
\caption{\ref{Label:EMT-Prisms}}
\label{Subfig-be-timezone-al2be-al2ga2-ded}
\end{subfigure}
\begin{subfigure}[t]{0.5\linewidth}
\centering
\begin{tikzpicture}

\tikzmath{
\r=0.625;
\th=360/4;
\x=\r*cos(0.5*\th);
}

\begin{scope}[]

\foreach \xs in {0,1} {

\tikzset{xshift=4*\xs*\x cm}

\foreach \aa in {-1,1} {

\tikzset{shift={(\aa*\x,0)}}

\foreach \a in {0,1,2,3} {

\tikzset{rotate=\a*\th}

\draw[]
	(0.5*\th:\r) -- (1.5*\th:\r)
;

\node at (0.5*\th:0.6*\r) {\small $\alpha$};

}
}

\foreach \aa in {-1,1} {

\tikzset{xscale=\aa}

\draw[line width=1.5]
	(2*\x,\x) -- (2*\x,3*\x)
;

\draw[line width=1.5, opacity=0.25]
	(4*\x,-\x) -- (4*\x,-3*\x)
;

\draw[opacity=0.25]
	(2*\x,-\x) -- (4*\x,-\x)
;

}

\draw[line width=1.5]
	(0, -\x) -- (0, -3*\x)
;

\node at (0, 2.5*\x) {\small $\delta$};
\node at (0, 1.5*\x) {\small $\beta$};
\node at (-1.6*\x, 1.5*\x) {\small $\gamma$};
\node at (1.6*\x, 1.5*\x) {\small $\gamma$};

}

\node at (2*\x, -1.5*\x) {\small $\beta$};
\node at (0.4*\x, -1.5*\x) {\small $\gamma$};
\node at (3.6*\x, -1.5*\x) {\small $\gamma$};
\node at (2*\x, -2.5*\x) {\small $\delta$};

\node at (-0.4*\x, -1.5*\x) {\small $\gamma$};
\node at (-2*\x, -1.5*\x) {\small $\beta$};
\node at (-2*\x, -2.5*\x) {\small $\delta$};

\node at (6*\x, -1.5*\x) {\small $\beta$};
\node at (4.4*\x, -1.5*\x) {\small $\gamma$};
\node at (6*\x, -2.5*\x) {\small $\delta$};

\node[inner sep=1,draw,shape=circle] at (-\x, 0) {\footnotesize $1$};
\node[inner sep=1,draw,shape=circle] at (\x, 0) {\footnotesize $2$};
\node[inner sep=1,draw,shape=circle] at (\x, 2*\x) {\footnotesize $3$};
\node[inner sep=1,draw,shape=circle] at (-\x, -2*\x) {\footnotesize $4$};
\node[inner sep=1,draw,shape=circle] at (\x, -2*\x) {\footnotesize $5$};

\node[inner sep=1,draw,shape=circle] at (\x+2*\x, 0) {\footnotesize $6$};
\node[inner sep=1,draw,shape=circle] at (\x+4*\x, 2*\x) {\footnotesize $7$};
\node[inner sep=1,draw,shape=circle] at (\x+4*\x, 0) {\footnotesize $8$};
\node[inner sep=1,draw,shape=circle] at (\x+4*\x, -2*\x) {\footnotesize $9$};

\end{scope}

\end{tikzpicture}
\caption{Flipped \ref{Label:EMT-Prisms} (shifted Antarctic Circle)}
\label{Subfig-be-timezone-flipped-al2be-al2ga2-ded}
\end{subfigure}
\caption{Time zones of \ref{Label:EMT-Prisms} with $\AVC = \{ \alpha^2\beta, \alpha^2\gamma^2, \delta^d \}$}
\label{Fig-be-Tilings-al2be-al2ga2-ded}
\end{figure}

If the $\alpha_1\vert\alpha_2\cdots$'s in $T_1, T_2$ in Figure \ref{Subfig-be-timezone-flipped-al2be-al2ga2-ded} are $\alpha^2\beta$ and $\alpha^2\gamma^2$ respectively, then they determine $T_3, T_4, T_5$. Next, $\alpha_2\gamma_3\cdots=\alpha^2\gamma^2$ determines $T_6,T_7$. Then $\alpha_6\gamma_5\cdots=\alpha^2\gamma^2$ determines $T_8, T_9$. The pattern in $T_5, T_6, T_7, T_8$ is the same as that in $T_1, T_2, T_3, T_4$. 

Therefore, we have two types of time zones given by $\AVC = \{ \alpha^2\beta, \alpha^2\gamma^2, \delta^d \}$.

Combinatorially, the \ref{Label:EMT-Prisms} family are given by copies of one of the two time zones in Figure \ref{Fig-be-Tilings-al2be-al2ga2-ded} for $\delta^d$ with $d\ge3$. 

Geometrically, we verify the realisation of the family by starting with the vertex angle sums from $\AVC$ \eqref{Eq-AVC-al2be-al2ga2-ded}:
\begin{align}\label{Eq-al2be-al2ga2-ded}
\beta = 2\pi - 2\alpha, \quad
\gamma=\pi-\alpha, \quad
\delta = \tfrac{2}{d}\pi. 
\end{align}
Substituting the above into \eqref{Eq-angle-id} and specifying $m = 4$, we get 
\begin{align*}
4\cos^{2} \tfrac{1}{2}\alpha\sin^2 \tfrac{1}{2}\alpha = \sin^2 \tfrac{1}{2}\alpha( \cos \tfrac{1}{d}\pi + 1).
\end{align*}
For $\alpha \in (\tfrac{1}{2}\pi, \pi)$, we know $\sin \tfrac{1}{2}\alpha \neq 0$. Then $\cos^2 \tfrac{1}{2}\alpha = \tfrac{1}{2}(1+\cos \alpha)$ implies
\begin{align*}
\cos \tfrac{1}{d}\pi =- 1 + 4 \cos^2  \tfrac{1}{2}\alpha = 1 + 2 \cos \alpha.
\end{align*}
The above gives
\begin{align}\label{Eq-al2be-al2ga2-ded2}
\alpha = \cos^{-1} \tfrac{1}{2}(\cos \tfrac{1}{d}\pi - 1),
\end{align}
So a solution $\alpha\in(\frac12\pi,\pi)$ exists if and only if $\cos\frac1d\pi-1\in[-1,0)$, i.e. $d\ge 2$. In other words, for every positive integer $d\ge2$, a tiling exists: the kite has angle values for $\beta,\gamma, \delta$ given by \eqref{Eq-al2be-al2ga2-ded} and the square has angle value $\alpha$ given by \eqref{Eq-al2be-al2ga2-ded2}. By \eqref{Eq-x2y2-cosx} and \eqref{Eq-x2y2-cosy}, the edge lengths for each $d\ge2$ are given by
\begin{align}
&x = \cos^{-1} \frac{1+\cos \frac{1}{d}\pi}{3 - \cos \frac{1}{d}\pi},&
&y=\tfrac{1}{2}\pi-\tfrac{1}{2}\cos^{-1} \frac{1+\cos \frac{1}{d}\pi}{3 - \cos \frac{1}{d}\pi} = \tfrac{1}{2}(\pi - x).&
\end{align}
\end{subcase*}
\end{case*}

\begin{case*}[$\alpha\beta^2$] The vertex angle sum of $\alpha\beta^2$ and $\beta+\gamma>\pi$ imply $2\gamma>\alpha$. The same vertex angle sum and $\alpha>\frac{1}{2}\pi$ and $\beta>\frac{2}{3}\pi$ imply $\frac{3}{4}\pi>\beta>\frac{2}{3}\pi>\alpha>\frac{1}{2}\pi$. Then $\alpha^3$ is not a vertex and $\alpha^a\beta^b=\alpha\beta^2$. The kite angle sum and $\frac{3}{4}\pi>\beta$ imply $2\gamma+\delta>\frac{5}{4}\pi$. By \eqref{Eq-mgon-sum} and $\frac{2}{3}\pi>\alpha > (1-\tfrac{2}{m})\pi$, we also get $m=4,5$.

For $\delta\cdots$, by $\alpha>\frac{1}{2}\pi$ and $2\gamma+\delta>\pi$ we get $4\gamma+2\delta, 2\alpha+2\gamma+\delta>2\pi$. Then Parity Lemma and \eqref{Eq-vertex-de} imply $\gamma^c\delta^d=\gamma^4\delta, \gamma^2\delta^{d\ge2}$ and $\alpha\gamma^c\delta^d=\alpha\gamma^4\delta, \alpha\gamma^2\delta^d$. Moreover, $2\gamma>\alpha>\frac{1}{2}\pi$ and $2\gamma+\delta>\pi$ rule out $\alpha\gamma^4\delta$. Therefore \eqref{Eq-vertex-de} becomes
\begin{align}\label{Eq-be>de-albe2-vertex-de}
\delta \cdots= \delta^d, \gamma^4\delta, \gamma^2\delta^{d\ge2}, \alpha\gamma^2\delta^d. 
\end{align}

For $\beta\cdots$, by $\alpha\beta^2$ and $\beta+\gamma>\pi$ and $\alpha>\frac{1}{2}\pi$ we get $2\gamma> \alpha >\frac{1}{2}\pi$. Then Parity Lemma and \eqref{Eq-be>de,2/3-vertex-be} imply $\beta\gamma\cdots = \beta\gamma^4, \alpha\beta\gamma^2$. Hence \eqref{Eq-be>de,2/3-vertex-be} becomes
\begin{align}\label{Eq-be>de-albe2-vertex-be}
\beta\cdots= \alpha\beta^2,  \beta\gamma^4, \alpha\beta\gamma^2.
\end{align}
Notably, $\alpha\beta\cdots =\alpha\beta^2, \alpha\beta\gamma^2$. We proceed with the subcases in which $\alpha\beta\gamma^2$ is a vertex and otherwise.

\begin{subcase*}[$\alpha\beta^2, \alpha\beta\gamma^2$] We use the subcase assumption to determine which one in \eqref{Eq-be>de-albe2-vertex-de} is a vertex. The kite angle sum and $\alpha\beta\gamma^2$ and $\alpha>\frac{1}{2}\pi$ imply $\delta>\alpha>\frac{1}{2}\pi$, meaning $\delta^d=\delta^3$. The vertices $\alpha\beta^2, \alpha\beta\gamma^2$ imply $2\gamma=\beta$. Combining it with the kite angle sum, we get $4\gamma+\delta=\beta+2\gamma+\delta>2\pi$, which rules out $\gamma^4\delta\cdots$. Meanwhile, $2\gamma=\beta>\frac{2}{3}\pi$ and $\delta>\frac{1}{2}\pi$ imply $\gamma^2\delta^{d\ge2}=\gamma^2\delta^2$. The vertex $\alpha\beta\gamma^2$ and $\beta>\delta$ imply $d\ge2$ in $\alpha\gamma^2\delta^d$. However, $2\gamma>\frac{2}{3}\pi$ and $\delta>\alpha>\frac{1}{2}\pi$ imply $\alpha+2\gamma+2\delta>2\pi$ which rules out $\alpha\gamma^2\delta^d$. Therefore \eqref{Eq-be>de-albe2-vertex-de} becomes $\delta\cdots=\delta^3, \gamma^2\delta^2$. 

It follows that $\gamma\delta\cdots=\gamma^2\delta^2$, which has a unique angle arrangement $\bvert \, \delta \, \bvert \, \gamma \vert \gamma \, \bvert \, \delta \, \bvert$. It has incident tiles $T_1, T_2, T_3, T_4$ as in Figure \ref{Subfig-be-AAD-albe2-albega2-ga2de2-alga4}. Then $\gamma_3\delta_2\cdots=\gamma^2\delta^2$ determine $T_5, T_6$, and $\gamma_5\delta_6\cdots=\gamma^2\delta^2$ further determine $T_7, T_8$. By \eqref{Eq-be>de-albe2-vertex-be}, we have $\beta^2\cdots=\alpha\beta^2$. Then $\beta_1\beta_2\cdots, \beta_5\beta_7\cdots=\alpha\beta^2$ determine $T_9, T_{10}$. The $x$-edges between $\alpha_9,\alpha_{10}$ and Parity Lemma imply $\bvert \, \gamma_2 \vert \alpha_9 \vert \cdots  \vert \alpha_{10} \vert \gamma_5 \, \bvert$ $=\alpha^3\gamma^2\cdots$ or $\alpha^2\gamma^4\cdots$. However, $\alpha>\frac{1}{2}\pi$ and $2\gamma>\frac{2}{3}\pi$ imply that they have angle sums $>2\pi$, a contradiction. Therefore $\gamma^2\delta^2$ is not a vertex.

\begin{figure}[h!] 
\centering
\begin{subfigure}[t]{0.45\linewidth}
\centering
\begin{tikzpicture}

\tikzmath{
\r=0.625;
\th=360/4;
\x=\r*cos(0.5*\th);
}

\foreach \ay in {0,1} {

\tikzset{yshift=2*\ay*\x cm}

\foreach \ax in {-1,1,3,5} {

\tikzset{xshift=\ax*\x cm}

\foreach \a in {0,1,2,3} {

\tikzset{rotate=\a*\th}

\draw[]
	(0.5*\th:\r) -- (1.5*\th:\r)
;

}
}
}

\draw[]
	(2*\x,3*\x) -- (3.5*\x,5*\x)
	(2*\x,3*\x) -- (0.5*\x,5*\x)
;

\foreach \ax in {0,4} {

\tikzset{xshift=\ax*\x cm}


\node at (0,3.25*\x) {\small $\alpha$};
\node at (0,4.55*\x) {\small $\alpha$};
\node at (-1.4*\x,3.25*\x) {\small $\alpha$};
\node at (1.4*\x,3.25*\x) {\small $\alpha$};

\node at (-0.3*\x,2.6*\x) {\small $\beta$};
\node at (-0.3*\x,1.35*\x) {\small $\gamma$};
\node at (-1.6*\x,2.6*\x) {\small $\gamma$};
\node at (-1.625*\x,1.4*\x) {\small $\delta$};

\node at (0.3*\x,2.6*\x) {\small $\beta$};
\node at (0.3*\x,1.35*\x) {\small $\gamma$};
\node at (1.6*\x,2.6*\x) {\small $\gamma$};
\node at (1.625*\x,1.4*\x) {\small $\delta$};

\node at (0.3*\x,0.6*\x) {\small $\delta$};
\node at (0.3*\x,-0.6*\x) {\small $\gamma$};
\node at (1.7*\x,0.6*\x) {\small $\gamma$};
\node at (1.6*\x,-0.6*\x) {\small $\beta$};

\node at (-0.3*\x,0.6*\x) {\small $\delta$};
\node at (-1.7*\x,0.6*\x) {\small $\gamma$};
\node at (-0.3*\x,-0.6*\x) {\small $\gamma$};
\node at (-1.6*\x,-0.6*\x) {\small $\beta$};
}

\foreach \ax in {0,2,4} {

\tikzset{xshift=\ax*2*\x cm}

\draw[line width=1.5]
	(-2*\x,\x) -- (-2*\x,3*\x)
;
}

\draw[line width=1.5]
	(-2*\x,\x) -- (6*\x,\x)
	(0,\x) -- (0,-1*\x)
	(4*\x,\x) -- (4*\x,-1*\x)
;

\node at (2.05*\x, 3.75*\x) {\small $\cdots$};

\node[inner sep=1,draw,shape=circle] at (-\x,2*\x) {\footnotesize $1$};
\node[inner sep=1,draw,shape=circle] at (\x,2*\x) {\footnotesize $2$};
\node[inner sep=1,draw,shape=circle] at (\x,0) {\footnotesize $3$};
\node[inner sep=1,draw,shape=circle] at (-\x,0) {\footnotesize $4$};

\node[inner sep=1,draw,shape=circle] at (-\x+4*\x,2*\x) {\footnotesize $5$};
\node[inner sep=1,draw,shape=circle] at (-\x+4*\x,0) {\footnotesize $6$};
\node[inner sep=1,draw,shape=circle] at (\x+4*\x,2*\x) {\footnotesize $7$};
\node[inner sep=1,draw,shape=circle] at (\x+4*\x,0) {\footnotesize $8$};

\node[inner sep=1,draw,shape=circle] at (0,3.9*\x) {\footnotesize $9$};
\node[inner sep=0.25,draw,shape=circle] at (0+4*\x,3.9*\x) {\footnotesize $10$};

\end{tikzpicture}
\caption{The deduction of $\gamma^2\delta^2$}
\label{Subfig-be-AAD-albe2-albega2-ga2de2-alga4}
\end{subfigure}
\begin{subfigure}[t]{0.45\linewidth}
\centering
\begin{tikzpicture}

\tikzmath{
\r=0.5;
\dr=0.05*\r;
\rr=0.5*\r;
\R=1.5*\r;
\RR=2.5*\r;
\th=360/4;
\x=\r*cos(\th/2);
\xx=0.5*\x;
\hex=360/6;
}

\begin{scope}[]

\foreach \a in {0,1,2,3} {

\tikzset{rotate=\a*\th}

\draw[]
	(0.5*\th:\rr) -- (1.5*\th:\rr)
	(0.5*\th:\rr) -- (0.5*\th:\r)
	(\xx,\x+\xx) -- (0.5*\th:\r)
	(-\xx,\x+\xx) -- (1.5*\th:\r)
	(0.5*\th:\R) -- (1.5*\th:\R)
	(0.5*\th:\R) -- (0.5*\th:\RR)
	(0.5*\th:\RR) -- (1.5*\th:\RR)
;

%
%
}


\end{scope}

\end{tikzpicture}
\caption{A drawing of truncated octahedron}
\label{Subfigure-be-albe2-de3-albega2-alga4-tO}
\end{subfigure}
\caption{The deduction of $\gamma^2\delta^2$ and the underlying structure of tilings with $\AVC = \{ \alpha\beta^2, \delta^3, \alpha\beta\gamma^2, \alpha\gamma^4 \}$}
\label{Fig-be-AAD-albe2-albega2-ga2de2-alga4}
\end{figure}

Now it suffices to discuss $\delta\cdots=\delta^3$ in the context of $m=4,5$. 

For $m=5$, we must have $\frac{2}{3}\pi>\alpha>\frac{3}{5}\pi$. The vertex angle sums of $\alpha\beta^2, \alpha\beta\gamma^2$ and $\delta^3$ and \eqref{Eq-angle-id} determine 
\begin{align} \label{Eq-ang-vals-m=5-albe2-albega2-de3}
&\alpha= 2\cos^{-1} \tfrac{1}{12}(9-\sqrt5) = (0.61878...)\pi,&
&\beta = \cos^{-1} \tfrac{1}{12}(\sqrt5-9)=(0.69060...)\pi,& \\ \notag
&\gamma = \tfrac{1}{2} \cos^{-1} \tfrac{1}{12}(\sqrt5-9)=(0.34530...)\pi ,&
&\delta =\tfrac{2}{3}\pi,& \\ \notag
&x = 2\tan^{-1} \tfrac{1}{6}(\sqrt{5}-1),&
&y= \tan^{-1} \tfrac{1}{\sqrt{3}}(3-\sqrt{5}),&
\end{align}
which gives the vertices below
\begin{align} \label{AVC-albe2-de3-albega2-alga4}
\AVC = \{ \alpha\beta^2, \delta^3, \alpha\beta\gamma^2, \alpha\gamma^4 \}.
\end{align}

For $m=4$, we must have $\frac{2}{3}\pi>\alpha>\frac{1}{2}\pi$. Similarly we determine \begin{align} \label{Eq-ang-vals-albe2-albega2-de3}
&\alpha = \cos^{-1}(-\tfrac{1}{9})=(0.53544...)\pi,&
&\beta = \cos^{-1}(-\tfrac{2}{3})=(0.73227...)\pi,& \\ \notag
&\gamma = \cos^{-1}\tfrac{1}{\sqrt6}=(0.36613...)\pi,&
&\delta = \tfrac{2}{3}\pi,& \\ \notag
&x=\cos^{-1}\tfrac{4}{5},&
&y = \tfrac{1}{2}\cos^{-1}\tfrac{1}{5}.&
\end{align}
The angle values determine the same $\AVC$ in \eqref{AVC-albe2-de3-albega2-alga4}. 

We now construct the tilings with $\AVC$ \eqref{AVC-albe2-de3-albega2-alga4}. Since $\AVC$ \eqref{AVC-de3-albega2} is a subset of $\AVC$ \eqref{AVC-albe2-de3-albega2-alga4}, the same construction there applies as follows. As the three kites at $\delta^3$ form a regular hexagon (see Figure \ref{Fig-be-hex-de3}), for $m=4,5$ the other three vertices, $\alpha\beta^2, \alpha\beta\gamma^2, \alpha\gamma^4$, can be viewed as degree $3$ vertex $\angle_{\square} \angle_{\varhexagon} \angle_{\varhexagon}$ (resp. $\angle_{\pentagon} \angle_{\varhexagon} \angle_{\varhexagon}$) where the angles $\angle_{\varhexagon} = \beta, \gamma^2$ and $\angle_{\square}=\alpha$ for $m=4$ (resp. $\angle_{\pentagon}$ for $m=5$). Therefore, the underlying structure of the tilings after merging the kites at each $\delta^3$ is the truncated octahedron (resp. the truncated icosahedron). Each hexagon is then divided by $\delta^3$ independently. 

For $m=4$, up to isomorphism the fourteen tilings are listed in Figure \ref{Fig-be-Tilings-albe2-de3-albega2-alga4}. They can be obtained manually by analysing their graphs with $x$-, $y$-edges labelled and their subgraphs given by only the $y$-edges. The first one is the same tiling in Figure \ref{Subfig-subdiv-tO}, which has $\AVC = \{ \delta^3, \alpha\beta\gamma^2 \}$. It is the only tiling with such $\AVC$, which is minimal in terms of the number of vertex types in $\AVC$ \eqref{AVC-albe2-de3-albega2-alga4}, while all other tilings in Figure \ref{Fig-be-Tilings-albe2-de3-albega2-alga4} have all the vertices from $\AVC$ \eqref{AVC-albe2-de3-albega2-alga4}. All these tilings can be generated by flipping subdivision(s) in one tiling. Given its minimality, we call the first tiling the canonical seed. The flipping is reflected in the appearance of both $\alpha\beta^2$ and $\alpha\gamma^4$.

\begin{figure}[h!] 
\centering
\begin{tikzpicture}

\tikzmath{
\s=1;
\xs=0.45*\s;
\ys=1.8*\s;
\r=0.4;
\dr=0.05*\r;
\rr=0.5*\r;
\R=1.5*\r;
\RR=2.5*\r;
\th=360/4;
\x=\r*cos(\th/2);
\xx=0.5*\x;
\hex=360/6;
\lw=1.25;
}

\def\CaseArrayOne{
(1,1,1,1,1,1,1,1),
(1,1,1,1,-1,1,1,1),
(1,1,1,1,1,-1,1,-1),
(1,1,1,1,-1,-1,1,1),
(1,1,1,1,-1,-1,-1,1),
(1,1,1,1,-1,-1,-1,-1),
(1,1,1,-1,1,-1,1,1)
}

\def\CaseArrayTwo{
(1,1,1,-1,-1,1,-1,1),
(1,1,1,-1,1,-1,-1,1),
(1,1,1,-1,-1,-1,-1,1),
(1,1,1,-1,-1,1,-1,-1),
(-1,1,1,1,-1,-1,-1,1),
(1,-1,1,-1,-1,1,-1,1),
(1,-1,1,-1,1,-1,1,-1)
}

\def\AllCasesArray{
\CaseArrayOne,
\CaseArrayTwo
}

\foreach \allcases [count=\allcnt] in \AllCasesArray {

\tikzmath{
\cY=\allcnt;
}

\begin{scope}[yshift=-\cY*\ys cm]

\edef\caseArray{\allcases}

\foreach \Case [count=\cnt] in \caseArray {

\edef\CaseTemp{
\noexpand\GetOne\Case\noexpand\CaseOne
\noexpand\GetTwo\Case\noexpand\CaseTwo
\noexpand\GetThree\Case\noexpand\CaseThree
\noexpand\GetFour\Case\noexpand\CaseFour
\noexpand\GetFive\Case\noexpand\CaseFive
\noexpand\GetSix\Case\noexpand\CaseSix
\noexpand\GetSeven\Case\noexpand\CaseSeven
\noexpand\GetEight\Case\noexpand\CaseEight
}

\CaseTemp

\begin{scope}[xshift=\cnt*4*\xs cm]

\fill[gray!50, scale=1.15]
	(0.5*\th:\RR) -- (1.5*\th:\RR) -- (2.5*\th:\RR) -- (3.5*\th:\RR) 
;

\fill[white]
	(0.5*\th:\RR) -- (1.5*\th:\RR) -- (2.5*\th:\RR) -- (3.5*\th:\RR) 
;

\foreach \a in {0,1,2,3} {

\tikzset{rotate=\a*\th}

\fill[gray!50]
	(0.5*\th:\rr) -- (1.5*\th:\rr) -- (2.5*\th:\rr) -- (3.5*\th:\rr)
	(2*\xx,\x) -- (3*\xx,\xx) -- (0.5*\th:\R) -- (\xx,\x+\xx)
;

}

\foreach \a in {0,1,2,3} {

\tikzset{rotate=\a*\th}

\draw[]
	(0.5*\th:\rr) -- (1.5*\th:\rr)
	(0.5*\th:\rr) -- (0.5*\th:\r)
	(\xx,\x+\xx) -- (0.5*\th:\r)
	(-\xx,\x+\xx) -- (1.5*\th:\r)
	(0.5*\th:\R) -- (1.5*\th:\R)
	(0.5*\th:\R) -- (0.5*\th:\RR)
	(0.5*\th:\RR) -- (1.5*\th:\RR)
;
}


\draw[line width=\lw, rotate=0*90]
	(0,\x) -- (\CaseOne*2*\xx,\x)
	(0,\x) -- (-\CaseOne*\xx,\x+\xx) 
	(0,\x) -- (-\CaseOne*\xx,\xx)
;

\draw[line width=\lw, rotate=1*90]
	(0,\x) -- (\CaseTwo*2*\xx,\x)
	(0,\x) -- (-\CaseTwo*\xx,\x+\xx) 
	(0,\x) -- (-\CaseTwo*\xx,\xx)
;

\draw[line width=\lw, rotate=2*90]
	(0,\x) -- (\CaseThree*2*\xx,\x)
	(0,\x) -- (-\CaseThree*\xx,\x+\xx) 
	(0,\x) -- (-\CaseThree*\xx,\xx)
;

\draw[line width=\lw, rotate=3*90]
	(0,\x) -- (\CaseFour*2*\xx,\x)
	(0,\x) -- (-\CaseFour*\xx,\x+\xx) 
	(0,\x) -- (-\CaseFour*\xx,\xx)
;

\draw[line width=\lw, rotate=0*90]
	(0,2*\x) -- (-\CaseFive*\x-\CaseFive*\xx,\x+\xx)
	(0,2*\x) -- (\CaseFive*\xx,\x+\xx)
	(0,2*\x) -- (\CaseFive*\x+\CaseFive*3*\xx,\x+3*\xx)
;

\draw[line width=\lw, rotate=1*90]
	(0,2*\x) -- (-\CaseSix*\x-\CaseSix*\xx,\x+\xx)
	(0,2*\x) -- (\CaseSix*\xx,\x+\xx)
	(0,2*\x) -- (\CaseSix*\x+\CaseSix*3*\xx,\x+3*\xx)
;

\draw[line width=\lw, rotate=2*90]
	(0,2*\x) -- (-\CaseSeven*\x-\CaseSeven*\xx,\x+\xx)
	(0,2*\x) -- (\CaseSeven*\xx,\x+\xx)
	(0,2*\x) -- (\CaseSeven*\x+\CaseSeven*3*\xx,\x+3*\xx)
;

\draw[line width=\lw, rotate=3*90]
	(0,2*\x) -- (-\CaseEight*\x-\CaseEight*\xx,\x+\xx)
	(0,2*\x) -- (\CaseEight*\xx,\x+\xx)
	(0,2*\x) -- (\CaseEight*\x+\CaseEight*3*\xx,\x+3*\xx)
;


\end{scope}

}

\end{scope}
}

\end{tikzpicture}
\caption{The $14$ tilings given by $\AVC$ \eqref{AVC-albe2-de3-albega2-alga4} $=\{ \alpha\beta^2, \delta^3, \alpha\beta\gamma^2, \alpha\gamma^4 \}$}
\label{Fig-be-Tilings-albe2-de3-albega2-alga4}
\end{figure}

For $m=5$, up to isomorphism there are $8924$ tilings via computer enumeration\footnote{A computer-aided result by the first author.}. The reasoning for the canonical seed is the same as that for $m=4$. 
\end{subcase*}

\begin{subcase*}[$\alpha\beta\cdots=\alpha\beta^2$] The vertex $\alpha\beta^2$ has a unique angle arrangement  $\vert \alpha \vert \beta \vert \beta \vert$. Then Lemma \ref{Lem-bebe-gaga-dede} implies that $\gamma\vert\gamma\cdots$ is a vertex.

By \eqref{Eq-vertex-alga} and $\alpha\beta\cdots=\alpha\beta^2$, we know $\alpha\gamma\cdots=\alpha^a\gamma^c, \alpha\gamma^c\delta^d$ with even $c$. Then using $2\gamma>\alpha>\frac{1}{2}\pi$ and $2\gamma+\delta>\frac{5}{4}\pi$, we get
\begin{align}\label{Eq-albe2-vertex-alga}
\alpha\gamma\cdots=\alpha\gamma^2, \alpha^2\gamma^2, \alpha\gamma^4, \alpha\gamma^2\delta^d.
\end{align}
Lemma \ref{Lem-albe-alga} asserts that one of them is a vertex. 

In the absence of $\alpha^3$ (established under the case), lists \eqref{Eq-be>de-albe2-vertex-de}, \eqref{Eq-be>de-albe2-vertex-be}, \eqref{Eq-albe2-vertex-alga} imply $\alpha^2\cdots=\alpha^2\gamma^2$, meaning that the shared vertices between two adjacent $m$-gons have to be both $\alpha^2\gamma^2$. In what follows, we will rule out $\alpha^2\gamma^2$ for $m=4$ and show that there is no tiling for $m=5$.

For $m=4$, two shared vertices $\alpha^2\gamma^2$ between two squares $T_1, T_2$ are shown in Figure \ref{Subfig-be-AAD-albe2-al2ga2-m=4}. They determine $T_3, T_4$. Then $\alpha_2\beta_3\cdots, \alpha_2\beta_4\cdots=\alpha\beta^2$ lead to two $\beta$'s in $T_5$, a contradiction. 

\begin{figure}[h!] 
\centering
\begin{subfigure}[t]{0.32\linewidth}
\centering
\begin{tikzpicture}
\tikzmath{
\r=0.625;
\th=360/4;
\x=\r*cos(0.5*\th);
}

\begin{scope}[]

\foreach \aa in {-1,1} {

\tikzset{shift={(0,\aa*\x)}}

\foreach \a in {0,...,3} {

\draw[rotate=\th*\a]
	(0.5*\th:\r) -- (1.5*\th:\r)
;

\node at (0.5*\th+\a*\th: 0.65*\r) {\small $\alpha$};

}
}

\foreach \aa in {-1,1} {

\tikzset{shift={(\aa*2*\x,\x)}, xscale=\aa}

\foreach \a in {0,...,3} {

\draw[rotate=\th*\a]
	(0.5*\th:\r) -- (1.5*\th:\r)
;

}

\draw[line width=1.5]
	(-\x,-\x) -- (\x,-\x)
	(\x,-\x) -- (\x,\x)
;

\node at (3.5*\th: 0.62*\r) {\small $\delta$};
\node at (2.5*\th: 0.62*\r) {\small $\gamma$};
\node at (0.5*\th: 0.62*\r) {\small $\gamma$};
\node at (1.5*\th: 0.6*\r) {\small $\beta$};

\node at (-1.2*\x,1.35*\x) {\small $\beta$};

}

\foreach \aa in {-1,1} {
\tikzset{xscale=\aa}

\node at (1.4*\x,-0.5*\x) {\small $\gamma$};

}

\node[inner sep=1,draw,shape=circle] at (0,-\x) {\footnotesize $1$};
\node[inner sep=1,draw,shape=circle] at (0,\x) {\footnotesize $2$};
\node[inner sep=1,draw,shape=circle] at (-2*\x,\x) {\footnotesize $3$};
\node[inner sep=1,draw,shape=circle] at (2*\x,\x) {\footnotesize $4$};
\node[inner sep=1,draw,shape=circle] at (0,3*\x) {\footnotesize $5$};

\end{scope}

\end{tikzpicture}
\caption{Deduction of $\alpha^2\gamma^2$, $m=4$}
\label{Subfig-be-AAD-albe2-al2ga2-m=4}
\end{subfigure}
\centering
\begin{subfigure}[t]{0.395\linewidth}
\centering
\begin{tikzpicture}

\begin{scope} 

\tikzmath{
\r=0.625;
\ph=360/5;
}

\foreach \a in {0,...,4} {
\tikzset{rotate=\a*\ph}

\draw[]
	(90:\r) -- (90+\ph:\r)
;

\node at (90:0.65*\r)  {\small $\alpha$};
}

\foreach \a in {1,4} {
\tikzset{rotate=\a*\ph}

\draw[]
	(90:\r) -- (90:1.8*\r)
;
}

\foreach \b in {2,3} {

\tikzset{rotate=\b*\ph}

\draw[line width=1.5]
	(90:\r) -- (90:1.8*\r)
;
}

\foreach \aa in {-1,1} {
\tikzset{xscale=\aa}

\draw[line width=1.5]
	(90:\r) -- (80:2*\r)
;

\draw[]
	(90-2*\ph:\r) -- (90-2.25*\ph: 2*\r) 
;

\node at (0.5*\r,1.75*\r) {\small $\delta$};
\node at (0.35*\r,1.125*\r) {\small $\gamma$};
\node at (1*\r,0.55*\r) {\small $\beta$};

\node at (1.1*\r,0*\r) {\small $\beta$};
\node at (0.85*\r,-0.75*\r) {\small $\gamma$};

\node at (0.4*\r,-1.1*\r) {\small $\alpha$};
}

\node at (0,1.85*\r) {\small $...$};

\node[inner sep=1,draw,shape=circle] at (90-0.5*\ph:1.5*\r) {\footnotesize $6$};

\node[inner sep=1,draw,shape=circle] at (90-1.5*\ph:1.5*\r) {\footnotesize $4$};

\node[inner sep=1,draw,shape=circle] at (270:1.5*\r) {\footnotesize $1$};
\node[inner sep=1,draw,shape=circle] at (0,0) {\footnotesize $2$};
\node[inner sep=1,draw,shape=circle] at (90+1.5*\ph:1.5*\r) {\footnotesize $3$};
\node[inner sep=1,draw,shape=circle] at (90+0.5*\ph:1.5*\r) {\footnotesize $5$};

\end{scope} 

\end{tikzpicture}
\caption{Deduction of $\alpha^2\gamma^2 / \gamma \vert \alpha \vert \gamma$, $m=5$}
\label{Subfig-a4-a2b2-AAD-albe2-al2ga2-m=5}
\end{subfigure}
\begin{subfigure}[t]{0.27\linewidth}
\centering
\begin{tikzpicture}
\tikzmath{
\r=0.625;
\th=360/4;
\x=\r*cos(0.5*\th);
}

\foreach \aa in {0,2} {

\tikzset{shift={(\aa*\th:\r)}}

\foreach \a in {0,1,2,3} {

\tikzset{rotate=\a*\th}

\draw[]
	(90:\r) -- (90+\th:\r)
;

}
}

\foreach \aa in {-1,1} {

\tikzset{xscale=\aa}

\draw[line width=1.5]
	(0,0) -- (\r,\r)
	(\r,\r) -- (2*\r,0)
;

\node at (-0.4*\r,0) {\small $\gamma$};
\node at (-1.6*\r,0) {\small $\gamma$};
\node at (-\r,0.6*\r) {\small $\delta$};
\node at (-\r,-0.6*\r) {\small $\beta$};

}

\foreach \aa in {-1,1} {
\tikzset{xscale=\aa}
\draw[]
	(\r,\r) -- (0,2*\r)
	(2*\r,2*\r)  -- (3*\r,\r)
	(\r,\r) -- (2*\r,2*\r)
	(\r, -\r) -- (2*\r, -2*\r)
	(2*\r,-2*\r) -- (3*\r,-\r)
;

\draw[dashed]
	(0,2*\r) -- (0.4*\r,3*\r)
;

\draw[line width=1.5]
	(2*\r,0) -- (3*\r,\r)
	(2*\r, 0) -- (3*\r, -\r)
	(3*\r,-\r) -- (2*\r,-2*\r)
;

}

\node at (0,1.55*\r) {\small $\beta$};
\node at (0,0.45*\r) {\small $\delta$};
\node at (0.6*\r,1*\r) {\small $\gamma$};
\node at (-0.6*\r,1*\r) {\small $\gamma$};

\node at (0,-0.4*\r) {\small $\alpha$};
\node at (0.75*\r,-1.15*\r) {\small $\alpha$};
\node at (-0.75*\r,-1.15*\r) {\small $\alpha$};

\foreach \aa in {-1,1} {
\tikzset{xscale=\aa}
\node at (1.4*\r,-1*\r) {\small $\beta$};
\node at (2*\r,-0.4*\r) {\small $\gamma$};

\node at (2*\r,0.4*\r) {\small $\delta$};
\node at (1.4*\r,\r) {\small $\gamma$};

\node at (0.35*\r,2.1*\r) {\small $\alpha$};
\node at (1*\r,1.4*\r) {\small $\alpha$};
}

\node at (0,2.75*\r) {\small $?$};

\node at (2.5*\r,0.15*\r) {\small $\vdots$};

\node at (2*\r,0.4*\r) {\small $\delta$};
\node at (1.4*\r,\r) {\small $\gamma$};

\node[inner sep=1,draw,shape=circle] at (-\r,0) {\footnotesize $1$};
\node[inner sep=1,draw,shape=circle] at (0,-\r) {\footnotesize $2$};
\node[inner sep=1,draw,shape=circle] at (\r,0) {\footnotesize $3$};
\node[inner sep=1,draw,shape=circle] at (0,\r) {\footnotesize $4$};

\node[inner sep=1,draw,shape=circle] at (2*\r,-1*\r) {\footnotesize $5$};
\node[inner sep=1,draw,shape=circle] at (2*\r,\r) {\footnotesize $6$};

\node[inner sep=1,draw,shape=circle] at (\r,2*\r) {\footnotesize $7$};

\node[inner sep=1,draw,shape=circle] at (-2*\r,-1*\r) {\footnotesize $8$};
\node[inner sep=1,draw,shape=circle] at (-2*\r,\r) {\footnotesize $9$};
\node[inner sep=0.25,draw,shape=circle] at (-\r,2*\r) {\footnotesize $10$};
\end{tikzpicture}
\caption{Deduction of $\alpha\gamma^2\delta$}
\label{Subfig-be-AAD-albe2-alga2de}
\end{subfigure}
\caption{Deductions in subsubcases with $\alpha^2\gamma^2$ and with $\alpha\gamma^2\delta^d$}
\label{Fig-be-AAD-albe2-al2ga2}
\end{figure}

For $m=5$, we claim that $\alpha^2\gamma^2$ is a vertex if and only if one of $\alpha\gamma^2, \alpha\gamma^4, \alpha\gamma^2\delta^d$ is also a vertex; and this will lead to a contradiction. Given $\alpha\beta\cdots=\alpha\beta^2$, if $\alpha^2\gamma^2$ is not a vertex, then $\alpha\gamma\cdots=\alpha\gamma^2, \alpha\gamma^4, \alpha\gamma^2\delta^d$ as per \eqref{Eq-albe2-vertex-alga}, and hence Lemma \ref{Lem-even-m} implies $m$ is even, a contradiction. Now, $\alpha^2\gamma^2$ is a vertex. The same deduction on the two shared $\alpha^2\gamma^2$'s between two adjacent pentagons $T_1,T_2$ in Figure \ref{Subfig-a4-a2b2-AAD-albe2-al2ga2-m=5} results in $\gamma \vert \alpha \vert \gamma \cdots$. This vertex comes from one of $\alpha\gamma^2, \alpha\gamma^4, \alpha\gamma^2\delta^d$ in \eqref{Eq-albe2-vertex-alga}. Given $2\gamma>\alpha$ and $\alpha^2\gamma^2$, it means that $\gamma \vert \alpha \vert \gamma \cdots$ can only be $\alpha\gamma^2\delta^d$. Now both $\alpha^2\gamma^2, \alpha\gamma^2\delta^d$ are vertices. Recall that $\gamma\vert\gamma\cdots$ is a vertex but is not any of \eqref{Eq-albe2-vertex-alga}. Then $\gamma\vert\gamma\cdots$ has no $\alpha$. The vertices $\alpha\beta^2, \alpha^2\gamma^2$ and $2\gamma>\alpha$ imply $4\gamma> \alpha+2\gamma = 2\beta$, which gives $2\gamma>\beta>\tfrac{2}{3}\pi$. With \eqref{Eq-be>de-albe2-vertex-be} and $\alpha\beta\cdots=\alpha\beta^2$, we know $\gamma^4\cdots=\gamma^4$, which means that $\gamma\vert\gamma\cdots$ has no $\beta$. Moreover, $\alpha^2\gamma^2, \gamma^4$ imply $\alpha=\frac{1}{2}\pi$, a contradiction. Then $\gamma\vert\gamma\cdots$ has exactly two $\gamma$'s and hence must have a $\delta$. List \eqref{Eq-be>de-albe2-vertex-de} therefore implies $\gamma\vert\gamma\cdots =\gamma^2\delta^{d\ge2}$. For $m=5$, inequality \eqref{Eq-mgon-sum} implies $\alpha>\frac{3}{5}\pi$. The vertex angle sums of $\alpha\beta^2, \alpha^2\gamma^2$ imply $\beta=\pi - \tfrac{1}{2}\alpha$ and $2\gamma=2\pi - 2\alpha$, substituting which into the kite angle sum gives $\delta > \tfrac{5}{2}\alpha - \pi$. Then $\alpha>\frac{3}{5}\pi$ further implies $\delta>\tfrac{1}{2}\pi$. Now $2\gamma>\alpha>\frac{3}{5}\pi$ and $\delta>\tfrac{1}{2}\pi$ determine $\gamma\vert\gamma\cdots =\gamma^2\delta^{d\ge2}=\gamma^2\delta^2$. Substituting its vertex angle sum and $\beta=\pi - \tfrac{1}{2}\alpha$ and $2\gamma=2\pi - 2\alpha$ into \eqref{Eq-angle-id} determine
\begin{align*}
&\alpha=(0.62334...)\pi,&
&\beta=(0.68832...)\pi,&
&\gamma=(0.37665...)\pi,&
&\delta=(0.62334...)\pi,&
\end{align*}
for $\alpha \in (\frac{3}{5}\pi, \frac{2}{3}\pi)$. The angle values determine 
\begin{align*}
\AVC = \{ \alpha\beta^2, \alpha^2\gamma^2, \gamma^2\delta^2, \alpha\gamma^2\delta \}.
\end{align*}
Then at an $\alpha\gamma^2\delta$ in Figure \ref{Subfig-be-AAD-albe2-alga2de}, the angles of the four tiles $T_1, T_2, T_3, T_4$ are determined. Next, $\alpha_2\beta_3\cdots=\alpha\beta^2$ determines $T_5$ and then $\gamma_3 \vert \gamma_5\cdots=\gamma^2\delta^2$ determines the angles in $T_6$. Now $\gamma_4\gamma_6\delta_3\cdots=\alpha\gamma^2\delta$ determines the angles in $T_7$. By mirror symmetry, $T_8, T_9, T_{10}$ are also determined. This results in $\alpha_7\alpha_{10}\beta_4\cdots$, which is not admissible, a contradiction. 

It remains to discuss one of $\alpha\gamma^2, \alpha\gamma^4, \alpha\gamma^2\delta^d$ for $m=4$.

\begin{subsubcase*}[$\alpha\beta\cdots=\alpha\beta^2$, and $\exists \, \alpha\gamma^2$ for $m=4$] The assumption reduces \eqref{Eq-albe2-vertex-alga} to $\alpha\gamma\cdots=\alpha\gamma^2$. With $\beta>\alpha$ from the case, $\alpha\gamma^2$ rules out $\beta\gamma^4$, and reduces \eqref{Eq-be>de-albe2-vertex-be} to $\beta\cdots=\alpha\beta^2$. 

Meanwhile, $\alpha\beta^2, \alpha\gamma^2$ imply $\gamma=\beta>\frac{2}{3}\pi$. Then $\alpha\gamma\cdots=\alpha\gamma^2$ and \eqref{Eq-be>de-albe2-vertex-de} imply $\delta\cdots=\delta^d, \gamma^2\delta^d$. In the absence of $\alpha^3$ (established under the case), the vertex types are
\begin{align*}
\AVC = \{ \alpha\beta^2, \alpha\gamma^2, \gamma^2\delta^d, \delta^d \}.
\end{align*}

Since $\alpha\gamma^2$ has a unique angle arrangement $\vert\alpha\vert\gamma \, \bvert \, \gamma\vert$, Lemma \ref{Lem-bebe-gaga-dede} imply that $\delta \, \bvert \, \delta\cdots$ is a vertex, which determines tiles $T_1, T_2$ in Figure \ref{Subfig-a4-a2b2-AAD-albe2-dede-m=4}. Then $\gamma_1 \, \bvert \, \gamma_2\cdots=\alpha\gamma^2$ determines $T_3$. Next, $\alpha_3\beta_1\cdots, \alpha_3\beta_2 \cdots=\alpha\beta^2$ determine $T_4, T_5$, meaning $\delta_1 \, \bvert \, \delta_2 \cdots= \gamma \, \bvert \, \delta_1 \, \bvert \, \delta_2 \, \bvert \, \gamma \cdots$ and $\delta_4 \, \bvert \, \delta_5 \cdots= \vert \gamma \, \bvert \, \delta_4 \, \bvert \, \delta_5 \, \bvert \, \gamma \vert \cdots$. They can only be $\gamma^2\delta^2$. Therefore $\delta \, \bvert \, \delta \cdots = \gamma^2\delta^2$.

\begin{figure}[h!] 
\centering
\begin{subfigure}[t]{0.4\linewidth}
\centering
\begin{tikzpicture}

\raisebox{2ex}{
\begin{scope}[] 

\tikzmath{
\r=0.625;
\th=360/4;
\x=\r*cos(0.5*\th);
\R = sqrt(\x^2+(3*\x)^2);
\aR = acos(3*\x/\R);
}

\foreach \a in {0,...,3} {
\draw[rotate=\th*\a]
	(0*\th:\r) -- (1*\th:\r)
;

\node at (0*\th+\a*\th: 0.65*\r) {\small $\alpha$};
}

\foreach \a in {-1,1} {
\tikzset{xscale=\a}

\draw[]
	(0*\th:\r) -- (0*\th:2*\r)
;

\draw[line width=1.5]
	(2*\r,0) -- (3*\r, \r)
	(2*\r,0) -- (3*\r, -\r)
;
}

\foreach \b in {0,1} {

\draw[line width=1.5, rotate=2*\th*\b]
	(\th:\r) -- (\th:2*\r) 
	(2*\r,0) -- (2*\r,2*\r)
	(-2*\r,0) -- (-2*\r,2*\r)
;

\tikzset{rotate=\b*2*\th}

\node at (\r, 0.32*\r) {\small $\beta$};
\node at (\r, 1.8*\r) {\small $\delta$};
\node at (0.25*\r, \r) {\small $\gamma$};
\node at (1.8*\r, 0.25*\r) {\small $\gamma$};

\node at (-\r, 0.32*\r) {\small $\beta$};
\node at (-\r, 1.8*\r) {\small $\delta$};
\node at (-0.25*\r, \r) {\small $\gamma$};
\node at (-1.8*\r, 0.25*\r) {\small $\gamma$};

\node at (2.75*\r, 0) {\small $\delta$};
\node at (2.25*\r, 1.8*\r) {\small $\gamma$};
\node at (-2.25*\r, 1.8*\r) {\small $\gamma$};

\node at (2.2*\r, 0.5*\r) {\small $\delta$};
\node at (2.2*\r, -0.5*\r) {\small $\delta$};
}

\node at (2.5*\r, 0.125*\r) {\small $\vdots$};
\node at (-2.5*\r, 0.125*\r) {\small $\vdots$};

\node[inner sep=1,draw,shape=circle] at (\r,\r) {\footnotesize $1$};
\node[inner sep=1,draw,shape=circle] at (-\r,\r) {\footnotesize $2$};
\node[inner sep=1,draw,shape=circle] at (0,0) {\footnotesize $3$};
\node[inner sep=1,draw,shape=circle] at (-\r,-\r) {\footnotesize $4$};
\node[inner sep=1,draw,shape=circle] at (\r,-\r) {\footnotesize $5$};

\end{scope}
}

\end{tikzpicture}
\caption{$\delta \, \bvert \, \delta \cdots$, $m=4$}
\label{Subfig-a4-a2b2-AAD-albe2-dede-m=4}
\end{subfigure}
\begin{subfigure}[t]{0.4\linewidth}
\centering
\begin{tikzpicture}

\begin{scope}[] 

\tikzmath{
\r=0.625;
\th=90;
\x=\r*cos(\th/2);
}

\foreach \a in {0,1,2,3} {

\tikzset{rotate=\a*\th}

\draw[]
	(0.8*\x, 0.8*\x) -- (-0.8*\x, 0.8*\x)
	(3.25*\x,3.25*\x) -- (-3.25*\x,3.25*\x)
;

\draw[line width=1.5]
	(2*\x, 2*\x) -- (-2*\x, 2*\x) 
;

\node at (0.45*\x, 0.45*\x) {\small $\alpha$};
\node at (3.5*\x, 3.5*\x) {\small $\alpha$};

}

\foreach \a in {0,2} {

\tikzset{rotate=\a*\th}

\draw[]
	(-0.8*\x, 0.8*\x) -- (-2*\x,2*\x)
	(2*\x,2*\x) -- (3.25*\x,3.25*\x)
;

\draw[line width=1.5]
	(0.8*\x, 0.8*\x) -- (2*\x,2*\x)
	(-2*\x,2*\x) -- (-3.25*\x,3.25*\x)
;

\node at (-0.6*\x,1.2*\x) {\small $\beta$};
\node at (0.6*\x,1.2*\x) {\small $\gamma$};
\node at (1.2*\x,1.65*\x) {\small $\delta$};
\node at (-1.2*\x,1.65*\x) {\small $\gamma$};

\node at (-1.1*\x,0.4*\x) {\small $\beta$};
\node at (1.1*\x,0.4*\x) {\small $\gamma$};
\node at (1.65*\x, 1.2*\x) {\small $\delta$};
\node at (1.65*\x, -1.2*\x) {\small $\gamma$};

\node at (2.5*\x, 2.85*\x) {\small $\beta$};
\node at (1.85*\x, 2.4*\x) {\small $\gamma$};
\node at (-1.85*\x, 2.4*\x) {\small $\delta$};
\node at (-2.4*\x, 2.85*\x) {\small $\gamma$};

\node at (2.4*\x,1.8*\x) {\small $\gamma$};
\node at (-2.35*\x, 1.8*\x) {\small $\delta$};
\node at (2.9*\x, 2.4*\x) {\small $\beta$};
\node at (2.9*\x, -2.4*\x) {\small $\gamma$};
}

\end{scope}

\end{tikzpicture}
\caption{A combinatorial tiling}
\label{Subfig-a4-a2b2-AAD-albe2-dede-comb}
\end{subfigure}
\caption{The deduction of $\delta \, \bvert \, \delta$ and a combinatorial tiling with $\AVC = \{ \alpha\beta^2, \alpha\gamma^2, \gamma^2\delta^2 \}$}
\label{Fig-a4-a2b2-AAD-albe2-dede}
\end{figure}

We update the $\AVC$ below,
\begin{align}\label{AVC-albe2-alga2-ga2de2}
\AVC = \{ \alpha\beta^2, \alpha\gamma^2, \gamma^2\delta^2 \}.
\end{align}
For $m=4$, the vertex angle sums and \eqref{Eq-angle-id} yield no solution for $\alpha \in (\frac{1}{2}\pi, 2\pi)$. Note that a combinatorial tiling, as shown in Figure \ref{Subfig-a4-a2b2-AAD-albe2-dede-comb}, can be constructed using $\AVC$ \eqref{AVC-albe2-alga2-ga2de2}.
\end{subsubcase*}

\begin{subsubcase*}[$\alpha\beta\cdots=\alpha\beta^2$, and $\exists \, \alpha\gamma^4$ for $m=4$] The kite angle sum and $\alpha\beta^2, \alpha\gamma^4$ imply $2\gamma=\beta$ and $\delta>\alpha$. Then $\frac{3}{4}\pi>\beta>\frac{2}{3}\pi>\alpha>\frac{1}{2}\pi$ and $\beta>\delta$ imply $\frac{3}{4}\pi>2\gamma=\beta>\delta, \frac{2}{3}\pi>\alpha>\frac{1}{2}\pi$. The inequalities refine \eqref{Eq-be>de-albe2-vertex-de} as follows. By $\delta>\frac{1}{2}\pi$, we have $\delta^d=\delta^3$. By $\delta>\alpha$, we have $4\gamma+\delta>\alpha+4\gamma=2\pi$ ruling out $\gamma^4\delta\cdots$. Then by $\frac{3}{4}\pi>2\gamma>\frac{2}{3}\pi$ and $\frac{3}{4}\pi>\delta>\frac{1}{2}\pi$, we have $\gamma^2\delta^d= \gamma^2\delta^2$. By $\alpha, \delta>\frac{1}{2}\pi$ and $2\gamma > \frac{2}{3}\pi$, we have $\alpha\gamma^2\delta^d = \alpha\gamma^2\delta$. By $2\gamma=\beta>\delta$, we have $2\pi=\alpha+2\beta>\alpha+2\gamma+\delta$ ruling out $\alpha\gamma^2\delta$. Hence $\delta\cdots=\delta^3, \gamma^2\delta^2$. The latter combined with $\alpha\beta^2, \alpha\gamma^4$ and \eqref{Eq-angle-id} imply the same angle values for $\alpha\beta^2$, $\alpha\beta\gamma^2, \gamma^2\delta^2$. Given $\alpha\beta\cdots=\alpha\beta^2$, the angle values determine $\AVC = \{ \alpha\beta^2, \gamma^2\delta^2, \alpha\gamma^4 \}$. The same deduction in Figure \ref{Subfig-be-AAD-albe2-albega2-ga2de2-alga4} leads to a contradiction. The vertices $\alpha\beta^2, \alpha\gamma^4, \delta^3$ and \eqref{Eq-angle-id} give \eqref{Eq-ang-vals-albe2-albega2-de3}. The angle values determine $\AVC = \{ \alpha\beta^2,  \delta^3, \alpha\gamma^4 \}$, which is a subset of $\AVC$ \eqref{AVC-albe2-de3-albega2-alga4}. Hence any associated tilings belong to the families given by $\AVC$ \eqref{AVC-albe2-de3-albega2-alga4}. Notably, for $m=4$, the tiling is the second last in Figure \ref{Fig-be-Tilings-albe2-de3-albega2-alga4}. 
\end{subsubcase*}

\begin{subsubcase*}[$\alpha\beta\cdots=\alpha\beta^2$, and $\exists \, \alpha\gamma^2\delta^d$ for $m=4$] Based on the previous discussion, we may assume $\alpha\gamma\cdots=\alpha\gamma^2\delta^d$. The vertex angle sum of $\alpha\gamma^2\delta^d$ implies $\gamma^2\delta^d=\gamma^2\delta^{d\ge2}$. 

Suppose $d\ge2$ in $\alpha\gamma^2\delta^{d}$. With $\alpha>\frac{1}{2}\pi$ and $2\gamma+\delta > \frac{5}{4}\pi$, it implies $\delta <\frac{1}{4}\pi$, which further deduces $2\gamma> \pi$ and rules out $\gamma^4\cdots$. Next, lists \eqref{Eq-be>de-albe2-vertex-de}, \eqref{Eq-be>de-albe2-vertex-be} and $\alpha\gamma\cdots=\alpha\gamma^2\delta^d$ imply $\gamma\cdots= \gamma^2\delta^d, \alpha\gamma^2\delta^d$, and hence $\gamma\vert\gamma\cdots=\gamma^2\delta^d$. The unique angle arrangement of $\alpha\gamma^2\delta^{d\ge2}$ determines tiles $T_1, T_2, T_3$ in Figure \ref{Subfig-be-AAD-albe2-alga2de} and the angles $\delta$'s on top of them: in place of $T_4$ there is a cluster of kites. Now $\alpha_2\beta_3\cdots=\alpha\beta^2$ determines the angles in $T_5$, and $\gamma_3\vert\gamma_4\cdots=\gamma^2\delta^d$ determines $T_6$. We have $\delta_3\cdots=\vert \gamma \, \bvert \, \delta_3 \, \bvert \, \gamma_6 \vert \cdots \vert$, which is neither $\gamma^2\delta^{d\ge2}$ nor $\alpha\gamma^2\delta^{d\ge2}$, contradicting $\gamma\cdots$ above.

Suppose $d=1$ in $\alpha\gamma^2\delta^{d}$. Then $\alpha\beta^2, \alpha\gamma^2\delta$ imply $\delta=2\beta-2\gamma$, which combined with $\beta>\alpha, \delta, \tfrac{2}{3}\pi$ gives $2\gamma>\beta>\alpha, \delta, \tfrac{2}{3}\pi$. The inequalities further imply $6\gamma > \beta+4\gamma > \alpha + 4\gamma > 4\gamma + \delta > \alpha+2\gamma+\delta = 2\pi$, meaning that $\gamma^4\cdots=\gamma^4$. Notably, it follows from \eqref{Eq-be>de-albe2-vertex-be} that $\beta\gamma\cdots$ is not a vertex. Now $\gamma\vert\gamma\cdots$ is either $\gamma^4$ or $\gamma^2\cdots$ with no more $\gamma$'s. With $\alpha\beta\cdots=\alpha\beta^2$ and \eqref{Eq-be>de-albe2-vertex-de}, \eqref{Eq-be>de-albe2-vertex-be}, the latter can only be $\gamma^2\delta^{d\ge2}$. Hence $\gamma\vert\gamma\cdots=\gamma^4, \gamma^2\delta^{d\ge2}$.

For $\gamma^4$, together with $\alpha\beta^2, \alpha\gamma^2\delta$ and $m=4$, the vertex angle sums and \eqref{Eq-angle-id} determine
\begin{align*}
&\alpha=(0.53291...)\pi,& 
&\beta=(0.73354...)\pi,&
&\gamma=\tfrac{1}{2}\pi,&
&\delta=(0.46708...)\pi.& 
\end{align*}
The angle values determine $\AVC = \{ \alpha\beta^2, \alpha\gamma^2\delta, \gamma^4 \}$, which contradicts Counting Lemma on $\gamma,\delta$. Then $\gamma^4$ is not a vertex, and hence $\gamma\vert\gamma\cdots=\gamma^2\delta^{d\ge2}$, which has to be a vertex. 

For $\gamma^2\delta^{d\ge2}$, the absence of $\beta\gamma\cdots, \gamma^4\cdots$ and the subsubcase assumption $\alpha\gamma\cdots=\alpha\gamma^2\delta^d$ and \eqref{Eq-be>de-albe2-vertex-de} imply $\gamma\cdots = \alpha\gamma^2\delta, \gamma^2\delta^{d\ge2}$, contradicting Counting Lemma on $\gamma,\delta$. \qedhere
\end{subsubcase*}
\end{subcase*}
\end{case*}
\end{proof}

\begin{prop}\label{Prop-be-de} If $\beta, \delta \le \frac{2}{3}\pi$, then the dihedral tilings are
\begin{itemize}
\item
the \ref{Label:EMT-Herschel} family, 
\item 
the \ref{Label:EMT-Antiprisms} family, 
\item
the even \ref{Label:EMT-Subdiv-Equator-Prisms} family, 
\item 
the kite subdivided square pyramid $\mathcal{J}_1$, 
\item
the \ref{Label:polar-subdiv-cube} tiling,
\item
the three \ref{Label:trunc-Platonic} tilings, 
\item
the five \ref{Label:rect-Platonic} tilings, 
\item
the two \ref{Label:multigr-Cube-Dodeca} tilings, 
\item
the two \ref{Label:quad-subdiv-thick-Octa} tilings, 
\item
and the canonical seeds of the \ref{Label:trunc-Octa}, \ref{Label:trunc-Icosa} families.
\end{itemize}
\end{prop}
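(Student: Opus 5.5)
The plan is to run the same AVC machinery as in Propositions \ref{Prop-de} and \ref{Prop-be}, but now organised around the degree $3$ vertex and the mandatory vertices $\alpha\beta\cdots$, $\alpha\gamma\cdots$ from Lemma \ref{Lem-albe-alga}.

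\textbf{Step 1: basic inequalities.} Under $\beta,\delta\le\frac23\pi$, the kite angle sum gives $2\gamma>2\pi-\beta-\delta\ge\frac23\pi$, so $\gamma>\frac13\pi$. The Parity Lemma \ref{Lem-parity} then bounds every $\gamma$-vertex to $\gamma^2\cdots$ or $\gamma^4\cdots$; indeed $\gamma^6\cdots$ would already have angle sum $>2\pi$. Combining this with $\alpha>\frac12\pi$, Lemma \ref{Lem-vertex} and the excluded vertices $\beta\gamma^2,\gamma^2\delta$ (Lemma \ref{Lem-bega2-ga2de}) yields short lists. For instance $\alpha\gamma\cdots=\alpha\gamma^2,\alpha^2\gamma^2,\alpha\beta\gamma^2,\alpha\gamma^2\delta^d,\ldots$, and each candidate has bounded degree.

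\textbf{Step 2: split on the vertex $\alpha\beta\cdots$.} By \eqref{Eq-vertex-albe}, $\alpha\beta\cdots$ is one of $\alpha^a\beta^b$ or $\alpha\beta^b\gamma^2$. The top-level cases are $\alpha^2\beta$, $\alpha\beta^2$, $\alpha\beta^3$, $\alpha^2\beta^2$, $\alpha\beta\gamma^2$, $\alpha\beta^2\gamma^2$, and so on; $\beta\le\frac23\pi$ makes the possible $b$ somewhat larger than before. Within each case I would subdivide by which $\alpha\gamma\cdots$ occurs.

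In each branch I would proceed in a fixed order. First, use the vertex angle sums, together with three independent equations and \eqref{Eq-angle-id}, to pin down the angles or a one-parameter family. Second, use Counting Lemma \ref{Lem-counting} on $\beta,\gamma$ and on $\gamma,\delta$, and Lemma \ref{Lem-bebe-gaga-dede} on $\beta\vert\beta$, $\gamma\vert\gamma$, $\delta\,\bvert\,\delta$, to prune the AVC. Third, use Lemmas \ref{Lem-3-div-m} and \ref{Lem-even-m} to restrict $m$ from the angle arrangements around an $m$-gon.

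\textbf{Step 3: identify the tilings.} The surviving AVCs should each have a generic vertex. I expect the following sources:
\begin{itemize}
\item $\delta^d$ or $\beta^b$ with free $d$ or $b$ gives the earth map families \ref{Label:EMT-Herschel}, \ref{Label:EMT-Antiprisms} and \ref{Label:EMT-Subdiv-Equator-Prisms}. I would build these time zone by time zone, as in Figure \ref{Fig-be-Tilings-al2be-al2ga2-ded}.
\item A vertex such as $\beta^3$, $\beta^4$, $\beta^5$ or $\delta^3$ lets me merge its kites into a regular $k$-gon. The AVC then reduces to a vertex-transitive structure of Platonic or Archimedean type, producing \ref{Label:polar-subdiv-cube}--\ref{Label:quad-subdiv-thick-Octa}, the $\mathcal{J}_1$ tiling and the canonical seeds. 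This uses the same hexagon-merging trick as in Figure \ref{Fig-be-hex-de3}.
\end{itemize}
For each candidate I would verify realisability via \eqref{Eq-a4-cosx}, \eqref{Eq-x2y2-cosx} and \eqref{Eq-x2y2-cosy}. In particular I would check that the resulting $x,y$ are positive and that the $m$-gon and kite edge lengths agree. The $d=3$ failure in \ref{Label:EMT-Antiprisms} must come out of this step.

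\textbf{Main obstacle.} The hard part is the sheer branching when both $\beta$ and $\delta$ are small: high-degree vertices such as $\beta^b$, $\delta^d$ and $\alpha\beta^b\gamma^2$ are no longer excluded by size alone. To control this, I would first locate a degree $3$ vertex and show that it is one of $\alpha^2\beta$, $\alpha\beta^2$, $\alpha\gamma^2$, $\beta^3$, $\delta^3$, or $\gamma^2\beta$-type vertices that are already excluded. I would then branch on it rather than on $\alpha\beta\cdots$, since this sharply limits the angle equations.

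A second difficulty is the neighbourhood propagation arguments, as in Figures \ref{Fig-a4-a2b2-AAD-al2be-gaalga} and \ref{Fig-be-AAD-albe2-al2ga2}. These must separate genuine tilings from combinatorial pseudo-tilings, and I expect some of them to need explicit numeric angle solutions to rule out.
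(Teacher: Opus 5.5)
Your Step 1 matches the paper's opening: $2\gamma>\frac23\pi\ge\beta,\delta$ gives $\beta+4\gamma$, $4\gamma+\delta$, $2\alpha+\beta+2\gamma$, $2\alpha+2\gamma+\delta>2\pi$. The case structure you then commit to does not terminate, however.

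You branch first on $\alpha\beta\cdots$, enumerate $b$ (``and so on''), and say that $\beta\le\frac23\pi$ makes $b$ ``somewhat larger''. The hypothesis is only an upper bound on $\beta$, so it gives no upper bound on $b$. Nothing available at that point prevents $\beta$ from being tiny, since the kite sum can be carried by $\gamma$ close to $\pi$. Hence $\alpha^a\beta^b$ and $\alpha\beta^b\gamma^2$ have unbounded $b$.

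The paper reverses the order, and this is the organising idea you are missing. The inequalities above cut $\alpha\gamma\cdots$ down to the five types $\alpha\gamma^2,\alpha^2\gamma^2,\alpha\gamma^4,\alpha\beta^b\gamma^2,\alpha\gamma^2\delta^d$, and $\gamma\vert\gamma\cdots$ down to $\gamma^4,\alpha\gamma^4,\gamma^2\delta^{d\ge2}$. Bounds on $b$ appear only inside these cases. For example, under $\alpha\gamma^2,\gamma^2\delta^{d\ge2}$ one gets $\beta>\delta$, hence $2\beta>\beta+\delta>\alpha$ and $b\le5$. With no $\gamma\vert\gamma\cdots$, Lemma \ref{Lem-bebe-gaga-dede} forbids $\beta\vert\beta$, so $b\le a\le3$.

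Your fallback of branching on a degree-$3$ vertex is a finite split, but your list omits $\alpha^3$. That vertex genuinely occurs in the analysis and must be excluded; in the paper, $\alpha\vert\alpha\cdots=\alpha^3$ propagates to a monohedral cube or dodecahedron. Nor does this split remove the need for the $\alpha\gamma\cdots$ split: \ref{Label:quad-subdiv-thick-Octa} and the canonical seeds have $\delta^3$ as their only degree-$3$ vertex.

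Your Step 3 also misplaces where the tilings come from, so the arguments that actually produce them are absent.
\begin{itemize}
\item \ref{Label:EMT-Subdiv-Equator-Prisms} has no free $\delta^d$ or $\beta^b$. Its AVC is $\{\alpha\beta^2,\alpha\gamma^2,\gamma^2\delta^2\}$, and the free parameter is the even $m\ge6$. Reaching it needs three steps:
  \begin{itemize}
  \item the local deduction at $\gamma^2\delta^{d\ge2}$ (Figure \ref{Fig-bede-alga2-AAD-ga2ded}), forcing $\beta_1\beta_2\cdots\in\{\alpha\beta^2,\alpha^2\beta^2,\alpha^2\beta^3,\beta^2\gamma^2\}$;
  \item Lemma \ref{Lem-even-m}, giving $m$ even and $\#\alpha\beta^2=\#\alpha\gamma^2$;
  \item a global count of $\beta,\gamma,\delta$ to exclude $\beta^b$ and $\beta^b\gamma^2$.
  \end{itemize}
\item $K\mathcal{J}_1$ comes from $\gamma^4$, which forces $\gamma=\frac12\pi$, $\alpha=\pi$ and $m=8$. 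The octagon is then a hemisphere, and an area count ($3\beta-\pi=\frac12\pi$ per merged triangle) gives exactly four triangles.
\item The Platonic and Archimedean tilings come from merging the kites around $\delta^d$ (not $\beta^b$, and not only $d=3$) and smoothing to a regular $d$-gon. One then identifies the resulting vertex-homogeneous tilings by two regular polygons (via \cite[Proposition 2.9]{lnp} in the $\{\bar\alpha^2\bar\beta\}$ case). This uses $3\mid m$ from Lemma \ref{Lem-3-div-m}, and $m=\frac32\bar m$ or $m=2\bar m$.
\item The $d=3$ failure of \ref{Label:EMT-Antiprisms} is not an edge-length issue. The relation $\cos\frac{\pi}{d}=1+\cos\alpha$ gives $\alpha=\frac23\pi$, hence $\beta=2\pi-3\alpha=0$.
\end{itemize}
So, beyond the faulty top-level split, the proposal describes the toolbox but does not supply the case-specific exclusions and constructions that make up the proof.
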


\begin{figure}[h!]
\centering
\begin{tikzpicture}[>=]
\tikzmath{
\x=1.5;
\y=0.25;
}

\draw[]
	(0,0) -- (0,\y)
	(0,2.75*\y) -- (0,3.75*\y)
	(-2*\x,0) -- (-2*\x,\y)
	(-2*\x,2.75*\y) -- (-2*\x,3.75*\y) -- (-2*\x,4.75*\y)
	(-2*\x,6.75*\y) -- (-2*\x,7.75*\y)
	(-4*\x,2.75*\y) -- (-4*\x,3.75*\y)
	(-1*\x,0) -- (-1*\x,-\y) 
	(-3*\x,0) -- (-3*\x,-\y) 
	(-1*\x-2*\x/3,0) -- (-1*\x-2*\x/3,-\y)
	(-3*\x+2*\x/3,0) -- (-3*\x+2*\x/3,-\y)
	(4*\x,6.75*\y) -- (4*\x,7.75*\y)
	(3*\x,6.75*\y) -- (3*\x,7.75*\y)
	(2*\x,6.75*\y) -- (2*\x,7.75*\y)
	(1*\x,6.75*\y) -- (1*\x,7.75*\y)
	(0,7.75*\y) -- (0,8.75*\y)
	(-4*\x,3.75*\y) -- (-2*\x,3.75*\y) -- (0,3.75*\y)
	(-3*\x,0) -- (-2*\x,0) -- (-1*\x,0) 
	(-2*\x,7.75*\y) -- (4*\x,7.75*\y)
;

\foreach \xs in {0,...,3} {
\tikzset{xshift=\xs*\x cm}
\draw[]
	(0,0) -- (\x,0)
;
}

\foreach \xs in {0,...,4} {
\tikzset{xshift=\xs*\x cm}
\draw[]
	(0,0) -- (0,-\y)
;
}

\node at (0,9.5*\y) {\tiny $\alpha\gamma\cdots$};

\node at (-2*\x,5.75*\y) {\tiny $\alpha\gamma^2$};

\node at (1*\x,5.75*\y) {\tiny $\alpha^2\gamma^2$};
\node at (1*\x,4.5*\y) {\tiny $\Downarrow$};
\node at (1*\x,3.5*\y) {\tiny (P8)};

\node at (2*\x,5.75*\y) {\tiny $\alpha\gamma^4$};
\node at (2*\x,4.5*\y) {\tiny $\Downarrow$};
\node at (2*\x,3.5*\y) {\tiny $\varnothing$};

\node at (3*\x,5.75*\y) {\tiny $\alpha\beta^b\gamma^2$};
\node at (3*\x,4.5*\y) {\tiny $\Downarrow$};
\node at (3*\x,3.5*\y) {\tiny $Kt\mathcal{O}, Kt\mathcal{I}$};

\node at (4*\x,5.75*\y) {\tiny $\alpha\gamma^2\delta^d$};
\node at (4*\x,4.5*\y) {\tiny $\Downarrow$};
\node at (4*\x,3.5*\y) {\tiny $\varnothing$};

\node at (-4*\x,0.5*\y) {\tiny $\Downarrow$};
\node at (-4*\x,-0.75*\y) {\tiny $K\mathcal{J}_1$};

\node at (0,1.75*\y) {\tiny $\nexists$ $\gamma^4, \gamma^2\delta^d$};
\node at (-2*\x,1.75*\y) {\tiny $\gamma^2\delta^d$, $\nexists$ $\gamma^4$};
\node at (-4*\x,1.75*\y) {\tiny $\gamma^4$};

\node at (-3*\x,-1.75*\y) {\tiny $\alpha\beta^2$};
\node at (-3*\x,-3*\y) {\tiny $\Downarrow$};
\node at (-3*\x,-4*\y) {\tiny (EM4)};

\node at (-3*\x+2*\x/3,-1.75*\y) {\tiny $\alpha^2\beta^2$};
\node at (-3*\x+2*\x/3,-3*\y) {\tiny $\Downarrow$};
\node at (-3*\x+2*\x/3,-4*\y) {\tiny $\varnothing$};

\node at (-1*\x-2*\x/3,-1.75*\y) {\tiny $\alpha^2\beta^3$};
\node at (-1*\x-2*\x/3,-3*\y) {\tiny $\Downarrow$};
\node at (-1*\x-2*\x/3,-4*\y) {\tiny $\varnothing$};

\node at (-1*\x,-1.75*\y) {\tiny $\beta^2\gamma^2$};
\node at (-1*\x,-3*\y) {\tiny $\Downarrow$};
\node at (-1*\x,-4*\y) {\tiny $\varnothing$};


\node at (0,-1.75*\y) {\tiny $\alpha^2\beta$};
\node at (0,-3*\y) {\tiny $\Downarrow$};
\node at (0,-4*\y) {\tiny (P3), (P4)};

\node at (\x,-1.75*\y) {\tiny $\alpha^3\beta^2$};
\node at (\x,-3*\y) {\tiny $\Downarrow$};
\node at (\x,-4*\y) {\tiny $\varnothing$};

\node at (2*\x,-1.75*\y) {\tiny $\alpha^2\beta^2$};
\node at (2*\x,-3*\y) {\tiny $\Downarrow$};
\node at (2*\x,-4*\y) {\tiny (EM3), (P5)};

\node at (3*\x,-1.75*\y) {\tiny $\alpha^3\beta$};
\node at (3*\x,-3*\y) {\tiny $\Downarrow$};
\node at (3*\x,-4*\y) {\tiny (EM2)};

\node at (4*\x,-1.75*\y) {\tiny $\alpha^3\beta^3$};
\node at (4*\x,-3*\y) {\tiny $\Downarrow$};
\node at (4*\x,-4*\y) {\tiny (P6)};
\end{tikzpicture}
\caption{The workflow of the proof of Proposition \ref{Prop-be-de}}
\label{Fig-flow-4.3}
\end{figure}

\begin{proof} The hypothesis and $\alpha>\frac{1}{2}\pi$ imply $2\alpha>\beta,\delta$. The kite angle sum and $\beta, \delta \le \frac{2}{3}\pi$ imply $2\gamma > \frac{2}{3}\pi \ge \beta, \delta$. A pair of $\gamma$'s contribute a bigger angle value at a vertex. The above inequalities deduce
\begin{align*}
2\alpha+\beta+2\gamma, 2\alpha+2\gamma+\delta, \beta+4\gamma, 4\gamma+\delta > \beta+ 2\gamma + \delta > 2\pi. 
\end{align*}

By $\alpha>\frac{1}{2}\pi$ and $2\gamma>\frac{2}{3}\pi$ and $\beta+4\gamma, 4\gamma+\delta >2\pi$,  Parity Lemma and \eqref{Eq-vertex-ga} imply $\gamma^4\cdots=\gamma^4, \alpha\gamma^4$ and $\gamma^c\delta^d=\gamma^2\delta^{d\ge2}$. The above facts will be used without reference. 

At $ \gamma \vert \gamma  \cdots$, Parity Lemma implies that it has either two more $\gamma$'s or none. The former deduces $\gamma^4\cdots=\gamma^4, \alpha\gamma^4$, whereas the latter deduces $\bvert \, \gamma \vert \gamma \, \bvert \, \delta \cdots \delta \, \bvert =$ $\gamma^2\delta^{d\ge2}$. Hence 
\begin{align}\label{Eq-be,de-vertex-gaxga}
\gamma \vert \gamma \cdots &= \gamma^4, \alpha\gamma^4, \gamma^2\delta^{d\ge2}.
\end{align}

List \eqref{Eq-vertex-alga} implies $\alpha\gamma\cdots=\alpha^a\gamma^c, \alpha^a\beta^b\gamma^c, \alpha^a\gamma^c\delta^d$. We determine $\alpha^a\gamma^c=\alpha\gamma^2, \alpha^2\gamma^2, \alpha\gamma^4$ from $\gamma^4\cdots=\gamma^4, \alpha\gamma^4$ and $\alpha>\frac{1}{2}\pi$ and $2\gamma>\frac{2}{3}\pi$ and Parity Lemma; and $\alpha^a\beta^b\gamma^c=\alpha\beta^b\gamma^2$ from $2\alpha+\beta+2\gamma > 2\pi$ and the same facts; and $\alpha^a\gamma^c\delta^d=\alpha\gamma^2\delta^d$ similarly. Hence 
\begin{align}\label{Eq-be,de-vertex-alga}
\alpha\gamma\cdots=\alpha\gamma^2, \alpha^2\gamma^2, \alpha\gamma^4, \alpha\beta^b\gamma^2, \alpha\gamma^2\delta^d. 
\end{align}
In addition, \eqref{Eq-vertex-ga}, \eqref{Eq-vertex-de} and \eqref{Eq-be,de-vertex-alga} imply
\begin{align}\label{Eq-be,de-vertex-gade}
\gamma\delta\cdots = \gamma^2\delta^{d\ge2}, \alpha\gamma^2\delta^d.
\end{align}

By Lemma \ref{Lem-albe-alga}, it suffices to discuss one of \eqref{Eq-be,de-vertex-alga} being a vertex in the cases below.

\begin{case*}[$\alpha\gamma^2$] The kite angle sum and the vertex angle sum of $\alpha\gamma^2$ imply $\beta+\delta > \alpha > \frac{1}{2}\pi$. 

The case assumption $\alpha\gamma^2$ rules out $\alpha^2\gamma^2, \alpha\gamma^4, \alpha\beta^b\gamma^2, \alpha\gamma^2\delta^d$, reducing \eqref{Eq-be,de-vertex-gaxga} to $\gamma\vert\gamma\cdots=\gamma^4, \gamma^2\delta^{d\ge2}$, and \eqref{Eq-be,de-vertex-alga} to $\alpha\gamma\cdots=\alpha\gamma^2$, and \eqref{Eq-be,de-vertex-gade} to $\gamma\delta\cdots=\gamma^2\delta^{d\ge2}$.

The knowledge of $\beta^{b}\gamma^{c}$ will be useful. For $b\ge1$ and $c\ge4$, the angle sums of $\beta^b\gamma^{c}, \alpha\gamma^2$ imply $\alpha=b\beta+(c-2)\gamma$. Combined with $\beta+\delta > \alpha$, it further implies $\delta > (b-1)\beta + (c-2) \gamma \ge 2\gamma > \frac{2}{3}\pi$, contradicting the hypothesis. As per \eqref{Eq-vertex-be}, we have $\beta^{b}\gamma^{c}=\beta^{b\ge2}\gamma^{2}$.

Lemma \ref{Lem-albe-alga} asserts that $\alpha\beta\cdots$ is a vertex. Then $\alpha\gamma\cdots=\alpha\gamma^2$ and \eqref{Eq-vertex-be} deduces $\alpha\beta\cdots=\alpha^a\beta^b$. Such a vertex is shaped by whether or not $\beta\vert\beta$ is possible. By Lemma \ref{Lem-bebe-gaga-dede}, vertices $\beta\vert\beta\cdots$ and $\gamma\vert\gamma\cdots$ coexist. As $\gamma\vert\gamma\cdots=\gamma^4, \gamma^2\delta^{d\ge2}$, further information about $a,b$ in $\alpha^a\beta^b$ depends on the subcases whether or not one of $\gamma^4, \gamma^2\delta^d$ is a vertex.

\begin{subcase*}[$\alpha\gamma^2, \gamma^4$] The vertex $\gamma^4$ implies $\gamma = \frac{1}{2}\pi$. Substituting it into the kite angle sum and the angle sum of $\alpha\gamma^2$ implies $\beta+\delta > \alpha = 2\gamma= \pi$. Then $\delta\le\frac{2}{3}\pi$ implies $\beta\ge\frac{1}{3}\pi$. Now $\alpha=\pi$ and $\beta\ge\frac{1}{3}\pi$ deduce $\alpha^a\beta^b=\alpha\beta^2$. The vertices $\alpha\beta^2, \alpha\gamma^2$ and $\gamma = \frac{1}{2}\pi$ imply $\beta=\gamma=\frac{1}{2}\pi$. Substituting them into the kite angle sum gives $\delta>\frac{1}{2}\pi$. Then \eqref{Eq-vertex-de} and \eqref{Eq-be,de-vertex-gade} imply $\delta\cdots=\delta^3$, which has to be a vertex, which deduces $\delta=\tfrac{2}{3}\pi$. The angle values are summarised below
\begin{align}
&\alpha=\pi,& &\beta=\gamma=\tfrac{1}{2}\pi,& &\delta=\tfrac{2}{3}\pi.&
\end{align}
Substituting them into \eqref{Eq-angle-id} determines $m=8$. The angle values also determine
\begin{align}\label{Eq-AVC-alga2-ga4}
\AVC = \{ \alpha\beta^2, \alpha\gamma^2, \delta^3, \beta^4, \gamma^4 \}.
\end{align}

Given $\alpha=\pi$, the boundary of an $m$-gon is a great circle, meaning that for $m=8$ the regular octagon in Figure \ref{Fig-kite-subdiv-J1} is a hemisphere and hence there is only one such tile. The remaining tiles are kites that tile the other hemisphere, which has area $2\pi$. As seen in Figure \ref{Fig-be-hex-de3}, the three incident kites at $\delta^3$ form a hexagon. Given $\gamma=\tfrac{1}{2}\pi$, the angle of the $\gamma^2$-vertices of such a hexagon is $\pi$. This means that removing $\delta^3$ and the $y$-edges results in a regular triangle $\triangle$ with edge length $2x$ and three $\beta$'s. The area of $\triangle$ is $3\beta-\pi = \tfrac{1}{2}\pi$, meaning that there are exactly four $\triangle$'s. Given $\alpha\beta\cdots=\alpha\beta^2$, placing two $\triangle$'s at an $\alpha$ immediately determines the placement of the other two $\triangle$'s, which is the square pyramid (Johnson-Zalgaller's solid $\mathcal{J}_1$). Putting back the $\delta^3$ and the $y$-edges into each $\triangle$ gives the tiling in Figure \ref{Fig-kite-subdiv-J1}.

\begin{figure}[h!]
\centering
\begin{tikzpicture}[>=latex]
\tikzmath{
\r=0.75;
\n=8;
\nn=\n-1;
\th=360/\n;
}

\draw[] (0,0) circle (\r);

\draw[line width=1.5] (0,0) circle (2*\r);

\foreach \a in {0,2,...,6} {
\tikzset{rotate=\a*\th}
\draw[->]
	(90:2*\r) -- (90:3*\r)
;
}

\foreach \a in {0,2,...,6} {
\tikzset{rotate=\a*\th}
\draw[]
	(90:\r) -- (90:2*\r) 
;
\draw[line width=1.5]
	(90-\th:\r) -- (90-\th:2*\r)
;

\node at (0.2*\r,1.2*\r) {\small $\beta$};
\node at (-0.2*\r,1.2*\r) {\small $\beta$};

\node at (0.2*\r,1.75*\r) {\small $\gamma$};
\node at (-0.2*\r,1.75*\r) {\small $\gamma$};

\node at (0.2*\r,2.2*\r) {\small $\gamma$};
\node at (-0.2*\r,2.2*\r) {\small $\gamma$};

\node at (55:1.2*\r) {\small $\gamma$};
\node at (35:1.2*\r) {\small $\gamma$};

\node at (52.5:1.8*\r) {\small $\delta$};
\node at (37.5:1.8*\r) {\small $\delta$};

\node at (45:2.2*\r) {\small $\delta$};

\node at (45:3*\r) {\small $\beta$};
}

\foreach \a in {0,...,\nn} {
\tikzset{rotate=\a*\th}
\node at (0,0.8*\r) {\small $\alpha$};
}

\end{tikzpicture}
\caption{Kite subdivision of the equilateral square pyramid $J_1$}
\label{Fig-kite-subdiv-J1}
\end{figure}
\end{subcase*}

\begin{subcase*}[$\alpha\gamma^2, \gamma^2\delta^{d\ge2}$] Recall $\alpha\gamma\cdots = \alpha\gamma^2$ and $\gamma\delta\cdots= \gamma^2\delta^{d\ge2}$ from the case. Based on the previous discussion, we may also exclude $\gamma^4$ and reduce \eqref{Eq-be,de-vertex-gaxga} to $\gamma \vert \gamma \cdots =\gamma^2\delta^{d\ge2}$. The kite angle sum and $\gamma^2\delta^{d\ge2}$ imply $\beta+2\gamma+\delta > 2\pi = 2\gamma + d \delta \ge 2\gamma + 2\delta$, giving $\beta>\delta$. With $\beta+\delta>\alpha>\frac{1}{2}\pi$, it implies $2\beta>\beta+\delta>\alpha>\frac{1}{2}\pi$, which then determine
\begin{align}\label{Eq-be,de-alga2-ga2ded-albe}
\alpha\beta\cdots=\alpha^a\beta^b=\alpha\beta^b(2\le b \le 5), \alpha^2\beta^b(1\le b \le 3), \alpha^3\beta.
\end{align}

Given $\alpha\gamma^2$ and no $\gamma^4\cdots$, Parity Lemma and \eqref{Eq-vertex-ga} imply $\gamma \, \bvert \, \gamma  \cdots=\alpha\gamma^2, \beta^{b\ge2}\gamma^2$ (the same for $\bvert \, \gamma \vert \cdots \vert \gamma \, \bvert$).

The unique angle arrangement $\bvert \, \delta \, \bvert \, \gamma \vert \gamma \, \bvert \, \delta \, \bvert \, \cdots$ of $\gamma^2\delta^{d\ge2}$ determines tiles $T_1, T_2, T_3, T_4$ in Figure \ref{Fig-bede-alga2-AAD-ga2ded}. From \eqref{Eq-be,de-vertex-gade}, $\gamma_4\delta_1\cdots, \gamma_3\delta_2\cdots=\gamma^2\delta^{d\ge2}$ and their angle arrangements determine $T_5, T_6$. Since $\gamma \, \bvert \, \gamma  \cdots=\alpha\gamma^2, \beta^{b\ge2}\gamma^2$, it means $\gamma_1\gamma_5\cdots, \gamma_2\gamma_6\cdots=\alpha\gamma^2, \beta^b\gamma^2$. If one of them is $\alpha\gamma^2$ and the other is $\beta^{b\ge2}\gamma^2$, then $\beta_1\beta_2\cdots$ becomes $\alpha\beta^2\gamma\cdots$ and contradicts $\alpha\gamma\cdots=\alpha\gamma^2$. Hence either both $\gamma_1\gamma_5\cdots, \gamma_2\gamma_6\cdots$ are $\alpha\gamma^2$'s or both are $\beta^{b\ge2}\gamma^2$'s. Each outcome paves the way for the next step. If both are $\alpha\gamma^2$, then \eqref{Eq-be,de-alga2-ga2ded-albe} deduces that $\beta_1\beta_2\cdots$ is one of $\alpha\beta^2, \alpha^2\beta^2, \alpha^2\beta^3$; if both are $\beta^b\gamma^2$, then $\beta_1\beta_2\cdots$ has angle arrangement $\vert \beta_1 \vert \gamma \,\bvert \cdots \, \bvert \gamma \vert \beta_2 \vert$. Since $\bvert \, \gamma \vert \cdots \vert \gamma \, \bvert = \alpha\gamma^2, \beta^b\gamma^2$, it further implies $\beta_1\beta_2\cdots=\beta^2\gamma^2$. To sum up, $\gamma^2\delta^{d\ge2}$ leads to $\beta_1\beta_2\cdots$ being one of $\alpha\beta^2, \alpha^2\beta^2, \alpha^2\beta^3, \beta^2\gamma^2$.

\begin{figure}[h!] 
\centering
\begin{tikzpicture}

\tikzmath{
\s=1;
\r=0.625;
\th=360/4;
\x=\r*cos(0.5*\th);
\R = sqrt(\x^2+(3*\x)^2);
\aR = acos(3*\x/\R);
}

\begin{scope}[]

\foreach \aa in {0,1} {

\tikzset{shift={(0.5*\th+\aa*\th:\r)}}

\foreach \a in {0,...,3} {

\draw[rotate=\th*\a]
	(0.5*\th:\r) -- (1.5*\th:\r)
;
}
}

\foreach \aa in {-1,1}{

\tikzset{xscale=\aa}

\draw[]
	(2*\x,0) -- (2.6*\x,-2*\x)
	(0.6*\x,-2*\x) -- (2.6*\x,-2*\x)
	(2*\x, 2*\x) -- (4*\x, 2*\x)
	(4*\x, 2*\x) -- (4*\x, 0)
;

}

\foreach \b in {-1, 1} {

\draw[line width=1.5, xscale=\b]
	(0,0) -- (2*\x,0)
	(2*\x,0) -- (2*\x, 2*\x)
	(0,0) -- (0.6*\x,-2*\x)
	(2*\x, 0) -- (4*\x, 0)
	(2*\x, 0) -- (3.75*\x,-1.4*\x)
;

}

\foreach \aa in {-1,1}{

\tikzset{shift={(\aa*\x, \x)}, xscale=\aa}

\foreach \a in {0,2} {

\tikzset{rotate=\a*\th}

\node at (0.55*\x, 0.55*\x) {\small $\gamma$};

}

\node at (-0.55*\x, 0.5*\x) {\small $\beta$};
\node at (0.55*\x, -0.55*\x) {\small $\delta$};

\node at (-4.6*\x, 0.5*\x) {\small $\beta$};
\node at (-4.6*\x, -0.6*\x) {\small $\gamma$};
}

\foreach \aa in {-1,1}{

\tikzset{shift={(\aa*\x, -\x)}, xscale=\aa}

\node at (-0.4*\x, 0.5*\x) {\small $\delta$};

\node at (0.8*\x, 0.5*\x) {\small $\gamma$};

\node at (1.1*\x, -0.6*\x) {\small $\beta$};

\node at (-0.2*\x, -0.6*\x) {\small $\gamma$};
}

\foreach \aa in {-1,1}{

\tikzset{xscale=\aa}

\node at (2.55*\x, -0.9*\x) {\small $\gamma$};
\node at (3.25*\x, -0.25*\x) {\small $\vdots$};
\node at (3.6*\x, -0.45*\x) {\small $\delta$};

}

\foreach \aa in {-1,1}{

\tikzset{xscale=\aa}

\node at (2.4*\x, 1.55*\x) {\small $\gamma$};
\node at (2.4*\x, 0.45*\x) {\small $\delta$};

}

\node at (2*\x, 2.35*\x) {\small $\alpha / \beta$}; 
\node at (-2*\x, 2.35*\x) {\small $\alpha / \beta$};

\node at (0,-2*\x) {\footnotesize $\cdots$};

\node[inner sep=1,draw,shape=circle] at (-\x,\x) {\footnotesize $1$};
\node[inner sep=1,draw,shape=circle] at (\x,\x) {\footnotesize $2$};
\node[inner sep=1,draw,shape=circle] at (1.25*\x,-\x) {\footnotesize $3$};
\node[inner sep=1,draw,shape=circle] at (-1.25*\x,-\x) {\footnotesize $4$};
\node[inner sep=1,draw,shape=circle] at (3*\x,\x) {\footnotesize $6$};
\node[inner sep=1,draw,shape=circle] at (-3*\x,\x) {\footnotesize $5$};

\end{scope}

\end{tikzpicture}
\caption{The deduction of $\gamma^2\delta^{d\ge2}$}
\label{Fig-bede-alga2-AAD-ga2ded}
\end{figure}

\begin{subsubcase*}[$\alpha\gamma^2, \gamma^2\delta^{d\ge2}$ and $\beta_1\beta_2\cdots=\alpha\beta^2$] The vertex $\alpha\beta^2$ excludes $\alpha^2\beta^2, \alpha^2\beta^3$, and with $\frac{2}{3}\pi \ge \beta$ it implies $\alpha \ge \frac{2}{3}\pi \ge \beta$, which also excludes $\alpha^3\beta$. Hence \eqref{Eq-be,de-alga2-ga2ded-albe} becomes $\alpha\beta\cdots=\alpha\beta^2$.

Recall $2\beta>\alpha$ and $\gamma \, \bvert \, \gamma  \cdots=\alpha\gamma^2, \beta^{b\ge2}\gamma^2$ from the subcase. Hence $\gamma \, \bvert \, \gamma  \cdots=\alpha\gamma^2$.

Given $\gamma\delta\cdots=\gamma^2\delta^{d\ge2}$ from the case and $\gamma \vert \gamma \cdots =\gamma^2\delta^{d\ge2}$ from the subcase, together with $\alpha\beta\cdots=\alpha\beta^2$ and $\gamma \, \bvert \, \gamma  \cdots=\alpha\gamma^2$, the same deduction through $T_1,T_2,T_3, T_4, T_5$ on $\delta \, \bvert \, \delta \cdots$ in Figure \ref{Subfig-a4-a2b2-AAD-albe2-dede-m=4} determines $\delta \, \bvert \, \delta \cdots=\gamma^2\delta^2$. This is because the argument on $\delta_1 \, \bvert \, \delta_2 \cdots$ does not require $T_4,T_5$ to be adjacent, and hence does not depend on the exact value of $m$. Now $\alpha\gamma^2, \alpha\beta^2, \gamma^2\delta^2$ are vertices, which determine $\beta=\gamma=\pi-\frac{1}{2}\alpha$ and $\delta=\frac{1}{2}\alpha$. For $m=4$, we already know from $\AVC$ \eqref{AVC-albe2-alga2-ga2de2} that these vertices yield no solutions for $\alpha$. Hence $m\ge5$. Given $\frac{2}{3}\pi \ge \beta$ in the hypothesis, $\beta=\pi-\frac{1}{2}\alpha$ implies $\alpha\ge \frac{2}{3}\pi$ and then $\delta=\frac{1}{2}\alpha$ implies $\delta\ge\frac{1}{3}\pi$. 

Next, we claim that $\delta\cdots=\gamma^2\delta^2$ and $\beta\cdots=\alpha\beta^2$. Accordingly, since $\alpha\beta^2$ is a vertex and $\beta^{b}\gamma^{c}=\beta^{b\ge2}\gamma^{2}$, list \eqref{Eq-vertex-be} will imply \eqref{Eq-be,de-alga2-ga2de2-albe2-be} below. List \eqref{Eq-vertex-de} and the subcase assumption will imply \eqref{Eq-be,de-alga2-ga2de2-albe2-de} below. Recall $\alpha\gamma\cdots=\alpha\gamma^2$ from the case; and recall that $\gamma^4$ has been excluded, which as per \eqref{Eq-be,de-vertex-gaxga} rules out $\gamma^c$ in \eqref{Eq-vertex-ga}. Combining the above gives \eqref{Eq-be,de-alga2-ga2de2-albe2-ga} below:
\begin{align}
\label{Eq-be,de-alga2-ga2de2-albe2-be}
&\beta\cdots=\alpha\beta^2, \beta^{b}, \beta^{b\ge2}\gamma^2; \\
\label{Eq-be,de-alga2-ga2de2-albe2-ga}
&\gamma\cdots=\alpha\gamma^2, \beta^{b\ge2}\gamma^2, \gamma^2\delta^2; \\
\label{Eq-be,de-alga2-ga2de2-albe2-de}
&\delta\cdots=\delta^d, \gamma^2\delta^{2}.
\end{align}

Now we prove the two claims. Assume that $\delta^d$ is also a vertex, then $\delta\ge\frac{1}{3}\pi$ implies $\delta^d = \delta^3, \delta^4, \delta^5, \delta^6$. Given $\beta=\pi-\frac{1}{2}\alpha$ and $\delta=\frac{1}{2}\alpha$, they further imply 
\begin{align*}
&\delta^3:& &\alpha=\tfrac{4}{3}\pi,& &\beta=\gamma=\tfrac{1}{3}\pi,& &\delta=\tfrac{2}{3}\pi;& \\
&\delta^4:& &\alpha=\pi,& &\beta=\gamma=\tfrac{1}{2}\pi,& &\delta=\tfrac{1}{2}\pi;& \\
&\delta^5:& &\alpha=\tfrac{4}{5}\pi,& &\beta=\gamma=\tfrac{3}{5}\pi,& &\delta=\tfrac{2}{5}\pi;& \\
&\delta^6:& &\alpha=\tfrac{2}{3}\pi,& &\beta=\gamma=\tfrac{2}{3}\pi,& &\delta=\tfrac{1}{3}\pi.&
\end{align*}
None of them yield integer solutions for $m\ge5$ in \eqref{Eq-angle-id}, ruling out $\delta^d$. Hence $\delta\cdots=\gamma^2\delta^{2}$.  

Similarly, if $\alpha^3$ or $\beta^3$ is a vertex, then the vertex angle sum implies $\alpha=\beta=\gamma=\tfrac{2}{3}\pi$ and $\delta=\tfrac{1}{3}\pi$, which has no solution for $m\ge5$ as shown above. Therefore $\alpha^3, \beta^3$ are not vertices.

Given $\alpha\beta\cdots=\alpha\beta^2$ and $\alpha\gamma\cdots=\alpha\gamma^2$ and the absence of $\alpha^3$, list \eqref{Eq-vertex-al} becomes $\alpha\cdots=\alpha\beta^2, \alpha\gamma^2$. Then, along the boundary of an $m$-gon, the vertices are either $\alpha\beta^2$ or $\alpha\gamma^2$ arranged alternatingly. Lemma \ref{Lem-even-m} implies that $m$ is even and hence $\#\alpha\beta^2=\# \alpha\gamma^2$ in a tiling.

Assume that one of $\beta^{b\ge4}, \beta^{b\ge2}\gamma^2$ is a vertex. A priori, both could appear in a tiling. Since $\beta^2\gamma^2$ and $\gamma^2\delta^{2}$ imply $\beta=\delta$, contradicting $\beta\neq\delta$, it suffices to consider $\beta^{b\ge3}\gamma^2$. For $\beta^{b_1\ge4}$ and $\beta^{b_2\ge3}\gamma^2$ and $\beta=\gamma$, we have $b_1\beta=b_2\beta+2\gamma=(b_2+2)\beta$. Hence $b_1-2=b_2\ge3$, which imply $b_1\ge5$. Moreover, $\gamma^2\delta^{2}$ and one of $\beta^{b\ge5}, \beta^{b\ge3}\gamma^2$ imply $\beta<\delta$. Then $\beta=\pi-\frac{1}{2}\alpha$ and $\delta=\frac{1}{2}\alpha$ imply $\alpha>\pi$. Counting $\beta, \gamma, \delta$ in a tiling via \eqref{Eq-be,de-alga2-ga2de2-albe2-be}, \eqref{Eq-be,de-alga2-ga2de2-albe2-ga} and \eqref{Eq-be,de-alga2-ga2de2-albe2-de} gives
\begin{align*} 
&\# \beta = 2 \# \alpha\beta^2 + b_1 \# \beta^{b_1} + b_2 \# \beta^{b_2} \gamma^2; \\
&\# \gamma = 2 \# \alpha\gamma^2 + 2\#\beta^{b_2}\gamma^2 + 2\# \gamma^2\delta^2; \\
&\# \delta = 2 \# \gamma^2\delta^2.
\end{align*}
Counting $\gamma, \delta$ (resp. $\beta,\delta$) via the kites gives $2\#\delta=\# \gamma$ (resp. $\#\beta= \#\delta$). Substituting $2\#\delta=\# \gamma$ and the third counting equation into the second gives
\begin{align*} 
\# \delta = 2 \#\alpha\gamma^2 + 2 \#\beta^{b_2}\gamma^2.
\end{align*}
Then combining it with $\#\beta= \#\delta$ and the first counting equation gives
\begin{align*} 
2(\# \alpha\beta^2 - \# \alpha\gamma^2) + b_1 \# \beta^{b_1} + (b_2 - 2)\#\beta^{b_2}\gamma^2 = 0.
\end{align*}
Since $ \# \alpha\beta^2=\# \alpha\gamma^2$ and $b_1 - 2 = b_2\ge3$, this implies $\#\beta^{b_1} = \#\beta^{b_2}\gamma^2 = 0$, contradicting one of $\beta^{b_1},\beta^{b_2}\gamma^2$ being a vertex in a tiling.

Now we conclude $\alpha\cdots=\alpha\beta^2, \alpha\gamma^2$, and $\beta\cdots=\alpha\beta^2$, and $\gamma\cdots=\alpha\gamma^2, \gamma^2\delta^2$, and $\delta\cdots=\gamma^2\delta^2$. Therefore the vertex types are
\begin{align}\label{AVC-albe2-alga2-ga2de2-m>=6}
\AVC = \{ \alpha\beta^2, \alpha\gamma^2, \gamma^2\delta^2 \}. 
\end{align}
Since $m\ge5$ is even, we have $m\ge6$.  

Starting at a $\gamma^2\delta^2$, its angles determine the incident tiles $T_1, T_2,T_3, T_4$ in Figure \ref{Subfig-prism-equatorial}. Given $\AVC$ \eqref{AVC-albe2-alga2-ga2de2-m>=6}, $\gamma_1\gamma_2\cdots=\alpha\gamma^2$ determines $T_5$ while $\beta_3\beta_4\cdots=\alpha\beta^2$ determines $T_6$. Moreover, $\gamma_1\delta_4\cdots, \gamma_2\delta_3\cdots=\gamma^2\delta^2$ determine $T_7, T_8$ and $T_9, T_{10}$. The pattern repeats to the left (and to the right) until it encloses the two $m$-gons $T_5, T_6$. This uniquely determines a tiling for even $m\ge6$. For example, see Figure \ref{Subfig-prism-m=6} for the tiling with $m=6$.

\begin{figure}[h!]
\centering
\begin{subfigure}[t]{0.475\linewidth}
\centering
\begin{tikzpicture}
\tikzmath{
\r=0.8;
\n=4;
\nn=\n-1;
\th=360/\n;
\x=\r*cos(0.5*\th);
}

\raisebox{5ex} {

\foreach \a in {0,...,\nn} {
\tikzset{rotate=\a*\th}
\draw[]
	(0,0) -- (0,\r)
	(0,\r) -- (\r,\r)
	(\r,0) -- (\r,\r)
;
}

\draw[line width=1.5]
	(0,0) -- (0,\r)
;

\foreach \aa in {-1,1} {
\tikzset{xscale=\aa}

\draw[]
	(2*\r,0) -- (2*\r,-\r)
;

\draw[line width=1.5]
	(2*\r,0) -- (2*\r,\r)
	(0,0) -- (\r,0) -- (2*\r,0)
	(\r,0) -- (\r,-\r)
;
	
\foreach \bb in {-1,1} {
\tikzset{yscale=\bb}
\draw[]
	(\r,\r) -- (2*\r,\r)
;
}
}

\foreach \xs in {-2,-1,0,1,2} {
\tikzset{xshift=\xs*\r cm}
\node at (0,1.2*\r) {\small $\alpha$};
\node at (0,-1.2*\r) {\small $\alpha$};
}

\foreach \aa in {-1,1} {
\tikzset{xscale=\aa}
\node at (0.2*\r,0.75*\r) {\small $\gamma$};
\node at (0.8*\r,0.75*\r) {\small $\beta$};
\node at (1.2*\r,0.75*\r) {\small $\beta$};
\node at (1.8*\r,0.75*\r) {\small $\gamma$};
\node at (0.2*\r,0.25*\r) {\small $\delta$};
\node at (0.8*\r,0.225*\r) {\small $\gamma$};
\node at (1.2*\r,0.225*\r) {\small $\gamma$};
\node at (1.8*\r,0.225*\r) {\small $\delta$};
\node at (0.2*\r,-0.25*\r) {\small $\gamma$};
\node at (0.8*\r,-0.225*\r) {\small $\delta$};
\node at (1.2*\r,-0.225*\r) {\small $\delta$};
\node at (1.8*\r,-0.225*\r) {\small $\gamma$};
\node at (0.2*\r,-0.8*\r) {\small $\beta$};
\node at (0.8*\r,-0.8*\r) {\small $\gamma$};
\node at (1.2*\r,-0.8*\r) {\small $\gamma$};
\node at (1.8*\r,-0.8*\r) {\small $\beta$};
}

\node[inner sep=0.8,draw,shape=circle] at (-0.5*\r,0.5*\r) {\footnotesize $1$};
\node[inner sep=0.8,draw,shape=circle] at (0.5*\r,0.5*\r) {\footnotesize $2$};
\node[inner sep=0.8,draw,shape=circle] at (0.5*\r,-0.5*\r) {\footnotesize $3$};
\node[inner sep=0.8,draw,shape=circle] at (-0.5*\r,-0.5*\r) {\footnotesize $4$};

\node[inner sep=0.8,draw,shape=circle] at (0*\r,1.75*\r) {\footnotesize $5$};
\node[inner sep=0.8,draw,shape=circle] at (0*\r,-1.75*\r) {\footnotesize $6$};

\node[inner sep=0.8,draw,shape=circle] at (-1.5*\r,0.5*\r) {\footnotesize $7$};
\node[inner sep=0.8,draw,shape=circle] at (-1.5*\r,-0.5*\r) {\footnotesize $8$};
\node[inner sep=0.8,draw,shape=circle] at (1.5*\r,0.5*\r) {\footnotesize $9$};
\node[inner sep=0.25,draw,shape=circle] at (1.5*\r,-0.5*\r) {\tiny $10$};

}

\end{tikzpicture}
\caption{Squares in a prism subdivided by kites}
\label{Subfig-prism-equatorial}
\end{subfigure}
\begin{subfigure}[t]{0.475\linewidth}
\centering
\begin{tikzpicture}
\tikzmath{
\r=0.9;
\n=6;
\nn=\n-1;
\th=360/\n;
}

\foreach \a in {0,...,\nn} {
\tikzset{rotate=\a*\th}
\draw[]
	(90:\r) -- (90+\th:\r)
	(90:2.5*\r) -- (90+\th:2.5*\r) 
;

\draw[line width=1.5]
	(90:1.75*\r) -- (90+\th:1.75*\r) 
;
}

\foreach \a in {0,2,4} {
\tikzset{rotate=\a*\th}
\draw[]
	(90:\r) -- (90:1.75*\r) 
	(270:1.75*\r) -- (270:2.5*\r) 
;
\draw[line width=1.5]
	(270:\r) -- (270:1.75*\r) 
	(90:1.75*\r) -- (90:2.5*\r) 
;
}

\foreach \a in {0,...,\nn} {
\tikzset{rotate=\a*\th}
\node at (90:0.7*\r) {\small $\alpha$};
\node at (90:2.7*\r) {\small $\alpha$};
}

\foreach \a in {0,2,4} {
\tikzset{rotate=\a*\th}
\node at (0.2*\r,2.2*\r) {\small $\gamma$};
\node at (-0.2*\r,2.2*\r) {\small $\gamma$};

\node at (0.2*\r,1.85*\r) {\small $\delta$};
\node at (-0.2*\r,1.85*\r) {\small $\delta$};

\node at (0.15*\r,1.45*\r) {\small $\gamma$};
\node at (-0.15*\r,1.45*\r) {\small $\gamma$};

\node at (0.2*\r,1.1*\r) {\small $\beta$};
\node at (-0.2*\r,1.1*\r) {\small $\beta$};

\node at (0.2*\r,-1.1*\r) {\small $\gamma$};
\node at (-0.2*\r,-1.1*\r) {\small $\gamma$};

\node at (0.2*\r,-1.45*\r) {\small $\delta$};
\node at (-0.2*\r,-1.45*\r) {\small $\delta$};

\node at (0.2*\r,-1.85*\r) {\small $\gamma$};
\node at (-0.2*\r,-1.85*\r) {\small $\gamma$};

\node at (0.2*\r,-2.175*\r) {\small $\beta$};
\node at (-0.2*\r,-2.175*\r) {\small $\beta$};
}

\end{tikzpicture}
\caption{A tiling for $m=6$}
\label{Subfig-prism-m=6}
\end{subfigure}
\caption{Kite subdivision by $\gamma^2\delta^2$}
\label{Fig-prism-tilings}
\end{figure}

To verify the existence of the infinite family, we substitute $\beta=\gamma=\pi-\frac{1}{2}\alpha$ and $\delta=\frac{1}{2}\alpha$ into \eqref{Eq-angle-id} and get 
$\cos^2\tfrac{2}{m}\pi=(\sin\frac{1}{2}\alpha - \cos\frac{1}{2}\alpha)^2=1-\sin\alpha$. Then for each even integer $m\ge6$, there always exists a solution given by
\begin{align}
\alpha = \pi - \sin^{-1}(1-\cos^2\tfrac{2}{m}\pi) = \pi-\sin^{-1}(\sin^2\tfrac{2}{m}\pi).
\end{align}
This solution $\alpha$ always belongs to the interval $((1-\frac2m)\pi,\pi)$ because $\sin\frac{2}{m}\pi\in(0,1)$; therefore a tiling exists.
\end{subsubcase*}

\begin{subsubcase*}[$\alpha\gamma^2, \gamma^2\delta^{d\ge2}$ and $\beta_1\beta_2\cdots=\alpha^2\beta^2$] 
The vertex $\alpha^2\beta^2$ gives $\alpha+\beta=\pi$. With $\alpha>(1-\tfrac{2}{m})\pi$ from \eqref{Eq-mgon-sum}, we deduce $\beta<\frac{2}{m}\pi$. The kite angle sum then implies $2\gamma+\delta>2\pi-\beta > (2-\tfrac{2}{m})\pi$. Combining $\beta+\delta>\alpha$ from the case with $\alpha>(1-\tfrac{2}{m})\pi$ and $\beta<\tfrac{2}{m}\pi$ gives $\delta > (1-\tfrac{4}{m})\pi$. The vertex angle sum of $\gamma^2\delta^{d\ge2}$ and the lower bounds for $2\gamma+\delta,\delta$ imply $2\pi \ge 2\gamma+2\delta > (3-\frac{6}{m})\pi$, and hence $m=4,5$. 


For $m=5$, the same argument actually implies $2\gamma+3\delta>2\pi$, i.e., $\gamma^2\delta^{d\ge2}=\gamma^2\delta^2$. The vertices $\alpha\gamma^2, \alpha^2\beta^2, \gamma^2\delta^2$ and \eqref{Eq-angle-id} determine the angle values below
\begin{align*}
&\alpha=(0.62222...)\pi,&
&\beta=(0.37777...)\pi,&
&\gamma=(0.68888...)\pi,&
&\delta=(0.31111...)\pi.&
\end{align*}
The angle values then determine
\begin{align*}
\AVC = \{ \alpha\gamma^2, \alpha^2\beta^2, \gamma^2\delta^2 \}. 
\end{align*}
Resume the deduction in Figure \ref{Fig-bede-alga2-AAD-ga2ded} with $\gamma^2\delta^d=\gamma^2\delta^2$. Tiles $T_3, T_4$ are now adjacent and $T_5, T_6$ have been previously determined, as shown in Figure \ref{Subfig-bede-alga2-al2be2-m=5}. Now $\gamma_3\gamma_4\cdots=\alpha\gamma^2$ and $\beta_4\beta_5\cdots, \beta_3\beta_6\cdots=\alpha^2\beta^2$ determine $T_7, T_8, T_9$. The bottom $\alpha_7\alpha_9\cdots, \alpha_8\alpha_9\cdots = \alpha^2\beta^2$ result in two $\beta$'s in $T_{10}$, a contradiction. Hence there is no tiling for $m=5$.

For $m=4$, if both $\alpha^3, \alpha\gamma^2$ are vertices, then they imply $\alpha=\gamma=\frac{2}{3}\pi$. The former further implies $\beta=\frac{1}{3}\pi$ while the latter with $\beta\neq\delta$ and $\gamma^2\delta^{d\ge2}$ implies $\delta < \frac{1}{3}\pi$. These angle values in \eqref{Eq-angle-id} yield no solution for $\delta \in (0, \frac{1}{3}\pi)$. Now $\alpha^3$ is dismissed. Recall $2\beta>\alpha>\frac{1}{2}\pi$ from the subcase. The inequalities and $\alpha+\beta=\pi$ imply $2\beta>\alpha>\tfrac{1}{2}\pi>\beta$. Then \eqref{Eq-be,de-alga2-ga2ded-albe} is reduced to $\alpha\beta\cdots=\alpha^2\beta^2$. Resume the deduction in Figure \ref{Fig-bede-alga2-AAD-ga2ded} with $\beta_1 \vert \beta_2\cdots=\alpha^2\beta^2$ as shown in Figure \ref{Subfig-bede-alga2-al2be2-m=4}. We determine two new tiles $T_7,T_8$. Then $\alpha_7\beta_5\cdots, \alpha_8\beta_6\cdots=\alpha^2\beta^2$ imply that $\alpha_7\alpha_8\cdots$ is one of $\alpha^3, \alpha^4\cdots$ or $\alpha^2\gamma\cdots, \alpha^2\gamma^2\cdots$, but none of them is a vertex, a contradiction.

\begin{figure}[h!] 
\centering
\begin{subfigure}[t]{0.3\linewidth}
\centering
\begin{tikzpicture}

\tikzmath{
\s=5;
\r=0.625;
\th=360/4;
\x=\r*cos(0.5*\th);
}

\begin{scope}[yshift=2*\x cm]

\foreach \aaa in {0,1} {

\tikzset{yshift=\aaa*2*\x cm}

\foreach \aa in {-1,1} {

\tikzset{xscale=\aa}

\foreach \a in {0,1,2,3} {

\tikzset{xshift=\x cm, rotate=\a*\th}

\draw[]
	(0.5*\th:\r) -- (0.5*\th+\th:\r)
;

}
}
}

\draw[]
	(-2*\x,-3*\x) -- (2*\x,-3*\x)
;

\draw[line width=1.5]
	(0,\x) -- (0,-\x)
;

\foreach \aa in {-1,1} {

\tikzset{xscale=\aa}

\draw[]
	(2*\x,-1*\x) -- (4*\x,-1*\x)
	(2*\x,-1*\x) -- (2*\x,-3*\x)
	(2*\x,-3*\x) -- (4*\x,-3*\x)
	(2*\x,-3*\x) -- (2*\x,-4*\x)
;

\draw[line width=1.5]
	(2*\x,\x) -- (4*\x,\x)
	(2*\x,1*\x) -- (2*\x,3*\x) 
	(0,\x) -- (2*\x,\x)
;

\node at (0.4*\x, 2.6*\x) {\small $\beta$};
\node at (1.55*\x, 2.6*\x) {\small $\gamma$};
\node at (0.4*\x, 1.4*\x) {\small $\gamma$};
\node at (1.55*\x, 1.4*\x) {\small $\delta$};

\node at (0.4*\x, 0.6*\x) {\small $\delta$};
\node at (1.6*\x, 0.6*\x) {\small $\gamma$};
\node at (1.6*\x, -0.6*\x) {\small $\beta$};
\node at (0.35*\x, -0.625*\x) {\small $\gamma$};

\node at (2.35*\x, 0.6*\x) {\small $\gamma$};
\node at (2.35*\x, -0.625*\x) {\small $\beta$};


\node at (2.4*\x, -1.325*\x) {\small $\alpha$};
\node at (2.4*\x, -2.65*\x) {\small $\alpha$};

\node at (1.6*\x, -1.325*\x) {\small $\alpha$};
\node at (1.6*\x, -2.65*\x) {\small $\alpha$};

\node at (1.6*\x, -3.45*\x) {\small $\beta$};

}

\node at (0, -1.325*\x) {\small $\alpha$};

\node[inner sep=1,draw,shape=circle] at (-\x,2*\x) {\footnotesize $1$};
\node[inner sep=1,draw,shape=circle] at (\x,2*\x) {\footnotesize $2$};
\node[inner sep=1,draw,shape=circle] at (\x,0) {\footnotesize $3$};
\node[inner sep=1,draw,shape=circle] at (-\x,0) {\footnotesize $4$};
\node[inner sep=1,draw,shape=circle] at (-3*\x,0) {\footnotesize $5$};
\node[inner sep=1,draw,shape=circle] at (3*\x,0) {\footnotesize $6$};
\node[inner sep=1,draw,shape=circle] at (-3*\x,-2*\x) {\footnotesize $7$};
\node[inner sep=1,draw,shape=circle] at (3*\x,-2*\x) {\footnotesize $8$};
\node[inner sep=1,draw,shape=circle] at (0,-2*\x) {\footnotesize $9$};
\node[inner sep=0.25,draw,shape=circle] at (0,-3.75*\x) {\footnotesize $10$};

\end{scope}
\end{tikzpicture}
\caption{$m=5$}
\label{Subfig-bede-alga2-al2be2-m=5}
\end{subfigure}
\begin{subfigure}[t]{0.3\linewidth}
\centering
\begin{tikzpicture}

\tikzmath{
\s=5;
\r=0.625;
\th=360/4;
\x=\r*cos(0.5*\th);
}

\begin{scope}[] 

\raisebox{3ex}{

\foreach \aaa in {0,1} {

\tikzset{yshift=\aaa*2*\x cm}

\foreach \aa in {-1,1} {

\tikzset{xscale=\aa}

\foreach \a in {0,1,2,3} {

\tikzset{xshift=\x cm, rotate=\a*\th}

\draw[]
	(0.5*\th:\r) -- (0.5*\th+\th:\r)
;

}
}
}

\foreach \aa in {-1,1} {

\tikzset{xscale=\aa}

\draw[]
	(2*\x,3*\x) -- (4*\x,3*\x)
;

\draw[line width=1.5]
	(0,-\x) -- (2*\x,-\x) 
	(2*\x,1*\x) -- (2*\x,-1*\x) 
;

\node at (0.35*\x, 2.65*\x) {\small $\alpha$};
\node at (1.65*\x, 2.65*\x) {\small $\alpha$};
\node at (0.35*\x, 1.35*\x) {\small $\alpha$};
\node at (1.65*\x, 1.35*\x) {\small $\alpha$};

\node at (0.35*\x, 0.6*\x) {\small $\beta$};
\node at (1.65*\x, 0.6*\x) {\small $\gamma$};
\node at (1.65*\x, -0.6*\x) {\small $\delta$};
\node at (0.35*\x, -0.6*\x) {\small $\gamma$};

\node at (2.35*\x, 2.55*\x) {\small $\beta$};
\node at (2.35*\x, 1*\x) {\small $\gamma$};
\node at (2.35*\x, -0.65*\x) {\small $\delta$};

\node at (2*\x, 3.4*\x) {\small $\alpha / \beta$};

}

\node[inner sep=1,draw,shape=circle] at (-\x,2*\x) {\footnotesize $7$};
\node[inner sep=1,draw,shape=circle] at (\x,2*\x) {\footnotesize $8$};
\node[inner sep=1,draw,shape=circle] at (\x,0) {\footnotesize $2$};
\node[inner sep=1,draw,shape=circle] at (-\x,0) {\footnotesize $1$};
\node[inner sep=1,draw,shape=circle] at (3*\x,\x) {\footnotesize $6$};
\node[inner sep=1,draw,shape=circle] at (-3*\x,\x) {\footnotesize $5$};

}

\end{scope}

\end{tikzpicture}
\caption{$m=4$}
\label{Subfig-bede-alga2-al2be2-m=4}
\end{subfigure}
\caption{The deductions of $\alpha^2\beta^2=\vert \alpha\vert \alpha \vert \beta \vert \beta \vert$}
\label{Fig-bede-alga2-al2be2}
\end{figure}
\end{subsubcase*}

\begin{subsubcase*}[$\alpha\gamma^2, \gamma^2\delta^{d\ge2}$ and $\beta_1\beta_2\cdots=\alpha^2\beta^3$] The vertex $\alpha^2\beta^3$ implies $\beta =\frac{2}{3}(\pi-\alpha)$. Recall $\beta+\delta > \alpha$ from the case, which then implies $2\pi+3\delta>5\alpha$. Then $\alpha>\frac{1}{2}\pi$ implies $3\delta>\alpha$. Comparing it with the vertex angle sum of $\alpha\gamma^2$ determines $\gamma^2\delta^{d\ge2}=\gamma^2\delta^2$. Recall $2\beta>\alpha$ from the subcase, $\alpha^2\beta^3$ and \eqref{Eq-mgon-sum} then imply $\tfrac{4}{7}\pi > \alpha > (1-\tfrac{2}{m})\pi$, and hence $m=4$.

For $m=4$, the vertex angle sums of $\alpha\gamma^2, \alpha^2\beta^3, \gamma^2\delta^2$ and \eqref{Eq-angle-id} imply 
\begin{align*}
& \alpha = (0.51664...)\pi, &
& \beta = (0.32223...)\pi, &
& \gamma = (0.74167...)\pi, &
& \delta = (0.25832...)\pi.&
\end{align*}
The angle values determine
\begin{align*}
\AVC = \{ \alpha\gamma^2, \gamma^2\delta^2,  \alpha^2\beta^3 \}.
\end{align*}
Resuming the deduction in Figure \ref{Fig-bede-alga2-AAD-ga2ded} with $\beta_1\beta_2\cdots=\alpha^2\beta^3$ and $\gamma_1\gamma_5\cdots, \gamma_2\gamma_6\cdots=\alpha\gamma^2$ uniquely determines the angle arrangement of $\beta_1\beta_2\cdots$ to be $\vert \alpha \vert \beta \vert \alpha \vert \beta \vert \beta \vert$. We illustrate the tiles, $T_7, T_8, T_9$, corresponding to $\alpha \vert \beta \vert \alpha$ in Figure \ref{Subfig-bede-AAD-albeal}. Next, $\alpha_7\gamma_8\cdots, \alpha_9\gamma_8\cdots=\alpha\gamma^2$ determine $T_{10}, T_{11}$, and hence $\delta_8\cdots=\delta^3\cdots$. It is however not admissible, a contradiction.


\begin{figure}[h!] 
\centering
\begin{subfigure}[t]{0.325\linewidth}
\centering
\begin{tikzpicture}

\raisebox{2.25ex}{
\begin{scope}[]

\tikzmath{
\r=0.85;
\gon=5;
\th=360/\gon;
\x=\r*cos(\th/2);
}

\foreach \a in {1,...,4}{

\draw[rotate=\th*\a]
	(0:0) -- (270:\r)
;
}

\foreach \a in {1,...,3}{
\draw[rotate=\th*\a]
	(270:\r) -- (270+0.5*\th:2*\x)
;
}

\foreach \a in {2,...,4}{
\draw[rotate=\th*\a]
	(270:\r) -- (270-0.5*\th:2*\x)
;
}

\draw[]
	(90-\th:2*\x) -- (90-0.5*\th:3*\x) 
	(90+\th:2*\x) -- (90+0.5*\th:3*\x) 
;

\draw[line width=1.5]
	(90:2*\x) -- (90-0.5*\th:3*\x) 
	(90:2*\x) -- (90+0.5*\th:3*\x) 
	(90-0.5*\th:\r) -- (90:2*\x)
	(90+0.5*\th:\r) -- (90:2*\x)
;

\node at (90:2.35*\x) {\small $?$};

\node at (135:1.1*\x) {\small $\alpha$}; 
\node at (188:1.1*\x) {\small $\alpha$};
\node at (166:0.3*\x) {\small $\alpha$};
\node at (162.5:1.65*\x) {\small $\alpha$};

\node at (90:0.3*\x) {\small $\beta$}; 
\node at (90:1.65*\x) {\small $\delta$};
\node at (65:1.1*\x) {\small $\gamma$};
\node at (115:1.1*\x) {\small $\gamma$};

\node at (16:0.3*\x) {\small $\alpha$}; 
\node at (18:1.65*\x) {\small $\alpha$};
\node at (45:1.1*\x) {\small $\alpha$}; 
\node at (352.5:1.1*\x) {\small $\alpha$};

\node at (152.5:1.9*\x) {\small $\beta$}; 
\node at (90+0.5*\th:1.2*\r) {\small $\gamma$};
\node at (90+0.5*\th:2.6*\x) {\small $\gamma$};
\node at (102:1.85*\x) {\small $\delta$};

\node at (27.5:1.9*\x) {\small $\beta$}; 
\node at (90-0.5*\th:1.2*\r) {\small $\gamma$};
\node at (90-0.5*\th:2.6*\x) {\small $\gamma$};
\node at (78:1.85*\x) {\small $\delta$};

\node at (275:0.25*\r) {\small $\cdots$};

\node[inner sep=1,draw,shape=circle] at (90+\th:1*\x) {\footnotesize $7$};
\node[inner sep=1,draw,shape=circle] at (90:1*\x) {\footnotesize $8$};
\node[inner sep=1,draw,shape=circle] at (90-\th:1*\x) {\footnotesize $9$};
\node[inner sep=0.25,draw,shape=circle] at (90+0.5*\th:1.6*\r) {\tiny $10$};
\node[inner sep=0.25,draw,shape=circle] at (90-0.5*\th:1.6*\r) {\tiny $11$};

\end{scope}
}
\end{tikzpicture}
\caption{$\alpha\vert\beta\vert\alpha$}
\label{Subfig-bede-AAD-albeal}
\end{subfigure}
\begin{subfigure}[t]{0.325\linewidth}
\centering
\begin{tikzpicture}

\begin{scope}[]

\tikzmath{
\r=0.625;
\th=360/4;
\x=\r*cos(0.5*\th);
}

\foreach \aaa in {0,1} {

\tikzset{yshift=\aaa*2*\x cm}

\foreach \aa in {-1,1} {

\tikzset{xscale=\aa}

\foreach \a in {0,1,2,3} {

\tikzset{xshift=\x cm, rotate=\a*\th}

\draw[]
	(0.5*\th:\r) -- (0.5*\th+\th:\r)
;

}
}
}

\draw[]
	(0,-\x) -- (0,-3*\x)
;

\draw[line width=1.5]
	(0,-\x) -- (0,\x)
;

\foreach \aa in {-1,1} {

\tikzset{xscale=\aa}

\draw[]
	(2*\x,1*\x) -- (4*\x,1*\x)
	(2*\x,-1*\x) -- (4*\x,-1*\x)
	(0,-3*\x) -- (2*\x,-3*\x)
;

\draw[line width=1.5]
	(4*\x,1*\x) -- (4*\x,-1*\x)
	(2*\x,3*\x) -- (2*\x,1*\x) 
	(2*\x,-1*\x) -- (2*\x,-3*\x)
;

\draw[line width=1.5]
	(0,\x) -- (2*\x,\x) 
	(2*\x,1*\x) -- (4*\x,1*\x)
	(0,-3*\x) -- (2*\x,-3*\x)
;

\node at (0.4*\x, 2.6*\x) {\small $\beta$};
\node at (1.6*\x, 2.6*\x) {\small $\gamma$};
\node at (0.4*\x, 1.4*\x) {\small $\gamma$};
\node at (1.6*\x, 1.4*\x) {\small $\delta$};

\node at (0.4*\x, 0.6*\x) {\small $\delta$};
\node at (1.6*\x, 0.6*\x) {\small $\gamma$};
\node at (1.6*\x, -0.6*\x) {\small $\beta$};
\node at (0.4*\x, -0.6*\x) {\small $\gamma$};

\node at (2.4*\x, 0.6*\x) {\small $\gamma$};
\node at (2.4*\x, -0.6*\x) {\small $\beta$};

\node at (0.4*\x, -1.4*\x) {\small $\beta$};
\node at (1.6*\x, -1.4*\x) {\small $\gamma$};
\node at (1.6*\x, -2.6*\x) {\small $\delta$};
\node at (0.4*\x, -2.6*\x) {\small $\gamma$};

}

\node[inner sep=0.25,draw,shape=circle] at (-\x,2*\x) {\tiny $11$};
\node[inner sep=0.25,draw,shape=circle] at (\x,2*\x) {\tiny $12$};
\node[inner sep=1,draw,shape=circle] at (\x,0) {\footnotesize $8$};
\node[inner sep=1,draw,shape=circle] at (-\x,0) {\footnotesize $7$};
\node[inner sep=0.25,draw,shape=circle] at (3*\x,0) {\tiny $10$};
\node[inner sep=1,draw,shape=circle] at (-3*\x,0) {\footnotesize $9$};

\node[inner sep=1,draw,shape=circle] at (\x,-2*\x) {\footnotesize $2$};
\node[inner sep=1,draw,shape=circle] at (-\x,-2*\x) {\footnotesize $1$};

\end{scope}

\end{tikzpicture}
\caption{$\beta^2\gamma^2$}
\label{Subfig-bede-AAD-be2ga2}
\end{subfigure}
\caption{The deductions of $\alpha \vert \beta \vert \alpha$ and $\beta^2\gamma^2$}
\label{Fig-bede-AAD-alga2-ga2de2-al2be3-be2ga2}
\end{figure}
\end{subsubcase*}

\begin{subsubcase*}[$\alpha\gamma^2, \gamma^2\delta^{d\ge2}$ and $\beta_1\beta_2\cdots=\beta^2\gamma^2$] Recall $\alpha\gamma\cdots=\alpha\gamma^2$ from the case, which implies no $\alpha$ in $\beta\gamma\cdots$. Given $\beta^2\gamma^2$ and \eqref{Eq-vertex-be}, we deduce $\beta\gamma\cdots=\beta^2\gamma^2$. 

Resuming the deduction of Figure \ref{Fig-bede-alga2-AAD-ga2ded} with $\beta_1\beta_2\cdots=\beta^2\gamma^2$ determines $T_7, T_8$ in Figure \ref{Subfig-bede-AAD-be2ga2}. Then $\beta_7\gamma_1\cdots, \beta_8\gamma_2\cdots = \beta^2\gamma^2$ determine the angles in $T_9, T_{10}$. Since $\gamma\vert\gamma\cdots= \gamma^2\delta^{d\ge2}$, the vertices $\gamma_7\vert\gamma_9\cdots$, $\gamma_8\vert\gamma_{10}\cdots=\gamma^2\delta^d$ determine $T_{11}, T_{12}$. This results in $\vert \gamma_{11} \, \bvert \, \delta_7 \, \bvert \, \delta_8 \, \bvert \, \gamma_{12} \vert \cdots$, which is $\gamma^2\delta^{d\ge2}$ and can only be $\gamma^2\delta^2$ as shown. However, $\beta^2\gamma^2, \gamma^2\delta^2$ imply $\beta=\delta$, a contradiction. The deduction is independent of $m$ and hence there is no tiling in this subsubcase.
\end{subsubcase*}
\end{subcase*}

\begin{subcase*}[$\alpha\gamma^2$ and $\nexists \, \gamma^4, \gamma^2\delta^d$] Recall $\alpha\gamma\cdots=\alpha\gamma^2$ and $\gamma\delta\cdots=\gamma^2\delta^d$ and $\gamma\vert\gamma\cdots = \gamma^4, \gamma^2\delta^d$ from the case. The subcase assumption rules out $\gamma\delta\cdots, \gamma\vert\gamma \cdots$. The former reduces \eqref{Eq-vertex-de} to
\begin{align}\label{Eq-bede-alga2-vertex-de}
\delta\cdots=\delta^{d\ge3}.
\end{align}

In the absence of $\gamma\vert\gamma \cdots$, Lemma \ref{Lem-bebe-gaga-dede} rules out $\beta\vert\beta\cdots$. Hence $\beta^b,\alpha\beta^b$ are not vertices. Recall $\beta^{b}\gamma^{c}=\beta^{b\ge2}\gamma^{2}$ from the case. The same argument also rules out $\beta^{b}\gamma^{c}$. Now $\eqref{Eq-vertex-be}$ becomes $\beta\cdots=\alpha^a\beta^b$. Then $\alpha\gamma\cdots=\alpha\gamma^2$ and $\alpha>\frac{1}{2}\pi$ and no $\beta\vert\beta\cdots$ deduce
\begin{align}\label{Eq-bede-alga2-vertex-be}
\beta\cdots=\alpha^2\beta, \alpha^2\beta^2, \alpha^3\beta, \alpha^3\beta^2, \alpha^3\beta^3. 
\end{align}
Furthermore, $\alpha\gamma\cdots=\alpha\gamma^2$ and the absence of $\gamma\delta\cdots, \gamma^4\cdots$ reduce \eqref{Eq-vertex-ga} to
\begin{align}\label{Eq-bede-alga2-vertex-ga}
\gamma\cdots &= \alpha\gamma^2. 
\end{align}
The above knowledge of $\beta\cdots, \gamma\cdots, \delta\cdots$ and $\alpha>\frac{1}{2}\pi$ reduce \eqref{Eq-vertex-al} to
\begin{align}\label{Eq-bede-alga2-vertex-al}
\alpha\cdots=\alpha^3, \alpha\gamma^2, \alpha^2\beta, \alpha^2\beta^2, \alpha^3\beta, \alpha^3\beta^2, \alpha^3\beta^3. 
\end{align}
In particular, $\alpha\beta\cdots=\alpha^2\beta, \alpha^2\beta^2, \alpha^3\beta, \alpha^3\beta^2, \alpha^3\beta^3$. Among them, the pair $\alpha^2\beta^2, \alpha^3\beta$ imply $\beta=\alpha=\frac{1}{2}\pi$, contradicting $\alpha>\frac{1}{2}\pi$. The other pairs are obviously mutually exclusive. We proceed with exactly one of them being a vertex of type $\alpha\beta\cdots$ as required by Lemma \ref{Lem-albe-alga}. 

\begin{subsubcase*}[$\gamma\cdots = \alpha\gamma^2$ and $\beta\cdots=\alpha^2\beta$] The vertex angle sum of $\alpha^2\beta$ and $\frac{2}{3}\pi \ge \beta$ imply $\pi>\alpha\ge\frac{2}{3}\pi\ge\beta$. Then $\alpha\ge\frac{2}{3}\pi$ implies $\alpha^3\cdots=\alpha^3$. 

Given $\alpha\beta\cdots=\alpha^2\beta$ and $\alpha\gamma\cdots=\alpha\gamma^2$ and their unique angle arrangements (see Figure \ref{Subfig-bede-alga2-al2be-mgon-bdry}), Lemma \ref{Lem-3-div-m} implies that $m$ is a multiple of $3$.

\begin{figure}[h!] 
\centering
\begin{subfigure}[t]{0.375\linewidth}
\centering
\begin{tikzpicture}
\tikzmath{
\XS=1;
\Y=1;
}

\foreach \xs in {0,1,2} {
\tikzset{xshift=\xs*\XS cm}
\draw[]
	(0,0) -- (\XS,0)
;
}

\foreach \aa in {-1,1} {
\tikzset{xshift=1.5*\XS cm, xscale=\aa}
\draw[]
	(0.5*\XS,0) -- (0.5*\XS,\Y)
;
\draw[line width=1.5]
	(1.5*\XS,0) -- (1.5*\XS,\Y)
;

\node at (0.3*\XS,0.2*\Y) {\small $\alpha$};
\node at (0.7*\XS,0.2*\Y) {\small $\beta$};
\node at (1.3*\XS,0.2*\Y) {\small $\gamma$};
\node at (1.7*\XS,0.2*\Y) {\small $\gamma$};

\node at (0.5*\XS,-0.2*\Y) {\small $\alpha$};
\node at (1.5*\XS,-0.2*\Y) {\small $\alpha$};
}

\end{tikzpicture}
\caption{Boundary vertices of an $m$-gon}
\label{Subfig-bede-alga2-al2be-mgon-bdry}
\end{subfigure}
\begin{subfigure}[t]{0.55\linewidth}
\centering
\begin{tikzpicture}
\tikzmath{
\r=0.8;
\xs=2;
\ys=1.75;
}

\begin{scope} 

\begin{scope}[yshift=0.25 cm]
\tikzmath{
\th=360/3;
\x=\r*cos(\th/2);
\xx=0.75*\x;
}

\raisebox{-0.15cm}{
\foreach \a in {0,...,2} {
\tikzset{rotate=\a*\th}

\foreach \aa in {-1,1} {
\tikzset{xscale=\aa}
\draw[]
	(90:\xx) -- (90-0.5*\th:\r)
;
}

\draw[line width=1.5]
	(0,0) -- (90:\xx)
;

\node at (270:0.6*\r) {\tiny $\beta$};
\node at (90-0.5*\th: 0.2*\r) {\tiny $\delta$};
}

}

\end{scope}

\begin{scope}[xshift=\xs cm]
\tikzmath{
\ph=360/4;
\x=\r*cos(0.5*\ph);
\xx=0.75*\x;
}

\foreach \a in {0,...,3} {
\tikzset{rotate=\a*\ph}

\foreach \aa in {-1,1}{
\tikzset{xscale=\aa}
\draw[]
	(90:\xx) -- (90-0.5*\ph:\r)
;
}

\draw[line width=1.5]
	(0,0) -- (90:\xx)
;

\node at (45:0.7*\r) {\tiny $\beta$};
\node at (90-0.5*\ph: 0.25*\r) {\tiny $\delta$};
}

\end{scope}

\begin{scope}[xshift=2*\xs cm]
\tikzmath{
\ps=360/5;
\x = \r*cos(0.5*\ps);
\xx=0.75*\x;
}

\foreach \a in {0,...,4} {
\tikzset{rotate=\a*\ps}

\foreach \aa in {-1,1}{
\tikzset{xscale=\aa}
\draw[]
	(90:\xx) -- (90-0.5*\ps:\r)
;
}

\draw[line width=1.5]
	(0,0) -- (90:\xx)
;

\node at (90-0.5*\ps:0.7*\r) {\tiny $\beta$};
\node at (90-0.5*\ps: 0.25*\r) {\tiny $\delta$};
}

\end{scope}

\end{scope}

\begin{scope}[yshift=-\ys cm] 

\begin{scope}[yshift=0.25 cm]
\tikzmath{
\th=360/3;
\x=\r*cos(\th/2);
\xx=0.75*\x;
}

\raisebox{-0.15cm}{
\foreach \a in {0,...,2} {
\tikzset{rotate=\a*\th}

\draw[dashed]
	(270:\r) -- (270+\th:\r)
;

\node at (270:0.45*\r) {\tiny $\bar{\beta}$};
}
}
\end{scope}

\begin{scope}[xshift=\xs cm]
\tikzmath{
\ph=360/4;
\x=\r*cos(0.5*\ph);
\xx=0.75*\x;
}

\foreach \a in {0,...,3} {
\tikzset{rotate=\a*\ph}

\draw[dashed]
	(\x,\x) -- (-\x,\x)
;

\node at (45:0.6*\r) {\tiny $\bar{\beta}$};
}

\end{scope}

\begin{scope}[xshift=2*\xs cm]
\tikzmath{
\ps=360/5;
\x = \r*cos(0.5*\ps);
\xx=0.75*\x;
}

\foreach \a in {0,...,4} {
\tikzset{rotate=\a*\ps}

\draw[dashed]
	(270:\r) -- (270+\ps:\r)
;

\node at (90-0.5*\ps:0.6*\r) {\tiny $\bar{\beta}$};
}

\end{scope}

\end{scope}

\end{tikzpicture}
\caption{Neighbourhood of $\delta^d$ for $d=3,4,5$}
\label{Subfig-bede-alga2-al2be-2d-gon}
\end{subfigure}
\caption{The boundary of an $m$-gon and the boundary of an equilateral $2d$-gon given by kite subdivision of the regular polygon with angles $\bar{\beta}^d$ for $d=3,4,5$}
\label{Fig-Subdiv-Prototiles-tr-sq-pn}
\end{figure}

If $\alpha=\frac{2}{3}\pi$, then \eqref{Eq-mgon-sum} implies $m=4,5$, a contradiction. Hence $\alpha>\frac{2}{3}\pi$ which rules out $\alpha^3\cdots$. Now \eqref{Eq-bede-alga2-vertex-al} and \eqref{Eq-bede-alga2-vertex-be} become $\alpha\cdots=\beta\cdots=\alpha^2\beta$. Together with \eqref{Eq-bede-alga2-vertex-de}, \eqref{Eq-bede-alga2-vertex-ga}, we obtain
\begin{align}\label{Eq-AVC-al2be-alga2-ded}
\AVC = \{ \alpha^2\beta, \alpha\gamma^2, \delta^{d\ge3} \},
\end{align}
for $m \in 3\mathbb{N}$ and $m\ge6$.

We determine the tilings by determining their primal tilings. The diminished neighbourhood given by the incident kites of a $\delta^{d\ge3}$ is an equilateral $2d$-gon, centred at $\delta^{d\ge3}$ as shown in Figure \ref{Subfig-bede-alga2-al2be-2d-gon}. The $2d$-gon can be deformed into a regular $d$-gon by smoothing the $2$-path between two $\alpha^2\beta$'s (see dotted lines in Figure \ref{Subfig-bede-alga2-al2be-2d-gon}). The resulting regular $d$-gon has angles $\bar{\beta}$, while the $m$-gon becomes an $\bar{m}$-gon with angles $\bar{\alpha}$. Since every other side of an $\bar{m}$-gon is to be divided by a $y$-edge at an $\alpha\gamma^2$ as shown in Figure \ref{Subfig-bede-alga2-al2be-mgon-bdry}, we know $\bar{m}\in 2\mathbb{N}$ and hence $m=\tfrac{3}{2}\bar{m}$. Now $\AVC$ \eqref{Eq-AVC-al2be-alga2-ded} becomes $\{ \bar{\alpha}^2\bar{\beta} \}$ and the corresponding primal tilings are vertex homogeneous. Note that it is possible that $\bar{m}=d$ and $\bar{\alpha}=\bar{\beta}$.

By \cite[Proposition 2.9]{lnp} and $\AVC$ \eqref{Eq-AVC-al2be-alga2-ded} and $\bar{m}\in 2\mathbb{N}$, the vertex homogeneous candidates for the primal tilings are given by the cube with $(\bar{m},d)=(4,4)$, the truncated tetrahedron $t\mathcal{T}$ with $(\bar{m},d)=(6,3)$, the truncated cube $t\mathcal{C}$ with $(\bar{m},d)=(8,3)$, the truncated octahedron $t\mathcal{O}$ with $(\bar{m},d)=(6,4)$, the truncated dodecahedron $t\mathcal{D}$ with $(\bar{m},d)=(10,3)$, and the truncated icosahedron $t\mathcal{I}$ with $(\bar{m},d)=(6,5)$.

\begin{figure}[h!]
\centering
\begin{tikzpicture}[>=latex]
\tikzmath{
\r=0.4;
\n=4;
\nn=\n-1;
\th=360/\n;
\y=\r*sin(45);
\XS=4;
}

\begin{scope}[xshift=-0.1*\XS cm] 

\foreach \a in {0,...,\nn} {
\tikzset{rotate=\a*\th}
\draw[]
	(90-0.5*\th:1*\r) -- (0,0.8*\y)
	(90+0.5*\th:1*\r) -- (0,0.8*\y)
	(90-0.5*\th:1*\r) -- (90-0.5*\th:2.25*\r)
	(90-0.5*\th:2.25*\r) -- (0,2.5*\y)
	(90+0.5*\th:2.25*\r) -- (0,2.5*\y)
;
\draw[line width=1.5]
	(0,0) -- (0,0.8*\y)
;
\draw[arrows = {-Latex[scale=0.5]},line width=1.5]
	(0,2.5*\y) -- (90:4*\y)
;
}

\end{scope}

\draw[->]
	(0.4*\XS, 0) -- (0.6*\XS, 0)
;

\node at (0.5*\XS, 1*\r) {\scriptsize diminishing};

\begin{scope}[xshift=\XS cm] 

\foreach \a in {0,...,\nn} {
\tikzset{rotate=\a*\th}
\draw[]
	(90-0.5*\th:1*\r) -- (0,0.8*\y)
	(90+0.5*\th:1*\r) -- (0,0.8*\y)
	(90-0.5*\th:1*\r) -- (90-0.5*\th:2.25*\r)
	(90-0.5*\th:2.25*\r) -- (0,2.5*\y)
	(90+0.5*\th:2.25*\r) -- (0,2.5*\y)
;
}

\end{scope}

\node at (1.5*\XS, 1*\r) {\scriptsize smoothing};

\draw[->]
	(1.4*\XS, 0) -- (1.6*\XS, 0)
;

\begin{scope}[xshift=2*\XS cm] 

\foreach \a in {0,...,\nn} {
\tikzset{rotate=\a*\th}
\draw[dashed]
	(90-0.5*\th:1*\r) -- (90+0.5*\th:1*\r)
	(90-0.5*\th:1*\r) -- (90-0.5*\th:2.25*\r) 
	(90-0.5*\th:2.25*\r) -- (90+0.5*\th:2.25*\r)
;

}

\end{scope}

\end{tikzpicture}
\caption{\ref{Label:polar-subdiv-cube} with primal tiling determined by $(\bar{m},d)=(4,4)$ and $\{ \bar{\alpha}^2\bar{\beta} \}$ where $\bar{\alpha}=\bar{\beta}$}
\label{Fig-bede-alga2-al2be-ded-Platonic-primal}
\end{figure}

The tiling with primal tiling given by the cube is \ref{Label:polar-subdiv-cube} as shown in Figure \ref{Fig-bede-alga2-al2be-ded-Platonic-primal}. By $\AVC$ \eqref{Eq-AVC-al2be-alga2-ded}, we get $\beta=2\pi-2\alpha$ and $\gamma=\pi - \tfrac{1}{2}\alpha$ and $\delta=\tfrac{1}{2}\pi$. For $(m,d)=(6,4)$, substituting the equations into \eqref{Eq-angle-id}, \eqref{Eq-a4-cosx}, and \eqref{Eq-x2y2-cosy} determines 
\begin{align} 
&K\mathcal{C}:& &\alpha=\cos^{-1}(-\tfrac{3}{4}),& &\beta=2\pi-2\alpha, \quad \gamma=\pi - \tfrac{1}{2}\alpha, \quad \delta=\tfrac{1}{2}\pi, & \\ \notag
&&&x=\cos^{-1} \tfrac{5}{7},& &y=\cos^{-1} \tfrac{2}{\sqrt{7}}. 
\end{align}

What remain are $t\mathcal{T}, t\mathcal{C}, t\mathcal{O}, t\mathcal{D}$, $t\mathcal{I}$. We obtain the tilings via kite subdivisions and they are the \ref{Label:trunc-Platonic} tilings in Figure \ref{Fig-kite-subdiv-trunc-Platonic}. By $m=\tfrac{3}{2}\bar{m}$, we get $(m,d)=(9,3), (12,3), (9,4), (15,3)$ and $(9,5)$ in the order of the aforementioned Archimedean solids (see Table \ref{Table-md}).

\begin{table}[h!]
\centering
\def\arraystretch{1.5}
    \begin{tabular}[]{| c | c | c | c | c | c |} 
    \hline
    $(m,d)$ & $(9,3)$ & $(12,3)$ & $(9,4)$ & $(15,3)$ & $(9,5)$ \\
    \hline
    Tiling & $Kt\mathcal{T}$ & $Kt\mathcal{C}$ & $Kt\mathcal{O}$ & $Kt\mathcal{D}$ & $Kt\mathcal{I}$ \\
    \hline
    \end{tabular}
\caption{}
\label{Table-md}
\end{table}

By $\AVC$ \eqref{Eq-AVC-al2be-alga2-ded}, we get $\beta=2\pi-2\alpha$ and $\gamma=\pi - \tfrac{1}{2}\alpha$ and $\delta=\tfrac{2}{d}\pi$. Substituting the equations into \eqref{Eq-angle-id}, \eqref{Eq-a4-cosx}, and \eqref{Eq-x2y2-cosy} determines
\begin{align}\label{Eq-al2be-alga2-ded} 
&Kt\mathcal{T}:& &\alpha =2\cos^{-1}\tfrac{1}{2+4\cos\frac{2}{9}\pi},& &\beta=2\pi-2\alpha, \quad \gamma=\pi - \tfrac{1}{2}\alpha, \quad \delta=\tfrac{2}{3}\pi,& \\ \notag
&&&x = (0.183899...)\pi,& &y=(0.078507...)\pi; \\
&Kt\mathcal{C}:& &\alpha =\cos^{-1}(-\tfrac{1}{4}(2+\sqrt3)),& &\beta=2\pi-2\alpha, \quad \gamma=\pi - \tfrac{1}{2}\alpha, \quad \delta=\tfrac{2}{3}\pi,& \\ \notag
&&&x=2\tan^{-1}(\tfrac{1}{\sqrt2}(2-\sqrt3)),& &y=\tan^{-1}(\tfrac{\sqrt2}{11}(3\sqrt3-4));\\
&Kt\mathcal{O}:& &\alpha =\cos^{-1}(-\tfrac{1}{3}(1+2\cos\tfrac{2}{9}\pi)),& &\beta=2\pi-2\alpha, \quad \gamma=\pi - \tfrac{1}{2}\alpha, \quad \delta=\tfrac{1}{2}\pi,& \\ \notag
&&&x = 2\tan^{-1}(\tfrac{1}{\sqrt3}\tan\tfrac{1}{9}\pi),& &y=(0.098747...)\pi;\\
&Kt\mathcal{D}:& &\alpha =\cos^{-1}\tfrac{-5+\tfrac{1}{\sqrt5}-\textstyle\sqrt{6(1+\frac{1}{\sqrt5})}}{8},& &\beta=2\pi-2\alpha, \quad \gamma=\pi - \tfrac{1}{2}\alpha, \quad \delta=\tfrac{2}{3}\pi,& \\ \notag
&&&x =(0.222571...)\pi,& &y=(0.028283...)\pi; \\
&Kt\mathcal{I}:& &\alpha =2\cos^{-1}\tfrac{1+\sqrt5}{4+8\cos\frac{2}{9}\pi},& &\beta=2\pi-2\alpha, \quad \gamma=\pi - \tfrac{1}{2}\alpha, \quad \delta=\tfrac{2}{5}\pi,& \\ \notag
&&&x = (0.082219...)\pi,& &y=(0.084759...)\pi.
\end{align}
We remark that the exact formulae for $x$ and $y$ in some of them are too lengthy to be listed. The formulae can be obtained by a computer algebra system.
\end{subsubcase*}

\begin{subsubcase*}[$\gamma\cdots = \alpha\gamma^2$ and $\beta\cdots=\alpha^3\beta^2$] 
The assumption reduces \eqref{Eq-bede-alga2-vertex-al} to $\alpha\cdots=\alpha\gamma^2, \alpha^3\beta^2$. In particular, $\alpha^2\cdots=\alpha^3\beta^2$, which implies $2\pi-3\alpha=2\beta$. With $\beta+\delta>\alpha$ from the case, it implies $\delta > \frac{5}{2}\alpha-\pi$. Meanwhile, $2\pi-3\alpha=2\beta>0$ and $\alpha>(1-\frac{2}{m})\pi$ deduce $m=4,5$. 

For $m=5$, the inequality \eqref{Eq-mgon-sum} implies $\alpha>\frac{3}{5}\pi$, and then $\delta > \frac{5}{2}\alpha-\pi$ implies $\delta > \frac{1}{2}\pi$. Hence \eqref{Eq-bede-alga2-vertex-de} becomes $\delta\cdots=\delta^3$. However, the vertex angle sums of $\alpha\gamma^2, \alpha^3\beta^2, \delta^3$ and $m=5$ yield no solution in \eqref{Eq-angle-id} for $\alpha \in (\tfrac{3}{5}\pi,\tfrac{2}{3}\pi)$. Therefore there is no tiling. 

For $m=4$, without $\beta\vert\beta\cdots$ the vertex $\alpha^3\beta^2$ has a unique angle arrangement $\vert \alpha\vert \alpha \vert \beta \vert \alpha \vert \beta \vert$ which determines tiles $T_1, ..., T_5$ in Figure \ref{Subfig-bede-alga2-al3be2-AAD-alalbealbe}. The top $\alpha_1\alpha_2\cdots=\alpha^3\beta^2$ determines the angles in $T_6, T_7, T_8$; while $\alpha_2\gamma_3\cdots=\alpha\gamma^2$ determines the angles in $T_9$. It results in $\alpha_2\beta_9\gamma_7\cdots$, which is not a vertex, a contradiction. Therefore there is no tiling. 

\begin{figure}[h!] 
\centering
\begin{subfigure}[t]{0.3\linewidth}
\centering
\begin{tikzpicture}

\tikzmath{
\s=4;
\r=0.85;
}

\begin{scope}

\tikzmath{
\ph=360/5;
\x=\r*cos(\ph/2);
}

\foreach \a in {0,...,4} {

\tikzset{rotate=\a*\ph}

\draw[]
	(0,0) -- (90:\r) 
	(90:\r) -- (90-0.5*\ph:2*\x)
	(90:\r) -- (90+0.5*\ph:2*\x)
;

}

\foreach \a in {-1,1} {

\tikzset{rotate=\a*\ph}

\draw[line width=1.5]
	(270:2*\x) -- (270-0.5*\ph:\r) 
	(270:2*\x) -- (270+0.5*\ph:\r) 
;

}

\draw[]
	(90:\r) -- (90-0.2*\ph:2*\r)
	(90:\r) -- (90+0.2*\ph:2*\r)
	(90-0.5*\ph:2*\x) -- (90-0.6*\ph:2*\r)
;

\draw[line width=1.5]
	(90-0.5*\ph:2*\x) -- (90-0.45*\ph:2.2*\r)
;

\foreach \a in {0,2,3} {

\tikzset{rotate=\a*\ph}

\node at (270:0.3*\r) {\small $\alpha$};
\node at (270:1.3*\r) {\small $\alpha$};
\node at (295:0.85*\r) {\small $\alpha$};
\node at (245:0.85*\r) {\small $\alpha$};

}

\foreach \a in {-1,1} {

\tikzset{rotate=\a*\ph}

\node at (270:0.35*\r) {\small $\beta$};
\node at (270:1.3*\r) {\small $\delta$};
\node at (295:0.85*\r) {\small $\gamma$};
\node at (245:0.85*\r) {\small $\gamma$};

}

\node at (90:1.35*\r) {\small $\alpha$};
\node at (90-0.215*\ph:1.3*\r) {\small $\beta$};
\node at (90+0.215*\ph:1.3*\r) {\small $\beta$};

\node at (90-0.4*\ph:2.05*\x) {\small $\gamma$};
\node at (90-0.625*\ph:2*\x) {\small $\beta$};
\node at (90-\ph:1.15*\r) {\small $\gamma$};

\node at (90-0.525*\ph:2.65*\x) {\small $?$};

\node[inner sep=1,draw,shape=circle] at (90+0.5*\ph:\x) {\footnotesize $1$};
\node[inner sep=1,draw,shape=circle] at (90-0.5*\ph:\x) {\footnotesize $2$};
\node[inner sep=1,draw,shape=circle] at (90-1.5*\ph:\x) {\footnotesize $3$};
\node[inner sep=1,draw,shape=circle] at (90-2.5*\ph:\x) {\footnotesize $4$};
\node[inner sep=1,draw,shape=circle] at (90+1.5*\ph:\x) {\footnotesize $5$};
\node[inner sep=1,draw,shape=circle] at (90-1*\ph:2*\x) {\footnotesize $9$};

\node[inner sep=1,draw,shape=circle] at (90:2.25*\x) {\footnotesize $6$};
\node[inner sep=1,draw,shape=circle] at (90-0.32*\ph:2.4*\x) {\footnotesize $7$};
\node[inner sep=1,draw,shape=circle] at (90+0.32*\ph:2.4*\x) {\footnotesize $8$};

\end{scope}
\end{tikzpicture}
\caption{}
\label{Subfig-bede-alga2-al3be2-AAD-alalbealbe}
\end{subfigure}
\begin{subfigure}[t]{0.3\linewidth}
\centering
\begin{tikzpicture}

\tikzmath{
\r=0.85;
}

\raisebox{4ex}{
\begin{scope}[]

\tikzmath{
\r=0.75;
\ps=360/6;
}

\foreach \p in {2,3,4,5} {

\draw[rotate=\p*\ps]
	(90-1*\ps:\r) -- (90:\r)
;

}

\foreach \p in {-1,0,1} {

\draw[rotate=\p*\ps]
	(270:\r) -- (270:2*\r)
;

\node at (282.5+\p*\ps:1.15*\r) {\small $\alpha$};
\node at (257.5+\p*\ps:1.15*\r) {\small $\alpha$};

}

\node at (270:0.7*\r) {\small $\alpha$};
\node at (270-\ps:0.7*\r) {\small $\alpha$};
\node at (270+\ps:0.7*\r) {\small $\alpha$};

\node[inner sep=1,draw,shape=circle] at (0,0) {\footnotesize $1$};
\node[inner sep=1,draw,shape=circle] at (270+0.5*\ps:1.75*\r) {\footnotesize $2$};
\node[inner sep=1,draw,shape=circle] at (270-0.5*\ps:1.75*\r) {\footnotesize $3$};

\end{scope}
}

\end{tikzpicture}
\caption{}
\label{Subfig-bede-alga2-al3be2-AAD-alal}
\end{subfigure}
\begin{subfigure}[t]{0.3\linewidth}
\centering
\begin{tikzpicture}

\tikzmath{
\r=0.85;
}

\begin{scope}[yscale=-1]
\tikzmath{
\th=360/3;
\x=\r*cos(0.5*\th);
}

\foreach \a in {0,1,2} {

\draw[rotate=\th*\a]
	(0:0) -- (90:\r) 
	(90:\r) -- (90-0.5*\th:2*\x)
	(90:\r) -- (90+0.5*\th:2*\x)
	(90-0.5*\th:2*\x) -- (90-0.5*\th:3*\x)
;

\node at (270+\th*\a:0.25*\r) {\small $\alpha$};
\node at (270+\th*\a:0.75*\r) {\small $\alpha$};
\node at (270+0.4*\th+\th*\a:0.75*\r) {\small $\alpha$};
\node at (270-0.4*\th+\th*\a:0.75*\r) {\small $\alpha$};

\node at (90+\th*\a:1.15*\r) {\small $\alpha$};
\node at (90+\th*\a:1.55*\r) {\small $\alpha$};
\node at (40+\th*\a:1.1*\r) {\small $\alpha$};
\node at (140+\th*\a:1.1*\r) {\small $\alpha$};

}

\end{scope}

\end{tikzpicture}
\caption{}
\label{Subfig-bede-alga2-al3be2-AAD-alal-cube}
\end{subfigure}
\caption{The deductions of $\alpha^3\beta^2=\vert \alpha\vert \alpha \vert \beta \vert \alpha \vert \beta \vert$ and $\alpha\vert\alpha\cdots=\alpha^3$}
\label{Fig-bede-alga2-al3be2-AAD-alalbealbe-AAD-alal}
\end{figure}
\end{subsubcase*}

\begin{subsubcase*}[$\gamma\cdots = \alpha\gamma^2$ and $\beta\cdots=\alpha^2\beta^2$] 
The assumption reduces \eqref{Eq-bede-alga2-vertex-al} to $\alpha\cdots=\alpha^3, \alpha\gamma^2, \alpha^2\beta^2$ and $\beta\cdots=\alpha^2\beta^2$. The absence of $\beta\vert\beta\cdots$ from the subcase implies that $\alpha^2\beta^2$ has a unique angle arrangement $\vert \alpha \vert \beta \vert \alpha \vert \beta \vert$. Then $\alpha\vert\alpha\cdots$ can only be $\alpha^3$. The three incident tiles at an $\alpha^3$ are illustrated in Figure \ref{Subfig-bede-alga2-al3be2-AAD-alal}. Each adjacent vertex $\alpha\vert\alpha\cdots$ is also $\alpha^3$. The deduction repeats at each $\alpha\vert\alpha\cdots$ and results in a monohedral tiling by regular $m$-gons. The tiling is either given by the cube (Figure \ref{Subfig-bede-alga2-al3be2-AAD-alal-cube}) or by the regular dodecahedron. 

Excluding $\alpha^3$ we now have $\alpha\cdots= \alpha\gamma^2, \alpha^2\beta^2$. With subsubcase assumption and \eqref{Eq-bede-alga2-vertex-de},
\begin{align}\label{Eq-AVC-alga2-al2be2-ded}
\AVC = \{ \alpha\gamma^2, \alpha^2\beta^2, \delta^d \}.
\end{align} 
Given $\alpha\beta\cdots=\alpha^2\beta^2$ and $\alpha\gamma\cdots=\alpha\gamma^2$ and their unique angle arrangements, Lemma \ref{Lem-even-m} implies that $m\ge4$ is even.

For $m=4$, a pair $\delta\,\bvert\,\delta$ at a $\delta^d$ have incident tiles $T_1,T_2$ illustrated in Figure \ref{Fig-a4-a2b2-Tiling-alga2-al2be2-ded}. The unique angle arrangements of the vertices in \eqref{Eq-bede-alga2-vertex-de} then determine $T_3,T_4,T_5$. The same argument repeats for each pair $\delta\,\bvert\,\delta$ until it returns to the starting pair and determines the \ref{Label:EMT-Herschel} family. The shaded tiles $T_1,T_3,T_5$ form a time zone. The minimal member of the family has three time zones and has the Herschel graph as its underlying graph.


\begin{figure}[h!] 
\centering
\begin{tikzpicture}

\tikzmath{
\s=1;
\r=0.6;
\th=360/4;
\x=\r*cos(0.5*\th);
\R = sqrt(\x^2+(3*\x)^2);
\aR = acos(3*\x/\R);
\tz=3;
\tzz=\tz-1;
}

\fill[gray!25]
	(\th:2*\r) -- (\th:\r) -- (0:\r) -- (2*\r, -\r) -- (2*\r, -2*\r) -- (4*\r, -2*\r) -- (4*\r, -\r) -- (3*\r, 0) -- (2*\r, \r) -- (2*\r, 2*\r) -- cycle
;

\foreach \a in {0,...,\tz} {

\tikzset{shift={(2*\a*\r,0)}}

\draw[]
	(\th:\r) -- (\th:2*\r)
	(\th:\r) -- (0:\r)
;

\draw[shift={(0:\r)}]
	(0:0) -- (-0.5*\th:2*\x) 
	(-0.5*\th:2*\x) -- ([shift={(-0.5*\th:2*\x)}]270:\r)
;

}

\foreach \a in {0,...,\tzz} {

\tikzset{shift={(2*\a*\r,0)}}

\draw[shift={(0:\r)}]
	(0:0) -- (0.5*\th:2*\x)
	(-0.5*\th:2*\x) -- ([shift={(-0.5*\th:2*\x)}]0.5*\th:2*\x)
;

}

\foreach \a in {0,...,\tz} {

\tikzset{shift={(2*\a*\r,0)}}

\draw[line width=1.5]
	(\th:\r) -- (\th:2*\r)
;

\draw[line width=1.5, shift={(0:\r)}]
	(-0.5*\th:2*\x) -- ([shift={(-0.5*\th:2*\x)}]270:\r)
;

}

\foreach \a in {0,...,\tzz} {

\tikzset{shift={(2*\a*\r,0)}}

\node at ([shift={(\r,0)}]\th:1.75*\r) {\small $\delta$};
\node at ([shift={(\r,0)}]\th:0.4*\r) {\small $\beta$};
\node at ([shift={(\r,0)}]0.65*\r,1.1*\r) {\small $\gamma$};
\node at ([shift={(\r,0)}]-0.65*\r,1.1*\r) {\small $\gamma$};

}

\foreach \a in {0,...,\tzz} {

\tikzset{shift={(2*\a*\r,0)}}

\foreach \c in {0,...,3} {

\node at ([shift={(2*\r,0)}]\th+\c*\th:0.6*\r) {\small $\alpha$};

}
}

\foreach \a in {0,...,\tzz} {

\tikzset{shift={(2*\a*\r,0)}}

\node at ([shift={(3*\r,0)}]3*\th:1.75*\r) {\small $\delta$};
\node at ([shift={(3*\r,0)}]3*\th:0.4*\r) {\small $\beta$};
\node at ([shift={(3*\r,0)}]0.65*\r,-1.1*\r) {\small $\gamma$};
\node at ([shift={(3*\r,0)}]-0.65*\r,-1.1*\r) {\small $\gamma$};

}

\node at (2*\tz*\r+3*\r, 0) {\Large $\cdots$};

\node[inner sep=1,draw,shape=circle] at (\r,1.5*\x) {\footnotesize $1$};
\node[inner sep=1,draw,shape=circle] at (3*\r,1.5*\x) {\footnotesize $2$};
\node[inner sep=1,draw,shape=circle] at (2*\r,0*\x) {\footnotesize $3$};
\node[inner sep=1,draw,shape=circle] at (4*\r,0*\x) {\footnotesize $4$};
\node[inner sep=1,draw,shape=circle] at (3*\r,-1.5*\x) {\footnotesize $5$};

\end{tikzpicture}
\caption{The \ref{Label:EMT-Herschel} tilings with $\AVC = \{ \alpha\gamma^2, \alpha^2\beta^2, \delta^d \}$, a time zone (shaded) consists of three tiles, one equatorial square and two polar kites}
\label{Fig-a4-a2b2-Tiling-alga2-al2be2-ded}
\end{figure}

The vertex angle sums from $\AVC$ \eqref{Eq-AVC-alga2-al2be2-ded} imply
\begin{align}\label{Eq-alga2-al2be2-ded}
\beta = \pi - \alpha, \quad
\gamma = \pi - \tfrac{1}{2}\alpha, \quad
\delta = \tfrac{2}{d}\pi.
\end{align}
Substituting the above equations into \eqref{Eq-angle-id} gives
\begin{align*}
\cos \tfrac{1}{2}\alpha \sin \tfrac{1}{2}\alpha = \sin^2\tfrac{1}{2}\alpha( \cos \tfrac{1}{d}\pi + \cos \tfrac{1}{2}\pi ).
\end{align*}
For $\alpha \in (\tfrac{1}{2}\pi, \pi)$, we know $\sin \tfrac{1}{2}\alpha \neq 0$. Then the above deduces
\begin{align*}
\cos \tfrac{1}{d}\pi = \frac{ \cos \tfrac{1}{2}\alpha }{\sin \tfrac{1}{2}\alpha } = \cot \tfrac{1}{2}\alpha.
\end{align*}
Hence 
for each positive integer $d\ge3$, there exists a solution $\alpha\in(\frac12\pi,\pi)$ given by
\begin{align}\label{Eq-alga2-al2be2-alarccot}
\alpha = \pi-2 \tan^{-1} (\cos \tfrac{1}{d}\pi),
\end{align}
and so
the tiling exists: the kite has the angles with values given by \eqref{Eq-alga2-al2be2-ded} and the square has the angle value $\alpha$ given by \eqref{Eq-alga2-al2be2-alarccot}.

In Figure \ref{Fig-a4-a2b2-Tiling-alga2-al2be2-ded}, the $\alpha^2\beta^2$'s are located along the equator. The distance between the $\alpha^2\beta^2, \delta^d$ in the same kite is given by diagonal of length $\tfrac{1}{2}\pi$ through the angles $\beta, \delta$. Then the triangle (as a half of a kite, Figure \ref{Fig-a4-a2b2-kite}) has edges $x, y, \tfrac{1}{2}\pi$ and opposite angles $\tfrac{1}{2}\delta, \tfrac{1}{2}\beta, \tfrac{1}{2}\pi$. By the spherical sine law, we get $\sin x =\frac{ \sin \frac{1}{2}\delta }{ \sin \gamma}$. By the spherical cosine law, we further obtain
\begin{align*}
\cos y &=  \frac{ \cos \frac{1}{2}\beta \sin \frac{1}{2}\delta }{ \sin \gamma}.
\end{align*} 
Substituting \eqref{Eq-alga2-al2be2-ded} into the above and \eqref{Eq-x2y2-cosx}, the edge lengths are given by
\begin{align}
x = \cos^{-1} \cos^2 \tfrac{1}{d}\pi, \qquad
y = \cos^{-1} \sin \tfrac{1}{d}\pi=\tfrac{\pi}{2}-\tfrac{\pi}{d}.
\end{align}

For even $m\ge6$, the vertices in $\AVC$ \eqref{Eq-AVC-alga2-al2be2-ded} imply $\beta=\pi-\alpha$, $2\gamma=2\pi-\alpha$ and $\delta=\tfrac{2}{d}\pi$. Combined with the kite angle sum and $\alpha>(1-\frac{2}{m})\pi$, we get $\tfrac{2}{d}\pi=\delta > (1-\tfrac{4}{m})\pi$, which implies $d<6$. Hence $\delta^d=\delta^3, \delta^4, \delta^5$. The vertex $\delta^3$ implies $\delta=\frac{2}{3}\pi > (1-\tfrac{4}{m})\pi$ and $m=6,8,10$. Then the solutions for the angles in \eqref{Eq-angle-id} are determined below; Similarly, the vertices $\delta^4$ and $\delta^5$ determine $m=6$ and the solutions for the angles below.

For $m=6$, we have
\begin{align}
&\delta^3:&
&\alpha=\cos^{-1}(-\tfrac{4}{5}),&
&\beta=\pi-\alpha, \qquad
\gamma=\pi-\tfrac{1}{2}\alpha, \quad
&\delta=\tfrac{2}{3}\pi,& \\ \notag
&&
&x=\cos^{-1}\tfrac{2}{3},&
&y=\tan^{-1}\tfrac{\sqrt2}{5}. \\
&\delta^4:&
&\alpha=\cos^{-1}(-\tfrac{7}{11}),&
&\beta=\pi-\alpha, \qquad
\gamma=\pi-\tfrac{1}{2}\alpha, \quad
&\delta=\tfrac{1}{2}\pi,& \\ \notag
&&
&x=\cos^{-1} \tfrac{5}{6},&
&y=\sin^{-1}\tfrac{1}{3}. \\ 
&\delta^5:&
&\alpha=2\tan^{-1}\tfrac{3}{2}(\sqrt5-1),&
&\beta=\pi - \alpha, \qquad
\gamma=\pi-\tfrac{1}{2}\alpha, \quad
&\delta=\tfrac{2}{5}\pi,& \\ \notag
&&
&x=\cos^{-1} \tfrac{1}{12}(9+\sqrt{5}),&
&y=\tan^{-1}\tfrac{1}{38}(9+\sqrt5).
\end{align}

For $m=8$, we have
\begin{align}
&\alpha=2\tan^{-1}(2+\sqrt2),&
&\beta=\pi-\alpha, \qquad 
\gamma=\pi-\tfrac{1}{2}\alpha, \qquad
\delta=\tfrac{2}{3}\pi,&\\ \notag
&x=\cos^{-1} \tfrac{1}{4}(2+\sqrt{2}),&
&y=\cos^{-1}\tfrac{1}{\sqrt6}(1+\sqrt2).
\end{align}

For $m=10$, we have
\begin{align}
&\alpha=2\tan^{-1}\tfrac{1}{2}(5+\sqrt5),&
&\beta=\pi-\alpha, \qquad
\gamma=\pi-\tfrac{1}{2}\alpha, \quad
\delta=\tfrac{2}{3}\pi,&\\ \notag
&x=\cos^{-1}\tfrac{1}{10}(5+2\sqrt{5}),&
&y=\cos^{-1} \tfrac{1}{30} ( 15\sqrt{3} + \sqrt{15} ).
\end{align}

Therefore the vertices for each $m$ are listed as follows,
\begin{align} 
\label{Eq-AVC-alga2-al2be2-de3}
&m=6,8,10,\quad \AVC = \{  \alpha\gamma^2, \alpha^2\beta^2, \delta^3 \}; \\
\label{Eq-AVC-alga2-al2be2-de4,5}
&m=6, \ \quad\qquad \AVC = \{  \alpha\gamma^2, \alpha^2\beta^2, \delta^{d=4,5} \}.
\end{align}
The construction is straightforward. The diminished neighbourhood of $\delta^d$ is a $2d$-gon which can be deformed to a regular $d$-gon with angles $\bar{\beta}^b$, as shown in Figure \ref{Subfig-bede-alga2-al2be-2d-gon}. In view of this, 
the original $m$-gon becomes an $\bar{m}$-gon with angles $\bar{\alpha}^{\bar{m}}$, where $\bar{m}=\frac{m}{2}$. Now, $\AVC$s \eqref{Eq-AVC-alga2-al2be2-de3} and \eqref{Eq-AVC-alga2-al2be2-de4,5} are reduced to $\AVC=\{ \bar{\alpha}^2\bar{\beta}^2 \}$, where $\bar{\alpha}^2\bar{\beta}^2=\vert \bar{\alpha} \vert \bar{\beta} \vert \bar{\alpha} \vert \bar{\beta} \vert $. Then using the underlying prototiles, one can obtain the primal graphs in Figure \ref{Fig-Rect-Platonic-Tilings-alga2-al2be2-ded} and hence the \ref{Label:rect-Platonic} tilings (Figure \ref{Fig-Platonic-Archimedean}) given by the highlighted kite subdivisions.

\begin{figure}[h!] 
\centering
\begin{tikzpicture}[>=latex]

\tikzmath{
\xs=4.75;
\ys=2;
}

\begin{scope}[thick] 

\begin{scope}[]

\tikzmath{
\th=360/3;
\r=0.6;
\x=\r*cos(\th/2);
}

\foreach \a in {0,...,2} {

\tikzset{rotate=\a*\th}

\draw[]
	(270:\r) -- (270+\th:\r)
	(90:2*\r) -- (90+\th:2*\r)
;

\draw[red!75, densely dashed]
	(0,0) -- (90:\x)
	(90-0.5*\th:1.5*\r) -- (90-0.5*\th:2*\r)
;

\arcThroughThreePoints[red!75, densely dashed]{$(90-0.5*\th:1*\r) !1/2! (90-1*\th:2*\r)$}{90-0.5*\th:1.5*\r}{$(90:2*\r) !1/2! (90-0.5*\th:1*\r)$};

} 

\draw[] (0,0) circle (2*\r);

\end{scope}

\begin{scope}[xshift=\xs cm]

\tikzmath{
\ph=360/4;
\r=0.6;
\x=\r*cos(0.5*\ph);
}

\foreach \a in {0,...,3} {
\tikzset{rotate=\a*\ph}

\draw[red!75, densely dashed]
	($(0,2*\x) !1/2! (2*\x,2*\x)$) -- (1.5*\x, 1.5*\x)
	($(2*\x,0) !1/2! (2*\x,2*\x)$) -- (1.5*\x, 1.5*\x)
	($(0,2*\x) !1/2! (\x,\x)$) -- (1.5*\x, 1.5*\x)
	($(2*\x,0) !1/2! (\x,\x)$) -- (1.5*\x, 1.5*\x)
;
}

\foreach \a in {0,...,3} {

\tikzset{rotate=\a*\ph}

\draw[]
	(\x,\x) -- (-\x,\x)
	(2*\x,0) -- (0,2*\x)
	(2*\x,2*\x) -- (-2*\x,2*\x)
;

\draw[red!75, densely dashed]
	(0,0) -- (0,\x)
;

\draw[->, red!75, densely dashed]
	(0,2*\r) -- (0,2.65*\r)
;

}

\draw[] (0,0) circle (2*\r);

\end{scope}

\begin{scope}[xshift=2*\xs cm]

\tikzmath{
\ph=360/4;
\r=0.6;
\x=\r*cos(0.5*\ph);
}

\draw[] (0,0) circle (2*\r);

\foreach \a in {0,...,3} {
\tikzset{rotate=\a*\ph}

\draw[red!75, densely dashed]
	(0,2.4*\x) -- (0,2*\r)
	(0,2.4*\x) to[out=0, in=135, distance=0.25cm] (1*\x,2*\x) 
	(0,2.4*\x) to[out=180, in=45, distance=0.25cm] (-1*\x,2*\x) 
;

\draw[red!75, densely dashed]
	(0,1.5*\x) -- (0,\x)
	(0,1.5*\x) -- ($(0,2*\x) !1/2! (\x,\x)$)
	(0,1.5*\x) -- ($(0,2*\x) !1/2! (-\x,\x)$)
;
}

\foreach \a in {0,...,3} {

\tikzset{rotate=\a*\ph}

\draw[]
	(\x,\x) -- (-\x,\x)
	(2*\x,0) -- (0,2*\x)
	(2*\x,2*\x) -- (-2*\x,2*\x)
;

}

\end{scope}

\end{scope}

\begin{scope}[yshift=-\ys cm, xshift=0.5*\xs cm, thick] 

\tikzmath{
\s=4;
}

\begin{scope}[]

\tikzmath{
\ps=360/5;
\r=0.35;
\x = \r*cos(0.5*\ps);
}

\foreach \a in {0,...,4} {
\tikzset{rotate=\a*\ps}

\draw[]
	(270:\r) -- (270+\ps:\r)
	(270:\r) -- (270-0.5*\ps:\x+\r)
	(270:\r) -- (270+0.5*\ps:\x+\r)
	(270-0.25*\ps:2.5*\r) -- (270-0.5*\ps:\x+\r)
	(270+0.25*\ps:2.5*\r) -- (270+0.5*\ps:\x+\r)
 	(270-0.25*\ps:2.5*\r) -- (270+0.25*\ps:2.5*\r)
	(90-0.25*\ps:2.5*\r) -- (90+0.25*\ps:2.5*\r) 
	(90-0.25*\ps:2.5*\r) -- (90-0.5*\ps:3.25*\r) 
	(90+0.25*\ps:2.5*\r) -- (90+0.5*\ps:3.25*\r)
	(90:3.75*\r) -- (90-0.5*\ps:3.25*\r) 
	(90:3.75*\r) -- (90+0.5*\ps:3.25*\r) 
;

\draw[red!75, densely dashed]
	(0,0) -- (0,\x)
	(90-0.5*\ps:\x+\r) -- ($(90-0.5*\ps:1*\r) !1/2! (90-\ps:\x+\r)$)
	(90-0.5*\ps:\x+\r) -- ($(90-0.5*\ps:1*\r) !1/2! (90:\x+\r)$)
	(90-0.5*\ps:\x+\r) -- ($(90:\x+\r) !1/2! (90-0.25*\ps:2.5*\r)$)
	(90-0.5*\ps:\x+\r) -- ($(90-\ps:\x+\r) !1/2! (90-0.75*\ps:2.5*\r)$)
	(90-0.5*\ps:\x+\r) -- ($(90-0.25*\ps:2.5*\r) !1/2! (90-0.75*\ps:2.5*\r)$)
	(90:3*\r) -- ($(90-0.25*\ps:2.5*\r) !1/2! (90+0.25*\ps:2.5*\r)$)
	(90:3*\r) -- ($(90-0.25*\ps:2.5*\r) !1/2! (90-0.5*\ps:3.25*\r)$)
	(90:3*\r) -- ($(90+0.25*\ps:2.5*\r) !1/2! (90+0.5*\ps:3.25*\r)$)
	(90:3*\r) -- ($(90:3.75*\r) !1/2! (90-0.5*\ps:3.25*\r)$)
	(90:3*\r) -- ($(90:3.75*\r) !1/2! (90+0.5*\ps:3.25*\r)$)
;

\draw[->, red!75, densely dashed]
	(90-0.5*\ps:3.75*\r) -- (90-0.5*\ps:4.75*\r)
;

}

\draw[] (0,0) circle (3.75*\r);

\end{scope}

\begin{scope}[xshift=1*\xs cm]

\tikzmath{
\ps=360/5;
\r=0.35;
\x = \r*cos(0.5*\ps);
}

\draw[] (0,0) circle (3.75*\r);

\foreach \a in {0,...,4} {
\tikzset{rotate=\a*\ps}

\draw[red!75, densely dashed]
	(90:1.2*\r) -- (90:\x)
	(90:1.2*\r) -- ($(90-0.5*\ps:\r) !1/2! (90:\x+\r)$)
	(90:1.2*\r) -- ($(90+0.5*\ps:\r) !1/2! (90:\x+\r)$)
	(90:2.59*\x) -- ($(90-0.25*\ps:2.5*\r) !1/2! (90:\x+\r)$)
	(90:2.59*\x) -- ($(90+0.25*\ps:2.5*\r) !1/2! (90:\x+\r)$)
	(90:2.59*\x) -- ($(90-0.25*\ps:2.5*\r) !1/2! (90+0.25*\ps:2.5*\r)$)
	(90-0.5*\ps:2.75*\r) -- ($(90-0.25*\ps:2.5*\r) !1/2! (90-0.75*\ps:2.5*\r)$)
	(90-0.5*\ps:2.75*\r) -- ($(90-0.25*\ps:2.5*\r) !1/2! (90-0.5*\ps:3.25*\r)$)
	(90-0.5*\ps:2.75*\r) -- ($(90-0.75*\ps:2.5*\r) !1/2! (90-0.5*\ps:3.25*\r)$)
	(270:3.5*\r) -- (270:3.75*\r) 
	(270:3.5*\r) to[out=180, in=280, distance=0.15 cm] ($(270:3.5*\r) !3/5! (270-0.5*\ps:3.5*\r)$)
	(270:3.5*\r) to[out=0, in=260, distance=0.15 cm] ($(270:3.5*\r) !3/5! (270+0.5*\ps:3.5*\r)$)
;
}

\foreach \a in {0,...,4} {
\tikzset{rotate=\a*\ps}

\draw[]
	(270:\r) -- (270+\ps:\r)
	(270:\r) -- (270-0.5*\ps:\x+\r)
	(270:\r) -- (270+0.5*\ps:\x+\r)
	(270-0.25*\ps:2.5*\r) -- (270-0.5*\ps:\x+\r)
	(270+0.25*\ps:2.5*\r) -- (270+0.5*\ps:\x+\r)
 	(270-0.25*\ps:2.5*\r) -- (270+0.25*\ps:2.5*\r)
	(90-0.25*\ps:2.5*\r) -- (90+0.25*\ps:2.5*\r) 
	(90-0.25*\ps:2.5*\r) -- (90-0.5*\ps:3.25*\r) 
	(90+0.25*\ps:2.5*\r) -- (90+0.5*\ps:3.25*\r)
	(90:3.75*\r) -- (90-0.5*\ps:3.25*\r) 
	(90:3.75*\r) -- (90+0.5*\ps:3.25*\r) 
;
}

\end{scope}

\end{scope}[] 

\end{tikzpicture}
\caption{The family of \ref{Label:rect-Platonic} tilings with $\AVC\equiv \{ \alpha\gamma^2, \alpha^2\beta^2, \delta^d \}$ determined in two steps -- step 1. construction by the underlying prototiles, a regular $d$-gon $\beta^d$ and a regular $\bar{m}$-gon $\alpha^{\bar{m}}$, and reduced $\AVC = \{ \vert \alpha \vert \beta \vert \alpha \vert \beta \vert \}$; step 2. kite subdivisions (marked by dashed lines) on one prototile throughout a tiling in step 1}
\label{Fig-Rect-Platonic-Tilings-alga2-al2be2-ded}
\end{figure}
\end{subsubcase*}

\begin{subsubcase*}[$\gamma\cdots=\alpha\gamma^2$ and $\beta\cdots=\alpha^3\beta$] The assumption reduces \eqref{Eq-bede-alga2-vertex-al} to $\alpha\cdots=\alpha\gamma^2, \alpha^3\beta$. Given $\beta\cdots=\alpha^3\beta$ and $\gamma\cdots=\alpha\gamma^2$ and $\delta\cdots=\delta^{d\ge3}$ from \eqref{Eq-bede-alga2-vertex-de}, the vertex types are
\begin{align}\label{Eq-AVC-alga2-al3be-ded}
\AVC = \{ \alpha\gamma^2, \alpha^3\beta, \delta^d \}.
\end{align}
The vertex $\alpha^3\beta$ and \eqref{Eq-mgon-sum} imply $\frac{2}{3}\pi>\alpha>(1-\tfrac{2}{m})\pi$, and hence $m=4,5$.  

For $m=5$, an $\alpha^3\beta$ vertex has incident tiles $T_1, T_2, T_3, T_4$ as illustrated in Figure \ref{Fig-bede-alga2-al3be-m=5-AAD}. Then $\alpha_1\gamma_4\cdots, \alpha_3\gamma_4\cdots=\alpha\gamma^2$ determine $T_5, T_6$. By $\alpha_1\beta_5\cdots, \alpha_3\beta_6\cdots$, we then determine $T_7, T_8$. The adjacent vertices $\alpha_1\alpha_2\cdots, \alpha_1\alpha_7\cdots$ can only be $\alpha^3\beta$, this determine $T_9, T_{10}$. By symmetry, we also determine $T_{11}, T_{12}$. Then $\alpha_2\gamma_{10}\cdots, \alpha_2\gamma_{12}\cdots=\alpha\gamma^2$ result in adjacent $\gamma$'s in $T_{13}$, a contradiction. Therefore there is no tiling for $m=5$.

\begin{figure}[h!] 
\centering
\begin{tikzpicture}

\tikzmath{
\s=6;
\r=0.7;
\ph=360/5;
}

\begin{scope}

\foreach \p in {0,...,4} {

\tikzset{rotate=\p*\ph}

\draw[]
	(270:\r) -- (270+\ph:\r)
;

\node at (270:0.65*\r) {\small $\alpha$};
	
}

\foreach \aa in {-1,1} {

\tikzset{xscale=\aa}

\draw[] 
	(0,-\r) -- (0.75*\r, -1.75*\r)
	(0.75*\r, -1.75*\r) -- (1.75*\r, -1.75*\r)
	(1.75*\r, -1.75*\r) -- (2*\r, -0.75*\r)
	(270+\ph:\r) -- (2*\r, -0.75*\r)
	(2*\r, -0.75*\r) -- (3*\r, -0.5*\r)
	(3*\r, -0.5*\r) -- (3.75*\r, -1.25*\r)
	(3.75*\r, -1.25*\r) -- (3*\r, -2*\r)
	(1.75*\r, -1.75*\r) -- (3*\r, -2*\r)
	(270+\ph:\r) -- (1.75*\r, 0.75*\r)
	(1.75*\r, 0.75*\r) -- (2.5*\r, 0.75*\r)
	(2.5*\r, 0.75*\r) -- (2.75*\r, 0*\r)
	(2*\r, -0.75*\r) -- (2.75*\r, 0*\r)
;

\draw[line width=1.5]
	(1.75*\r, 0.75*\r) -- (1.25*\r, 1.75*\r) 
	(270+2*\ph:\r) -- (1.25*\r, 1.75*\r) 
	(0.75*\r, -1.75*\r) -- (0, -3*\r)
	(0, -3*\r) -- (1.25*\r, -3*\r)
;

\draw[]
	(1.25*\r, -3*\r) -- (1.75*\r, -1.75*\r)
;

\node at (0.4*\r,-1*\r) {\small $\alpha$};
\node at (1*\r,-0.6*\r) {\small $\alpha$};
\node at (0.85*\r,-1.5*\r) {\small $\alpha$};
\node at (1.7*\r,-0.9*\r) {\small $\alpha$};
\node at (1.55*\r,-1.5*\r) {\small $\alpha$};

\node at (1.45*\r,-2*\r) {\small $\beta$};
\node at (0.95*\r,-2*\r) {\small $\gamma$};
\node at (1.15*\r,-2.8*\r) {\small $\gamma$};
\node at (0.45*\r,-2.75*\r) {\small $\delta$};

\node at (1.9*\r,-2*\r) {\small $\alpha$};

\node at (2.2*\r,-0.95*\r) {\small $\alpha$};
\node at (2.85*\r,-0.75*\r) {\small $\alpha$};
\node at (3.4*\r,-1.25*\r) {\small $\alpha$};
\node at (2.9*\r,-1.75*\r) {\small $\alpha$};
\node at (2.1*\r,-1.55*\r) {\small $\alpha$};

\node at (1.95*\r,-0.45*\r) {\small $\alpha$};
\node at (1.35*\r,-0.2*\r) {\small $\alpha$};
\node at (1.85*\r,0.5*\r) {\small $\alpha$};
\node at (2.35*\r,0.5*\r) {\small $\alpha$};
\node at (2.45*\r,0.1*\r) {\small $\alpha$};

\node at (1*\r,0.1*\r) {\small $\beta$};
\node at (0.8*\r,0.75*\r) {\small $\gamma$};
\node at (1.5*\r,0.75*\r) {\small $\gamma$};
\node at (1.2*\r,1.25*\r) {\small $\delta$};

\node at (0.5*\r,1.1*\r) {\small $\gamma$};

}

\node at (0,-1.4*\r) {\small $\beta$};
\node at (-0.4*\r,-1.8*\r) {\small $\gamma$};
\node at (0.4*\r,-1.8*\r) {\small $\gamma$};
\node at (0,-2.5*\r) {\small $\delta$};

\node[inner sep=1,draw,shape=circle] at (-1.1*\r,-1.1*\r) {\footnotesize $1$};
\node[inner sep=1,draw,shape=circle] at (0,0) {\footnotesize $2$};
\node[inner sep=1,draw,shape=circle] at (1.1*\r,-1.1*\r) {\footnotesize $3$};
\node[inner sep=1,draw,shape=circle] at (0,-2*\r) {\footnotesize $4$};

\node[inner sep=1,draw,shape=circle] at (-1*\r,-2.4*\r) {\footnotesize $5$};
\node[inner sep=1,draw,shape=circle] at (1*\r,-2.4*\r) {\footnotesize $6$};

\node[inner sep=1,draw,shape=circle] at (-2.75*\r,-1.25*\r) {\footnotesize $7$};
\node[inner sep=1,draw,shape=circle] at (2.75*\r,-1.25*\r) {\footnotesize $8$};

\node[inner sep=1,draw,shape=circle] at (-1.9*\r,0) {\footnotesize $9$};
\node[inner sep=0.25,draw,shape=circle] at (1.9*\r,0) {\footnotesize $11$};

\node[inner sep=0.25,draw,shape=circle] at (-1.125*\r,0.6*\r) {\footnotesize $10$};
\node[inner sep=0.25,draw,shape=circle] at (1.125*\r,0.6*\r) {\footnotesize $12$};
\node[inner sep=0.25,draw,shape=circle] at (0,1.5*\r) {\footnotesize $13$};

\end{scope}

\end{tikzpicture}
\caption{The deduction of $\alpha^3\beta$}
\label{Fig-bede-alga2-al3be-m=5-AAD}
\end{figure}

For $m=4$, three consecutive $\delta \, \bvert \, \delta \, \bvert \, \delta$ at a $\delta^d$ vertex determine the three incident tiles $T_1, T_2, T_3$ in Figure \ref{Fig-a4-a2b2-Tiling-alga2-al3be-ded}. Then $\gamma_1\gamma_2\cdots, \gamma_2\gamma_3=\alpha\gamma^2$ determine $T_4, T_5$. Next, $\alpha_4\beta_2\alpha_5\cdots, \alpha_5\beta_3\cdots=\alpha^3\beta$ determine $T_6, T_7$. Hence $\alpha_5\alpha_6\alpha_7\cdots=\alpha^3\beta$ determines $T_8$. The same argument repeats and determines the family of \ref{Label:EMT-Antiprisms} tilings. A time zone formed ($T_1,T_4,T_6, T_8$) is shaded in Figure \ref{Fig-a4-a2b2-Tiling-alga2-al3be-ded}. The family of \ref{Label:EMT-Antiprisms} tilings are generated by gluing copies of the time zone. 


\begin{figure}[h!] 
\centering
\begin{tikzpicture}
\tikzmath{
\s=1;
\r=0.6;
\th=360/4;
\x=\r*cos(0.5*\th);
\R = sqrt(\x^2+(3*\x)^2);
\aR = acos(3*\x/\R);
\tz=3;
\tzz=\tz-1;
}

\fill[gray!25]
	(-1.5*\r,2.5*\r) --  (1.5*\th:3*\x)  -- (3.5*\th:3*\x) -- (1.5*\r, -2.5*\r) -- (1.5*\r, -2.5*\r) 
	-- (3.5*\r, -2.5*\r) -- (3.5*\r, -1.5*\r) -- (0.5*\r, 1.5*\r) -- (0.5*\r, 1.5*\r) -- (0.5*\r, 2.5*\r) -- cycle
;

\foreach \a in {0,...,\tz} {

\tikzset{shift={(2*\a*\r,0)}}

\draw[]
	(1.5*\th:\x) -- (1.5*\th:3*\x)
	(1.5*\th:3*\x) -- (-1.5*\r, 2.5*\r)
	(1.5*\th:\x) -- (3.5*\th:\x)
	(3.5*\th:\x) -- (3.5*\th:3*\x)
	(3.5*\th:3*\x) -- (1.5*\r, -2.5*\r)
;

}

\foreach \a in {0,...,\tzz} {

\tikzset{shift={(2*\a*\r,0)}}

\draw[]
	(-0.5*\r, 0.5*\r) -- (0.5*\r, 1.5*\r)
	(0.5*\r, -0.5*\r) -- (1.5*\r, 0.5*\r)
	(1.5*\r, -1.5*\r) -- (2.5*\r, -0.5*\r)
;

}

\foreach \a in {0,...,\tz} {

\tikzset{shift={(2*\a*\r,0)}}

\draw[line width=1.5]
	(1.5*\th:3*\x) -- (-1.5*\r, 2.5*\r)
	(3.5*\th:3*\x) -- (1.5*\r, -2.5*\r)
;

}

\foreach \a in {0,...,\tzz} {

\tikzset{shift={(2*\a*\r,0)}}

\node at (-0.5*\r,2.25*\r) {\small $\delta$};
\node at (-0.5*\r,0.85*\r) {\small $\beta$};
\node at (-1.25*\r,1.65*\r) {\small $\gamma$};
\node at (0.25*\r,1.65*\r) {\small $\gamma$};

\node at (2.5*\r, -2.25*\r) {\small $\delta$};
\node at (2.5*\r,-0.9*\r) {\small $\beta$};
\node at (1.75*\r,-1.65*\r) {\small $\gamma$};
\node at (3.25*\r,-1.65*\r) {\small $\gamma$};

}

\foreach \a in {0,...,\tzz} {

\tikzset{shift={(2*\a*\r,0)}}

\foreach \c in {0,...,3} {

\node at ([shift={(0.5*\r,0.5*\r)}]\th+\c*\th:0.65*\r) {\small $\alpha$};
\node at ([shift={(1.5*\r,-0.5*\r)}]\th+\c*\th:0.65*\r) {\small $\alpha$};

}
}

\node[inner sep=1,draw,shape=circle] at (-0.5*\r,1.6*\r) {\footnotesize $1$};
\node[inner sep=1,draw,shape=circle] at (1.5*\r,1.6*\r) {\footnotesize $2$};
\node[inner sep=1,draw,shape=circle] at (3.5*\r,1.6*\r) {\footnotesize $3$};

\node[inner sep=1,draw,shape=circle] at (0.5*\r,0.5*\r) {\footnotesize $4$};
\node[inner sep=1,draw,shape=circle] at (2.5*\r,0.5*\r) {\footnotesize $5$};
\node[inner sep=1,draw,shape=circle] at (1.5*\r,-0.5*\r) {\footnotesize $6$};
\node[inner sep=1,draw,shape=circle] at (3.5*\r,-0.5*\r) {\footnotesize $7$};
\node[inner sep=1,draw,shape=circle] at (2.5*\r,-1.6*\r) {\footnotesize $8$};

\node at (2*\tz*\r+2*\r, 0.5*\r) {\Large $\cdots$};

\end{tikzpicture}
\caption{The \ref{Label:EMT-Antiprisms} tiling with $\AVC \equiv \{ \alpha\gamma^2, \alpha^3\beta, \delta^d \}$, a (shaded) time zone consists of two equatorial squares and two polar kites}
\label{Fig-a4-a2b2-Tiling-alga2-al3be-ded}
\end{figure}

It remains to show the existence of a tiling for each $d\ge4$. By $\AVC$ \eqref{Eq-AVC-alga2-al3be-ded}, we have
\begin{align} \label{Eq-alga2-al3be-ded}
\beta = 2\pi - 3\alpha, \quad
\gamma=\pi-\tfrac{1}{2}\alpha, \quad
\delta = \tfrac{2}{d}\pi. 
\end{align}
Substituting the above into \eqref{Eq-angle-id} deduces
\begin{align*}
\sin \tfrac{3}{2}\alpha  \sin\tfrac{1}{2}\alpha   = \sin^2 \tfrac{1}{2}\alpha ( \cos \tfrac{1}{d}\pi + \cos \alpha).
\end{align*}
As $\sin \tfrac{1}{2}\alpha \neq 0$ for $\alpha \in (\tfrac{1}{2}\pi,\tfrac{2}{3}\pi)$, trigonometric identities deduce
\begin{align*}
\cos \tfrac{1}{d}\pi = \frac{ \sin \tfrac{3}{2}\alpha }{ \sin \tfrac{1}{2}\alpha  } - \cos \alpha = \sin\alpha\cot\tfrac{1}{2}\alpha = 2\cos^2\tfrac{1}{2}\alpha = 1 + \cos \alpha.
\end{align*}
Hence 
for every positive integer $d\ge4$, there exists a solution $\alpha\in(\tfrac{1}{2}\pi, \tfrac{2}{3}\pi)$ 
given by
\begin{align}\label{Eq-alga2-al3be-alarccos}
\alpha=\cos^{-1}(\cos \tfrac{1}{d}\pi -1),
\end{align}
and so
the tiling exists: the kite has angles with values from \eqref{Eq-alga2-al3be-ded} and the square has angles with values given in \eqref{Eq-alga2-al3be-alarccos}. Furthermore, \eqref{Eq-a4-cosx} and \eqref{Eq-x2y2-cosy} determine edge lengths $x,y$,
\begin{align*}
&x = \cos^{-1} \frac{\cos \frac{1}{d}\pi }{ 2 - \cos \frac{1}{d}\pi },&
&y 
=\cos^{-1}(3\sqrt{\cos x}\tan\tfrac{1}{2d}\pi).&
\end{align*}
\end{subsubcase*}

\begin{subsubcase*}[$\gamma\cdots=\alpha\gamma^2$ and $\beta\cdots=\alpha^3\beta^3$] The assumption reduces \eqref{Eq-bede-alga2-vertex-al} to $\alpha\cdots=\alpha\gamma^2, \alpha^3\beta^3$. The latter implies $\beta=\frac{2}{3}\pi- \alpha$. With $\beta+\delta>\alpha$ from the case and $\alpha>(1-\tfrac{2}{m})\pi$, it further implies $\delta > 2\alpha - \frac{2}{3}\pi > 4(\frac{1}{3}-\frac{1}{m})\pi$. By $\frac{2}{3}\pi- \alpha=\beta>0$ and $\alpha>(1-\tfrac{2}{m})\pi$, we deduce $m=4,5$.


For $m=5$, the inequality $\delta > 4(\frac{1}{3}-\frac{1}{m})$ deduces $\delta>\frac{8}{15}\pi$, and hence \eqref{Eq-bede-alga2-vertex-de} becomes $\delta^d=\delta^3$. However, $\alpha\gamma^2, \alpha^3\beta^3, \delta^3$ has no solution in \eqref{Eq-angle-id} for $\alpha \in (\tfrac{3}{5}\pi,\tfrac{2}{3}\pi)$, and yield no tiling. 

For $m=4$, a similar argument reduces \eqref{Eq-bede-alga2-vertex-de} to $\delta^d=\delta^3, \delta^4, \delta^5$. The angle sums of $\alpha\gamma^2, \alpha^3\beta^3, \delta^3$ yield no solution in \eqref{Eq-angle-id} for $\alpha \in (\frac{1}{2}\pi, \frac{2}{3}\pi)$. Meanwhile, one of $\delta^4, \delta^5$ and $\alpha\gamma^2, \alpha^3\beta^3$ in \eqref{Eq-angle-id}, \eqref{Eq-a4-cosx} and \eqref{Eq-x2y2-cosy} determine
\begin{align}
&\delta^4:& 
&\alpha=\cos^{-1}(-\tfrac{1}{5}),&
&\beta=\tfrac{2}{3}\pi-\alpha, \ 
\gamma=\pi - \tfrac{1}{2}\alpha, \ 
\delta=\tfrac{1}{2}\pi,& \\ \notag
&&
&x=\cos^{-1}\tfrac{2}{3},&
&y=\tan^{-1}(3-2\sqrt2);& \\
&\delta^5:& 
&\alpha=2\tan^{-1}\tfrac{1}{2}(\sqrt{15}-\sqrt3),&
&\beta=\tfrac{2}{3}\pi-\alpha,
\gamma=\pi - \tfrac{1}{2}\alpha, \ \
\delta=\tfrac{2}{5}\pi,& \\ \notag
&&
&x= \cos^{-1} \tfrac{1}{6} (3 + \sqrt{5}),&  
&y=\tan^{-1}\tfrac{1}{4}(3-\sqrt 5).&
\end{align}
We remark that the solution with $y>x$ is rejected because of the spherical sine law on Figure \ref{Fig-a4-a2b2-kite}. With $\gamma\cdots=\alpha\gamma^2$ and $\delta\cdots=\delta^4, \delta^5$, the respective angle values determine
\begin{align}
\label{Eq-AVC-alga2-al3be3-de4}
\AVC &= \{ \alpha\gamma^2, \alpha^3\beta^3, \delta^4 \}; \\
\label{Eq-AVC-alga2-al3be3-de5}
\AVC &= \{ \alpha\gamma^2, \alpha^3\beta^3, \delta^5 \}.
\end{align}
In the absence of $\beta\vert\beta\cdots$, every vertex in the above $\AVC$s has a unique angle arrangement. Therefore, it is straightforward to construct the two tilings---\ref{Label:multigr-Cube-Dodeca} in Figure \ref{Fig-Platonic-multigraph-Cube-Dodeca}---directly from the $\AVC$s or diminishing the kite subdivision (dashed lines) in Figure \ref{Fig-a4-a2b2-Tilings-alga2-de4/de5-al3be3}.

\begin{figure}[h!] 
\centering
\begin{tikzpicture}[>=latex]

\tikzmath{
\xs=2.5;
}

\begin{scope}[]
\tikzmath{
\r=0.4;
\th=360/4;
\x=\r*cos(0.5*\th);
}

\foreach \a in {0,...,3} {
\tikzset{rotate=\a*\th}

\draw[red!75, densely dashed]
	(0,0) -- (0,\x)
	(0,1.6*\x) -- (0,\x)
	(0,1.6*\x) -- (0,2.5*\x)
	(0,1.6*\x) -- (0.5*\th: 1.6*\r)
	(0,1.6*\x) -- (1.5*\th: 1.6*\r)
;
\draw[->, red!75, densely dashed]
	(90:2.25*\r) -- (90:3.1*\r)
;
}

\foreach \a in {0,...,3} {
\tikzset{rotate=\a*\th}

\fill[white]
	(0.5*\th:\r) to[out=150, in=30] (1.5*\th:\r) to[out=330, in=210] (0.5*\th:\r) 
	(0.5*\th:\r) to[out=75, in=195] (0.5*\th:2.25*\r) to[out=255, in=15] (0.5*\th:\r) 
	(0.5*\th:2.25*\r) to[out=185, in=355] (1.5*\th:2.25*\r) to[out=45, in=135] (0.5*\th:2.25*\r)
;
}

\foreach \a in {0,...,3} {
\tikzset{rotate=\a*\th}

\draw[]
	(0.5*\th:\r) to[out=150, in=30] (1.5*\th:\r)
	(0.5*\th:\r) to[out=210, in=-30] (1.5*\th:\r)
	(0.5*\th:\r) to[out=75, in=195] (0.5*\th:2.25*\r)
	(0.5*\th:\r) to[out=15, in=255] (0.5*\th:2.25*\r)
	(0.5*\th:2.25*\r) to[out=185, in=355] (1.5*\th:2.25*\r)
;
}

\draw[] (0,0) circle (2.25*\r);

\end{scope}

\node at (0,-0.5*\xs) {};
\end{tikzpicture} \hspace{1cm}
\begin{tikzpicture}[>=latex]

\tikzmath{
\xs=2.5;
}

\raisebox{0.75ex}{

\begin{scope}[]

\tikzmath{
\r=0.3;
\ph=360/5;
\X=\r*cos(0.5*\ph);
}

\foreach \a in {0,...,4} {
\tikzset{rotate=\a*\ph}

\draw[red!75, densely dashed]
	(0,0) -- (0,\X)
	(0,1.8*\X) -- (0,\X)
	(0,1.8*\X) -- (90-0.5*\ph:1.5*\r)
	(0,1.8*\X) -- (90+0.5*\ph:1.5*\r)
	(0,1.8*\X) -- (90-0.25*\ph:2.2*\r)
	(0,1.8*\X) -- (90+0.25*\ph:2.2*\r)
	(270:2.5*\r) -- (270+0.25*\ph:2.2*\r) 
	(270:2.5*\r) -- (270-0.25*\ph:2.2*\r) 
	(270:2.5*\r) -- (270+0.5*\ph:3.1*\r) 
	(270:2.5*\r) -- (270-0.5*\ph:3.1*\r) 
	(270:2.5*\r) -- (270:3*\r)
;

\draw[red!75, densely dashed, ->]
	(90-0.5*\ph:3.5*\r) -- (90-0.5*\ph:4.5*\r)
;
}

\foreach \a in {0,...,4} {
\tikzset{rotate=\a*\ph}

\fill[white]
	(90-0.5*\ph:\r) to[out=150, in=30] (90+0.5*\ph:\r) to[out=-30, in=210] (90-0.5*\ph:\r)
	(270:\r) to[out=300, in=60] (270:2.25*\r) to[out=120, in=240] (270:\r) 
	(90:2.25*\r) to[out=60, in=300] (90:3.5*\r) to[out=240, in=120] (90:2.25*\r) 
	(90:3.5*\r) to[out=-40, in=180-\ph+40]  (90-\ph:3.5*\r) arc (90-\ph:90:3.5*\r)
;

\foreach \aa in {-1,1} {
\tikzset{xscale=\aa}

\fill[white]
	(270+2*\ph:2.25*\r) to[out=185, in=320] (90:2.25*\r) to[out=20, in=130] (270+2*\ph:2.25*\r) 
;

}
}

\foreach \a in {0,...,4} {
\tikzset{rotate=\a*\ph}

\draw[]
	(90-0.5*\ph:\r) to[out=150, in=30] (90+0.5*\ph:\r)
	(90-0.5*\ph:\r) to[out=210, in=-30] (90+0.5*\ph:\r)
	(270:\r) to[out=300, in=60] (270:2.25*\r)	
	(270:\r) to[out=240, in=120] (270:2.25*\r)
	(90:2.25*\r) to[out=60, in=300] (90:3.5*\r)
	(90:2.25*\r) to[out=120, in=240] (90:3.5*\r)
	(90:3.5*\r) to[out=-40, in=180-\ph+40]  (90-\ph:3.5*\r)
;

\foreach \aa in {-1,1} {
\tikzset{xscale=\aa}

\draw[]
	(270+2*\ph:2.25*\r) to[out=185, in=320] (90:2.25*\r)
	(270+2*\ph:2.25*\r) to[out=130, in=20] (90:2.25*\r)
;

}
}

\draw[] (0,0) circle (3.5*\r);

\end{scope}

}

\end{tikzpicture}
\caption{The \ref{Label:multigr-Cube-Dodeca} tilings with $\AVC \equiv \{ \alpha\gamma^2, \alpha^3\beta^3, \delta^{d=4,5} \}$ determined in two steps -- step 1. construction by the underlying prototiles, a regular $d$-gon with angles $\beta^d$ and a schematic $2$-gon with angles $\alpha^2$, and $\AVC=\{ \vert \alpha\vert\beta \vert \alpha \vert \beta \vert \alpha \vert \beta \vert \}$; step 2. kite subdivision (indicated by the dashed lines) on the regular $d$-gons}
\label{Fig-a4-a2b2-Tilings-alga2-de4/de5-al3be3}
\end{figure}


\end{subsubcase*}
\end{subcase*}
\end{case*}

\begin{case*}[$\alpha^2\gamma^2$] The kite angle sum and the vertex angle sum of $\alpha^2\gamma^2$ and $\alpha>\frac{1}{2}\pi$ imply $\beta+\delta>2\alpha>\pi$. The vertex angle sum of $\alpha^2\gamma^2$ and $\alpha>\frac{1}{2}\pi$ also implies $\alpha + \gamma =\pi$ and $\alpha>\frac{1}{2}\pi > \gamma$. Recall $2\gamma>\frac{2}{3}\pi$ from the beginning of the proof, the inequalities deduce $\frac{2}{3}\pi>\alpha>\frac{1}{2}\pi > \gamma > \frac{1}{3}\pi$. In particular, $\frac{2}{3}\pi > \alpha > (1-\tfrac{2}{m})\pi$ deduce $m=4,5$.

Assume $\beta>\delta$. Then $\frac{2}{3}\pi\ge\beta$ and $\beta+\delta>2\alpha>\pi$ imply $\frac{2}{3}\pi\ge\beta>\alpha>\frac{1}{2}\pi$. If $\alpha^a\beta^b$ is a vertex, then $\frac{2}{3}\pi\ge\beta>\alpha$ imply that $\alpha^a\beta^b$ has degree $\ge4$, while $\beta>\alpha>\frac{1}{2}\pi$ imply that $\alpha^a\beta^b$ has degree $<4$, a contradiction. Hence $\alpha^a\beta^b$ is not a vertex. Meanwhile, \eqref{Eq-be,de-vertex-alga} implies $\alpha\beta\gamma\cdots=\alpha\beta^b\gamma^2$. Then $\beta>\alpha>\frac{1}{2}\pi$ and $2\gamma>\frac{2}{3}\pi$ determine $\alpha\beta^b\gamma^2=\alpha\beta\gamma^2$. However, $\alpha^2\gamma^2$ and $\alpha\beta\gamma^2$ imply $\alpha=\beta$, a contradiction. Hence $\alpha\beta\gamma\cdots$ is also not a vertex. Therefore, in reference to \eqref{Eq-vertex-albe}, we know that $\alpha\beta\cdots$ is not a vertex, contradicting Lemma \ref{Lem-albe-alga}.

Now we know $ \delta > \beta$. Then $\beta+\delta>2\alpha>\pi$ imply $\delta>\alpha>\frac{1}{2}\pi$. Now we have $2\gamma > \frac{2}{3}\pi \ge \delta>\alpha>\frac{1}{2}\pi$. By $ \delta>\frac{1}{2}\pi$, we get $\delta^d=\delta^3$. Meanwhile, $\pi>2\gamma> \frac{2}{3}\pi \ge \delta>\alpha>\frac{1}{2}\pi$ reduce  \eqref{Eq-be,de-vertex-gade} to $\gamma\delta\cdots = \gamma^2\delta^2, \alpha\gamma^2\delta$, and \eqref{Eq-vertex-de} becomes $\delta\cdots=\delta^3, \gamma^2\delta^2, \alpha\gamma^2\delta$. However, $\alpha^2\gamma^2$ and one of $\gamma^2\delta^2, \alpha\gamma^2\delta$ imply $\delta=\alpha$, a contradiction. Therefore $\delta\cdots=\delta^3$, which is now a vertex.

The vertex $\delta^3$ and $ \delta > \beta$ imply $\delta=\frac{2}{3}\pi > \beta$. Given $\delta\cdots=\delta^3$, it reduces \eqref{Eq-be,de-vertex-gaxga} to $\gamma \vert \gamma \cdots = \gamma^4, \alpha\gamma^4$. However, $\frac{1}{2}\pi>\gamma$ rules out $\gamma^4$, while $\alpha^2\gamma^2, \delta^3, \alpha\gamma^4$ imply $\alpha=\delta=\frac{2}{3}\pi$, contradicting $\delta>\alpha$. Hence $\gamma \vert \gamma \cdots$ is not a vertex and Lemma \ref{Lem-bebe-gaga-dede} implies that $\beta\vert\beta\cdots$ is also not a vertex.

From \eqref{Eq-vertex-albe} and \eqref{Eq-be,de-vertex-alga}, we deduce $\alpha\beta\cdots=\alpha^a\beta^b, \alpha\beta^b\gamma^2$. The kite angle sum and $\delta=\frac{2}{3}\pi$ and $\frac{1}{2}\pi>\gamma$ imply $\beta>\frac{1}{3}\pi$. Then $\beta>\frac{1}{3}\pi$ and $2\beta, 2\gamma>\frac{2}{3}\pi > \alpha>\frac{1}{2}\pi$ and $\alpha^2\gamma^2$ imply $\alpha\beta^b\gamma^2=\alpha\beta\gamma^2$. Meanwhile, $\frac{2}{3}\pi>\alpha>\frac{1}{2}\pi$ and $\frac{2}{3}\pi>\beta>\frac{1}{3}\pi$ and the absence of $\beta\vert\beta\cdots$ imply $\alpha^a\beta^b=\alpha^2\beta^2, \alpha^3\beta$. Hence $\alpha\beta\cdots=\alpha^2\beta^2, \alpha^3\beta, \alpha\beta\gamma^2$. Lemma \ref{Lem-albe-alga} implies that one of them is a vertex.

For $m=5$, the vertex angle sums of $\alpha^2\gamma^2, \delta^3$ and with one of $\alpha^2\beta^2, \alpha^3\beta$ yield no solution in \eqref{Eq-angle-id} for $\alpha\in(\frac{3}{5}\pi,\frac{2}{3}\pi)$, whereas with $\alpha\beta\gamma^2$ we get
\begin{align*}
&\alpha = (0.61901...)\pi,& 
&\beta = (0.61901...)\pi,&
&\gamma = (0.38098...)\pi,&
&\delta= \tfrac{2}{3}\pi.&
\end{align*}
The angle values and the absence of $\beta\vert\beta\cdots$ determine the vertices below
\begin{align*}
\AVC = \{ \delta^3, \alpha^2\gamma^2, \alpha\beta\gamma^2 \},
\end{align*}
which contradicts Counting Lemma on $\beta, \gamma$. Hence there is no tiling. 

For $m=4$, the vertex angle sums of $\alpha^2\gamma^2, \delta^3$ and one of $\alpha^2\beta^2, \alpha^3\beta, \alpha\beta\gamma^2$ and \eqref{Eq-angle-id} imply the following solutions for $\alpha\in(\frac{1}{2}\pi,\frac{2}{3}\pi)$:
\begin{align*}
&\alpha^2\beta^2:&
&\alpha=(0.52789...)\pi,&
&\beta =(0.47210...)\pi,&
&\gamma=(0.47210...)\pi,&
&\delta=\tfrac{2}{3}\pi;& \\
&\alpha^3\beta:&
&\alpha=(0.52133...)\pi,&
&\beta=(0.43598...)\pi,&
&\gamma=(0.47866...)\pi,&
&\delta=\tfrac{2}{3}\pi;& \\
&\alpha\beta\gamma^2:&
&\alpha=(0.53911...)\pi,&
&\beta=(0.53911...)\pi,&
&\gamma=(0.46088...)\pi,&
&\delta=\tfrac{2}{3}\pi.&
\end{align*}
The respective angle values with $\alpha^2\beta^2, \alpha\beta\gamma^2$ and without $\beta\vert\beta\cdots$ determine,
\begin{align*}
\AVC &= \{ \delta^3, \alpha^2\gamma^2, \alpha^2\beta^2 \}; \\
\AVC &= \{ \delta^3, \alpha^2\gamma^2, \alpha\beta\gamma^2\}.
\end{align*}
The second $\AVC$ (the same as the one with $m=5$) contradicts Counting Lemma on $\beta, \gamma$. In the first $\AVC$, we have $\alpha\beta\cdots=\alpha^2\beta^2$, and without $\beta\vert\beta\cdots$ the vertex has a unique angle arrangement $\vert \alpha \vert \beta \vert \alpha \vert \beta \vert$. Meanwhile, $\alpha^2\gamma^2 = \vert \alpha \vert \alpha \vert \gamma \, \bvert \, \gamma \vert$. Hence the same deduction of $\alpha^2\gamma^2$ in Figure \ref{Fig-be-AAD-albe2-al2ga2} leads to two $\beta$'s in the same tile, a contradiction. 

The corresponding angle values with $\alpha^3\beta$ and without $\beta\vert\beta\cdots$ determine 
\begin{align}\label{Eq-AVC-de3-al3be-al2ga2}
\AVC = \{ \delta^3, \alpha^3\beta, \alpha^2\gamma^2 \}.
\end{align}
Denoting $\beta$ by $\bullet$ and $\gamma^2$ by $\_$, the above $\AVC$ becomes $\{ \alpha^2\_, \alpha^3\bullet \}$. The same argument on Figures \ref{Fig-be-hex-de3} and \ref{Fig-be-Tilings-sq-hex-al2be-al3ga2} determines the same two tilings by squares and hexagons with $\AVC\equiv\{ \alpha^2\_, \alpha^3\bullet \}$. The two \ref{Label:quad-subdiv-thick-Octa} tilings (Figure \ref{Fig-Platonic-thicken-Octa}) with $\AVC$ \eqref{Eq-AVC-de3-al3be-al2ga2} are obtained by kite subdivision on the hexagons, from their centres to each $\_$, see in Figure \ref{Fig-be,de-Tilings-sq-hex-de3-al3be-al2ga2}. The subdivisions in Figure \ref{Fig-be,de-Tilings-sq-hex-de3-al3be-al2ga2} and those in Figure \ref{Fig-be-Tilings-sq-hex-al2be-al3ga2} are related via flips.

\begin{figure}[h!]
\centering
\begin{tikzpicture}

\tikzmath{
\s=1;
\ys=7;
\r=0.24;
\rr=0.25*\r;
\ps=360/3;
\X=\r*cos(0.5*\ps);
\ph=360/6;
\l=\r*cos(0.5*\ph);
\th=360/4;
\x=\r*cos(\th/2);
}

\begin{scope}[]

\fill[gray!50, scale=1.1] (0,0) circle (7*\r);

\fill[white] (0,0) circle (7*\r);

\fill[gray!50]
	(\x, \x) -- (-\x, \x) -- (-\x, -\x) -- (\x, -\x) -- cycle
;

\foreach \a in {0,1,2,3} {

\tikzset{rotate=\a*\th}

\fill[gray!50]
	(\x, \x) -- (\x, 3*\x) -- (2*\x, 4*\x) -- (2*\x, 7*\x) to[out=20,in=160] (7*\x, 7*\x) arc (45:135:7*\r) 
	to[out=20,in=160] (-2*\x, 7*\x) -- (-2*\x, 4*\x) -- (-\x, 3*\x) -- (-\x, \x) -- cycle
;

\fill[gray!50]
	(2*\x, 4*\x) -- (2*\x, 5.5*\x) -- (5.5*\x, 5.5*\x) 
	-- (5.5*\x, 2*\x) -- (4*\x, 2*\x) -- (4*\x, 4*\x) -- cycle
;

\draw[red, densely dashed]
	(2.5*\x, 2.5*\x) -- (\x, 3*\x)
	(2.5*\x, 2.5*\x) -- (3*\x, \x)
	(2.5*\x, 2.5*\x) -- (4*\x, 4*\x)
	(6.25*\x, 6.25*\x) -- (5.5*\x, 5.5*\x)
	(6.25*\x, 6.25*\x) to[] (2*\x, 7*\x)
	(6.25*\x, 6.25*\x) to[] (7*\x, 2*\x)
;

}


\foreach \aa in {0,1,2,3} {

\tikzset{shift={(\aa*\th:2*\x)}}

\foreach \a in {0,1,2,3} {

\tikzset{rotate=\a*\th}

\draw[]
	(0.5*\th:\r) -- (1.5*\th:\r)
;

}
}

\foreach \a in {0,1,2,3} {

\tikzset{rotate=\a*\th}

\draw[]
	(\x,3*\x) -- (2*\x, 4*\x)
	(3*\x,\x) -- (4*\x, 2*\x)
	(2*\x, 4*\x) -- (4*\x, 4*\x)
	(4*\x,2*\x) -- (4*\x, 4*\x)
	(2*\x, 4*\x) -- (-2*\x, 4*\x)
	(2*\x, 4*\x) -- (2*\x, 5.5*\x)
	(-2*\x, 4*\x) -- (-2*\x, 5.5*\x)
	(-2*\x, 5.5*\x) -- (2*\x, 5.5*\x)
	(4*\x, 4*\x) -- (5.5*\x, 5.5*\x)
	(2*\x, 5.5*\x) -- (5.5*\x, 5.5*\x)
	(-2*\x, 5.5*\x) -- (-5.5*\x, 5.5*\x)
	(2*\x, 5.5*\x) -- (2*\x, 7*\x)
	(2*\x, 7*\x) to[out=20,in=160] (7*\x, 7*\x)
	(5.5*\x, 2*\x) -- (7*\x,2*\x)
	(7*\x,2*\x) to[out=70,in=290] (7*\x, 7*\x)
	(-2*\x, 7*\x) -- (2*\x, 7*\x)
;

\fill (\x,\x) circle (\rr); 
\fill (2*\x, 4*\x) circle (\rr);
\fill (4*\x, 2*\x) circle (\rr);
\fill (2*\x, 5.5*\x) circle (\rr); 
\fill (-2*\x, 5.5*\x) circle (\rr); 
\fill (7*\x, 7*\x) circle (\rr); 

}

\draw (0,0) circle (7*\r);

\draw[fill=white] (-5.5*\x, -5.5*\x) circle (1.35*\rr);

\end{scope}

\begin{scope}[xshift=4.5*\s cm]

\fill[gray!50, scale=1.1] (0,0) circle (7*\r);

\fill[white] (0,0) circle (7*\r);

\fill[gray!50]
	(\x, \x) -- (-\x, \x) -- (-\x, -\x) -- (\x, -\x) -- cycle
;

\foreach \a in {0,1,2,3} {

\tikzset{rotate=\a*\th}

\fill[gray!50]
	(-2*\x, 5.5*\x) -- (-2*\x, 7.75*\x) to[out=20, in=160] (7*\x, 7*\x) arc (45:135:7*\r) -- (-5.5*\x, 5.5*\x) -- cycle
;

\fill[gray!50]
	(\x, \x) -- (\x, 3*\x) -- (2*\x, 4*\x) -- (2*\x, 5.5*\x) 
	-- (-2*\x, 5.5*\x) -- (-2*\x, 4*\x) -- (-\x, 3*\x) -- (-\x, \x) -- cycle
;

\fill[gray!50]
	(2*\x, 4*\x) -- (2*\x, 5.5*\x) -- (5.5*\x, 5.5*\x) 
	-- (5.5*\x, 2*\x) -- (4*\x, 2*\x) -- (4*\x, 4*\x)   -- cycle
;

\draw[red, densely dashed]
	(2.5*\x, 2.5*\x) -- (\x, 3*\x)
	(2.5*\x, 2.5*\x) -- (3*\x, \x)
	(2.5*\x, 2.5*\x) -- (4*\x, 4*\x)
	(2*\x, 7*\x) -- (6.25*\x, 6.25*\x)
	(2*\x, 7*\x) -- (-2*\x, 7.75*\x) 
	(2*\x, 7*\x) -- (2*\x, 5.5*\x)
;

}


\foreach \aa in {0,1,2,3} {

\tikzset{shift={(\aa*\th:2*\x)}}

\foreach \a in {0,1,2,3} {

\tikzset{rotate=\a*\th}

\draw[]
	(0.5*\th:\r) -- (1.5*\th:\r)
;

}
}

\foreach \a in {0,1,2,3} {

\tikzset{rotate=\a*\th}

\draw[]
	(\x,3*\x) -- (2*\x, 4*\x)
	(3*\x,\x) -- (4*\x, 2*\x)
	(2*\x, 4*\x) -- (4*\x, 4*\x)
	(4*\x,2*\x) -- (4*\x, 4*\x)
	(2*\x, 4*\x) -- (-2*\x, 4*\x)
	(2*\x, 4*\x) -- (2*\x, 5.5*\x)
	(-2*\x, 4*\x) -- (-2*\x, 5.5*\x)
	(-2*\x, 5.5*\x) -- (2*\x, 5.5*\x)
	(4*\x, 4*\x) -- (5.5*\x, 5.5*\x)
	(2*\x, 5.5*\x) -- (5.5*\x, 5.5*\x)
	(-2*\x, 5.5*\x) -- (-5.5*\x, 5.5*\x)
	(5.5*\x, 5.5*\x) -- (7*\x, 7*\x)
	(-2*\x, 5.5*\x) -- (-2*\x, 7.75*\x)
	(-2*\x, 7.75*\x) to[out=180, in=45] (-6.25*\x, 6.25*\x)
	(-2*\x, 7.75*\x) to[out=20, in=160] (7*\x, 7*\x)
;

\fill (\x,\x) circle (\rr); 
\fill (2*\x, 4*\x) circle (\rr);
\fill (4*\x, 2*\x) circle (\rr);

\fill (-2*\x, 5.5*\x) circle (\rr); 
\fill (5.5*\x, 5.5*\x) circle (\rr); 
\fill (7*\x,7*\x) circle (\rr); 

}

\draw (0,0) circle (7*\r);

\draw[fill=white] (-5.5*\x, -5.5*\x) circle (1.35*\rr);

\end{scope}

\end{tikzpicture}
\caption{The tilings by squares and hexagons with $\AVC = \{ \alpha^2\_, \alpha^3\bullet \}$, via kite subdivision of the hexagons, determine \ref{Label:quad-subdiv-thick-Octa} tilings with $\AVC = \{ \delta^3, \alpha^3\beta, \alpha^2\gamma^2 \}$}
\label{Fig-be,de-Tilings-sq-hex-de3-al3be-al2ga2}
\end{figure}

The angles and edges are determined as follows. The vertices in $\AVC$ \eqref{Eq-AVC-de3-al3be-al2ga2} imply 
\begin{align*}
&\beta = 2\pi - 3\alpha,&
&\gamma = \pi - \alpha,&
&\delta= \tfrac{2}{3}\pi.&
\end{align*}
Substituting them into \eqref{Eq-angle-id} determines
\begin{align}
&\alpha=\cos^{-1}\tfrac{1}{4}(\sqrt3-2),&
&\beta=2\cos^{-1}\tfrac{1}{8}(1+3\sqrt3),& \\ \notag
&\gamma=\cos^{-1}\tfrac{1}{4}(2-\sqrt3),&
&\delta = \tfrac{2}{3}\pi.&
\end{align}
Furthermore, the angle values and \eqref{Eq-a4-cosx} and \eqref{Eq-x2y2-cosy} imply
\begin{align}
&x=2\tan^{-1} \tfrac{1}{2\sqrt2}(\sqrt{3}-1),&
&y=\tan^{-1}(2\sqrt2-\sqrt6).&
\end{align}
The unique solution for $y \in (0,\pi)$ is due to $\beta<\delta$, which implies $y>x$.
\end{case*}

\begin{case*}[$\alpha\gamma^4$] The vertex angle sum of $\alpha\gamma^4$ and $\alpha>\frac{1}{2}\pi$ imply $2\gamma<\frac{3}{4}\pi$. Then the kite angle sum implies $\beta+\delta > \frac{5}{4}\pi$. Combined with $\frac{2}{3}\pi \ge \beta, \delta$, we further get $\beta, \delta > \frac{7}{12}\pi > \frac{1}{2}\pi$. Recall $2\gamma>\frac{2}{3}\pi$ from the beginning of the proof. Then $\alpha>\frac{1}{2}\pi$ and $\alpha\gamma^4$ imply $\frac{2}{3}\pi>\alpha$. Hence \eqref{Eq-mgon-sum} deduces $m=4,5$.

Given $\gamma\delta\cdots=\gamma^2\delta^{d\ge2}, \alpha\gamma^2\delta^{d}$ from \eqref{Eq-be,de-vertex-gade}, the vertex angle sum of $\alpha\gamma^4$ and $\alpha>\frac{1}{2}\pi$ and $2\gamma>\frac{2}{3}\pi \ge \delta >\frac{1}{2}\pi$ rule out $\alpha\gamma^2\delta^{d}$. Hence $\delta\cdots=\delta^3, \gamma^2\delta^2$.

The vertex $\alpha\gamma^4$ has a unique angle arrangement $\vert \alpha \vert \gamma \, \bvert \, \gamma \vert \gamma \, \bvert \, \gamma \vert$. Then Lemma \ref{Lem-bebe-gaga-dede} implies that $\beta\vert\beta \cdots$ is a vertex. Now \eqref{Eq-vertex-be} and \eqref{Eq-be,de-vertex-alga} imply $\beta\vert\beta \cdots=\beta^{b\ge3}, \alpha^a\beta^{b\ge2}, \beta^{b\ge2}\gamma^c, \alpha\beta^{b\ge2}\gamma^2$. Next, $\alpha,\beta>\frac{1}{2}\pi$ and $2\gamma>\frac{2}{3}\pi$ deduce $\beta^b=\beta^3$ and $\beta^{b\ge2}\gamma^c=\beta^2\gamma^2$, and dismiss $\alpha\beta^{b\ge2}\gamma^2$. The inequalities $\frac{2}{3}\pi>\alpha$ and $\frac{2}{3}\pi\ge\beta$ imply that $\alpha^a\beta^b$ has degree $\ge4$; while $\alpha,\beta>\frac{1}{2}\pi$ imply that $\alpha^a\beta^b$ has degree $<4$. Then $\alpha^a\beta^b$ is also not a vertex. Hence $\beta\vert\beta \cdots=\beta^3, \beta^2\gamma^2$.

If $\beta^3$ is a $\beta\vert\beta \cdots$ vertex, then $\beta=\frac{2}{3}\pi$, and $\beta\neq\delta$ rules out $\delta^3$, resulting in $\delta\cdots = \gamma^2\delta^2$. For $m=4,5$, the vertex angle sums of $\alpha\gamma^4, \beta^3, \gamma^2\delta^2$ and \eqref{Eq-angle-id} imply
\begin{align*}
&m=4:&
&\alpha=(0.51032...)\pi,&
&\beta = \tfrac{2}{3}\pi,&
&\gamma=(0.37241...)\pi,&
&\delta=(0.62758...)\pi;& \\
&m=5:&
&\alpha=(0.60585...)\pi,&
&\beta=\tfrac{2}{3}\pi,&
&\gamma=(0.34853...)\pi,&
&\delta=(0.65146...)\pi.& 
\end{align*}
The angle values determine the vertices below
\begin{align*}
\AVC = \{  \beta^3, \gamma^2\delta^2, \alpha\gamma^4  \},
\end{align*}
where $\alpha\beta\cdots$ is not a vertex, contradicting Lemma \ref{Lem-albe-alga}.

If $\beta^2\gamma^2$ is a $\beta\vert\beta \cdots$ vertex, then $\beta\neq\delta$ excludes $\gamma^2\delta^2$, resulting in $\delta\cdots =\delta^3$. For $m=4,5$, the vertex angle sums of $\alpha\gamma^4, \beta^2\gamma^2, \delta^3$ and \eqref{Eq-angle-id} imply
\begin{align*}
&m=4:&
&\alpha=(0.51114...)\pi,&
&\beta=(0.62778...)\pi,&
&\gamma=(0.37221...)\pi,&
&\delta=\tfrac{2}{3}\pi;& \\
&m=5:&
&\alpha=(0.60601...)\pi,&
&\beta=(0.65150...)\pi,&
&\gamma=(0.34849...)\pi,&
&\delta=\tfrac{2}{3}\pi.& 
\end{align*}
The angle values determine the vertices below
\begin{align*}
\AVC = \{ \delta^3, \beta^2\gamma^2, \alpha\gamma^4 \},
\end{align*}
where $\alpha\beta\cdots$ is again not a vertex, contradicting Lemma \ref{Lem-albe-alga}.

Therefore there is no tiling for this case.
\end{case*}

\begin{case*}[$\alpha\beta^b\gamma^2$] Based on the previous discussion, we may assume that $\alpha\gamma^2$, $\alpha^2\gamma^2, \alpha\gamma^4$ are not vertices. Then \eqref{Eq-be,de-vertex-alga} becomes $\alpha\gamma\cdots=\alpha\beta^b\gamma^2, \alpha\gamma^2\delta^d$.

The kite angle sum and the vertex angle sum of $\alpha\beta^b\gamma^2$ imply $\delta>\alpha+(b-1)\beta$. Given $b\ge 1$, it implies $\frac{2}{3}\pi \ge \delta>\beta$ and $\frac{2}{3}\pi \ge \delta > \alpha >\frac{1}{2}\pi$. Then $\frac{2}{3}\pi  > \alpha > (1-\tfrac{2}{m})\pi$ deduce $m=4,5$.


By \eqref{Eq-be,de-vertex-gade}, we know $\gamma\delta\cdots=\gamma^2\delta^d, \alpha\gamma^2\delta^d$. Then \eqref{Eq-vertex-de} becomes $\delta\cdots=\delta^d, \gamma^2\delta^{d\ge2}, \alpha\gamma^2\delta^d$. By $\delta > \alpha >\frac{1}{2}\pi$ and $2\gamma>\frac{2}{3}\pi$ and Lemma \ref{Lem-bega2-ga2de}, we further deduce $\delta^d=\delta^3$ and $\gamma^2\delta^d= \gamma^2\delta^2$ and $\alpha\gamma^2\delta^d=\alpha\gamma^2\delta$. Hence $\delta\cdots =\delta^3, \gamma^2\delta^2, \alpha\gamma^2\delta$. Moreover, the assumption of $\alpha\beta^b\gamma^2$ and Counting Lemma imply that one of $\delta^3, \gamma^2\delta^2$ is a vertex. Indeed, if $\delta\cdots=\alpha\gamma^2\delta$, then Counting Lemma on $\gamma, \delta$ implies $\gamma\cdots=\alpha\gamma^2\delta$, contradicting $\alpha\beta^b\gamma^2$ being a vertex.

The exact form of $\alpha\beta^b\gamma^2$ will be determined in the subcases: $2\gamma\le\pi$ and $2\gamma>\pi$.


\begin{subcase*}[$2\gamma\le\pi$] The kite angle sum and $\pi\ge2\gamma$ and $\frac{2}{3}\pi \ge \delta$ imply $\beta+\pi + \frac{2}{3}\pi \ge \beta+2\gamma+\delta >2\pi$, which implies $\beta>\frac{1}{3}\pi$. Combined with $\alpha>\frac{1}{2}\pi$, it gives $\alpha+\beta>\frac{5}{6}\pi > \frac{2}{3}\pi \ge \delta$. Then the kite angle sum implies $\alpha+2\beta+2\gamma > \beta + 2\gamma + \delta > 2\pi$. Hence $\alpha\beta^b\gamma^2=\alpha\beta\gamma^2$. Given $\beta\neq\delta$ and $\alpha\gamma^2\delta^d=\alpha\gamma^2\delta$, it further rules out $\alpha\gamma^2\delta$. Therefore \eqref{Eq-be,de-vertex-alga} becomes $\alpha\gamma\cdots=\alpha\beta\gamma^2$, and $\delta\cdots=\delta^3, \gamma^2\delta^2$. We proceed with one of them being a vertex.


\begin{subsubcase*}[$\alpha\beta\gamma^2, \delta^3$] The vertex angle sum of $\delta^3$ and $\delta>\alpha, \beta$ imply $\delta=\frac{2}{3}\pi > \alpha, \beta$. Then $\beta^b, \alpha^a\beta^b$ have degrees $\ge4$. By $\beta>\frac{1}{3}\pi$, we get $\beta^b=\beta^4, \beta^5$. By $\beta>\frac{1}{3}\pi$ and $\alpha>\frac{1}{2}\pi$, we also have $\alpha^a\beta^b = \alpha^3\beta, \alpha^2\beta^2, \alpha\beta^3, \alpha\beta^4$. Moreover, $3\beta>\pi\ge2\gamma$ and $\alpha\beta\gamma^2$ imply $\alpha+4\beta>\alpha+\beta+2\gamma=2\pi$, ruling out $\alpha\beta^4$. Then $\alpha^a\beta^b=\alpha^3\beta, \alpha^2\beta^2, \alpha\beta^3$. By $2\beta>\frac{2}{3}\pi > \alpha$ and $\alpha\beta\gamma^2$, we get $3\beta+2\gamma> \alpha+\beta+2\gamma= 2\pi$, which implies $\beta^b\gamma^2=\beta^2\gamma^2$. Therefore \eqref{Eq-vertex-be} becomes $\beta\cdots= \beta^4, \beta^5, \alpha\beta^3, \alpha^2\beta^2, \alpha^3\beta, \beta^2\gamma^2, \alpha\beta\gamma^2$.


For $m=5$, the vertex angle sums of $\alpha\beta\gamma^2, \delta^3$ with one of $\beta^4, \beta^5, \alpha\beta^3, \alpha^2\beta^2, \beta^2\gamma^2$ in \eqref{Eq-angle-id} yield solutions for the angles below, while with $\alpha^3\beta$ there is no suitable solutions.
\begin{align*}
&\beta^4:&
&\alpha=(0.62050...)\pi,&
&\beta=\tfrac{1}{2}\pi,&
&\gamma=(0.43974...)\pi,&
&\delta=\tfrac{2}{3}\pi;& \\
&\beta^5:&
&\alpha=(0.62307...)\pi,&
&\beta=\tfrac{2}{5}\pi,&
&\gamma=(0.48846...)\pi,&
&\delta=\tfrac{2}{3}\pi;& \\
&\alpha\beta^3:&
&\alpha=(0.62137...)\pi,&
&\beta=(0.45954...)\pi,&
&\gamma=(0.45954...)\pi,&
&\delta=\tfrac{2}{3}\pi;& \\
&\alpha^2\beta^2:&
&\alpha=(0.62392...)\pi,&
&\beta=(0.37607...)\pi,&
&\gamma=\tfrac{1}{2}\pi,&
&\delta=\tfrac{2}{3}\pi;& \\
&\beta^2\gamma^2:&
&\alpha=(0.61901...)\pi,&
&\beta=(0.61901...)\pi,&
&\gamma=(0.38098...)\pi,&
&\delta=\tfrac{2}{3}\pi.&
\end{align*}

For $m=4$, the vertex angle sums of $\alpha\beta\gamma^2, \delta^3$ and one of $\beta^4, \beta^5, \alpha\beta^3, \alpha^2\beta^2$, $\alpha^3\beta, \beta^2\gamma^2$ and \eqref{Eq-angle-id} imply
\begin{align*}
&\beta^4:&
&\alpha=(0.54088...)\pi,&
&\beta=\tfrac{1}{2}\pi,&
&\gamma = (0.47955...)\pi,&
&\delta = \tfrac{2}{3}\pi;& \\
&\beta^5:&
&\alpha=(0.54769...)\pi,&
&\beta=\tfrac{2}{5}\pi,&
&\gamma=(0.52615...)\pi,&
&\delta=\tfrac{2}{3}\pi;&\\
&\alpha\beta^3:&
&\alpha=(0.54162...)\pi,&
&\beta=(0.48612...)\pi,&
&\gamma=(0.48612...)\pi,&
&\delta = \tfrac{2}{3}\pi;& \\
&\alpha^2\beta^2:&
&\alpha=(0.54339..)\pi,&
&\beta=(0.45660...)\pi,&
&\gamma=\tfrac{1}{2}\pi,&
&\delta=\tfrac{2}{3}\pi;&\\
&\alpha^3\beta:&
&\alpha=(0.55421...)\pi,&
&\beta=(0.33735...)\pi,&
&\gamma=(0.55421...)\pi,&
&\delta=\tfrac{2}{3}\pi;&\\
&\beta^2\gamma^2:&
&\alpha=(0.53911...)\pi,&
&\beta=(0.53911...)\pi,&
&\gamma=(0.46088...)\pi,&
&\delta=\tfrac{2}{3}\pi.&
\end{align*}
Without $\alpha^2\gamma^2$ (the case assumption), the above angle values determine the $\AVC$'s below. Other than the one with $\alpha^2\beta^2$, they will be treated collectively.
\begin{align*}
&m=4, 5,& \AVC = \, &\{ \delta^3, \alpha\beta\gamma^2, \beta^4 \}, \{ \delta^3, \alpha\beta\gamma^2, \beta^5 \}, \{ \delta^3, \alpha\beta\gamma^2, \alpha\beta^3 \},& \\ 
&&&\{ \delta^3, \alpha\beta\gamma^2, \alpha^2\beta^2, \gamma^4 \}, \{ \delta^3, \alpha\beta\gamma^2, \beta^2\gamma^2 \};& \\
&m=4,& \AVC = \, &\{ \delta^3, \alpha\beta\gamma^2, \alpha^3\beta \}.&
\end{align*}

Among $\{ \delta^3, \alpha\beta\gamma^2, \beta^4 \}, \{ \delta^3, \alpha\beta\gamma^2, \beta^5 \}, \{ \delta^3, \alpha\beta\gamma^2, \alpha\beta^3 \}$ and $\{ \delta^3$, $\alpha\beta\gamma^2, \beta^2\gamma^2 \}$, we see $\beta\vert\beta \cdots=\beta^4, \beta^5, \alpha\beta^3, \beta^2\gamma^2$ respectively, while the absence of $\gamma\vert\gamma\cdots$ contradicts Lemma \ref{Lem-bebe-gaga-dede}. 

It remains to discuss $\AVC = \{ \delta^3, \alpha\beta\gamma^2, \alpha^2\beta^2, \gamma^4 \}$, where $\alpha^2\cdots=\beta^2\cdots=\alpha^2\beta^2$ and $\alpha\gamma\cdots=\alpha\beta\gamma^2$. Assume that $\gamma^4$ appears as a vertex. It has a unique angle arrangement $\vert \gamma \, \bvert \, \gamma \vert \gamma \, \bvert \, \gamma \vert$ and Lemma \ref{Lem-bebe-gaga-dede} implies that $\beta\vert\beta\cdots$ is also a vertex. Hence an $\alpha^2\beta^2$ has an angle arrangement $\vert \alpha \vert \alpha \vert \beta \vert \beta \vert$ which determines tiles $T_1, T_2, T_3, T_4$ in both pictures of Figure \ref{Fig-bede-albebga2-AAD-de3-al2be2} for $m=4,5$. For $m=4$, the bottom $\alpha_1\vert\alpha_2\cdots$ in Figure \ref{Subfig-bede-albebga2-AAD-de3-al2be2-m=4} are $\alpha^2\cdots=\alpha^2\beta^2$ and determines $T_5, T_6$. Since $\alpha_2\gamma_3\cdots, \alpha_2\gamma_5\cdots=\alpha\beta\gamma^2$, their unique angle arrangements result in two $\beta$'s in $T_7$, a contradiction. The deduction argument for $m=5$ is analogous and results in $\bvert \, \gamma_8 \vert \alpha_2 \vert \gamma_9 \, \bvert \, \cdots$ in Figure \ref{Subfig-bede-albebga2-AAD-de3-al2be2-m=5}, which is not a vertex, also a contradiction. Therefore both $\alpha^2\beta^2, \gamma^4$ are not vertices. 

\begin{figure}[h!] 
\centering
\begin{subfigure}[t]{0.4\linewidth}
\centering
\begin{tikzpicture}[>=latex]

\tikzmath{
\s=5;
\r=0.625;
\th=360/4;
\ph=360/5;
}

\begin{scope}[]

\tikzmath{
\x=\r*cos(0.5*\th);
\R = sqrt(\x^2+(3*\x)^2);
\aR = acos(3*\x/\R);
}

\foreach \aaa in {-1, 0, 1} {

\tikzset{shift={(0, \aaa*2*\x)}}

\foreach \aa in {-1, 1} {

\tikzset{shift={(\aa*\x, 0)}}

\foreach \a in {0,...,3} {

\draw[rotate=\th*\a]
	(0.5*\th:\r) -- (1.5*\th:\r)
;

}
}
}

\foreach \bb in {-1, 1} {

\tikzset{shift={(0, \bb*2*\x)}, yscale=\bb}

\foreach \b in {-1, 1} {

\tikzset{shift={(\b*\x, 0)}, xscale=\b}

\draw[line width=1.5]
	(0.5*\th:\r) -- (1.5*\th:\r)
	(0.5*\th:\r) -- (3.5*\th:\r)
;

}
}

\foreach \aa in {-1, 1} {

\tikzset{shift={(\aa*\x, 0)}}

\foreach \a in {0,...,3} {

\node at (0.5*\th+\a*\th: 0.65*\r) {\small $\alpha$};

}

}

\draw[]
	(2*\x, \x) -- (3*\x, \x)
	(2*\x, -\x) -- (3*\x, -\x)
;

\foreach \aaa in {-1, 1} {

\tikzset{shift={(0, \aaa*2*\x)}, yscale=\aaa}

\foreach \aa in {-1, 1} {

\tikzset{shift={(\aa*\x, 0)}, xscale=\aa}

\foreach \a in {1,3} {

\node at (0.5*\th+\a*\th: 0.6*\r) {\small $\gamma$};

}

\node at (2.5*\th: 0.6*\r) {\small $\beta$};
\node at (0.5*\th: 0.6*\r) {\small $\delta$};

}
}

\foreach \c in {-1, 1} {

\node at (2.4*\x, \c*0.6*\x) {\small $\beta$}; 

\node at (2.4*\x, \c*1.4*\x) {\small $\gamma$}; 

}

\node[inner sep=1,draw,shape=circle] at (-\x,0) {\footnotesize $1$};
\node[inner sep=1,draw,shape=circle] at (\x,0) {\footnotesize $2$};
\node[inner sep=1,draw,shape=circle] at (\x,2*\x) {\footnotesize $3$};
\node[inner sep=1,draw,shape=circle] at (-\x,2*\x) {\footnotesize $4$};
\node[inner sep=1,draw,shape=circle] at (\x,-2*\x) {\footnotesize $5$};
\node[inner sep=1,draw,shape=circle] at (-\x,-2*\x) {\footnotesize $6$};
\node[inner sep=1,draw,shape=circle] at (3*\x,0) {\footnotesize $7$};

\end{scope}

\end{tikzpicture}
\caption{$m=4$}
\label{Subfig-bede-albebga2-AAD-de3-al2be2-m=4}
\end{subfigure}
\begin{subfigure}[t]{0.4\linewidth}
\centering
\begin{tikzpicture}[>=latex]

\tikzmath{
\s=5;
\r=0.625;
\th=360/4;
\ph=360/5;
}

\begin{scope}[]

\tikzmath{
\x=\r*cos(\ph/2);
\y=\r*sin(\ph/2);
}

\foreach \aa in {-1,1} {

\tikzset{xshift=\aa*\x cm, xscale=-\aa}

\foreach \a in {0,...,4} {

\draw[rotate=\a*\ph]
	(0.5*\ph:\r) -- (-0.5*\ph:\r)
;

\node at (0.5*\ph+\a*\ph:0.65*\r) {\small $\alpha$};

}
}

\foreach \aa in {-1,1} {

\draw[yscale=\aa, line width=1.5]
	(\x+\r,0) -- (\x+2*\r, \y) 
;

\foreach \bb in {-1,1} {

\draw[yscale=\bb]
	(0,\y) -- (0,\y+1.25*\r)
	([shift={(\x,0)}]\ph:\r) -- (35:2.5*\r) 
;

\draw[xscale=\aa, yscale=\bb, line width=1.5]
	(0,\y+1.25*\r) -- (1.25*\r,2*\y+1.25*\r)
	([shift={(\x,0)}]\ph:\r) -- (1.25*\r,2*\y+1.25*\r)
;

}
}

\foreach \aa in {-1,1} {

\foreach \bb in {-1,1} {

\tikzset{xscale=\aa, yscale=\bb}

\node at (0.3*\x,1.15*\x) {\small $\beta$};
\node at (0.25*\x,2*\x) {\small $\gamma$};
\node at (1.125*\x,1.4*\x) {\small $\gamma$};
\node at (1.15*\x,2.45*\x) {\small $\delta$};


}

\tikzset{yscale=\aa}

\node at (1.85*\x,1*\x) {\small $\beta$};
\node at (2.25*\x,0.45*\x) {\small $\gamma$};

}

\node at (1.75*\x,1.75*\x) {\small $\gamma$};
\node at (3*\x,0) {\small $?$};

\node[inner sep=1,draw,shape=circle] at (-\x,0) {\footnotesize $1$};
\node[inner sep=1,draw,shape=circle] at (\x,0) {\footnotesize $2$};
\node[inner sep=1,draw,shape=circle] at (0.75*\x,1.8*\x) {\footnotesize $3$};
\node[inner sep=1,draw,shape=circle] at (-0.75*\x,1.8*\x) {\footnotesize $4$};
\node[inner sep=1,draw,shape=circle] at (0.75*\x,-1.8*\x) {\footnotesize $5$};
\node[inner sep=1,draw,shape=circle] at (-0.75*\x,-1.8*\x) {\footnotesize $6$};

\node[inner sep=1,draw,shape=circle] at (2.5*\x,\x) {\small $8$};
\node[inner sep=1,draw,shape=circle] at (2.5*\x,-\x) {\small $9$};

\end{scope}

\end{tikzpicture}
\caption{$m=5$}
\label{Subfig-bede-albebga2-AAD-de3-al2be2-m=5}
\end{subfigure}
\caption{The deduction of $\alpha^2\beta^2=\vert\alpha\vert\alpha\vert\beta\vert\beta\vert$}
\label{Fig-bede-albebga2-AAD-de3-al2be2}
\end{figure}

In summary, none of $\beta^4, \beta^5, \alpha\beta^3, \alpha^2\beta^2, \alpha^3\beta, \beta^2\gamma^2$ is a vertex, and \eqref{Eq-vertex-be} becomes $\beta\cdots=\alpha\beta\gamma^2$. Then Counting Lemma on $\beta,\gamma$ deduces $\gamma\cdots=\alpha\beta\gamma^2$ and the same lemma on $\gamma,\delta$ further implies $\delta\cdots=\delta^3$. By $\frac{2}{3}\pi > \alpha > \frac{1}{2}\pi$ and the above, we conclude $\alpha\cdots=\alpha\beta\gamma^2$ and obtain the same $\AVC$ \eqref{AVC-de3-albega2}. The tilings are the canonical seeds of the \ref{Label:trunc-Octa}, \ref{Label:trunc-Icosa} families.
\end{subsubcase*}

\begin{subsubcase*}[$\alpha\beta\gamma^2, \gamma^2\delta^2$] 

The vertex $\gamma^2\delta^2$ has a unique angle arrangement $\bvert \, \delta \, \bvert \, \gamma \vert \gamma \, \bvert \, \delta \, \bvert$. Then Lemma \ref{Lem-bebe-gaga-dede} implies that $\beta\vert\beta\cdots$ is also a vertex.

Recall from the case that $\frac{2}{3}\pi \ge \delta > \alpha >\frac{1}{2}\pi$. Then $\gamma^2\delta^2$ implies $\gamma+\delta=\pi$ and $\gamma<\frac{1}{2}\pi$. Combined with the kite angle sum, we further get $\beta>\frac{1}{2}\pi$. By $\frac{2}{3}>\alpha>\frac{1}{2}\pi$ and $\frac{2}{3}\pi \ge \beta > \frac{1}{2}\pi$, we rule out $\alpha^a\beta^b$. By $\beta>\frac{1}{2}\pi$, we deduce $\beta^b=\beta^3$. Combined with $\alpha\beta\gamma^2$ and \eqref{Eq-vertex-be}, we get $\beta^2\cdots=\beta^3$. Hence $\beta\vert\beta\cdots=\beta^3$, which is a vertex.

By the vertex angle sums of $\alpha\beta\gamma^2, \gamma^2\delta^2, \beta^3$ and \eqref{Eq-angle-id}, we determine
\begin{align*}
&\alpha=(0.51810...)\pi,&
&\beta=\tfrac{2}{3}\pi,&
&\gamma=(0.40761...)\pi,&
&\delta=(0.59238...)\pi.& 
\end{align*}
The angle values determine
\begin{align*}
\AVC = \{ \beta^3, \gamma^2\delta^2, \alpha\beta\gamma^2 \}.
\end{align*}
The unique angle arrangement of a $\beta^3$ determines tiles $T_1, T_2, T_3$ in Figure \ref{Fig-be,de-AAD-albega2-ga2de2-be3}. Then $\gamma_2\gamma_3\cdots, \gamma_1\gamma_3\cdots=\gamma^2\delta^2$ result in $\vert \gamma_4 \, \bvert \, \delta_3 \, \bvert \, \gamma_5 \vert \cdots$, which is not admissible, a contradiction.

\begin{figure}[h!] 
\centering
\begin{tikzpicture}

\tikzmath{
\r=1;
\th=360/3;
\x=\r*cos(0.5*\th);
\ph=360/6;
}

\foreach \a in {0,1,2} {
\tikzset{rotate=\th*\a}
\draw[]
	(0:0) -- (90:\r) 
;
}

\foreach \aa in {-1,1} {
\tikzset{xscale=\aa}
\draw[]
	%
	%
	(270:\r) -- (270-0.1*\th:1.5*\r)
;
\draw[line width=1.5]
;
}

\foreach \b in {0,...,5} {
\tikzset{rotate=\b*\ph}
\draw[line width=1.5]
	(90:\r) -- (90+\ph:\r)
;
}

\foreach \b in {1,2} {
\tikzset{rotate=2*\b*\ph}
\draw[line width=1.5]
	(90:\r) -- (90:\r+\x)
;
}

\foreach \a in {0,1,2} {
\tikzset{rotate=\th*\a}
\node at ([shift={(0,-\x)}]90:0.6*\x) {\small $\beta$}; 
\node at ([shift={(0,-\x)}]270:0.65*\x) {\small $\delta$};
\node at ([shift={(0,-\x)}]0:1*\x) {\small $\gamma$};
\node at ([shift={(0,-\x)}]180:1*\x) {\small $\gamma$};
}

\foreach \a in {1,2} {
\tikzset{rotate=\th*\a}
\node at (81:1.15*\r) {\small $\delta$};
\node at (99:1.15*\r) {\small $\delta$};
}

\foreach \aa in {-1,1} {
\tikzset{xscale=\aa}


\node at (270-0.125*\th:1.1*\r) {\small $\gamma$}; 
}

\node at (270:1.35*\r) {\small $?$};



\node[inner sep=1,draw,shape=circle] at (90-0.5*\th:0.5*\r) {\footnotesize $1$};
\node[inner sep=1,draw,shape=circle] at (90+0.5*\th:0.5*\r) {\footnotesize $2$};
\node[inner sep=1,draw,shape=circle] at (270:0.5*\r) {\footnotesize $3$};
\node[inner sep=1,draw,shape=circle] at (90-2.45*\ph:1.25*\r) {\footnotesize $4$};
\node[inner sep=1,draw,shape=circle] at (90+2.45*\ph:1.25*\r) {\footnotesize $5$};

\end{tikzpicture}
\caption{The deduction of $\beta^3$}
\label{Fig-be,de-AAD-albega2-ga2de2-be3}
\end{figure}
\end{subsubcase*}
\end{subcase*}

\begin{subcase*}[$2\gamma>\pi$] Recall $\alpha, \delta > \frac{1}{2}\pi$ from the case. Then $\alpha, \delta > \frac{1}{2}\pi$ and $2\gamma>\pi$ dismiss $\gamma^4, \alpha\gamma^4,\alpha\gamma^2\delta^d$. Hence \eqref{Eq-be,de-vertex-alga} becomes $\alpha\gamma\cdots=\alpha\beta^b\gamma^2$, and \eqref{Eq-be,de-vertex-gaxga} becomes $\gamma \vert \gamma\cdots=\gamma^2\delta^{d\ge2}$, and \eqref{Eq-vertex-de} becomes $\delta\cdots =\delta^3$. Hence $\delta^3$ is a vertex. Given $\beta\neq\delta$, it implies $\delta=\tfrac{2}{3}\pi > \beta$. Moreover, $\delta=\tfrac{2}{3}\pi$ and $2\gamma>\pi$ dismiss $\gamma^2\delta^{d\ge2}$. Now $\gamma \vert \gamma\cdots$ is not a vertex, and Lemma \ref{Lem-bebe-gaga-dede} implies that $\beta\vert\beta\cdots$ is also not a vertex. Without $\gamma^2\delta^{d\ge2}, \alpha\gamma^2\delta^d$, list \eqref{Eq-be,de-vertex-gade} rules out $\gamma\delta\cdots$. 

The absence of $\beta\vert\beta\cdots$ implies that $\beta^b, \beta^{b\ge2}\gamma^2$ are not vertices and $\alpha\beta^b\gamma^2 = \alpha\beta\gamma^2, \alpha\beta^2\gamma^2$.

Meanwhile, $\frac{2}{3}\pi > \alpha, \beta$ imply that $\alpha^a\beta^b$ has degree $\ge4$. Then $\alpha>\frac{1}{2}\pi$ and no $\beta\vert\beta\cdots$ deduce $\alpha^a\beta^b=\alpha^2\beta^2, \alpha^3\beta, \alpha^3\beta^2, \alpha^3\beta^3$. Hence \eqref{Eq-vertex-be} becomes $\beta\cdots=\alpha^2\beta^2, \alpha^3\beta, \alpha^3\beta^2, \alpha^3\beta^3, \alpha\beta\gamma^2, \alpha\beta^2\gamma^2$.

The absence of $\gamma\vert\gamma\cdots$ implies that $\gamma^c$ is not a vertex. The knowledge of $\beta\cdots$, $\delta\cdots$ and $\alpha\gamma\cdots$ above determine $\gamma\cdots=\alpha\beta\gamma^2, \alpha\beta^2\gamma^2$. These two vertices are mutually exclusive.

If $\alpha\beta^2\gamma^2$ is a vertex, then $\gamma\cdots=\alpha\beta^2\gamma^2$. The kite angle sum and $\alpha\beta^2\gamma^2$ imply $\frac{2}{3}\pi=\delta > \alpha+\beta$. We have $2\pi>3\alpha+3\beta$, which dismiss $\alpha^2\beta^2, \alpha^3\beta, \alpha^3\beta^2, \alpha^3\beta^3$. Hence $\beta\cdots=\gamma\cdots=\alpha\beta^2\gamma^2$, contradicting Counting Lemma on $\beta, \gamma$.

Therefore $\gamma\cdots=\alpha\beta\gamma^2$. Counting Lemma on $\beta, \gamma$ implies that $\beta\cdots=\alpha\beta\gamma^2$. Combined with $\delta\cdots=\delta^3$ and $\delta=\frac{2}{3}\pi>\alpha>\frac{1}{2}\pi$ and the above, we get $\alpha\cdots=\alpha\beta\gamma^2$. We again obtain $\AVC$ \eqref{AVC-de3-albega2}, which results in the canonical seeds of the \ref{Label:trunc-Octa}, \ref{Label:trunc-Icosa} families.
\end{subcase*}
\end{case*}

\begin{case*}[$\alpha\gamma^2\delta^d$] Based on the previous discussion, we may assume $\alpha\gamma\cdots=\alpha\gamma^2\delta^d$. The kite angle sum and $\alpha\gamma^2\delta^d$ imply $\beta>\alpha+(d-1) \delta$. For $d \ge 1$, it implies $\frac{2}{3}\pi \ge \beta > \alpha >\frac{1}{2}\pi$. Then $\alpha^a\beta^b$ is not a vertex. Without $\alpha^a\beta^b, \alpha\beta^b\gamma^2$, list \eqref{Eq-vertex-be} becomes $\beta\cdots=\beta^b, \beta^b\gamma^c$. Then $\alpha\beta\cdots$ is not a vertex, contradicting Lemma \ref{Lem-albe-alga}. Therefore there is no tiling.  \qedhere
\end{case*}
\end{proof}

\section{A glimpse into a bigger picture}
\label{Sec-Discussion}

In the case when $m=3$ (i.e., equilateral triangles), the truncated tetrahedron also gives rise to a finite family via kite subdivision of its hexagons. In light of the discussion of the truncated octahedron and the truncated icosahedron (see \eqref{AVC-albe2-de3-albega2-alga4}, Proposition \ref{Prop-be}) we consider
\begin{align}\label{AVC-trunc-albega2-de3-albe2-alga4}
\AVC = \{ \alpha\beta\gamma^2, \delta^d, \alpha\beta^2, \alpha\gamma^4 \}.
\end{align}
For the truncated tetrahedron, we take $d=3$ and use the vertex angle sums to derive
\begin{align*}
\beta = \pi  - \tfrac{1}{2}\alpha, \quad
\gamma = \tfrac{1}{2}\pi  - \tfrac{1}{4}\alpha, \quad
\delta = \tfrac{2}{3}\pi,
\end{align*}
and $\tfrac{1}{3}\pi < \alpha <\tfrac{2}{3}\pi$. Substituting the above into \eqref{Eq-angle-id}, \eqref{Eq-a4-cosx} and \eqref{Eq-x2y2-cosy} determines the angles and edge lengths
\begin{align}
&\alpha=\cos^{-1}\tfrac{7}{18},&
&\beta=\cos^{-1}(-\tfrac{5}{6}),&
&\gamma =\cos^{-1}\tfrac{1}{2\sqrt3},& \\ \notag 
&x = \cos^{-1}\tfrac{7}{11},&
&y =\tan^{-1}\tfrac{2}{3}\sqrt{6}.&
\end{align}
With the angle values and edge lengths, one may obtain tilings in Figure \ref{Fig-kt-subdiv-trunc-tetrahedron}. 

\begin{figure}[h!]
\centering
\begin{subfigure}[t]{0.325\linewidth}
\centering
\begin{tikzpicture}[>=latex]
\tikzmath{
\s=2;
\r=0.3;
\th=360/3;
\x=\r*cos(30);
\y=\r*sin(30);
\XS=2;
}

\raisebox{1ex}{

\foreach \a in {0,1,2} {
\tikzset{rotate=\a*\th}
\draw[HGold, line width=1.25]
	(0, 2*\y) -- (90+0.5*\th:\r)
	(0, 2*\y) -- (90-0.5*\th:2*\r) 
	(0, 2*\y) -- (-\x, 3*\y)
;
\draw[arrows = {-Latex[scale=0.45]}, HGold, line width=1.25]
	(\x, 3*\y) -- (0, 5*\y)
;
}

\foreach \a in {0,1,2} {
\tikzset{rotate=\a*\th}

\fill[teal!75!blue]
	(2*\x, 0) arc (270:270+2*\th:\r) -- (90-0.5*\th:2*\r) -- cycle
;
}

\fill[teal!75!blue]
	(90-0.5*\th:\r) -- (90-1.5*\th:\r) -- (90-2.5*\th:\r) -- cycle;
;

\foreach \a in {0,1,2} {
\tikzset{rotate=\a*\th}
\draw[]
	(90-0.5*\th:\r) -- (90+0.5*\th:\r)
	(90-0.5*\th:\r) -- (90-0.5*\th:2*\r) 
	(90-0.5*\th:2*\r) -- (\x, 3*\y)
	(90+0.5*\th:2*\r) -- (-\x, 3*\y)
	(-\x, 3*\y) -- (\x, 3*\y)
	(2*\x, 0) arc (270:270+2*\th:\r)
;
}

}

\begin{scope}[] 
\node [inner sep=0] (image) at (\XS,0) 
            {\includegraphics[height=2cm]{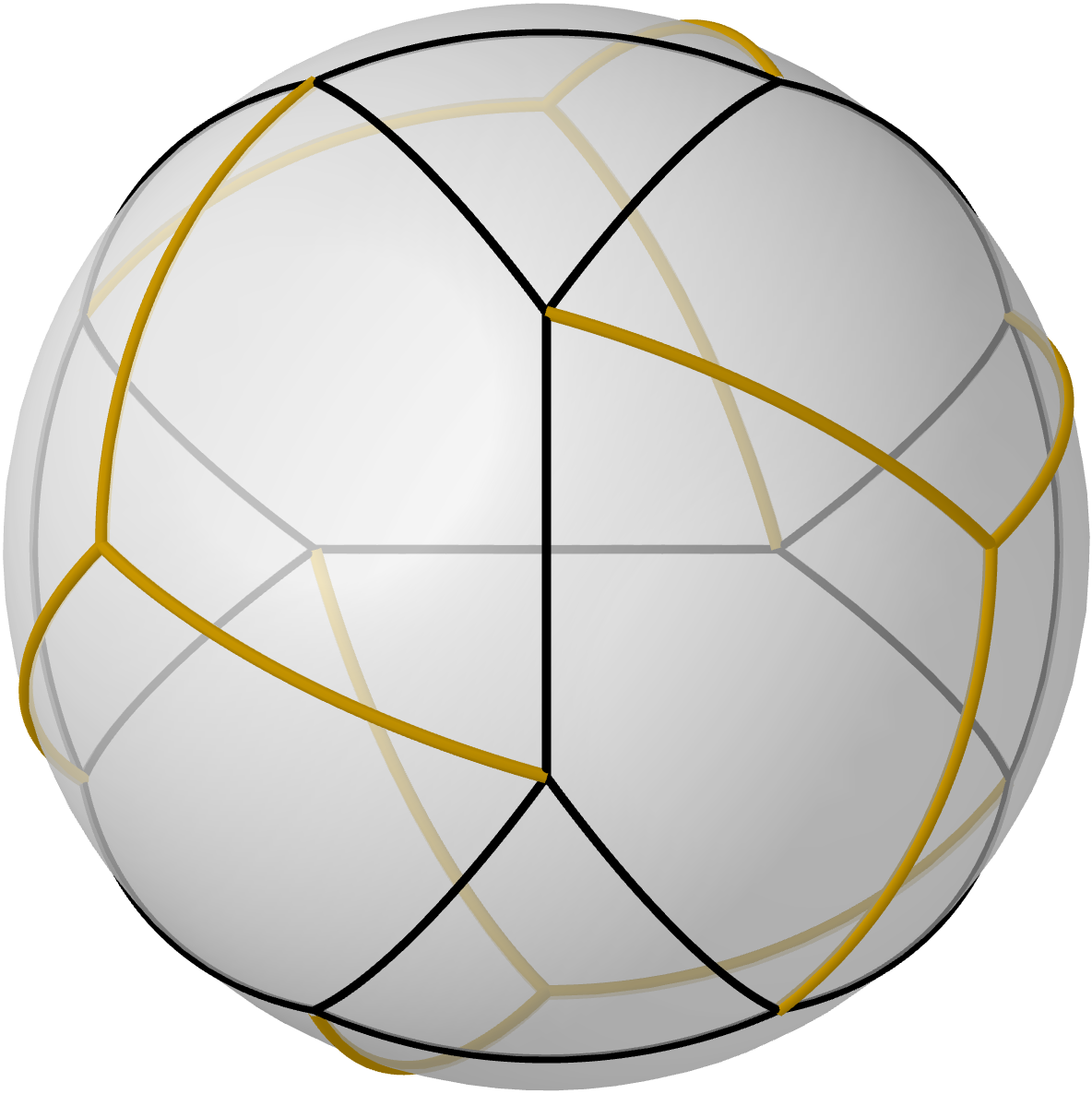}};
\end{scope}

\end{tikzpicture}
\caption{Canonical seed}
\label{Subfig-kt-subdiv-trunc-tetrahedron-seed}
\end{subfigure}
\begin{subfigure}[t]{0.325\linewidth}
\centering
\begin{tikzpicture}[>=latex]
\tikzmath{
\s=2;
\r=0.3;
\th=360/3;
\x=\r*cos(30);
\y=\r*sin(30);
\XS=2;
}

\raisebox{1ex}{

\foreach \a in {0,1} {
\tikzset{rotate=\a*\th}
\draw[HGold, line width=1.25]
	(0, 2*\y) -- (90+0.5*\th:\r)
	(0, 2*\y) -- (90-0.5*\th:2*\r) 
	(0, 2*\y) -- (-\x, 3*\y)
;
}

\draw[HGold, line width=1.25]
	(90-\th:2*\y) -- (90-0.5*\th:2*\r)
	(90-\th:2*\y) -- (270:\r) 
	(90-\th:2*\y) -- ([rotate=-1.5*\th]-\x, 3*\y)
;

\foreach \a in {0,1,2} {
\tikzset{rotate=\a*\th}
\draw[arrows = {-Latex[scale=0.45]}, HGold, line width=1.25]
	(\x, 3*\y) -- (0, 5*\y)
;
}

\foreach \a in {0,1,2} {
\tikzset{rotate=\a*\th}
\fill[teal!75!blue]
	(2*\x, 0) arc (270:270+2*\th:\r) -- (90-0.5*\th:2*\r) -- cycle
;
}

\fill[teal!75!blue]
	(90-0.5*\th:\r) -- (90-1.5*\th:\r) -- (90-2.5*\th:\r) -- cycle;
;

\foreach \a in {0,1,2} {
\tikzset{rotate=\a*\th}
\draw[]
	(90-0.5*\th:\r) -- (90+0.5*\th:\r)
	(90-0.5*\th:\r) -- (90-0.5*\th:2*\r) 
	(90-0.5*\th:2*\r) -- (\x, 3*\y)
	(90+0.5*\th:2*\r) -- (-\x, 3*\y)
	(-\x, 3*\y) -- (\x, 3*\y)
	(2*\x, 0) arc (270:270+2*\th:\r)
;
}
}

\begin{scope}[] 
\node [inner sep=0] (image) at (\XS,0) 
            {\includegraphics[height=2cm]{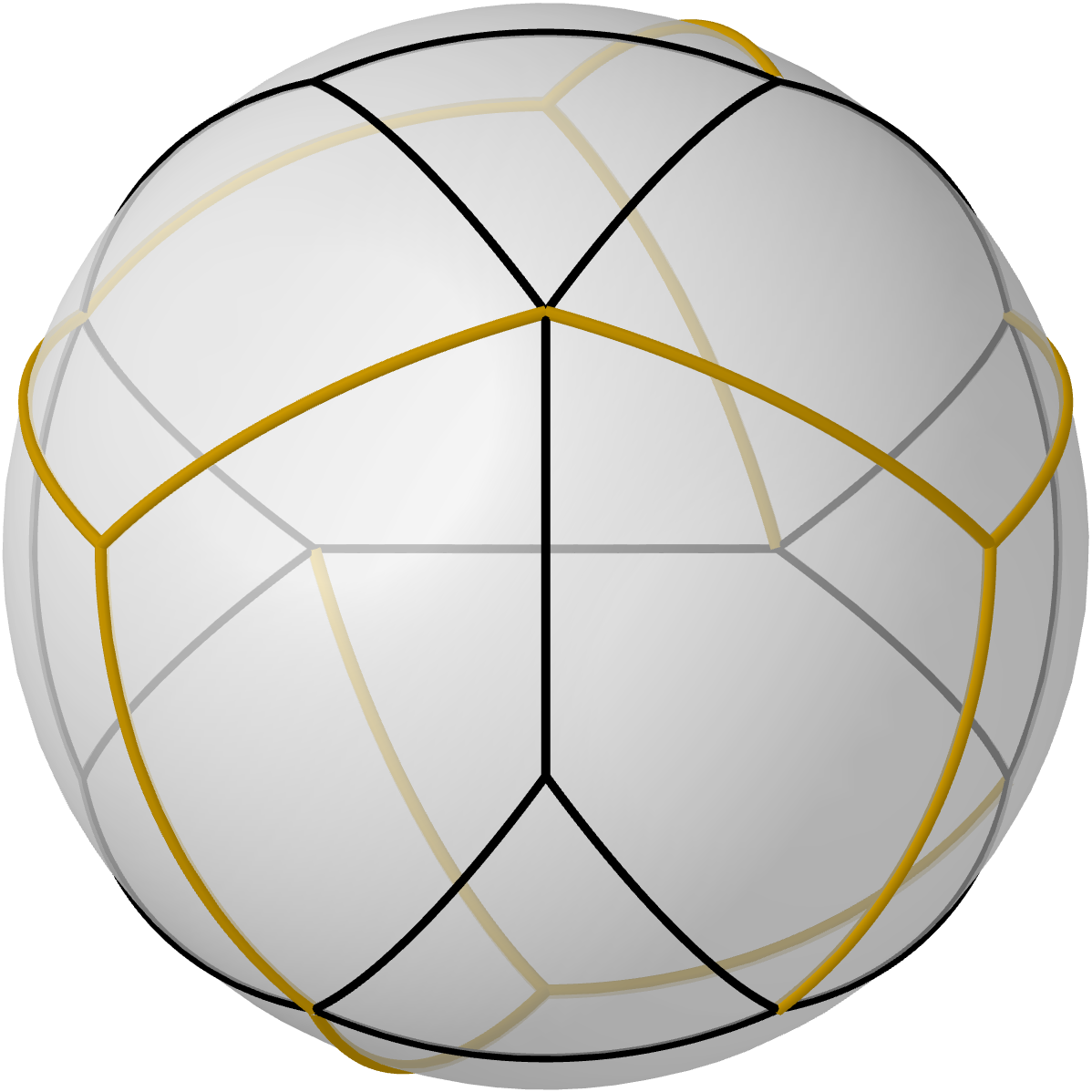}};
\end{scope}

\end{tikzpicture}
\caption{One hexagon flipped}
\label{Subfig-kt-subdiv-trunc-tetrahedron-F1}
\end{subfigure}
\begin{subfigure}[t]{0.325\linewidth}
\centering
\begin{tikzpicture}[>=latex]
\tikzmath{
\s=2;
\r=0.3;
\th=360/3;
\x=\r*cos(30);
\y=\r*sin(30);
\XS=2;
}

\raisebox{1ex}{

\foreach \a in {0,1} {
\tikzset{rotate=\a*\th}
\draw[HGold, line width=1.25]
	(0, 2*\y) -- (90+0.5*\th:\r)
	(0, 2*\y) -- (90-0.5*\th:2*\r) 
	(0, 2*\y) -- (-\x, 3*\y)
;
}

\draw[HGold, line width=1.25]
	(90-\th:2*\y) -- (90-0.5*\th:2*\r)
	(90-\th:2*\y) -- (270:\r) 
	(90-\th:2*\y) -- ([rotate=-1.5*\th]-\x, 3*\y)
;

\foreach \a in {0,1,2} {
\tikzset{rotate=\a*\th}
\draw[arrows = {-Latex[scale=0.45]}, HGold, line width=1.25]
	(-\x, 3*\y) -- (0, 5*\y)
;
}

\foreach \a in {0,1,2} {
\tikzset{rotate=\a*\th}

\fill[teal!75!blue]
	(2*\x, 0) arc (270:270+2*\th:\r) -- (90-0.5*\th:2*\r) -- cycle
;
}

\fill[teal!75!blue]
	(90-0.5*\th:\r) -- (90-1.5*\th:\r) -- (90-2.5*\th:\r) -- cycle;
;

\foreach \a in {0,1,2} {
\tikzset{rotate=\a*\th}
\draw[]
	(90-0.5*\th:\r) -- (90+0.5*\th:\r)
	(90-0.5*\th:\r) -- (90-0.5*\th:2*\r) 
	(90-0.5*\th:2*\r) -- (\x, 3*\y)
	(90+0.5*\th:2*\r) -- (-\x, 3*\y)
	(-\x, 3*\y) -- (\x, 3*\y)
	(2*\x, 0) arc (270:270+2*\th:\r)
;
}
}

\begin{scope}[] 
\node [inner sep=0] (image) at (\XS,0) 
            {\includegraphics[height=2cm]{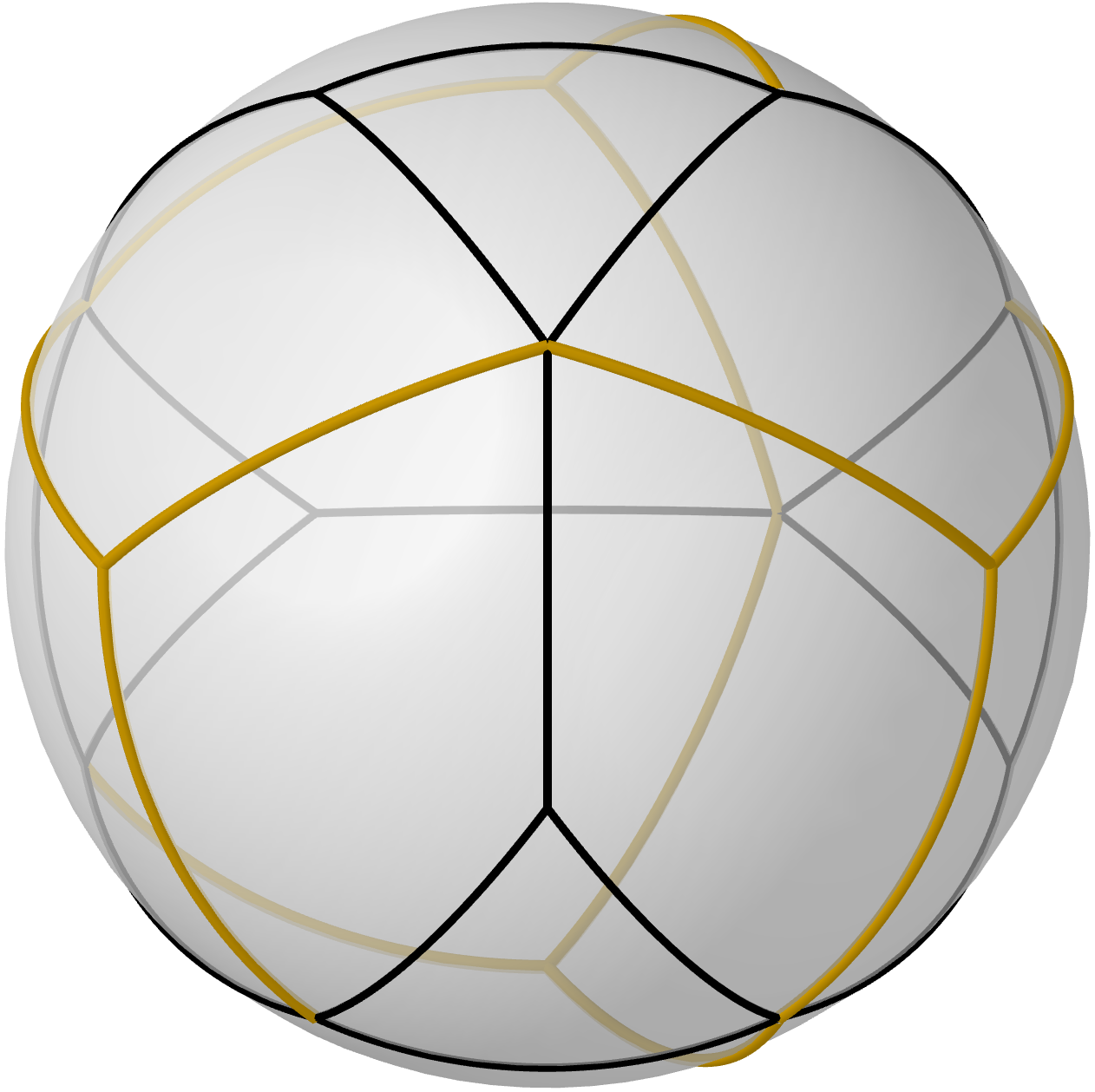}};
\end{scope}

\end{tikzpicture}
\caption{Two hexagons flipped}
\label{Subfig-kt-subdiv-trunc-tetrahedron-F2}
\end{subfigure}
\caption{The tilings via kite subdivision of the truncated tetrahedron}
\label{Fig-kt-subdiv-trunc-tetrahedron}
\end{figure}

\begin{prop} A finite family of three tilings are given by a kite subdivision of the truncated tetrahedron and their flips. 
\end{prop}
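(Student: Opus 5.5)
The plan has two parts: realize the tilings geometrically, then count them up to isomorphism. The $m=3$ analogue of the construction in Proposition \ref{Prop-be} is used throughout. Three kites at a $\delta^3$ merge into a hexagon with angles $\beta^3\bar{\gamma}^3$, where $\bar{\gamma}$ denotes $\gamma^2$ combined, as in Figure \ref{Fig-be-hex-de3}. With $\AVC$ \eqref{AVC-trunc-albega2-de3-albe2-alga4} and $d=3$, every remaining vertex $\alpha\beta\gamma^2$, $\alpha\beta^2$, $\alpha\gamma^4$ becomes a degree $3$ vertex $\angle_{\triangle}\angle_{\varhexagon}\angle_{\varhexagon}$. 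The merged tiling is therefore the truncated tetrahedron $t\mathcal{T}$: four triangles, four hexagons, each vertex incident to one triangle and two hexagons.

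\emph{Geometric step.} First I check the angle and edge values displayed above. They satisfy the vertex angle sums $\alpha+\beta+2\gamma=\alpha+2\beta=\alpha+4\gamma=2\pi$ and $3\delta=2\pi$, and \eqref{Eq-angle-id} with $m=3$. I also check $x,y\in(0,\pi)$, and that \eqref{Eq-a4-cosx} agrees with \eqref{Eq-x2y2-cosx}, so the triangle and kite share the edge $x$. Because $2\beta=4\gamma$, i.e.\ $\beta=\bar{\gamma}$, each kite-subdivided hexagon is congruent to a regular spherical hexagon of side $x$ and angle $\beta$. Which alternate vertices receive the $y$-edges does not change the outer shape. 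Hence every choice of subdivision orientation in each hexagon is geometrically realizable, using the spherical $t\mathcal{T}$ whose triangles have angle $\alpha$ and hexagons angle $\beta$; these satisfy $\alpha+2\beta=2\pi$. This independence of the hexagon's outer shape from its orientation is the fact that makes the flips legitimate.

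\emph{Combinatorial step.} Each hexagon of $t\mathcal{T}$ has two subdivision orientations. An orientation is determined by which three alternate vertices receive the $y$-edges. Each hexagon of $t\mathcal{T}$ alternates between edges shared with triangles and edges shared with hexagons, so the two orientations are distinguished by whether the $\bar{\gamma}$ angles sit at the clockwise or counterclockwise end of each triangle-edge. This gives $2^4$ labelled configurations. I then count orbits under the full symmetry group of $t\mathcal{T}$, which is $S_4$ of order $24$ including reflections. Reflections swap the two orientations in every hexagon simultaneously. Rotations ($A_4$) act on hexagons as even permutations and preserve orientation type.

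The orbits are then classified by the number $k$ of hexagons having a given orientation, modulo $k\leftrightarrow 4-k$. Since $A_4$ is $2$-transitive on the four hexagons, all configurations with the same $k$ are equivalent. The classes are $\{0,4\}$, $\{1,3\}$ and $\{2\}$, which gives exactly three tilings, matching Figure \ref{Fig-kt-subdiv-trunc-tetrahedron}. The canonical seed ($k=0$) realizes $\AVC=\{\delta^3,\alpha\beta\gamma^2\}$, and flipping one or two hexagons produces the others.

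I expect the main obstacle to be this orbit count. Whether reflections are allowed must be handled carefully, as must the fact that distinct $k$ give non-isomorphic tilings. For the latter I would use an invariant: counting vertices of types $\alpha\beta^2$ and $\alpha\gamma^4$. The seed has none of them. One flip creates some, and the counts for $k=1$ and $k=2$ must be computed and shown to differ.
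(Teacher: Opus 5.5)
Your argument is correct, and it reaches the count by a different route from the paper's.

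\textbf{The paper's route.} It lists the three tilings with their vertex counts, which already separates them. It then proves completeness by local forcing from $\AVC$ \eqref{AVC-trunc-albega2-de3-albe2-alga4}. The type list $\{\delta^3,\alpha\beta\gamma^2\}$ forces the seed. Otherwise, the neighbourhood $T_1,\dots,T_8$ of an $\alpha\beta^2$ leaves, up to mirror symmetry, two choices for the four surrounding vertices, and each choice forces one flipped tiling. The direct ``enumerate flips and remove duplicates'' check is mentioned only as something one could do by computer.

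\textbf{Your route.} You do that check by hand. You reduce to $t\mathcal{T}$ by merging the $\delta^3$-clusters. You make realisability of every flip explicit through $\beta=2\gamma$, which the paper leaves implicit. You then count orbits of $T_d\cong S_4$ on the $2^4$ chirality assignments.

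\textbf{What each buys.} Your approach explains why the answer is three, and it generalises verbatim. By Burnside's lemma the same orbit count gives $672/48=14$ for $t\mathcal{O}$, matching Figure \ref{Fig-be-Tilings-albe2-de3-albega2-alga4}. It gives $1070880/120=8924$ for $t\mathcal{I}$, matching the computer enumeration. The paper's local deduction, for its part, never needs the merged structure. Yours relies on the standard fact that a triangle--hexagon tiling with every vertex of type $3.6.6$ is $t\mathcal{T}$; Euler's formula gives four tiles of each kind. The paper invokes this same reduction for $t\mathcal{O}$ and $t\mathcal{I}$.

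\textbf{Two tightenings.} First, the $\delta^3$'s are the only degree-$3$ vertices not incident to a triangle. So any isomorphism of kite tilings preserves the clusters and descends to an automorphism of $t\mathcal{T}$. Your orbit count therefore already proves that different $k$-classes are non-isomorphic. Second, the invariant you propose is immediate to compute. Each pair of hexagons of $t\mathcal{T}$ shares exactly one hexagon--hexagon edge. Its endpoints are $\alpha\beta\gamma^2,\alpha\beta\gamma^2$ when the two chiralities agree, and $\alpha\beta^2,\alpha\gamma^4$ when they differ. Hence $\#\alpha\beta^2=\#\alpha\gamma^4=k(4-k)\in\{0,3,4\}$, which matches the paper's vertex lists.
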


The three tilings are those in Figure \ref{Fig-kt-subdiv-trunc-tetrahedron}: the canonical seed in Figure \ref{Subfig-kt-subdiv-trunc-tetrahedron-seed}, one subdivision flipped in Figure \ref{Subfig-kt-subdiv-trunc-tetrahedron-F1}, and two subdivisions flipped in Figure \ref{Subfig-kt-subdiv-trunc-tetrahedron-F2}.

\begin{proof} The vertices (including their numbers) of the three tilings are 
\begin{align*}
\AVC = \, &\{ 4\delta^3, 12\alpha\beta\gamma^2 \}, \\
\AVC = \, &\{ 4\delta^3, 6\alpha\beta\gamma^2, 3\alpha\beta^2, 3\alpha\gamma^4 \}, \\
\AVC = \, &\{ 4\delta^3, 4\alpha\beta\gamma^2, 4\alpha\beta^2, 4\alpha\gamma^4 \}.
\end{align*}

It suffices to show that they are all of the tilings. Obviously, one may flip a combination of subdivided hexagon(s) and check for duplicates via a computer. 

By the same construction for the tilings via the truncated octahedron and the truncated icosahedron, $\AVC = \{ 4\delta^3, 12\alpha\beta\gamma^2 \}$ uniquely determines the canonical seed. 

Next, we assume that there is an $\alpha\beta^2$. It is straightforward to determine its incident tiles $T_1, T_2, T_3$ as shown in Figure \ref{Fig-trunc-tetrahedron-albe2-nbhd}. From $\AVC$ \eqref{AVC-trunc-albega2-de3-albe2-alga4}, we know that $\delta_2, \delta_3\cdots=\delta^3$ which determine $T_4, T_5$ and $T_6,T_7$ respectively. Now $\gamma_2\gamma_3\gamma_4\gamma_6\cdots=\alpha\gamma^4$ determines $T_8$.

Notice that the adjacent vertices $\alpha_1\gamma_2\gamma_5\cdots, \alpha_1\gamma_3\gamma_7\cdots$ can be $\alpha\beta\gamma^2$ or $\alpha\gamma^4$, but exactly one of them can be the former and the other one must be the latter. One can easily check that any such combination does not uniquely determine the whole tiling. Likewise for $\alpha_8\beta_4\cdots, \alpha_8\beta_6\cdots$, which can be $\alpha\beta^2$ or $\alpha\beta\gamma^2$, but exactly one of them can be the former and the other one must be the latter. 

\begin{figure}[h!]
\centering
\begin{tikzpicture}[>=latex]
\tikzmath{
\r=0.9;
\th=360/3;
\x=\r*cos(30);
\y=\r*sin(30);
}

\foreach \a in {0,1,2} {
\tikzset{rotate=\a*\th}
\draw[]
	(90-0.5*\th:\r) -- (90+0.5*\th:\r)
	(270:\r) -- (270:2*\r)
;
}

\foreach \a in {1,2} {
\tikzset{rotate=\a*\th}
\draw[]
	(90-0.5*\th:2*\r) -- (\x, 3*\y)
	(90+0.5*\th:2*\r) -- (-\x, 3*\y)
	(-\x, 3*\y) -- (\x, 3*\y)
;
}

\foreach \aa in {-1,1} {
\tikzset{xscale=\aa}
\draw[line width=1.5]
	(90-\th:2*\y) -- (90-0.5*\th:\r)
	(90-\th:2*\y) -- (270:2*\r) 
	(90-\th:2*\y) -- ([rotate=-0.5*\th]\x, 3*\y)
;
}

\draw[rotate=-\th]
	(2*\x, 0) arc (270:270+2*\th:\r)
;

\node at (270:2.15*\r) {\small $\alpha$};

\foreach \a in {0,1,2} {
\tikzset{rotate=\a*\th}
\node at (270:0.6*\r) {\small $\alpha$};
}

\foreach \aa in {-1,1} {
\tikzset{xscale=\aa}
\node at (0.85*\x, 0*\y) {\small $\gamma$};
\node at (1.6*\r, 0.2*\y) {\small $\gamma$};
\node at (1.6*\r, 1.4*\y) {\small $\beta$};

\node at (1.15*\x, 0.75*\y) {\small $\gamma$};

\node at (1.05*\r, -0.3*\y) {\small $\delta$};
\node at (0.7*\r, -0.9*\y) {\small $\delta$};
\node at (1*\r, -1.3*\y) {\small $\delta$};

\node at (0.2*\x, -1.05*\r) {\small $\beta$};
\node at (0.2*\x, -1.45*\r) {\small $\gamma$};

\node at (0.9*\x, -1.375*\r) {\small $\beta$};
\node at (1.4*\r, -0.8*\y) {\small $\gamma$};
\node at (0.45*\x, -1.625*\r) {\small $\gamma$};

\node at (0.9*\x, -1.75*\r) {\small $\alpha$};
}

\node[inner sep=1,draw,shape=circle] at (0,0) {\footnotesize $1$};
\node[inner sep=1,draw,shape=circle] at (-0.5*\x,-0.72*\r) {\footnotesize $2$};
\node[inner sep=1,draw,shape=circle] at (0.5*\x,-0.72*\r) {\footnotesize $3$};
\node[inner sep=1,draw,shape=circle] at (-1*\x,-1*\r) {\footnotesize $4$};
\node[inner sep=1,draw,shape=circle] at (-1.275*\r,0.5*\y) {\footnotesize $5$};
\node[inner sep=1,draw,shape=circle] at (1*\x,-1*\r) {\footnotesize $6$};
\node[inner sep=1,draw,shape=circle] at (1.275*\r,0.5*\y) {\footnotesize $7$};

\node[inner sep=1,draw,shape=circle] at (0,-2.5*\r) {\footnotesize $8$};

\end{tikzpicture}
\caption{The neighbourhood of $\alpha\beta^2$}
\label{Fig-trunc-tetrahedron-albe2-nbhd}
\end{figure}

Up to mirror symmetry, it suffices to consider two combinations of the four vertices below,
\begin{align*}
(\alpha_1\gamma_2\gamma_5\cdots, \alpha_1\gamma_3\gamma_7\cdots, \alpha_8\beta_4\cdots, \alpha_8\beta_6\cdots) = (\alpha\beta^2, \alpha\gamma^4, \alpha\beta\gamma^2, \alpha\beta^2), (\alpha\beta^2, \alpha\gamma^4, \alpha\beta^2, \alpha\beta\gamma^2).
\end{align*}
The former deduces the tiling in Figure \ref{Subfig-kt-subdiv-trunc-tetrahedron-F1} while the latter deduces the tiling in Figure \ref{Subfig-kt-subdiv-trunc-tetrahedron-F2}. Therefore we have all the tilings.  
\end{proof}

In fact, without using the truncated tetrahedron, what we proved above is that $\AVC$ \eqref{AVC-trunc-albega2-de3-albe2-alga4} gives rise to the three tilings in the finite family. 

Finite families of tilings can also be derived from the truncated cube $t\mathcal{C}$ by taking $d=3$ in $\AVC$ \eqref{AVC-trunc-albega2-de3-albe2-alga4} and the truncated dodecahedron $t\mathcal{D}$ by taking $d=5$. Alternatively, the values for the angles and the edge length $x$ can be found in \cite[Table 3]{lnp}. 
\begin{align}
t\mathcal{C}: \quad &\alpha =\cos^{-1}\tfrac{1}{4}(2\sqrt2-1),& &\beta =\cos^{-1}(-\tfrac{1}{4}(2+\sqrt2)), \\ \notag
&\gamma =\tfrac{1}{2}\cos^{-1}(-\tfrac{1}{4}(2+\sqrt2)),& &\delta=\tfrac{1}{2}\pi,& \\ \notag
&x=\tan^{-1}\tfrac{1}{7}(6\sqrt2-4),& &y=\tfrac{1}{2}\cos^{-1}\tfrac{1}{17}(4\sqrt2-7).& \\
t\mathcal{D}: \quad &\alpha =2\cos^{-1}\tfrac{4+\sqrt5}{5+\sqrt5},& &\beta=\cos^{-1}(-\tfrac{4+\sqrt5}{5+\sqrt5}),& \\ \notag
&\gamma =\tfrac{1}{2}\cos^{-1}(-\tfrac{4+\sqrt5}{5+\sqrt5}),& &\delta=\tfrac{2}{5}\pi,& \\ \notag
&x=\tan^{-1}\tfrac{2}{3}(5-2\sqrt5),& &y=\tfrac{1}{2}\cos^{-1}\tfrac{8-3\sqrt5}{5\sqrt5-8}.&
\end{align}

An immediate consequence of the above discussion is as follows.
\begin{prop}[Proposition \ref{Prop-Platonic-finite-families}]\label{Prop-tP-families} The truncated Platonic solids give rise to finite families of dihedral tilings via kite subdivisions and their flip modifications. Their canonical seeds are given in Figure \ref{Fig-trunc-platonic-seeds}.
\end{prop}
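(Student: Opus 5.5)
The plan is to treat the five truncated Platonic solids $t\mathcal{T}, t\mathcal{C}, t\mathcal{O}, t\mathcal{D}, t\mathcal{I}$ uniformly through the $\AVC$ \eqref{AVC-trunc-albega2-de3-albe2-alga4} $=\{ \alpha\beta\gamma^2, \delta^d, \alpha\beta^2, \alpha\gamma^4 \}$. For each solid, the legacy faces are hexagons (for $t\mathcal{T}, t\mathcal{O}, t\mathcal{I}$) or regular $2d$-gons (octagons for $t\mathcal{C}$, decagons for $t\mathcal{D}$). In every case the legacy face is a $2d$-gon, so it is subdivided into $d$ kites meeting at a central $\delta^d$. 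The derived faces are the regular $m$-gons, with $(m,d)=(3,3),(3,4),(4,3),(3,5),(5,3)$ respectively.

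\textbf{Step 1: the canonical seed.} In the seed every subdivision is oriented so that each vertex of the truncated solid, which is $\angle_{m}\angle_{2d}\angle_{2d}$, receives one $\beta$ and one $\gamma^2$. This gives $\AVC=\{\delta^d, \alpha\beta\gamma^2\}$. These seeds are the tilings drawn in Figure \ref{Fig-trunc-platonic-seeds}. For $t\mathcal{O}, t\mathcal{I}$ they coincide with Figure \ref{Fig-Finite-Families-tOcta-tIcosa}, and for $t\mathcal{T}$ with Figure \ref{Subfig-kt-subdiv-trunc-tetrahedron-seed}.

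\textbf{Step 2: flips stay inside the $\AVC$.} Flipping the subdivision of a single $2d$-gon interchanges the roles of its $\beta$ and $\gamma^2$ corners. At each vertex of the truncated solid, the two incident $2d$-gons then contribute one of $\beta\beta$, $\beta\gamma^2$ or $\gamma^2\gamma^2$. Together with $\alpha$, these give $\alpha\beta^2$, $\alpha\beta\gamma^2$ or $\alpha\gamma^4$, all of which lie in the $\AVC$. The angle relations $\beta=2\gamma$ (from $\alpha\beta^2$ and $\alpha\beta\gamma^2$), $\alpha+\beta+2\gamma=2\pi$ and $\delta=\tfrac{2}{d}\pi$ are therefore compatible with all vertex types simultaneously. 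Combinatorially, every choice of flips is thus admissible, and the number of choices is finite, at most $2^{f}$ where $f$ is the number of legacy faces.

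\textbf{Step 3: geometric realisation.} For each $(m,d)$, solve the vertex angle sums together with \eqref{Eq-angle-id} for $\alpha$, then use \eqref{Eq-a4-cosx} and \eqref{Eq-x2y2-cosy} to obtain $x,y\in(0,\pi)$. The cases $m=4,5$ are \eqref{Eq-ang-vals-albe2-albega2-de3} and \eqref{Eq-ang-vals-m=5-albe2-albega2-de3}. The values for $t\mathcal{T}$, $t\mathcal{C}$ and $t\mathcal{D}$ are the ones displayed above, and they are consistent with \cite[Table 3]{lnp}. Because the kite is determined by its angles and $x$, the two orientations of a subdivided $2d$-gon are congruent. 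Since the $2d$-gon is equilateral with alternating angles $\beta,\gamma^2$, each flipped configuration is realised by the same prototiles, and the flipped tilings exist.

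\textbf{Main obstacle.} The delicate point is verifying that the kite subdivision of each $2d$-gon is geometrically genuine, namely that $y>0$ and that $\gamma<\pi$ hold for the computed values. A second concern is whether ``finite family'' should be read up to isomorphism. For the statement itself only existence and finiteness are needed, and both follow from Steps 2 and 3. Counting the members exactly would require the neighbourhood analysis carried out for $t\mathcal{T}$ above, or a computer enumeration as was done for $m=5$; I would not attempt that count here.
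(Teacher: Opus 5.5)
Your proposal is correct and follows essentially the paper's route: the paper derives this proposition as an immediate consequence of the preceding discussion. That discussion consists of the $\AVC$ \eqref{AVC-trunc-albega2-de3-albe2-alga4} with $\beta=2\gamma$ (so each subdivided $2d$-gon is regular and a flip is a rotation by $\pi/d$), the analyses of $t\mathcal{O},t\mathcal{I}$ in Proposition \ref{Prop-be} and of $t\mathcal{T}$ above, and the computed angles and edge lengths for $t\mathcal{C},t\mathcal{D}$.

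Two small remarks. Your $(m,d)=(3,4)$ for $t\mathcal{C}$ is the right one, since it matches $\delta=\tfrac12\pi$, even though the text says $d=3$. In your ``main obstacle'', the check that actually matters is not $y>0$ or $\gamma<\pi$ (both automatic) but $x\neq y$. This holds because the circumradius $y$ of a spherical regular $2d$-gon with $d\ge3$ strictly exceeds its side $x$, as the displayed values confirm.
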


\begin{figure}[h!]
\centering
\begin{tikzpicture}[>=latex]
\tikzmath{
\s=3;
\r=0.3;
\XS=3.5;
\YS=3;
}

\begin{scope}[] 
\node [inner sep=0] (image) at (0,0) 
            {\includegraphics[height=2cm]{dih-kt-subdiv-hex-trunc-tetrahedron-seed}};
\node at (0,-0.5*\s) {\small Kite-subdivided $t\mathcal{T}$};
\end{scope}

\begin{scope}[] 
\node [inner sep=0] (image) at (\XS,0) 
            {\includegraphics[height=2cm]{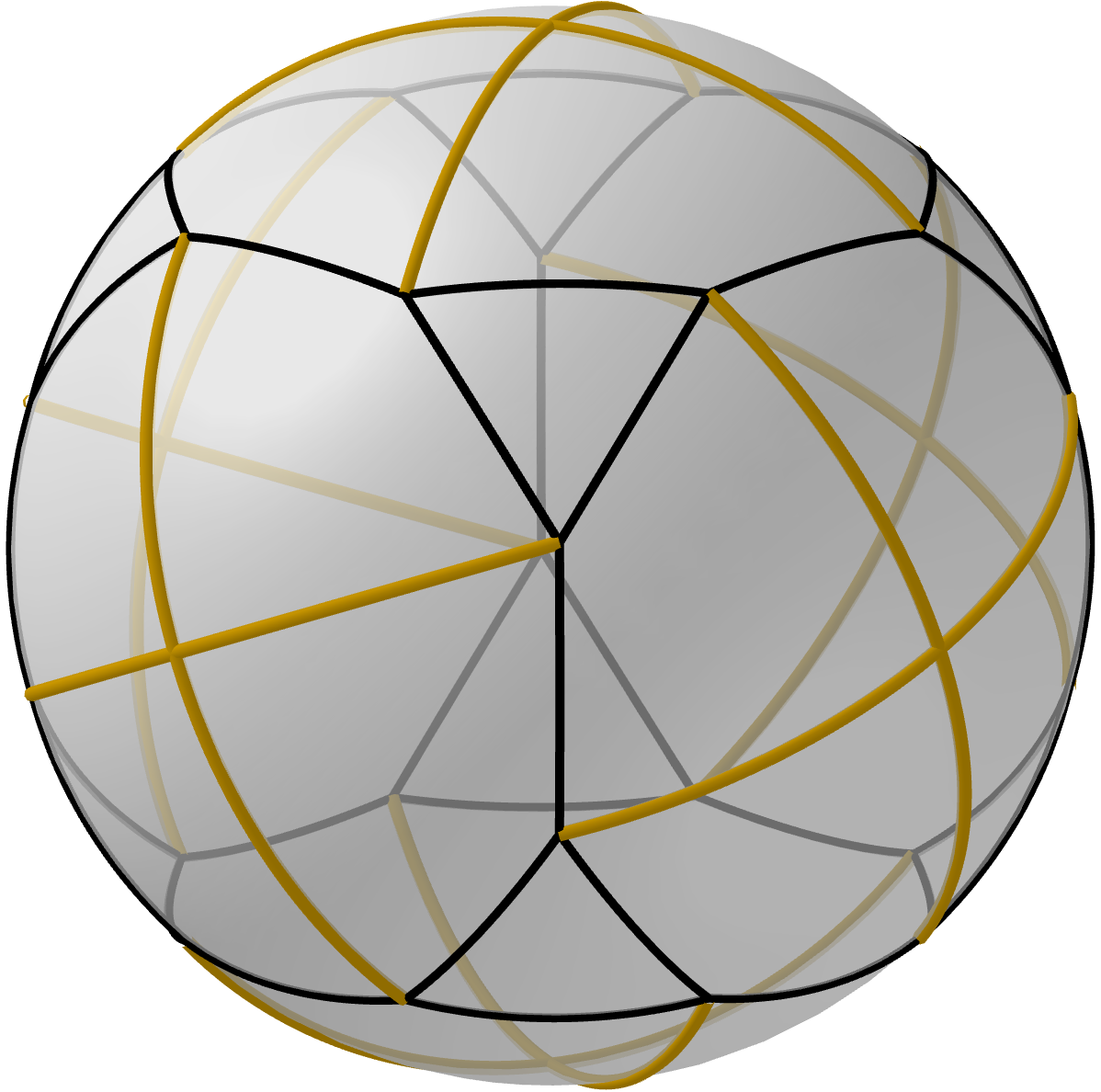}};
\node at (\XS,-0.5*\s) {\small Kite-subdivided $t\mathcal{C}$};
\end{scope}

\begin{scope}[] 
\node [inner sep=0] (image) at (2*\XS,0) 
            {\includegraphics[height=2cm]{dih-trunc-octahedron}};
\node at (2*\XS,-0.5*\s) {\small Kite-subdivided $t\mathcal{O}$};
\end{scope}

\begin{scope}[] 
\node [inner sep=0] (image) at (0.5*\XS,-\YS) 
            {\includegraphics[height=2cm]{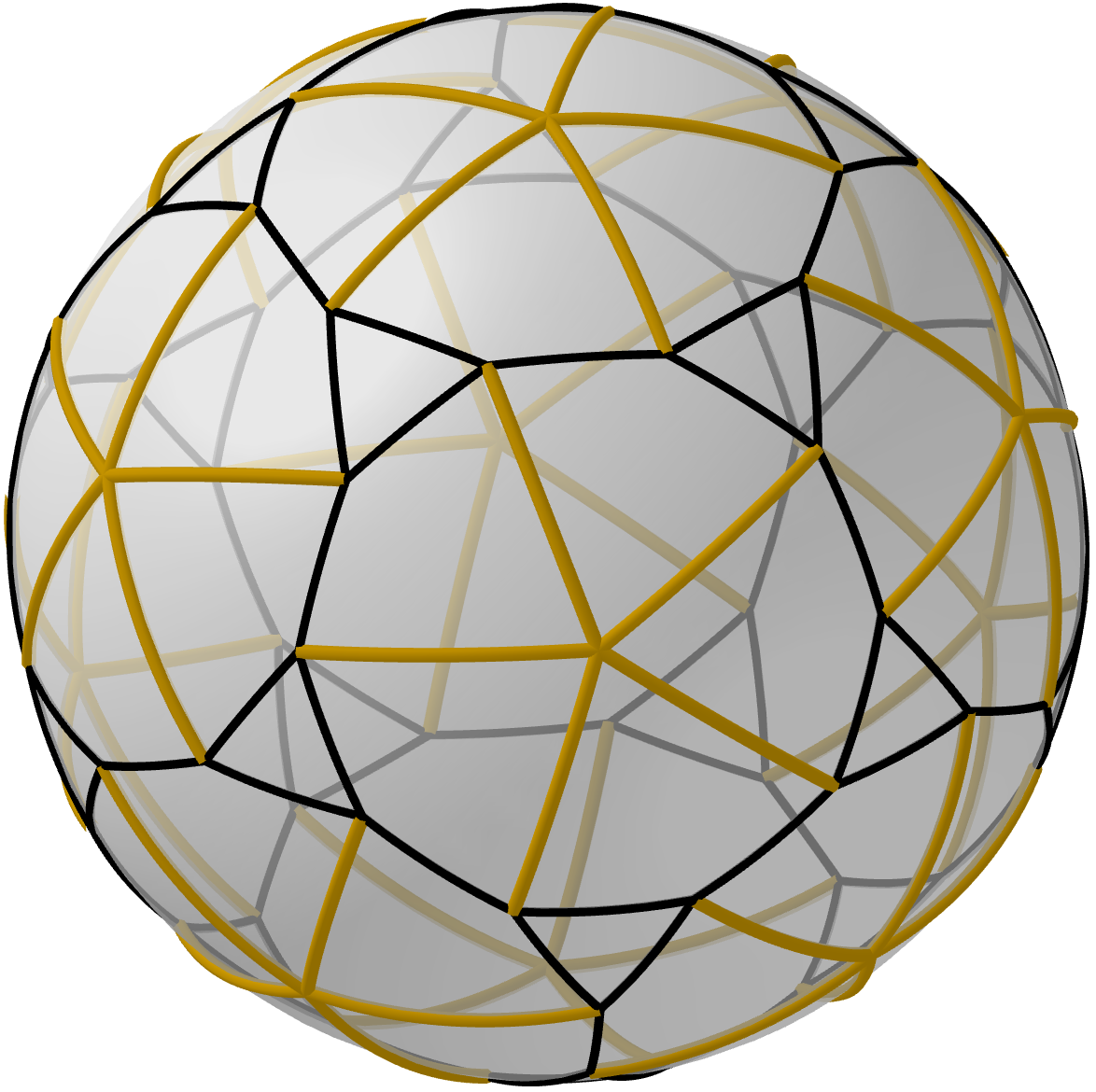}};
\node at (0.5*\XS,-0.5*\s-\YS) {\small Kite-subdivided $t\mathcal{D}$};
\end{scope}

\begin{scope}[] 
\node [inner sep=0] (image) at (1.5*\XS,-\YS) 
            {\includegraphics[height=2cm]{dih-trunc-icosahedron}};
\node at (1.5*\XS,-0.5*\s-\YS) {\small Kite-subdivided $t\mathcal{I}$};
\end{scope}

\end{tikzpicture}
\caption{Canonical seeds for the families by kite-subdivided truncated Platonic solids}
\label{Fig-trunc-platonic-seeds}
\end{figure}

Further studies on tilings having a kite and an equilateral triangle as prototiles will appear in our future work.

\section{Acknowledgement}

The third author expresses his sincere gratitude towards Tom\'a\v{s} Kaiser, Roman Nedela, and Min Yan for fruitful discussions. 

\section*{Funding}

The first author was supported by JSPS Kakenhi grant 25K21149.
\noindent The third author was supported by Academic Career in Pilsen 2024, 2025 and 2026.


\begin{thebibliography}{99}

\bibitem{aehj}
C.~Adams, C.~Edgar, P.~Hollander and L.~Jacoby,
\newblock The non-edge-to-edge tilings of the sphere by regular polygons.
\newblock {\em Discrete \& Computational Geometry}, 72:1029--1085, 2024.


\bibitem{awy}
Y.~Akama, E.~X.~Wang, M.~Yan,
\newblock Tilings of the sphere by congruent pentagons III: edge combination $a^5$,
\newblock {\it Adv. in Math.} {\bf 394} (2022), \#107881.

\bibitem{ay}
Y.~Akama, M.~Yan,
\newblock On deformed dodecahedron tiling,
\newblock {\it Australas. J. of Comb.} {\bf 85(1)} (2023), 1--14.

\bibitem{bj}
D.~Barnette, E.~Jucovi\v{c},
\newblock Hamiltonian circuits on $3$-polytopes,
\newblock {\em Journal of Combinatorial Theory}, 9:54--59, 1970.


\bibitem{cl}
H.~M.~Cheung, H.~P.~Luk,
\newblock Dihedral tilings of the sphere by regular polygons and quadrilaterals II: regular polygons with high gonality and rhombi,
\newblock {\em preprint}, 2023, {\tt arXiv:2403.07014}.

\bibitem{cly}
H.~M.~Cheung, H.~P.~Luk, M.~Yan,
\newblock Tilings of the sphere by congruent quadrilaterals or triangles,
\newblock {\it preprint}, 2022, {\tt arXiv:2204.02736}.

\bibitem{cly2}
H.~M.~Cheung, H.~P.~Luk, M.~Yan,
\newblock Tilings of the sphere by congruent pentagons IV: edge combination $a^4b$,
\newblock {\it preprint}, 2023, {\tt arXiv:2307.11453}.


\bibitem{desi}
M.~Deza, M.~D.~Sikiri\'c,
\newblock {\it Geometry of Chemical Graphs: Polycycles and Two-faced Maps}
\newblock Cambridge University Press, 2008.

\bibitem{emm}
J.~A.~Ellis-Monaghan, I.~Moffatt, 
\newblock Chapter 1.1.4 Ribbon Graphs, 
\newblock {\em Graphs on Surfaces: Dualities, Polynomials, and Knots}, SpringerBriefs in Mathematics, Springer, pp. 5--7, 2013.

\bibitem{gj}
B.~Gr\"unbaum, N.~W.~Johnson,
\newblock The faces of a regular-faced polyhedron,
\newblock {\em Journal of the London Mathematical Society} 1 (1965) 1:577--586.

\bibitem{joh}
N.~Johnson, 
\newblock Convex solids with regular faces, 
\newblock {\em Canadian Journal of Mathematics} 18 (1966), 169--200.

\bibitem{gru}
B.~Gr\"unbaum,
\newblock On polyhedra in $\mathbb{E}^3$ having all faces congruent,
\newblock {\em Bull. Research Council Israel} {\bf 8F} (1960), 215--218.

\bibitem{hig}
Y.~Higuchi,
\newblock Combinatorial curvature for planar graphs,
\newblock {\it J. Graph Theory} {\bf 38(4)} (2001), 220--229.

\bibitem{gsy}
H.~H.~Gao, N.~Shi, M.~Yan,
\newblock Spherical tiling by $12$ congruent pentagons,
\newblock {\it J. Comb. Theory Ser. A} {\bf 120(4)} (2013), 744--776.

\bibitem{lc}
H.~P.~Luk, H.~M.~Cheung, 
\newblock Rational angles and tilings of the sphere by congruent quadrilaterals,
\newblock {\it Ann. Comb.} {\bf 28} (2024), 485--527.

\bibitem{lnp}
H.~P.~Luk, R.~Nedela, C.~Purcell,
\newblock Tiling the sphere with regular polygons,
\newblock {\em preprint}, 2025, {\tt arXiv:2512.05577}.

\bibitem{luk1}
H.~P.~Luk,
\newblock Dihedral tilings of the sphere by regular polygons and quadrilaterals I: squares and rhombi,
\newblock {\em Combinatorial Theory} {\bf 5(3)} (2025) {\tt \url{doi.org/10.5070/C65365567}}. 

\bibitem{luk2}
H.~P.~Luk,
\newblock Dihedral tilings of the sphere by regular polygons and quadrilaterals: quadrilaterals with equal opposite edges,
\newblock {\em preprint}, 2023, {\tt arXiv:2403.05938}.

\bibitem{luk3}
H.~P. Luk,
\newblock A parity phenomenon of spherical tilings.
\newblock {\em Archiv der Mathematik} (2026) {\tt \url{doi.org/10.1007/s00013-026-02233-2}}.

\bibitem{luk4}
H.~P.~Luk,
\newblock Odd tilings of the sphere and graphs of Herschel type,
\newblock {\em preprint}, 2024.

\bibitem{rob}
S.~A.~Robertson, 
\newblock Isometric folding of Riemannian manifolds, 
\newblock {\em Proc. R. Soc. Edinb.} {\bf 79} (1977), 275--284.

\bibitem{som}
D.~M.~Y.~Sommerville,
\newblock Division of space by congruent triangles and tetrahedra,
\newblock {\it Proc. Royal Soc. Edinburgh} {\bf 43} (1923), 85--116.

\bibitem{ua}
Y.~Ueno, Y.~Agaoka,
\newblock Classification of tilings of the $2$-dimensional sphere by congruent triangles,
\newblock {\it Hiroshima Math. J.} {\bf 32(3)} (2002), 463--540.

\bibitem{ua2}
Y.~Ueno, Y.~Agaoka, 
\newblock Examples of spherical tilings by congruent quadrangles, 
\newblock {\it Math. Inform. Sci., Fac. Integrated Arts Sci., Hiroshima Univ.} {\bf Ser. IV 27} (2001), 135--144.

\bibitem{wy}
E.~X.~Wang, M.~Yan,
\newblock Tilings of sphere by congruent pentagons I: edge combinations $a^2b^2c$ and $a^3bc$, 
\newblock {\it Adv. in Math.} {\bf 394} (2022), \#107866.

\bibitem{wy2}
E.~X.~Wang, M.~Yan,
\newblock Tilings of sphere by congruent pentagons II: edge combination $a^3b^2$,
\newblock {\it Adv. in Math.} {\bf 394} (2022), \#107867.

\bibitem{zal} 
V.~A.~Zalgaller, 
\newblock Convex polyhedra with regular faces,
\newblock {\em Zap. Naucn. Sem. Leningrad. Otdel. Mat. Inst. Steklov. (LOMI)} 2, 220 (1967).

\end{thebibliography}
\end{document}